\newcommand{\putaway}[1]{}
\newtheorem{theorem}{Theorem}[section]
\newtheorem{lemma}[theorem]{Lemma}
\newtheorem{proposition}[theorem]{Proposition}
\newtheorem{corollary}[theorem]{Corollary}
\newtheorem{definition}[theorem]{Definition}
\newtheorem{example}[theorem]{Example}
\newtheorem{strategy}[theorem]{Strategy}
\newtheorem{remark}[theorem]{Remark}
\newcommand{\pto}{\to}
\newcommand{\pointed}[1]{{\bm #1}}
\newcommand{\bis}{\leftrightarroweq}
\newcommand{\oper}{\rho}
\newcommand{\lft}{\mathfrak L}
\newcommand{\rgt}{\mathfrak R}
\newcommand{\meg}{{\sc meg}}
\newcommand{\valfun}[3]{\lambda #1.#2 [#3]}
\newcommand{\trC}[1]{t_{\forall}^{#1}}
\newcommand{\trtanpd}{t_{\pt}^\bot}
\newcommand{\trtanwedge}[1]{t_{\pt}^{#1}}
\newcommand{\trpsmu}{t_\ps^\mu}
\newcommand{\trmutan}{t_\mu^{\pt}}
\newcommand{\trcl}[1]{t_\ps^\pd({#1})}
\newcommand{\trtan}[1]{t_{\pt}^\mu({#1})}
\newcommand{\trtanb}{t_{\pt}^\mu}
\newcommand{\pt}{\pd^\ast}
\newcommand{\nt}{\nd^\ast}
\newcommand{\frm}[1]{{\mathcal{#1}}} % for frames
\newcommand{\cls}[1]{{\mathbf{#1}}} % for classes of frames
\newcommand{\dom}[1]{{|#1|}} % for the domain of a frame
\newcommand{\rel}[1]{R_{#1}} % for the binary relation on a frame
\newcommand{\V}{V} % for valuations
\newcommand{\rgnA}{{A}} % for region a
\newcommand{\rgnB}{{B}} % for region b
\newcommand{\rgnX}{{X}} % for region x
\newcommand{\power}[1]{2^{#1}}
\newcommand{\PV}{P}
\newcommand{\nc}{{\boxplus}}
\newcommand{\ps}{{\diamondplus}}
\newcommand{\nd}{{\Box}}
\newcommand{\pd}{{\Diamond}}
\newcommand{\peq}{\preccurlyeq}
\newcommand{\lang}{{\mathcal L}}
\def\lb{\left\llbracket}
\def\rb{\right\rrbracket}
\newcommand{\val}[1]{\lb #1 \rb}
\newcommand{\david}[1]{\footnote{{\color{orange} DAVID: #1}}}
\newcommand{\petar}[1]{\footnote{{\color{blue} PETAR: #1}}}
\begin{document}

\begin{frontmatter}

\title{Succinctness in subsystems of the spatial $\mu$-calculus}
%\tnotetext[mytitlenote]{Fully documented templates are available in the elsarticle package on \href{http://www.ctan.org/tex-archive/macros/latex/contrib/elsarticle}{CTAN}.}

\author{David F\'ernandez-Duque\fnref{davidfoot}}
\author{Petar Iliev\fnref{petarfoot}}
\address{Institute de Recherche en Informatique de Toulouse, Toulouse University}
\fntext[davidfoot]{Email: david.fernandez@irit.fr}
\fntext[petarfoot]{Email: petar.iliev@irit.fr}
%% Group authors per affiliation:
%\author{Elsevier\fnref{myfootnote}}
%\address{Radarweg 29, Amsterdam}
%\fntext[myfootnote]{Since 1880.}

%% or include affiliations in footnotes:
%\author[mymainaddress,mysecondaryaddress]{Elsevier Inc}
%\ead[url]{www.elsevier.com}

%\author[mysecondaryaddress]{Global Customer Service\corref{mycorrespondingauthor}}
%\cortext[mycorrespondingauthor]{Corresponding author}
%\ead{support@elsevier.com}

%\address[mymainaddress]{1600 John F Kennedy Boulevard, Philadelphia}
%\address[mysecondaryaddress]{360 Park Avenue South, New York}

\begin{abstract}
In this paper we systematically explore questions of succinctness in modal logics employed in spatial reasoning.
We show that the closure operator, despite being less expressive, is exponentially more succinct than the limit-point operator, and that the $\mu$-calculus is exponentially more succinct than the equally-expressive tangled limit operator. These results hold for any class of spaces containing at least one crowded metric space or containing all spaces based on ordinals below $\omega^\omega$, with the usual limit operator. We also show that these results continue to hold even if we enrich the less succinct language with the universal modality.
\end{abstract}

\begin{keyword}
succinctness\sep spatial reasoning\sep modal logic
%\MSC[2010] 00-01\sep  99-00
\end{keyword}

\end{frontmatter}

%\linenumbers

\section{Introduction}

In spatial reasoning, as in any other application of logic, there are several criteria to take into account when 
choosing an appropriate formal system. A more expressive logic has greater potential applicability, but often 
at the cost of being less tractable.
Similarly, a more {\em succinct} logic is preferable, for example, when storage
capacity is limited: even when two formal languages $\lang_1$ and $\lang_2$ are equally expressive, it may be the case 
that certain properties may be represented in $\lang_1$ by much shorter expressions than in $\lang_2$. 
As we will see, this is sometimes the case even when $\lang_1$ is strictly {\em less} expressive than $\lang_2$.

Qualitative spatial reasoning deals with regions in space and abstract relations between them, without requiring a precise description of them.
 It is useful in settings where data about such regions is incomplete or, otherwise, highly complex, yet precise numerical values of 
coordinates are not necessary: in such a context, qualitative descriptions may suffice and can be treated more efficiently 
from a computational perspective.
One largely unexplored aspect of such 
efficiency lies in the succinctness of the formal languages employed.
To this end, our goal is to develop a 
first approximation to the study of succinctness in the context of modal logics of space.

\subsection{State-of-the-art in succinctness research}

Succinctness is an important research topic that has been quite active for the last couple of decades.
For example, it was shown by Grohe and Schweikardt \cite{groheschw05}
that the four-variable fragment of first-order logic is exponentially more succinct than the three-variable one
on linear orders, while Eickmeyer et al.~\cite{eickmeyer} offer a study of the succinctness of order-invariant 
sentences of first-order  and monadic second-order 
logic on graphs of bounded tree-depth. Succinctness problems for temporal 
logics for formal verification of programs were studied, among others, by
Wilke \cite{Wilke}, Etessami et al.~\cite{etassamivardiwilke}, and Adler and Immerman \cite{adlerimmerman}, while it was convincingly argued 
by Gogic et al.~\cite{gogicetal} that, as far as knowledge representations formalisms studied in the artificial
intelligence are concerned, succinctness offers a more fine-grained comparison criterion than
expressivity or computational complexity.

Intuitively, proving that one language $\lang_1$ is more succinct than another language $\lang_2$ ultimately boils down 
to proving a sufficiently big  lower bound on the size of $\lang_2$-formulas expressing some semantic property. 
For example, if we want to show that $\lang_1$ is exponentially\footnote{Analogous considerations apply in the 
case when we want to show that $\lang_1$ is  doubly exponentially or non-elementarily etc.
more succinct than $\lang_2$.}  more succinct
than $\lang_2$, we have to find an infinite sequence of semantic properties (i.e., classes of models) ${\cls P}_1, {\cls P}_2, \ldots $ 
definable in both $\lang_1$ and $\lang_2$, show that there are $\lang_1$-formulas $\varphi_1, \varphi_2, \ldots$ defining
${\cls P}_1, {\cls P}_2, \ldots $ and prove that, for every $n$, every $\lang_2$-formula $\psi_n$ defining ${\cls P}_n$
has size exponential in the size of $\varphi_n$. Many such lower bound proofs, especially in the setting of temporal logics,
rely on automata theoretic arguments possibly combined with complexity theoretic assumptions. In the present paper, we
use formula-size games that were developed in the setting of Boolean function complexity by Razborov \cite{razborovcombinatorica}
and in the setting of first-order logic and some temporal logics by Adler and Immerman \cite{adlerimmerman}. By now, the formula-size games 
have been adapted to a host of modal logics (see for example French et al.~\cite{ijcai}, Hella and Vilander \cite{hellaaiml}, Figueira and Gor{\'i}n \cite{gorinfigueira}, van der Hoek et al.~\cite{aamas}) 
and used to obtain lower bounds on modal formulas expressing properties of Kripke models.
Our goal is to build upon these techniques in order to apply them to modal logics employed in spatial reasoning.

%%%%%%%%%%%%%%%%%%%%%%%%%%%%%%%%%%%%%%%%%%%%%%%%%

\subsection{Spatial interpretations of modality}\label{SecModSpace}

Modal logic is an extension of propositional logic with a `modality' $\pd$ and its dual, $\nd$, so that if $\varphi$ is any formula,
$\pd \varphi$  and $\nd\varphi$  are formulas too. There are several interpretations for
 these modalities, but one of the first was studied by McKinsey and Tarski \cite{McKinsey1944}, who proposed a topological reading for them. 
These semantics have regained interest in the last decades, due to their potential for spatial reasoning, especially when modal logic is augmented with a universal modality as studied by Shehtman \citep{ShehtmanEverywhereHere}, or fixpoint operators studied by the first author \cite{FernandezIJCAI} and Goldblatt and Hodkinson \cite{Goldblatt2017Spatial}.

The intention is to interpret formulas of the modal language as subsets of a `spatial' structure, such as $\mathbb R^n$. To do this, we use the {\em closure} and the {\em interior} of a set $\rgnA\subseteq \mathbb R^n$. The closure of $\rgnA$, denoted $c(\rgnA)$, is the set of points that have distance zero from $\rgnA$; its interior, denoted $i(\rgnA)$, is the set of points with positive distance from its complement. To define these, for $x,y\in\mathbb R^n$, let $\delta(x,y)$ denote the standard Euclidean distance between $x$ and $y$. It is well-known that $\delta$ satisfies
\begin{enumerate*}[label=(\roman*)]
\item $\delta(x,y)\geq 0$,
\item $\delta(x,y)=0$ iff $x=y$,
\item $\delta(x,y)=\delta(y,x)$ and
\item the triangle inequality, $\delta(x,z)\leq \delta(x,y)+\delta(y,z)$.
\end{enumerate*}
More generally, a set $X$ with a function $\delta\colon X\times X\to \mathbb R$ satisfying these four properties is a {\em metric space.} The Euclidean spaces $\mathbb R^n$ are metric spaces, but there are other important examples, such as the set of continuous functions on $[0,1]$ (with a suitable metric).

\begin{definition}
Given a metric space $X$ and $\rgnA\subseteq X$, we say that a point $x$ has {\em distance zero from $\rgnA$} if for every $\varepsilon>0$, there is $y \in \rgnA$ so that $\delta(x,y)<\varepsilon$. If $x$ does not have distance zero from $a$, we say it has {\em positive distance} from $\rgnA$. Then, $c(\rgnA)$ is the set of points with zero distance from $\rgnA$ and $i(\rgnA)$ is the set of points with positive distance from its complement.
\end{definition}

Note that if we denote the complement of $A$ by $\overline A$, then we have that $i(\rgnA)=\overline{c(\overline {\rgnA})}$.
The basic properties of $c$ are well-known, and we mention them without proof.

\begin{proposition}
If $X$ is a metric space and $c$ is the closure operator on $X$, then, given sets $\rgnA,\rgnB\subseteq X$,
\begin{enumerate}[label=(\roman*)]
\item $c(\varnothing)=\varnothing$,
\item $\rgnA\subseteq c(\rgnA)$,
\item $c(\rgnA)=c(c(\rgnA))$ and
\item $c(\rgnA\cup \rgnB)=c(\rgnA)\cup c(\rgnB)$.
\end{enumerate}
\end{proposition}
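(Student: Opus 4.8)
The plan is to verify the four Kuratowski-style axioms directly from the metric definition of $c$, unwinding the ``distance zero'' condition in each case. Throughout I would use that $x\in c(\rgnA)$ means precisely that for every $\varepsilon>0$ there is $y\in \rgnA$ with $\delta(x,y)<\varepsilon$. Items (i) and (ii) are then immediate: for (i), the defining condition quantifies over elements $y\in\varnothing$, of which there are none, so the condition fails for every $x$ and hence $c(\varnothing)=\varnothing$; for (ii), given $x\in\rgnA$ and any $\varepsilon>0$, the witness $y=x$ works, since $\delta(x,x)=0<\varepsilon$ by property (ii) of the metric, whence $x\in c(\rgnA)$.

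The one inclusion requiring genuine work is the nontrivial half of (iii), namely $c(c(\rgnA))\subseteq c(\rgnA)$; the reverse inclusion $c(\rgnA)\subseteq c(c(\rgnA))$ is just (ii) applied to the set $c(\rgnA)$. This is where the triangle inequality enters. Given $x\in c(c(\rgnA))$ and $\varepsilon>0$, I would first locate $z\in c(\rgnA)$ with $\delta(x,z)<\varepsilon/2$, and then, using $z\in c(\rgnA)$, locate $y\in\rgnA$ with $\delta(z,y)<\varepsilon/2$; the triangle inequality then yields $\delta(x,y)\le\delta(x,z)+\delta(z,y)<\varepsilon$, so $x\in c(\rgnA)$. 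The standard $\varepsilon/2$ splitting is the only subtle point in the entire argument, and I expect it to be the main (indeed the only) obstacle.

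For (iv) I would first record monotonicity as a preliminary observation: if $\rgnA\subseteq \rgnB$, then any witness establishing membership in $c(\rgnA)$ is already a witness for $c(\rgnB)$, so $c(\rgnA)\subseteq c(\rgnB)$. Applying this to the inclusions $\rgnA,\rgnB\subseteq \rgnA\cup \rgnB$ gives $c(\rgnA)\cup c(\rgnB)\subseteq c(\rgnA\cup \rgnB)$. For the converse I would argue by contraposition: if $x\notin c(\rgnA)\cup c(\rgnB)$, then there are $\varepsilon_1,\varepsilon_2>0$ ruling out witnesses in $\rgnA$ and in $\rgnB$ respectively, and taking $\varepsilon=\min(\varepsilon_1,\varepsilon_2)>0$ rules out all witnesses in $\rgnA\cup \rgnB$, so $x\notin c(\rgnA\cup\rgnB)$. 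Since everything except the $\varepsilon/2$ step of (iii) is a routine unwinding of the definition, the proof should be short and self-contained.
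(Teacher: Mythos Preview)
Your argument is correct and is the standard verification of the Kuratowski axioms from the metric definition of closure. Note, however, that the paper does not actually prove this proposition: it is introduced with the remark that these properties are well-known and are mentioned without proof, so there is no proof in the paper to compare against. Your write-up simply supplies the routine details the authors chose to omit.
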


We will say that any set $X$ equipped with a function $c\colon 2^X\to 2^X$ satisfying these four properties is a {\em closure space,} and that $c$ is a closure operator. Closure spaces are simply topological spaces in disguise, but presenting them in this fashion will have many advantages for us. To be precise, if $(X,c)$ is a closure space and $A = c(A)$, we say that $X$ is {\em closed,} and its complement is {\em open;} the family of open sets then gives a topology in the usual way.\footnote{We will not define topological spaces in this text, and instead refer the reader to a text such as \cite{Munkres}.}

 From a computational perspective, it can be more convenient to work with closure spaces than with metric spaces, as finite, non-trivial closure spaces can be defined in a straightforward way, and thus spatial relations can be represented using finite structures. To be precise, let $W$ be a set and $R\subseteq W\times W$ be a binary relation; the structure $(W,R)$ is a {\em frame.} Then, if $R$ is a preorder (i.e., a transitive, reflexive relation), the operator $R^{-1}[\cdot]$ defined by $R^{-1}[\rgnA]=\{w\in W:\exists v\in \rgnA \ (w \mathrel R v)\}$ is a closure operator.

A good deal of the geometric properties of regions in a metric space $X$ are reflected in the behavior of its closure operator; however, some information is inevitably lost. It has been observed that more information about the structure of $X$ is captured if we instead consider its {\em limit operator.} For $\rgnA\subseteq X$, define $d(\rgnA)$ to be the set of points such that, for every $\varepsilon>0$, there is $y\in a$ {\em different} from $x$ such that $\delta(x,y)<\varepsilon$. It is no longer the case that $\rgnA\subseteq d(\rgnA)$: for example, if $\rgnA=\{x\}$ consists of a single point, then $d(\rgnA)=\varnothing$.

 Nevertheless, $d$ still satisfies the following properties.
 
\begin{proposition}
Let $X$ be a metric space, and let $d \colon \power X\to \power X$ be its limit operator. Then, for any $\rgnA,\rgnB\subseteq X$, 
\begin{enumerate}[label=(\roman*)]
\item $d(\varnothing)=\varnothing$,
\item $d(d(\rgnA))\subseteq d(\rgnA)$,\footnote{If, instead, we let $X$ be an arbitrary topological space, then only the weaker condition $d(d(\rgnA))\subseteq d(\rgnA)\cup \rgnA$ holds in general.} and
\item $d(\rgnA\cup \rgnB)=d(\rgnA)\cup d(\rgnB)$.
\end{enumerate}
\end{proposition}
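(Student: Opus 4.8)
The plan is to verify the three clauses directly from the definition of the limit operator, handling them in increasing order of difficulty. Clause (i) is immediate: for any point $x$ and any $\varepsilon>0$ there is simply no $y\in\varnothing$, so the defining condition for $x\in d(\varnothing)$ fails vacuously, and hence $d(\varnothing)=\varnothing$.

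For clause (iii) I would split into the two inclusions. The inclusion $d(\rgnA)\cup d(\rgnB)\subseteq d(\rgnA\cup \rgnB)$ is just monotonicity of $d$, which falls straight out of the definition: any witness $y\in \rgnA$ (respectively $y\in \rgnB$) for membership in $d(\rgnA)$ is also a witness drawn from the larger set $\rgnA\cup \rgnB$. For the reverse inclusion I would argue contrapositively. If $x\notin d(\rgnA)$ and $x\notin d(\rgnB)$, then there are thresholds $\varepsilon_A,\varepsilon_B>0$ such that no point of $\rgnA$ (respectively $\rgnB$) other than $x$ lies within distance $\varepsilon_A$ (respectively $\varepsilon_B$) of $x$; taking $\varepsilon=\min(\varepsilon_A,\varepsilon_B)>0$ then witnesses $x\notin d(\rgnA\cup \rgnB)$. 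The only point worth flagging is that a minimum of two positive reals is positive, which is exactly why this argument does not extend to infinite unions.

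Clause (ii) is the substantive one, and here the triangle inequality does the real work. Given $x\in d(d(\rgnA))$ and an arbitrary $\varepsilon>0$, I would first use $x\in d(d(\rgnA))$ to produce a point $z\in d(\rgnA)$ with $z\neq x$ and $\delta(x,z)<\varepsilon/2$; write $r=\delta(x,z)$, so that $0<r<\varepsilon/2$. Since $z\in d(\rgnA)$, I can then select $y\in \rgnA$ with $y\neq z$ and, crucially, $\delta(z,y)<r$. This choice of radius is what makes the step succeed: because $\delta(z,y)<r=\delta(z,x)$, the point $y$ cannot equal $x$, while the triangle inequality gives $\delta(x,y)\le \delta(x,z)+\delta(z,y)<2r<\varepsilon$. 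Thus $y\in \rgnA\setminus\{x\}$ lies within $\varepsilon$ of $x$, and since $\varepsilon$ was arbitrary we conclude $x\in d(\rgnA)$.

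I expect clause (ii) to be the main obstacle, precisely because one must keep the new witness $y$ simultaneously close to $x$ and bounded away from $x$ so as to guarantee $y\neq x$. The device that resolves this tension is to tie the tolerance on $\delta(z,y)$ to the already-positive distance $r=\delta(z,x)$, which relies on $z$ and $x$ being separated — a metric (indeed $T_1$) feature. This is exactly the place where arbitrary topological spaces break down and one recovers only the weaker inclusion $d(d(\rgnA))\subseteq d(\rgnA)\cup \rgnA$ recorded in the footnote.
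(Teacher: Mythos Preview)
Your proof is correct. The paper itself does not prove this proposition; it is stated as a standard fact in Section~\ref{SecModSpace} without argument (in the same spirit as the preceding proposition on the closure operator, which is explicitly ``mention[ed] without proof''). Your verification of clause~(ii) is the non-trivial part, and your device of choosing the second tolerance equal to $r=\delta(x,z)>0$ to force $y\neq x$ is exactly the right move; as you note, this is precisely where the metric (or more generally $T_1$) hypothesis enters, matching the footnote's remark that only $d(d(\rgnA))\subseteq d(\rgnA)\cup \rgnA$ holds for arbitrary topological spaces.
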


In order to treat closure operators and limit operators uniformly, we will define a {\em convergence space} to be a pair $(X,d)$, where $d\colon 2^X\to 2^X $ satisfies these three properties (see Definition \ref{DefModalSpace}). In any convergence space, we can then define $c(\rgnA)=\rgnA\cup d(\rgnA)$, but in general, it is not possible to define $d$ in terms of $c$ using Boolean operations. In particular, the {\em isolated points} of $\rgnA$ can be defined as the elements of $\rgnA\setminus d(\rgnA)$, but they cannot be defined in terms of $c$ (see Figure \ref{FigCID}). As before, a convenient source of convergence spaces is provided by finite frames: if $(W,R)$ is such that $R$ is transitive (but not necessarily reflexive), then $(W,R^{-1}[\cdot])$ is a convergence space.

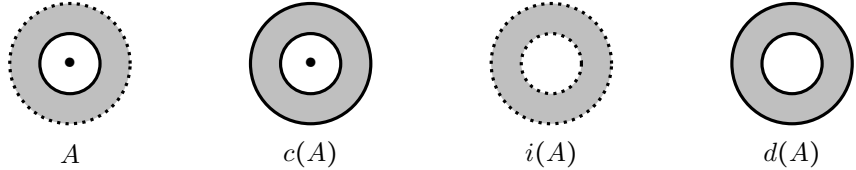
\begin{figure}

\begin{center}

\begin{tikzpicture}[scale=.8]
\draw[fill=gray!50,very thick, dotted] {(0,0) circle (1)};

\draw[fill=white,very thick] {(0,0) circle (.5)};

\draw[very thick] {(0,0) node {$\bullet$}};

\draw[very thick] {(0,-1.5) node {$\rgnA$}};

\draw[fill=gray!50,very thick] {(4,0) circle (1)};

\draw[fill=white,very thick] {(4,0) circle (.5)};

\draw[very thick] {(4,0) node {$\bullet$}};

\draw[very thick] {(4,-1.5) node {$c(\rgnA)$}};

\draw[fill=gray!50,very thick,dotted] {(8,0) circle (1)};

\draw[fill=white,very thick,dotted] {(8,0) circle (.5)};

\draw[very thick] {(8,-1.5) node {$i(\rgnA)$}};

\draw[fill=gray!50,very thick] {(12,0) circle (1)};

\draw[fill=white,very thick] {(12,0) circle (.5)};

\draw[very thick] {(12,-1.5) node {$d(\rgnA)$}};

\end{tikzpicture}

\end{center}

\caption{A region in $\mathbb R^2$ and its closure, interior and limit sets. The point in the middle is the only isolated point of $\rgnA$.}\label{FigCID}

\end{figure}

%All of the $\sf RCC8$ relations can be represented using modal logic \citep{Bennett1996,Wolter:2003}. Moreover, modal logics are much more expressive than $\sf RCC8$, as they can naturally define relations involving more than two regions (e.g., {\em the regions $a_1,\hdots,a_n$ share an interior point}) and can disginguish between a region, its closure and its interior.

Logics involving the $d$ operator are more expressive than those with $c$ alone, for example being able to distinguish the real line from higher-dimensional space and Euclidean spaces from arbitrary metric spaces, as shown by Bezhanishvili et al.~\cite{Bezhanishvili2005}. Nevertheless, as we will see, modal logic with the $c$-interpretation of modal logic is exponentially more succinct than modal logic with the $d$-interpretation.

\subsection*{Layout of the article} In Section \ref{SecMu}, we review the syntax and semantics of the spatial $\mu$-calculus, extended in Section \ref{SecExtLan} to include the closure and tangled limit operators. Section \ref{SecTruthPres} then reviews some model-theoretic operations that preserve the truth of modal formulas, and Section \ref{SecSpecial} discusses the classes of spaces that we will be interested in.

Section \ref{SecGames} presents the game-theoretic techniques that we will use to establish our core results in Section \ref{SecSuccCl}, where it is shown that the closure operator is more succinct than the limit-point operator. This is extended to include the tangled limit operator and the universal modality in Section \ref{SecSuccExt}, leading to our main results, namely Theorem \ref{TheoMain} and Theorem \ref{TheoMuSucc}. Finally, Section \ref{SecConc} provides some concluding remarks and open problems.

\section{The spatial $\mu$-calculus}\label{SecMu}

In this section we present the modal $\mu$-calculus and formalize its semantics over {\em convergence spaces,} a general class of models that allow us to study all the semantic structures we are interested in under a unified framework. Let us begin by defining the basic formal language we will work with.

\subsection{Syntax}

We will consider logics over variants of the language $\lang_{\pd \forall }^{\mu}$ given by the following grammar (in Backus-Naur form). Fix a set $\PV$ of propositional variables, and define:
\[\varphi,\psi:= \hfill  \top \hfill  \mid \hfill  \bot \hfill  \mid \hfill  p \hfill  \mid \hfill  \overline p \hfill  \mid \hfill  \varphi \wedge \psi \hfill \mid \hfill \varphi \wedge \psi  \hfill  \mid  \hfill \pd \varphi   \hfill \mid \hfill \nd \varphi  \hfill \mid  \hfill \forall \varphi  \hfill \mid  \hfill \exists \varphi  \hfill \mid  \hfill \mu p. \varphi  \hfill \mid  \hfill \nu p. \varphi\]
Here, $p\in \PV$, $\overline p$ denotes the negation of $p$, and $\overline p$ may not occur in $\mu p. \varphi$ or $\nu p. \varphi$. For the game-theoretic techniques we will use, it is convenient to allow negations only at the atomic level, and thus we include all duals as primitives, but not negation or implication; however, we may use the latter as shorthands, defined via de Morgan's laws. As usual, formulas of the forms $p,\overline p$ are {\em literals.} It will also be crucial for our purposes to measure the size of formulas: the {\em size} of a formula $\varphi$ is denoted $|\varphi|$ and is 
defined as the number of nodes in its syntax tree.

\begin{definition}\label{defComplexity}
We define a function ${|\cdot|} :  \lang^\mu_{\pd\forall} \to \mathbb N$ recursively by
\begin{itemize}
\item $|p|=|\overline p|=1$

\item $|\varphi\wedge \psi|=|\varphi\vee \psi|=|\varphi|+|\psi|+1$

\item $|\pd\varphi|=|\nd \varphi|
=
|\forall\varphi|=|\exists \varphi|=
|\mu p.\varphi|=|\nu p.\varphi|=
|\varphi|+1$.

\end{itemize}
\end{definition}

Sublanguages of $\lang_{\pd \forall }^{\mu}$ are denoted by omitting some of the operators, with the convention that whenever an operator is included, so is its dual (for example, $\lang_\pd$ includes the modalities $\pd,\nd$ but does not allow $\forall,\exists,\mu$ or $\nu$).

\subsection{Convergence spaces}

Spatial interpretations of modal logics are usually presented in terms of topological spaces. Here we will follow an unorthodox route, instead introducing {\em modal spaces,} and as a special case {\em convergence spaces;} both the closure and limit point operators give rise to convergence spaces, while Kripke models can be seen as modal spaces. As a general convention, structures (e.g.~frames or models) will be denoted $\frm A,\frm B,\ldots$, while classes of structures will be denoted $\cls{A}$, $\cls{B}$, $\ldots$. The domain of a structure $\frm A$ will be denoted by $\dom{\frm A}$.

\begin{definition}\label{DefModalSpace}
A {\em normal operator} on a set $A$ is any function ${\oper} \colon \power A\to \power A$ satisfying
\begin{enumerate}[label=({\sc n\arabic*})]

\item $\oper(\varnothing) = \varnothing$, and

\item $\oper (X \cup Y) = \oper (X) \cup \oper  (Y)$.

\end{enumerate}
A {\em modal space} is a pair $\frm A=(\dom{\frm A}, \oper_\frm A)$, where $\dom{\frm A}$ is any set and $\oper_\frm A\colon 2^{\dom{\frm A}} \to 2^{\dom{\frm A}}$ is a normal operator.
If $\rgnX \subseteq \dom{\frm A}$, define $\hat \oper_\frm A (\rgnX)=\overline {\oper_\frm A (\overline  {\rgnX})}$.
We say that $\oper_\frm A$ is a {\em convergence operator} if it also satisfies
\begin{enumerate}[label=({\sc c\arabic*})]
\item $\oper_\frm A(\varnothing)=\varnothing$, and
\item $\oper_\frm A(\oper_\frm A(X))\subseteq \oper_\frm A(Y)$.
\end{enumerate}
If $\oper_\frm A$ is a convergence operator, we will say that $\frm A$ is a {\em convergence space.}

If $X \subseteq \oper_\frm A(X)$ for all $X \subseteq |\frm A|$, we say that $\oper_\frm A$ is {\em inflationary.} An inflationary convergence operator is a {\em closure operator,} and if $\oper_\frm A$ is a closure operator then $\frm A$ is a {\em closure space.}
\end{definition}

This general presentation will allow us to unify semantics over metric spaces with those over arbitrary relational structures.

\begin{remark}
Modal spaces are just {\em neighborhood structures} \cite{PacuitNeighborhood} in disguise; indeed, if $(A,N)$ is a neighborhood structure (i.e., $N\subseteq A\times 2^A$), we may set $\rho_N(X)$ to be the set of $a\in A$ such that every neighborhood of $a$ intersects $X$. Conversely, if $\rgnX \subseteq \dom{\frm A}$ and $a\in \dom{\frm A}$, we let $\rgnX$ be a neighborhood of $a$ if $a\in \hat \oper_\frm A (\rgnX)$.
\end{remark}

As mentioned before, a Kripke frame is simply a structure
 $\frm A =(\dom{\frm A},\rel{\frm A})$, where $\dom{\frm A}$ is a set and 
$\rel{\frm A} \subseteq \dom{\frm A} \times \dom{\frm A}$ is an arbitrary binary relation.
We will  implicitly identify $\frm A$ with the modal space $(|\frm A|,\oper_{\frm A})$, 
where $\oper_\frm A (X) = R^{-1}_\frm A [X]$ for any $X\subseteq |\frm A|$. 
It is readily verified that the operator $\oper_\frm A$ thus defined is normal. In this sense, modal spaces generalize Kripke frames, but
 the structure $(\dom{\frm A},\oper_\frm A)$ is not always a convergence space.
 Nevertheless, convergence spaces may be obtained by restricting our attention to frames where $R_\frm A$ is transitive.

\begin{definition}
Define $\cls K$ to be the class of all Kripke frames, $\cls{K4}$ to be the class of all Kripke frames with a transitive
 relation, $\cls{KD4}$ to be the class of all Kripke frames with a transitive, serial\footnote{Recall that a relation $R\subseteq W\times W$ is {\em serial} if for all $w\in W$, $R(w) \not =w$.}
 relation, and $\cls{S4}$ to be the class of all Kripke frames with a transitive, reflexive relation.
\end{definition}

 The names for these classes are derived from their corresponding modal logics (see e.g.~\cite{Chagrov1997}).
 The following is then readily verified, and we mention it without proof:

\begin{proposition}
\begin{enumerate}

\item If $ {\frm A} \in \cls{K}$, then $(\dom{\frm A}, R^{-1}_{\frm A}[\cdot] )$ is a modal space;

\item If $ {\frm A} \in \cls{K4}$, then $(\dom{\frm A}, R^{-1}_{\frm A}[\cdot])$ is a convergence space, and

\item If $ {\frm A} \in \cls{S4}$, then $(\dom{\frm A}, R^{-1}_{\frm A}[\cdot] )$ is a closure space.
\end{enumerate}
\end{proposition}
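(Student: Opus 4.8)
The plan is to check, for each item, the defining conditions of the relevant kind of operator directly from the formula
\[
\oper_\frm A(X) = R^{-1}_{\frm A}[X] = \{\, w \in \dom{\frm A} : \exists v \in X \ (w \mathrel{R_\frm A} v) \,\},
\]
noting at each step exactly which property of $R_\frm A$ is invoked. For item (1) I would verify normality for an \emph{arbitrary} relation. Condition (N1) is immediate, since there is no $v \in \varnothing$ and hence $R^{-1}_{\frm A}[\varnothing] = \varnothing$. For (N2), I would unwind both sides: $w \in R^{-1}_{\frm A}[X \cup Y]$ holds iff there is $v \in X \cup Y$ with $w \mathrel{R_\frm A} v$, which, distributing the existential quantifier over the disjunction $v \in X$ or $v \in Y$, is equivalent to $w \in R^{-1}_{\frm A}[X]$ or $w \in R^{-1}_{\frm A}[Y]$. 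Thus $\oper_\frm A$ is normal and $(\dom{\frm A}, \oper_\frm A)$ is a modal space, with no assumption on $R_\frm A$.

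For item (2), the hypothesis that $R_\frm A$ is transitive is used only to secure (C2), namely $\oper_\frm A(\oper_\frm A(X)) \subseteq \oper_\frm A(X)$; condition (C1) coincides with (N1), already established. To prove (C2), I would take $w \in R^{-1}_{\frm A}[R^{-1}_{\frm A}[X]]$, unfold the two layers to obtain witnesses $v$ and $u$ with $w \mathrel{R_\frm A} v$, $v \mathrel{R_\frm A} u$ and $u \in X$, and then apply transitivity to conclude $w \mathrel{R_\frm A} u$, whence $w \in R^{-1}_{\frm A}[X]$. Together with item (1), this makes $\oper_\frm A$ a convergence operator, so $(\dom{\frm A}, \oper_\frm A)$ is a convergence space.

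For item (3), every frame in $\cls{S4}$ is in particular transitive, so item (2) already supplies a convergence operator; it remains only to check that reflexivity makes $\oper_\frm A$ inflationary. This is the one-line observation that if $w \in X$ then $w \mathrel{R_\frm A} w$ by reflexivity, and since $w \in X$ this gives $w \in R^{-1}_{\frm A}[X]$, so $X \subseteq \oper_\frm A(X)$. Hence $\oper_\frm A$ is an inflationary convergence operator, i.e.\ a closure operator, and $(\dom{\frm A}, \oper_\frm A)$ is a closure space. The whole argument is a routine chain of quantifier manipulations with no genuine obstacle; the only step carrying any mathematical content is the use of transitivity in (C2), where one must correctly chain the two relational steps $w \mathrel{R_\frm A} v \mathrel{R_\frm A} u$ into the single step $w \mathrel{R_\frm A} u$, everything else being bookkeeping on the definition of the inverse image.
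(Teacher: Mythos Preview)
Your verification is correct and is precisely the routine check the paper has in mind; the paper itself omits the proof, stating only that the result ``is then readily verified, and we mention it without proof.'' Your direct unpacking of $R^{-1}_{\frm A}[\cdot]$ against the axioms {\sc (n1)}, {\sc (n2)}, {\sc (c2)}, and inflationarity, invoking transitivity and reflexivity exactly where needed, is the intended argument.
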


Before defining our semantics, we need to briefly discuss least and greatest fixed points. Let $X$ be a set and $f\colon 2^X\to 2^X$ be monotone; that is, if $A\subseteq B\subseteq X$, then $f(A)\subseteq f(B)$. Say that $A_\ast$ is a {\em fixed point} of $f$ if $f(A_\ast)=A_\ast$. Then, the following is well-known:

\begin{theorem}
If $X$ is any set and $f\colon \power X\to \power X$ is monotone, then
\begin{enumerate}

\item  $f$ has a $\subseteq$-least fixed point, which we denote ${\rm LFP}(f)$, and

\item  $f$ has a $\subseteq$-greatest fixed point, which we denote ${\rm GFP}(f)$.

\end{enumerate}

\end{theorem}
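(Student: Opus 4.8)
The statement is the Knaster--Tarski fixed point theorem specialized to the complete lattice $(\power X,\subseteq)$, and the plan is to construct the extremal fixed points directly as an intersection and a union of suitable approximants, exploiting that $\power X$ is closed under arbitrary intersections and unions. I would prove the two parts by a symmetric (dual) argument, so I will spell out the least fixed point in detail and then indicate the dualization.

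For part (1), consider the collection of \emph{prefixed points}
\[
\mathcal S = \{ A \subseteq X : f(A) \subseteq A \},
\]
and set $A_\ast = \bigcap \mathcal S$. First note $\mathcal S$ is nonempty, since $f(X) \subseteq X$ gives $X \in \mathcal S$, so the intersection is well-defined. The key step is to show $A_\ast$ is itself a prefixed point: for every $A \in \mathcal S$ we have $A_\ast \subseteq A$, hence by monotonicity $f(A_\ast) \subseteq f(A) \subseteq A$; intersecting over all $A \in \mathcal S$ yields $f(A_\ast) \subseteq A_\ast$, so $A_\ast \in \mathcal S$. To upgrade this inclusion to an equality, apply $f$ once more: from $f(A_\ast) \subseteq A_\ast$ and monotonicity we get $f(f(A_\ast)) \subseteq f(A_\ast)$, which says $f(A_\ast) \in \mathcal S$ and therefore $A_\ast \subseteq f(A_\ast)$. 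The two inclusions give $f(A_\ast)=A_\ast$. Finally, $A_\ast$ is the least fixed point because any fixed point $B$ satisfies $f(B)=B\subseteq B$, so $B \in \mathcal S$ and hence $A_\ast \subseteq B$; we set ${\rm LFP}(f)=A_\ast$.

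For part (2) I would run the order-dual argument, working with the \emph{postfixed points}
\[
\mathcal T = \{ A \subseteq X : A \subseteq f(A) \},
\]
and taking $B_\ast = \bigcup \mathcal T$ (nonempty since $\varnothing \in \mathcal T$). The same bookkeeping with inclusions reversed shows that $B_\ast$ is a fixed point and that it dominates every fixed point, so ${\rm GFP}(f)=B_\ast$.

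I do not expect a genuine obstacle here, as this is a classical result; the only point requiring care is the direction of the inclusions and the clean use of monotonicity at each stage, together with making sure the dualization in part (2) is carried out consistently (prefixed points and intersection for the least fixed point, postfixed points and union for the greatest). An alternative, more constructive route would be the transfinite iteration $\varnothing, f(\varnothing), f^2(\varnothing),\dots$ for ${\rm LFP}$ and $X, f(X), f^2(X),\dots$ for ${\rm GFP}$, but the intersection/union approach above is shorter and avoids any discussion of ordinal-indexed approximants, so I would present that one.
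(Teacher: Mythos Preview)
Your proof is correct and is the standard Knaster--Tarski argument. The paper itself gives no proof of this statement: it simply records the result as well-known and refers the reader to \cite{Goldblatt2017Spatial} for further discussion, so there is nothing to compare against.
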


This result is discussed in some detail in the context of the spatial $\mu$-calculus in \citep{Goldblatt2017Spatial}.
With this, we turn our attention from frames to {\em models.}

\subsection{Models and truth definitions}

Formulas of $\lang^\mu_{\pd\forall}$ are interpreted as subsets of a convergence space, but first we need to determine the propositional variables that are true at each point.

\begin{definition}
If $\frm A$ is a modal space, a {\em valuation} on $\frm A$ is a function $\V \colon \dom{\frm A}\to \power{\PV}$ (recall that $\PV$ is the set of atoms). 
A modal space $\frm A$ equipped with a valuation $\V $ is a {\em model}. If $\frm A$ is a convergence space, then $(\frm A, \V)$ is a {\em convergence model.}
\end{definition}

If $X\subseteq \dom{\frm A}$ and $p \in \PV$, we define a new valuation $\V_{[X/p]}$ by setting
\begin{itemize}

\item for $q\not=p$, $q\in \V_{[X/p]}(w)$ if and only if $q\in V(w)$, and

\item $p\in \V_{[X/p]}(w)$ if and only if $w\in X$.

\end{itemize}

Now we are ready to define the semantics for $\lang^\mu_{\pd\forall}$.

\begin{definition}\label{DefSem}
Let $(\frm A,\V)$ be a model.
We define the truth set
\[\val\varphi_V = \{w\in \dom{\frm A} : (\frm A, w) \models \varphi\}\]
by structural induction on $\varphi$.

We first need an auxiliary definition for the cases $\mu p.\varphi$ and $\nu p.\varphi$. Suppose inductively that $\lb \varphi \rb$ has been defined for any valuation $\V'$, and define a function
$\valfun p\varphi \V \colon \power {\dom{\frm A}}\to \power {\dom{\frm A}}$ given by $\valfun p\varphi \V (X) = \val\varphi_{\V_{[X/p]}}.$
With this, we define:
\[
\begin{array}{lclllcl}
\lb p \rb_\V&=&\{w\in \dom{\frm A} : p \in \V(w)\}&
&\lb \overline p \rb_\V&=& \{w\in \dom{\frm A} : p \not \in \V(w)\}\\
\lb\varphi\wedge\psi\rb_\V &=&\lb\varphi\rb_\V\cap  \lb\psi\rb_\V&
&\lb\varphi\vee\psi\rb_\V &=&\lb\varphi\rb _\V \cup \lb\psi\rb_\V\\
\val{\pd\varphi}_\V&=& \oper_\frm A(\val\varphi_\V)&
&\val{\nd\varphi}_\V&=&\hat \oper_\frm A(\val\varphi_\V)\\
\val{\exists\varphi}_\V&=&\text{$X$ if $\val\varphi_\V\not=\varnothing$ else $\varnothing$}&
&\val{\forall\varphi}_\V&=&\text{$X$ if $\val\varphi_\V = X$ else $\varnothing$}\\
\lb \mu p.\varphi \rb_\V &=&{\rm LFP}(\valfun p\varphi \V )&
&\lb \nu p.\varphi \rb_\V &=&{\rm GFP}(\valfun p\varphi \V ).
\end{array}
\]
Given a model $(\frm A,\V)$ and formulas $\varphi,\psi\in\lang^\mu_{\pd\forall}$, we say that $\varphi$ is {\em equivalent to} $\psi$ on $\frm A$ if $\val\varphi_\V =\val\psi_\V$. If $\frm A$ is a modal space, $\varphi,\psi$ are equivalent on $\frm A$ if they are equivalent on any model of the form $(\frm A,\V)$, and if $\cls A$ is a class of structures, we say that $\varphi,\psi$ are equivalent over $\cls A$ if they are equivalent on any element of $\cls A$. When $\frm A$ or $\cls A$ is clear from context, we may write $\varphi\equiv\psi$, and if $\psi=\top$ we say $\varphi$ is {\em valid.}
\end{definition}

It is readily verified that if $\overline p$ does not appear in $\varphi$, it follows that $\valfun p\varphi \V $ is monotone, and hence the above definition is sound.
On occasion, if $\frm M$ is a model with valuation $\V$, we may write $\val\cdot_\frm M$ or even $\val\cdot$ instead of $\val\cdot_\V$.
In the case that $(\frm A,\V)$ is a Kripke model, the semantics we have just defined coincide with the standard relational semantics \citep{Chagrov1997}.
To see this, note that for any formula $\varphi$ and any $w\in \dom{\frm A}$, $w\in \val{\pd\varphi}$ if and only if $w\in \oper_{\frm A}(\val\varphi)$, which means that $w\in \rel{\frm A}^{-1}[\val\varphi]$; i.e., there is $v\in\val\varphi$ such that $w \mathrel \rel{\frm A} v$. Thus, the interpretation of $\pd$ coincides with the standard relational interpretation in modal logic.

\section{The extended spatial language}\label{SecExtLan}

The spatial $\mu$-calculus, as we have presented it, may be naturally extended to include other definable operations.
Of course such extensions will not add any expressive power to our language, but as we will see later in the text, they can yield considerable gains in terms of succinctness.
We begin by discussing the closure operator.

\subsection{The closure operator}\label{SubsecClos}

As we have mentioned, the closure operator is definable in terms of the limit point operator on metric spaces. Let us make this precise.
We will denote the closure operator by $\ps\varphi$, defined as a shorthand for $\varphi\vee \pd\varphi$. Dually, the interior operator $\nc\varphi$ will be defined as $\varphi\wedge \nd \varphi$. To do this, let $\lang ^\mu_{\ps\pd\forall}$ be the extension of $\lang^\mu_{\pd\forall}$ which includes $\ps,\nc$ as primitives. Then, for $\varphi \in \lang ^\mu_{\ps\pd\forall}$, define a formula\footnote{We use the general convention that the symbol being replaced by a translation is placed as a subindex, and the symbol used to replace it is used as a superindex. However, this convention is only orientative.} $\trcl\varphi\in \lang ^\mu_{ \pd\forall}$ by letting $\trcl\cdot$ commute with Booleans and all modalities except for $\ps,\nc$, in which case $\trcl{\ps\varphi}= \trcl\varphi \vee \pd  \trcl{\varphi }$ and $\trcl{\nc\varphi} = \trcl \varphi \wedge \nd \trcl {\varphi } $.

Semantics for $\lang ^\mu_{\ps\pd\forall}$ are defined by setting $\val\varphi_\V = \val{\trcl\varphi}_\V$, and we extend Definition \ref{defComplexity} to $\lang^\mu_{\ps\pd\forall}$ in the obvious way, by
\[|\ps\varphi| = |\nc\varphi| = |\varphi| + 1.\]
With this, we can give an easy upper bound on the translation $t^\pd_\ps$.

\begin{lemma}
If $\varphi\in \lang ^\mu_{\ps\pd\forall}$, then $\left | \trcl \varphi \right | \leq 2^{|\varphi|}$.
\end{lemma}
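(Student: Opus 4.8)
The plan is to prove the bound $\left|\trcl\varphi\right|\leq 2^{|\varphi|}$ by structural induction on $\varphi$, since the translation $\trcl\cdot$ is itself defined by recursion on the structure of the formula. The base cases are the literals: for $\varphi\in\{\top,\bot,p,\overline p\}$ we have $\trcl\varphi=\varphi$, so $\left|\trcl\varphi\right|=|\varphi|\leq 2^{|\varphi|}$, which holds trivially since $|\varphi|\geq 1$ and $2^{|\varphi|}\geq |\varphi|$ for all positive integers.

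For the inductive step I would split into three groups according to how $\trcl\cdot$ acts. First, the \emph{cheap} unary operators $\pd,\nd,\forall,\exists,\mu p.,\nu p.$, with which $\trcl\cdot$ commutes: here $\left|\trcl{\pd\varphi}\right|=\left|\trcl\varphi\right|+1\leq 2^{|\varphi|}+1\leq 2^{|\varphi|+1}=2^{|\pd\varphi|}$, where the last inequality uses $2^{|\varphi|}+1\leq 2\cdot 2^{|\varphi|}$ (equivalently $1\leq 2^{|\varphi|}$). Second, the binary Booleans $\wedge,\vee$: for instance $\left|\trcl{\varphi\wedge\psi}\right|=\left|\trcl\varphi\right|+\left|\trcl\psi\right|+1\leq 2^{|\varphi|}+2^{|\psi|}+1$, and I would bound this by $2^{|\varphi|+|\psi|+1}=2^{|\varphi\wedge\psi|}$; this follows because $2^{a}+2^{b}+1\leq 2^{a+b+1}$ whenever $a,b\geq 1$ (indeed $2^{a+b+1}=2\cdot 2^a\cdot 2^b\geq 2^a+2^b+1$ since $2^a 2^b\geq 2^a$ and $2^a 2^b\geq 2^b+1$ for $a,b\geq 1$).

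The interesting group, and the main obstacle, is the closure and interior operators $\ps,\nc$, since these are precisely where the translation \emph{duplicates} the subformula: $\trcl{\ps\varphi}=\trcl\varphi\vee\pd\trcl\varphi$ and $\trcl{\nc\varphi}=\trcl\varphi\wedge\nd\trcl\varphi$. Here I would compute $\left|\trcl{\ps\varphi}\right|=\left|\trcl\varphi\vee\pd\trcl\varphi\right|=\left|\trcl\varphi\right|+(\left|\trcl\varphi\right|+1)+1=2\left|\trcl\varphi\right|+2$, and then invoke the induction hypothesis to get $2\left|\trcl\varphi\right|+2\leq 2\cdot 2^{|\varphi|}+2=2^{|\varphi|+1}+2$. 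The target is $2^{|\ps\varphi|}=2^{|\varphi|+1}$, so I need $2^{|\varphi|+1}+2\leq 2^{|\varphi|+1}$, which is \emph{false}. This shows the naive bound is too tight to push through directly, so the real work is to strengthen the induction hypothesis to a bound with slack, for example $\left|\trcl\varphi\right|\leq 2^{|\varphi|}-2$ (or $\leq 2^{|\varphi|}-c$ for a suitable constant), and verify that this strengthened inequality is still true in the base cases and is preserved by all three groups of operators. With the strengthened hypothesis the doubling cases close: $2(2^{|\varphi|}-2)+2=2^{|\varphi|+1}-2$, which is exactly $2^{|\ps\varphi|}-2$, as required.

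I expect the remaining cases to re-verify painlessly under the strengthened hypothesis once the constant is chosen correctly, the only subtlety being to confirm the base case literals satisfy $1\leq 2^{1}-2$—which fails for the constant $2$, signalling that the clean slack constant must be chosen more carefully (e.g.\ proving the sharper bound only for $|\varphi|\geq 2$, or tracking an additive term that the doubling step can absorb). Pinning down the exact form of the strengthened invariant so that it simultaneously holds at literals and survives both the doubling $\ps,\nc$ steps and the additive Boolean steps is the crux of the argument; everything else is routine arithmetic on powers of two.
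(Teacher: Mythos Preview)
Your instinct to strengthen the induction hypothesis with additive slack is exactly right, and your arithmetic is correct throughout. The reason you cannot find a constant that makes both the base case and the $\ps$-case close is that \emph{the lemma as stated is actually false}: for $\varphi=\ps\ps p$ one has $|\varphi|=3$, while
\[
\trcl{\ps\ps p}=(p\vee\pd p)\vee\pd(p\vee\pd p),
\]
which has size $10>8=2^{3}$. More generally, iterating $\ps$ on a literal gives the recurrence $a_{n+1}=2a_n+2$ with $a_1=1$, whose solution is $a_n=3\cdot 2^{\,n-1}-2$, and this overtakes $2^{n}$ already at $n=3$.

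The paper offers no proof of this lemma; it is stated as a throwaway upper bound before moving to the sharper $\mu$-based translation, and the precise constant is immaterial for the rest of the argument. What is actually true (and what your method proves cleanly) is the bound
\[
\left|\trcl\varphi\right|\;\leq\;3\cdot 2^{|\varphi|-1}-2.
\]
With this target, the slack $-2$ is exactly absorbed by the doubling in the $\ps,\nc$ cases (as you computed: $2(3\cdot 2^{|\varphi|-1}-2)+2=3\cdot 2^{|\varphi|}-2$), the base case holds with equality ($1=3\cdot 2^{0}-2$), and the remaining unary and binary cases go through by the same elementary inequalities you already wrote down. So your proof is complete once the statement is corrected; the gap is in the paper, not in your reasoning.
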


However, this bound is not optimal; it can be improved if we instead define $\pd$ in terms of $\mu$. Define $\trpsmu \colon \lang^\mu_{\ps\pd\forall} \to \lang^\mu_{\pd\forall}$ by replacing every occurrence of $\ps\varphi$ recursively by $\mu p.(\trpsmu(\varphi) \vee \pd p)$ and every occurrence of $\nc\varphi$ recursively by $\nu p.(\trpsmu(\varphi) \wedge \nd p)$ (where $p$ is always a fresh variable), and commuting with Booleans and other operators. Then, we obtain the following:

\begin{lemma}\label{LemmMuShort}
For all $\varphi\in \lang^\mu_{\ps\pd\forall}$, we have that $\varphi \equiv \trpsmu(\varphi)$ over the class of convergence spaces, and $\left |\trpsmu(\varphi)\right| \leq 4|\varphi|$.
\end{lemma}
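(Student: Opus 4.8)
The plan is to prove both claims simultaneously by structural induction on $\varphi$, but to make the induction go through I will strengthen the equivalence to hold for \emph{every} valuation: I show $\val\varphi_\V = \val{\trpsmu(\varphi)}_\V$ for all valuations $\V$ on all convergence spaces, which is exactly what the fixpoint clauses of the semantics require. The Boolean and modal cases are then immediate, since $\trpsmu$ commutes with every connective other than $\ps,\nc$ and the inductive hypothesis applies to the immediate subformulas; in the $\ps,\nc$ clauses I will invoke freshness of the newly bound variable $p$ (it does not occur in the translated subformula) to rule out variable capture.

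The size bound $|\trpsmu(\varphi)| \leq 4|\varphi|$ is a routine induction. For a literal, $|\trpsmu(p)| = |p| = 1 \leq 4$, and for every connective that commutes with the translation the bound is preserved because that connective's unit cost is at most $4$. The only case needing care is $\ps$ (dually $\nc$): here $\trpsmu(\ps\varphi) = \mu p.(\trpsmu(\varphi) \vee \pd p)$ has size $|\trpsmu(\varphi)| + 4$, since $\pd p$ contributes $2$, the disjunction $1$, and the $\mu$-binder $1$. As $|\ps\varphi| = |\varphi| + 1$, the inductive hypothesis gives $|\trpsmu(\varphi)| + 4 \leq 4|\varphi| + 4 = 4|\ps\varphi|$, so the constant $4$ is precisely what is needed to absorb the overhead of unfolding $\ps$ into a fixpoint.

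The substantive content is the equivalence in the $\ps$-case, which I reduce to the following fact about convergence operators: for any $A \subseteq \dom{\frm A}$, the set $A \cup \oper_\frm A(A)$ is the least fixed point of the monotone map $f(X) = A \cup \oper_\frm A(X)$. Monotonicity of $f$ (and of $\oper_\frm A$ itself) follows from additivity (\textsc{n2}). To see that $A \cup \oper_\frm A(A)$ is a fixed point I compute $f(A \cup \oper_\frm A(A)) = A \cup \oper_\frm A(A) \cup \oper_\frm A(\oper_\frm A(A))$ using (\textsc{n2}) and then discard the last term via the convergence property (\textsc{c2}), $\oper_\frm A(\oper_\frm A(A)) \subseteq \oper_\frm A(A)$. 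For leastness, if $Y = A \cup \oper_\frm A(Y)$ is any fixed point then $A \subseteq Y$, so $\oper_\frm A(A) \subseteq \oper_\frm A(Y) \subseteq Y$ by monotonicity, giving $A \cup \oper_\frm A(A) \subseteq Y$. Unwinding the semantics, $\val{\mu p.(\trpsmu(\varphi) \vee \pd p)}_\V = {\rm LFP}\bigl(X \mapsto \val{\trpsmu(\varphi)}_\V \cup \oper_\frm A(X)\bigr)$, which by the fact just proved equals $\val{\trpsmu(\varphi)}_\V \cup \oper_\frm A(\val{\trpsmu(\varphi)}_\V)$; the inductive hypothesis $\val{\trpsmu(\varphi)}_\V = \val\varphi_\V$ then identifies this with $\val{\ps\varphi}_\V = \val\varphi_\V \cup \oper_\frm A(\val\varphi_\V)$.

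The dual case $\nc$ follows by the same argument under complementation: the greatest fixed point of $X \mapsto A \cap \hat\oper_\frm A(X)$ is dual to the least fixed point of $X \mapsto \overline A \cup \oper_\frm A(X)$, hence equals $A \cap \hat\oper_\frm A(A) = \val{\nc\varphi}_\V$. I expect the main obstacle to be bookkeeping rather than mathematics: stating the induction hypothesis for all valuations so that the fixpoint clauses are handled correctly, and invoking freshness of $p$ to avoid capture. The algebraic heart of the proof is the short least-fixed-point computation above, and it is there that properties (\textsc{n2}) and (\textsc{c2}) of convergence spaces are used in an essential way.
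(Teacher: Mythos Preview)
Your proof is correct and complete; the paper itself omits the proof entirely as ``straightforward,'' so there is nothing to compare against. Your structural induction with the hypothesis strengthened to all valuations is exactly what is needed for the $\mu/\nu$ clauses, and your least-fixed-point computation for the $\ps$-case correctly isolates where the convergence axiom $\oper_\frm A(\oper_\frm A(A))\subseteq\oper_\frm A(A)$ is used.
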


We omit the proof, which is straightforward. Whenever $\pd$ is interpreted as a convergence operator, $\ps$ is then interpreted as a closure operator. To be precise, given a modal space $\frm A$, define a new operator $\oper^+_\frm A$ on $|\frm A|$ by $\oper^+_\frm A(X)=X\cup \oper_\frm A(X)$. Then, $\frm A^+ = (|\frm A|,\oper^+_\frm A)$ is an inflationary modal space, and if $\frm A$ is a convergence space, it follows that $\frm A^+$ is a closure space.
If $(\frm A,\V)$ is any model and $\varphi$ is any formula, it is straightforward to check that $\val {\ps\varphi}_\V= \oper_\frm A^+(\val\varphi_\V)$. In the setting of Kripke models, we see that $w\in \val{\ps\varphi}_\V$ if and only if $w\in \val\varphi_\V$, or there is $v\in W$ such that $w \rel{\frm A} v$ and $v\in\val\varphi_\V$; as was the case for $\pd\varphi$, this coincides with the standard relational semantics, but with respect to the reflexive closure of $\rel{\frm A}$.

\subsection{Tangled limits}

There is one final extension to our language of interest to us; namely, the tangled limit operator, known to be expressively equivalent to the $\mu$-calculus over the class of convergence spaces, but with arguably simpler syntax and semantics.

\begin{definition}
Let $\lang^{\mu\pt}_{\ps\pd\forall}$ be the extension of $\lang^{\mu}_{\ps\pd\forall}$ such that, if $\varphi_1,\hdots,\varphi_n$ are formulas, then so are $\pt\{\varphi_1,\hdots,\varphi_n\}$ and $\nt\{\varphi_1,\hdots,\varphi_n\}$. We define $\trtan\varphi$ to commute with all operators except $\pt,\nt$, in which case
\begin{align*}
\trtan{\pt\{\varphi_1,\hdots,\varphi_n\}}&=\mu p.\bigwedge_{i\leq n}\pd (p\wedge \trtan{\varphi_i})\\
\trtan{\nt\{\varphi_1,\hdots,\varphi_n\}}&=\nu p.\bigvee_{i\leq n}\nd (p\vee \trtan{\varphi_i}).
\end{align*}
We extend $|\cdot|$ and $\val\cdot$ to $\lang^{\mu\pt}_{\ps\pd\forall}$ by defining
\[|\pt \{\varphi_1,\hdots,\varphi_n\}| = |\nt \{\varphi_1,\hdots,\varphi_n\}| = |\varphi_1| + \hdots + |\varphi_n| +1\]
and $\val\varphi_\V=\val{\trtan\varphi}_\V$.
\end{definition}

We call $\pt$ the {\em tangled limit operator;} this was introduced by Dawar and Otto \cite{DawarOtto} in the context of $\cls{K4}$ frames, then extended by the first author \cite{FernandezIJCAI} to closure spaces and by Goldblatt and Hodkinson \cite{Goldblatt2017Spatial} to other convergence spaces.
For clarity, let us give a direct definition of $\pt$ without translating into the $\mu$-calculus.

\begin{lemma}
If $(\frm A,\V)$ is a convergence model, $\varphi_1,\hdots,\varphi_n$ any sequence of formulas, and $x\in \dom{\frm A}$, then $x\in \val{\pt\{\varphi_1,\hdots,\varphi_n\}}_\V$ if and only if there is $S\subseteq \dom{\frm A}$ such that $x\in S$ and, for all $i\leq n$, $S\subseteq \oper_\frm A(S\cap \val {\varphi_i}_\V)$.
\end{lemma}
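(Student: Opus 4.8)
The plan is to unwind the defining translation of $\pt$ and then read off the result from the Knaster--Tarski description of extremal fixed points recorded earlier in the text. Abbreviate $\psi_i := \trtan{\varphi_i}$ and recall that $\val{\varphi_i}_\V = \val{\psi_i}_\V$, so I may work throughout with $\val{\varphi_i}_\V$. Since the bound variable $p$ is chosen fresh, it does not occur in any $\psi_i$; hence for every $X\subseteq\dom{\frm A}$ we have $\val{p\wedge\psi_i}_{\V_{[X/p]}} = X\cap\val{\varphi_i}_\V$, and applying the semantics of $\pd$ gives $\val{\pd(p\wedge\psi_i)}_{\V_{[X/p]}} = \oper_\frm A(X\cap\val{\varphi_i}_\V)$. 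Taking the conjunction over $i\leq n$, the map $X\mapsto\val{\bigwedge_{i\leq n}\pd(p\wedge\psi_i)}_{\V_{[X/p]}}$ whose fixed point interprets the tangle is
\[
F(X)\;=\;\bigcap_{i\leq n}\oper_\frm A\bigl(X\cap\val{\varphi_i}_\V\bigr).
\]
A first routine check is that $F$ is monotone: $p$ occurs only positively, so by the remark following Definition~\ref{DefSem} the associated functional is monotone (concretely, this follows from monotonicity of $\cap$ and of $\oper_\frm A$, the latter being a consequence of normality).

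Next I would rewrite the right-hand side of the claimed equivalence purely in terms of $F$. For a fixed $S\subseteq\dom{\frm A}$, the conjunction of conditions ``$S\subseteq\oper_\frm A(S\cap\val{\varphi_i}_\V)$ for every $i\leq n$'' holds exactly when $S$ is contained in the intersection of those sets, i.e.\ exactly when $S\subseteq F(S)$. Consequently, the assertion ``there is $S$ with $x\in S$ and $S\subseteq\oper_\frm A(S\cap\val{\varphi_i}_\V)$ for all $i$'' is equivalent to $x\in\bigcup\{S : S\subseteq F(S)\}$, the union of all post-fixed points of $F$.

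It then remains to match this union with the interpretation of the tangle. Here I invoke the explicit form of Knaster--Tarski: for a monotone operator on $\power{\dom{\frm A}}$, the greatest fixed point equals the union of its post-fixed points, ${\rm GFP}(F)=\bigcup\{S:S\subseteq F(S)\}$. Since the tangle is interpreted as the greatest fixed point of $F$, combining the two previous paragraphs yields $x\in\val{\pt\{\varphi_1,\ldots,\varphi_n\}}_\V$ iff $x$ lies in some post-fixed point $S$, which is precisely the asserted characterisation. I expect no deep obstacle; the one point genuinely requiring care is recognising that the witnessing set $S$ is a \emph{post}-fixed point, so that it is the greatest-fixed-point half of Knaster--Tarski that applies. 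A least-fixed-point reading would instead produce the intersection of pre-fixed points and fail outright --- already for $n=1$, $\varphi_1=\top$ and a single reflexive point, where the union of post-fixed points is nonempty while the least fixed point of $X\mapsto\oper_\frm A(X)$ is empty. The remaining verifications, namely the freshness argument producing $F$ and the elementary rewriting of the inequalities, are routine.
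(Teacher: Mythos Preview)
The paper states this lemma without proof, so there is no argument to compare yours against. Your approach---unfold the translation, identify the monotone operator $F(X)=\bigcap_{i}\oper_\frm A(X\cap\val{\varphi_i}_\V)$, and invoke the Knaster--Tarski description of the greatest fixed point as the union of post-fixed points---is the standard one and is correct.

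In fact your closing paragraph pinpoints a typo in the paper: the translation $\trtanb$ is written with $\mu$, but as you observe (and illustrate with the single reflexive point), the lemma is false under the least-fixed-point reading. The intended binder is $\nu$, i.e.\ $\trtan{\pt\{\varphi_1,\ldots,\varphi_n\}}=\nu p.\bigwedge_{i\leq n}\pd(p\wedge\trtan{\varphi_i})$; this is consistent both with how the paper itself subsequently uses the semantics of $\pt$ (the proof of Proposition~\ref{PropTangTrivial} explicitly takes $S$ with $S\subseteq\oper_\frm A(S\cap\val{\varphi_i}_\V)$, and Lemma~\ref{LemmTangleHat} reasons with infinite $R$-paths) and with the source literature \cite{DawarOtto,Goldblatt2017Spatial}. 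With that correction your proof goes through verbatim.
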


Although Dawar and Otto proved in \cite{DawarOtto} that the tangled limit operator is equally expressive as the $\mu$-calculus, they use model-theoretic techniques that do not provide an explicit translation.
As such, we do not provide an upper bound in the following result.

\begin{theorem}\label{TheoDawarOtto}
There exists a function $\trmutan\colon \lang^\mu_{\pd} \to \lang^{\pt}_{\pd}$ such that, for all $\varphi\in \lang^\mu_{\pd} $, $\varphi \equiv \trmutan (\varphi)$ on the class of $\cls{K4}$ models.
\end{theorem}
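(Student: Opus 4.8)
The plan is to obtain the function $\trmutan$ from the purely semantic inclusion ``$\lang^\mu_\pd \subseteq \lang^\pt_\pd$ over $\cls{K4}$'', i.e.\ the fact that every $\mu$-formula is, on transitive models, equivalent to some tangle formula; once this is established, $\trmutan$ is read off by selecting such an equivalent for each input. The reverse inclusion is not needed here, but it is worth noting that it is already witnessed by the translation $\trtan{\cdot}$ introduced with the tangled limit operator (which sets $\val{\pt\{\cdots\}}_\V = \val{\trtan{\pt\{\cdots\}}}_\V$), so the two languages are in fact equi-expressive over $\cls{K4}$.

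For the required inclusion I would invoke the model-theoretic analysis of Dawar and Otto \cite{DawarOtto}. The relevant facts are that every formula of $\lang^\mu_\pd$ is a fragment of monadic second-order logic and is invariant under bisimulation, and that Dawar and Otto characterise, over the class of transitive frames, the bisimulation-invariant fragment of MSO as exactly basic modal logic enriched with the tangled limit operator --- this being precisely the class for which they introduced $\pt$. Since each $\varphi \in \lang^\mu_\pd$ defines a bisimulation-invariant, MSO-definable class of pointed $\cls{K4}$ models, their characterisation yields, for every such $\varphi$, some $\psi \in \lang^\pt_\pd$ with $\varphi \equiv \psi$ over $\cls{K4}$.

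With this inclusion in hand, $\trmutan$ is defined by a choice function: the set of $\lang^\pt_\pd$-formulas is countable, so I would fix a well-ordering of it and let $\trmutan(\varphi)$ be the least $\psi$ with $\varphi \equiv \psi$ over $\cls{K4}$, which exists by the previous paragraph. This makes $\trmutan$ a well-defined total map with the desired property.

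The main obstacle lies entirely in the Dawar--Otto characterisation itself, whose proof proceeds by a composition / Ehrenfeucht--Fra\"{\i}ss\'{e}-style argument for MSO over the cluster decomposition of transitive models rather than by an effective syntactic procedure. Consequently the equivalence is purely semantic and gives no control whatsoever over $|\trmutan(\varphi)|$ in terms of $|\varphi|$, which is exactly why the statement carries no accompanying upper bound. A self-contained route would have to reconstruct this decomposition directly --- splitting the truth of $\varphi$ into bounded-depth modal information, handled by ordinary $\pd,\nd$, and the behaviour within non-trivial clusters, which is what $\pt,\nt$ are engineered to capture --- but this amounts to reproving Dawar and Otto's theorem and I would not attempt it from scratch.
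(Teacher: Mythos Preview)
Your proposal is correct and matches the paper's own treatment: the paper does not prove this theorem but simply attributes it to Dawar and Otto \cite{DawarOtto}, noting that their model-theoretic argument yields the equivalence without an explicit translation or size bound. Your extra step of spelling out how to extract $\trmutan$ from the semantic inclusion via a choice function, and your remark that this is why no upper bound accompanies the statement, are exactly in the spirit of the paper's brief commentary preceding the theorem.
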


%\begin{figure}
%\begin{center}
%\begin{tikzpicture}

%\draw (-5,1.5) node {$\rgnA  \rc{ntpp}  \rgnB$};

%\draw (-5,-1.5) node {\small $\forall (\ps \rgnA\to \nc \rgnB) $};

%\draw[very thick] (-5,0) circle (1);

%\draw[very thick] (.4-5,.2) circle (.4);

%\draw (0,1.5) node {$\rgnA \rc{po} \mathrel  \rgnB$};

%\draw (0,-1.5) node {\small $\exists(\nc \rgnA\wedge\nc \rgnB)\wedge \exists(\rgnA\wedge \overline \rgnB) \wedge \exists(\overline {\rgnA}\wedge \rgnB) $};

%\draw[very thick] (-.5,0) circle (1);

%\draw[very thick] (.5,0) circle (1);

%\draw (5,1.5) node {$\rgnA \rc{ec} \rgnB$};

%\draw (5,-1.5) node {\small $\exists(\ps \rgnA\wedge \ps \rgnB)\wedge \forall(\ps\overline \rgnA\vee \ps\overline {\rgnB}) $};

%\draw[very thick] (-1+5,0) circle (1);

%\draw[very thick] (1+5,0) circle (1);

%\end{tikzpicture}
%\end{center}

%\caption{Some $\lgc{RCC8}$ relations represented in $\lanu$.}\label{FigRCC8}
%\end{figure}

Spatial interpretations of the tangled closure and limit operators have gathered attention in recent years (see e.g.~\cite{FernandezTangledDynamics,FernandezNonFinite,GoldblattAiML,GoldblattStudiaLogica}). Later we will show that the tangled limit operator, despite being equally expressive, is exponentially less succinct than the $\mu$-calculus.

\section{Truth-preserving transformations}\label{SecTruthPres}

Let us review some notions from the model theory of modal logics, and lift them to the setting of convergence spaces. We begin by discussing bisimulations, the standard notion of equivalence between Kripke models; or, more precisely between {\em pointed models,} which are pairs $(\frm A, a)$ such that $\frm A$ is a model respectively and $a\in \dom{\frm A}$.  

\subsection{Bisimulations}

The well-known notion of {\em bisimulation} between Kripke models readily generalizes to the setting of convergence spaces, using what we call {\em confluent} relations. Below, we say that two pointed models $({\frm A}, a)$ and $({\frm B}, b)$ {\em differ} on the truth of a propositional variable $p$
when we have $({\frm A}, a) \models p$ whereas $({\frm B}, b)\models \overline p$, or vice-versa. If $({\frm A}, a)$ and $({\frm B}, b)$ do not differ on $p$, then they {\em agree} on $p$.

\begin{definition}\label{DerSimNbh}
Let $\frm A=(\dom{\frm A},\oper_\frm A)$ and $\frm B=(\dom{\frm B},\oper_\frm B)$ be modal spaces and $\chi\subseteq \dom{\frm A}\times \dom{\frm B}$. We say that $\chi$ is {\em forward confluent} if, for all $X\subseteq \dom{\frm A}$,
\[\chi[\oper_\frm A(X)] \subseteq \oper_\frm B (\chi[X]).\]
Say that $\chi$ is {\em backward confluent} if $\chi^{-1}$ is forward confluent, and {\em confluent} if it is forward and backward confluent.

Let $Q \subseteq \PV$ be a set of atoms. If $(\frm A,V_\frm A)$ and $(\frm B,V_\frm B)$ are models, a {\em bisimulation relative to $Q$} is a confluent relation $\chi \subseteq \dom{\frm A}\times \dom{\frm B}$ such that if $a\mathrel \chi b$, then $(\frm A,a)$ agrees with $(\frm B,b)$ on all atoms of $Q$. If $Q$ is not specified, we assume that $Q = \PV$.

A bisimulation between pointed models $(\frm A, a)$ and $(\frm B, b)$ is a bisimulation $\chi\subseteq \dom{\frm A}\times \dom{\frm B}$ such that $a\mathrel \chi b$.
We say that $(\frm A, a)$ and $(\frm B, b)$ are {\em locally bisimilar} if there exists a bisimulation between them, in which case we write $(\frm A, a) \bis (\frm B, b)$. They are {\em globally bisimilar} if there exists a total, surjective bisimulation between them.
\end{definition}

The following is readily verified by a structural induction on formulas (see e.g.~\cite{Goldblatt2017Spatial}). Recall that a variable $p$ is {\em free} if it appears outside of the scope of $\mu p$ or $\nu p$.

\begin{lemma}\label{LemmTruthBisim}
Let $(\frm A,a)$ and $(\frm B,b)$ be pointed models. Then:
\begin{enumerate}

\item if $(\frm A,a)$ and $(\frm B,b)$ are locally bisimilar relative to $Q\subseteq \PV$, then for every $\varphi\in \lang^{\mu\pt}_{\ps\pd}$ all of whose free atoms appear in $Q$, $(\frm A,a)\models\varphi$ if and only if $(\frm B,b)\models\varphi$, and

\item if moreover $(\frm A,a)$ and $(\frm B,b)$ are globally bisimilar relative to $Q\subseteq \PV$, then for every $\varphi\in \lang^{\mu\pt}_{\ps\pd\forall}$ all of whose free atoms appear in $Q$, $(\frm A,a)\models\varphi$ if and only if $(\frm B,b)\models\varphi$.

\end{enumerate}
\end{lemma}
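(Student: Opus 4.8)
The plan is to prove Lemma~\ref{LemmTruthBisim} by structural induction on $\varphi$, strengthening the induction hypothesis to range over \emph{all} valuations that agree on the relevant atoms, so that the fixpoint cases go through. Concretely, for part~(1) I would fix a bisimulation $\chi$ relative to $Q$ between $(\frm A, V_\frm A)$ and $(\frm B, V_\frm B)$ and prove, simultaneously for every subformula $\varphi$ whose free atoms lie in $Q$, the containment-style statement that $\chi$ and $\chi^{-1}$ both respect truth: if $a \mathrel\chi b$ then $a \in \val\varphi_{V_\frm A}$ iff $b \in \val\varphi_{V_\frm B}$. Since $\chi$ is symmetric in role (it is confluent, so $\chi^{-1}$ is also confluent relative to $Q$), it suffices to prove one direction, say $\chi[\val\varphi_{V_\frm A}] \subseteq \val\varphi_{V_\frm B}$, and then apply the same argument to $\chi^{-1}$ for the converse.

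The base cases are immediate: literals $p, \overline p$ are handled by the assumption that $\chi$-related points agree on atoms of $Q$, and $\top,\bot$ are trivial. The Boolean cases $\wedge,\vee$ follow because $\chi[\cdot]$ distributes over unions and the induction hypothesis applies to the (smaller) immediate subformulas. The modal case is where the definition of confluence does its work: for $\pd\varphi$ I would compute
\[
\chi[\val{\pd\varphi}_{V_\frm A}] = \chi[\oper_\frm A(\val\varphi_{V_\frm A})] \subseteq \oper_\frm B(\chi[\val\varphi_{V_\frm A}]) \subseteq \oper_\frm B(\val\varphi_{V_\frm B}) = \val{\pd\varphi}_{V_\frm B},
\]
using forward confluence for the first inclusion and the induction hypothesis (together with monotonicity of $\oper_\frm B$ over $\subseteq$) for the second. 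The dual $\nd\varphi$ is obtained by applying the $\pd$ argument to $\chi^{-1}$ and unfolding $\hat\oper_\frm B(X) = \overline{\oper_\frm B(\overline X)}$; this is precisely why backward confluence is built into the definition, and is the step I expect to require the most care in bookkeeping.

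The genuine obstacle is the fixpoint cases $\mu p.\varphi$ and $\nu p.\varphi$, since here the induction hypothesis on $\varphi$ alone is not enough: the inner formula $\varphi$ now has $p$ free, so I need the hypothesis to hold for the \emph{modified} valuations $V_{\frm A,[X/p]}$ and $V_{\frm B,[Y/p]}$. The standard fix is to prove the stronger statement that whenever $X \subseteq \dom{\frm A}$ and $Y \subseteq \dom{\frm B}$ are $\chi$-matched in the sense that $\chi[X] \subseteq Y$ and $\chi^{-1}[Y] \subseteq X$, the operators $\valfun p\varphi{V_\frm A}$ and $\valfun p\varphi{V_\frm B}$ preserve this matching relation; the induction hypothesis applies to $\varphi$ because $\chi$ remains a bisimulation relative to $Q \cup \{p\}$ between the $p$-updated models. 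One then argues that ${\rm LFP}$ (resp.\ ${\rm GFP}$) of $\chi$-compatible monotone operators are again $\chi$-matched, either by transfinite induction along the approximants $\varnothing, \valfun p\varphi{} (\varnothing), \dots$ (using that $\chi[\cdot]$ commutes with the relevant unions) or by a direct Knaster--Tarski argument showing that the $\chi$-image of a prefixpoint is a prefixpoint. Part~(2) for $\forall,\exists$ is then straightforward: global bisimilarity means $\chi$ is total and surjective, so $\val\varphi_{V_\frm A} = \varnothing$ iff $\chi[\val\varphi_{V_\frm A}] = \varnothing$ iff $\val\varphi_{V_\frm B} = \varnothing$, which is exactly what the semantics of $\exists$ and $\forall$ require.
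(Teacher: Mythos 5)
Your proof is correct and takes essentially the approach the paper intends: the paper offers no proof of this lemma at all, remarking only that it ``is readily verified by a structural induction on formulas'' with a citation to Goldblatt--Hodkinson, and your strengthened induction (two-sided $\chi$-matching of truth sets, forward confluence for $\pd$, backward confluence for $\nd$, matched valuations for the fixpoint cases, and totality/surjectivity for $\exists$/$\forall$) is the standard way to carry out exactly that induction. One small caution: of the two options you offer for the fixpoint step, rely on the transfinite induction along approximants, since the parenthetical alternative that ``the $\chi$-image of a prefixpoint is a prefixpoint'' is not literally sound when $\chi$ is a relation rather than a function (a point may have several $\chi$-images, and the inductive preservation property needs matching in both directions), whereas the stage-by-stage argument, with matching preserved under unions and intersections at limit stages, goes through without difficulty.
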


As a special case, we can view an {\em isomorphism} as a bisimulation that is also a bijection. Isomorphism between structures will be denoted by $\cong$.
Similarly, if $\dom{\frm A}=\dom{\frm B}$ and $Q$ is a set of atoms, we say that $\frm A$, $\frm B$ {\em agree everywhere on all atoms in $Q$} if for every $w\in \dom{\frm A}$, $(\frm A,w)$ agrees with $(\frm B,w)$ on all atoms in $Q$. It is easy to see that if this is the case, then $\frm A$ and $\frm B$ are globally bisimilar relative to $Q$.

It is instructive to compare our notion of confluence to more familiar notions in the literature. We begin with the familiar notion of bisimulations between relational models:

\begin{lemma}
If $\frm A=(\dom{\frm A},R_\frm A)$ and $\frm B=(\dom{\frm B},R_\frm B)$ are Kripke frames, then $\chi \subseteq \dom{\frm A} \times \dom{\frm B}$ is forward-confluent if and only if, whenever $a \mathrel R_\frm A a'$ and $a\mathrel \chi b$, there is $b'\in \dom{\frm B}$ such that $a' \mathrel \chi b'$ and $b \mathrel R_\frm B b'$.
\end{lemma}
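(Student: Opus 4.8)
The plan is to prove the biconditional by unwinding the definition of forward confluence into its pointwise content and matching it against the stated zig-zag condition. Recall that under the identification of a Kripke frame with its modal space we have $\oper_\frm A(X) = R_\frm A^{-1}[X] = \{a : \exists a' \in X,\ a \mathrel R_\frm A a'\}$, and that $\chi[Y]$ denotes the $\chi$-image of $Y$. Forward confluence asserts that $\chi[\oper_\frm A(X)] \subseteq \oper_\frm B(\chi[X])$ for every $X \subseteq \dom{\frm A}$, where the left-hand side consists of those $b$ admitting $a, a'$ with $a' \in X$, $a \mathrel R_\frm A a'$ and $a \mathrel \chi b$, and the right-hand side consists of those $b$ admitting $b'$ with $b \mathrel R_\frm B b'$ and $b' \in \chi[X]$.

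First I would treat the direction from the zig-zag condition to forward confluence by a direct element chase. Fix $X$ and pick $b \in \chi[\oper_\frm A(X)]$; then there is $a \in \oper_\frm A(X)$ with $a \mathrel \chi b$, and unpacking membership in $\oper_\frm A(X)$ yields $a' \in X$ with $a \mathrel R_\frm A a'$. Applying the zig-zag condition to $a \mathrel R_\frm A a'$ and $a \mathrel \chi b$ produces $b'$ with $a' \mathrel \chi b'$ and $b \mathrel R_\frm B b'$; since $a' \in X$, we get $b' \in \chi[X]$, hence $b \in R_\frm B^{-1}[\chi[X]] = \oper_\frm B(\chi[X])$. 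As $b$ was arbitrary this gives the required inclusion.

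The converse direction carries the only genuine idea, namely instantiating forward confluence at a singleton. Given $a \mathrel R_\frm A a'$ and $a \mathrel \chi b$, I would set $X = \{a'\}$. Then $a \in \oper_\frm A(\{a'\})$ because $a \mathrel R_\frm A a'$, so $b \in \chi[\oper_\frm A(\{a'\})]$, and forward confluence gives $b \in \oper_\frm B(\chi[\{a'\}]) = R_\frm B^{-1}[\chi[\{a'\}]]$. Hence there is $b' \in \chi[\{a'\}]$ with $b \mathrel R_\frm B b'$, and $b' \in \chi[\{a'\}]$ means precisely $a' \mathrel \chi b'$, which is exactly the witness required by the zig-zag condition.

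I do not anticipate a real obstacle: both directions are bookkeeping once the set-theoretic abbreviations are expanded, and the only step requiring any thought is recognizing that testing forward confluence on singletons recovers the pointwise back-and-forth clause. If one prefers, the first direction can likewise be phrased as the observation that it suffices to verify the inclusion on singletons, since both $\chi[\cdot]$ and $\oper_\frm A$ commute with unions; but the direct chase above avoids even that remark.
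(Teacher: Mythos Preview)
Your proof is correct. The paper states this lemma without proof, treating it as a routine observation; your argument---unfolding the definitions and instantiating forward confluence at a singleton for the converse---is exactly the natural way to fill in the details.
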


On metric spaces, confluent functions are related to continuous and open maps.

\begin{lemma}
Let $\frm A=(\dom{\frm A}, \delta_\frm A)$ and $\frm B=(\dom{\frm B},\delta_\frm B)$ be metric spaces with respective closure operators $c_\frm A$, $c_\frm B$ and limit operators $d_\frm A,d_\frm B$, and let $f \colon \dom{\frm A} \to \dom{\frm B}$. Then,
\begin{enumerate}

\item  $f$ is forward-confluent with respect to $c_\frm A$, $c_\frm B$ if and only if $f$ is {\em continuous;} that is, for every $a\in \dom{\frm A}$ and every $\varepsilon >0$, there exists $\eta > 0$ such that if $\delta_\frm A(a, a' ) <\eta$, then $\delta_\frm B(f(a), f(a') ) <\varepsilon$.

\item  $f$ is forward-confluent with respect to $d_\frm A$, $d_\frm B$ if and only if $f$ is continuous and {\em pointwise discrete;} that is, if $a\in\dom{\frm A}$, then there is $\varepsilon>0$ such that if $\delta_\frm A(a,a')<\varepsilon$ and $f(a)=f(a')$, then $a=a'$.

\item $f$ is backward-confluent with respect to $c_\frm A$ and $c_\frm B$ or, equivalently, with respect to $d_\frm A$ and $d_\frm B$, if and only if $f$ is {\em open;} that is, for every $a\in \dom{\frm A}$ and every $\varepsilon >0$, there exists $\eta > 0$ such that if $\delta_\frm B(f(a), b' ) <\eta$, then there is $a'\in \dom{\frm A}$ such that $\delta_\frm A(a, a' ) <\varepsilon$ and $f(a') = b'$.

\end{enumerate}
\end{lemma}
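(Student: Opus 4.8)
The plan is to specialize the definitions of forward and backward confluence to the case where the relation $\chi$ is the graph of a function $f$. Since then $\chi[X]=f[X]$ and $\chi^{-1}[Y]=f^{-1}[Y]$, forward confluence with respect to operators $\rho_{\frm A},\rho_{\frm B}$ reads $f[\rho_{\frm A}(X)]\subseteq \rho_{\frm B}(f[X])$ for all $X\subseteq\dom{\frm A}$, while backward confluence reads $f^{-1}[\rho_{\frm B}(Y)]\subseteq \rho_{\frm A}(f^{-1}[Y])$ for all $Y\subseteq\dom{\frm B}$. Each item is thus a biconditional that I would prove by an $\varepsilon$--$\eta$ argument in the ``map property $\Rightarrow$ inclusion'' direction and by contraposition in the converse, building a countable witness set from balls of radius $1/n$. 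I write $B_{\frm A}(a,\varepsilon)=\{a'\in\dom{\frm A}:\delta_{\frm A}(a,a')<\varepsilon\}$, and similarly for $\frm B$.

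Item (1) is the classical characterization of continuity by $f[c_{\frm A}(X)]\subseteq c_{\frm B}(f[X])$: if $f$ is continuous and $a\in c_{\frm A}(X)$, then for given $\varepsilon$ I take $\eta$ from continuity at $a$ and an $x\in X$ within $\eta$ of $a$, whose image witnesses $f(a)\in c_{\frm B}(f[X])$; conversely, discontinuity at $a$ yields $\varepsilon>0$ and points $a_n$ with $\delta_{\frm A}(a,a_n)<1/n$ but $\delta_{\frm B}(f(a),f(a_n))\geq\varepsilon$, so $X=\{a_n\}$ gives $a\in c_{\frm A}(X)$ yet $f(a)\notin c_{\frm B}(f[X])$. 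For item (2) the continuity half is then free, via $c(X)=X\cup d(X)$: forward confluence for $d$ gives $f[c_{\frm A}(X)]=f[X]\cup f[d_{\frm A}(X)]\subseteq c_{\frm B}(f[X])$, so $f$ is continuous. Pointwise discreteness is obtained by contraposition: a failure at $a$ produces $a_n\neq a$ with $a_n\to a$ and $f(a_n)=f(a)$, whence $X=\{a_n\}$ satisfies $a\in d_{\frm A}(X)$ but $f[X]=\{f(a)\}$ has empty limit set, contradicting $f(a)\in f[d_{\frm A}(X)]\subseteq d_{\frm B}(f[X])$. For the converse, given $a\in d_{\frm A}(X)$ and $\varepsilon>0$ I use continuity and pointwise discreteness to choose a radius $\eta$ so small that every $a'\neq a$ within $\eta$ of $a$ maps within $\varepsilon$ of $f(a)$ \emph{and} to a point distinct from $f(a)$; a limit witness $x\in X$ inside this radius then exhibits $f(a)\in d_{\frm B}(f[X])$.

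For item (3) I would reformulate the stated openness condition pointwise as $B_{\frm B}(f(a),\eta)\subseteq f[B_{\frm A}(a,\varepsilon)]$, and then show that openness is equivalent both to backward confluence for $c$ and to backward confluence for $d$; the two confluence conditions are then equivalent to each other, which is the content of the ``equivalently'' in the statement. In each case openness $\Rightarrow$ inclusion is direct: from $f(a)$ lying in the relevant operator applied to $Y$ I extract a witness $y\in Y$ inside $B_{\frm B}(f(a),\eta)$ and pull it back through openness to an $a'\in B_{\frm A}(a,\varepsilon)$ with $f(a')=y$, which places $a$ in $c_{\frm A}(f^{-1}[Y])$ (respectively $d_{\frm A}(f^{-1}[Y])$). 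The converse is by contradiction: if openness fails at $a$ for some $\varepsilon$, I pick $b_n\in B_{\frm B}(f(a),1/n)\setminus f[B_{\frm A}(a,\varepsilon)]$, put $Y=\{b_n\}$, note $f(a)\in c_{\frm B}(Y)$, and obtain from backward confluence an $a'\in f^{-1}[Y]\cap B_{\frm A}(a,\varepsilon)$, so that $f(a')\in Y$ lies in $f[B_{\frm A}(a,\varepsilon)]$, a contradiction.

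I expect the main obstacle to be the bookkeeping of the distinctness ($\neq$) conditions that separate the limit operator from the closure operator. In the $d$-arguments I must both ensure the constructed witness set has the intended limit set (for pointwise discreteness, that $f[X]$ collapses to a singleton so that $d_{\frm B}(f[X])=\varnothing$) and check that witnesses produced in the ``easy'' directions can be taken distinct from the base point. For item (3) the one extra point is that in the $d$-case the chosen $b_n$ are automatically distinct from $f(a)$: since $a\in B_{\frm A}(a,\varepsilon)$ forces $f(a)\in f[B_{\frm A}(a,\varepsilon)]$, each $b_n\notin f[B_{\frm A}(a,\varepsilon)]$ satisfies $b_n\neq f(a)$, which is exactly what the limit operator requires.
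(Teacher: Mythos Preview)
Your argument is correct in all three items; the $\varepsilon$--$\eta$ verifications and the contrapositive constructions with $X=\{a_n\}$ or $Y=\{b_n\}$ are exactly what is needed, and your handling of the distinctness bookkeeping in the $d$-cases (particularly the observation that $b_n\notin f[B_{\frm A}(a,\varepsilon)]$ forces $b_n\neq f(a)$ since $a$ itself lies in that ball) is clean and complete.

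As for comparison: the paper states this lemma without proof, treating it as a standard topological characterization of continuous and open maps recast in the language of confluence. So there is no approach in the paper to compare yours against; you have supplied a full proof where the authors chose to omit one.
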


Next, we will review some well-known constructions that yield locally bisimilar models.

\subsection{Generated submodels}

Given Kripke models $\frm A,\frm B$, we say that $\frm A$ is a {\em submodel} of $\frm B$ if $\dom{\frm A} \subseteq \dom{\frm B}$, $\rel{\frm A} = \rel{\frm B}\cap ( \dom{\frm A} \times \dom{\frm A})$, and $V_\frm A(w) = V_\frm B(w)$ for all $w\in \dom{\frm A}$. It is typically not the case that $\frm A$ satisfies the same formulas as $\frm B$, unless we assume that $\dom{\frm A}$ has some additional properties.

\begin{definition}
If $\frm B$ is any Kripke frame or model, a set $U\subseteq \dom{\frm B}$ is {\em persistent} if, whenever $w\in U$ and $w \mathrel \rel{\frm B} v$, it follows that $v\in U$. If $\frm A$ is a subframe (respectively, submodel) of $\frm B$, we say that $\frm A$ is persistent if $\dom{\frm A}$ is.
\end{definition}

In this case, the inclusion $\iota \colon \dom{\frm A} \to \dom{\frm B}$ is a bisimulation, and thus we obtain:

\begin{lemma}
If $\frm A$ is a persistent submodel of $\frm B$ and $w\in \dom{\frm A}$, then $(\frm A,w)$ is locally bisimilar to $(\frm B,w)$.
\end{lemma}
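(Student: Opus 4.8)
The plan is to verify that the graph of the inclusion, namely the relation $\chi=\{(w,w):w\in\dom{\frm A}\}\subseteq\dom{\frm A}\times\dom{\frm B}$, is a bisimulation between the Kripke models $\frm A$ and $\frm B$. Since $w\mathrel\chi w$ for the given $w\in\dom{\frm A}$, this immediately yields $(\frm A,w)\bis(\frm B,w)$. The atom-agreement clause is handled at once by the definition of submodel: if $a\mathrel\chi b$ then $b=a$ and $V_\frm A(a)=V_\frm B(a)$, so $(\frm A,a)$ and $(\frm B,b)$ agree on every atom. It thus remains to check that $\chi$ is confluent, i.e.\ both forward- and backward-confluent.

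Rather than unfold the definition of forward confluence in terms of $\oper$, I would invoke the back-and-forth characterization for Kripke frames established earlier, which says that $\chi$ is forward-confluent iff whenever $a\mathrel\rel{\frm A}a'$ and $a\mathrel\chi b$ there is $b'$ with $a'\mathrel\chi b'$ and $b\mathrel\rel{\frm B}b'$. For the forward direction, suppose $a\mathrel\rel{\frm A}a'$ and $a\mathrel\chi b$, so $b=a$. Take $b'=a'$; since $\rel{\frm A}\subseteq\dom{\frm A}\times\dom{\frm A}$ we have $a'\in\dom{\frm A}$, hence $a'\mathrel\chi a'$, and since $\rel{\frm A}=\rel{\frm B}\cap(\dom{\frm A}\times\dom{\frm A})\subseteq\rel{\frm B}$ we get $a\mathrel\rel{\frm B}a'$, i.e.\ $b\mathrel\rel{\frm B}b'$. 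Note this direction uses only that $\frm A$ is a submodel, not that it is persistent.

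For backward confluence I would apply the same characterization to $\chi^{-1}=\{(w,w):w\in\dom{\frm A}\}\subseteq\dom{\frm B}\times\dom{\frm A}$: I must show that whenever $b\mathrel\rel{\frm B}b'$ and $b\mathrel{\chi^{-1}}a$ (so $a=b\in\dom{\frm A}$) there is $a'$ with $b'\mathrel{\chi^{-1}}a'$ and $a\mathrel\rel{\frm A}a'$. This is exactly the step where persistence is indispensable: from $a=b\in\dom{\frm A}$ and $b\mathrel\rel{\frm B}b'$, persistence of $\dom{\frm A}$ forces $b'\in\dom{\frm A}$; taking $a'=b'$ then gives $b'\mathrel{\chi^{-1}}b'$, and since both $b$ and $b'$ lie in $\dom{\frm A}$ the submodel condition upgrades $b\mathrel\rel{\frm B}b'$ to $b\mathrel\rel{\frm A}b'$, i.e.\ $a\mathrel\rel{\frm A}a'$. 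I expect this backward clause to be the only real obstacle: the forward clause and atom agreement are automatic for any submodel, and it is precisely persistence that rules out the failure case where a successor of $b'$ in $\frm B$ escapes $\dom{\frm A}$ and has no witness in $\frm A$. Assembling the three checks shows that $\chi$ is a bisimulation with $w\mathrel\chi w$, which completes the proof.
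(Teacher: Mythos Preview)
Your proof is correct and follows exactly the approach indicated in the paper: the paper simply asserts that the inclusion $\iota\colon\dom{\frm A}\to\dom{\frm B}$ is a bisimulation and derives the lemma from that, while you have supplied the routine verification of the atom-agreement and the two confluence clauses that the paper leaves implicit.
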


In particular, if we are concerned with satisfiaction of $\lang^{\mu\pt}_{\ps\pd}$-formulas on a pointed model $(\frm B,w)$, it suffices to restrict our attention to the set of points accessible from $w$.

\begin{definition}
Given a binary relation $R$, let $R^\ast$ denote the transitive, reflexive closure of $R$.

Then, given a Kripke frame or model $\frm B$ and $w\in \dom{\frm B}$, we define the {\em generated subframe (respectivel, submodel)} of $w$ to be the substructure of $\frm B$ with domain $R^\ast_\frm B(w)$.
\end{definition}

The following is then obvious from the definitions:

\begin{lemma}
If $\frm B$ is a Kripke structure and $w\in\dom{\frm B}$, then the generated substructure of $w$ is persistent.
\end{lemma}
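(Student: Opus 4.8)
The plan is to simply unwind the definitions, since the domain of the generated substructure is by construction $\rel{\frm B}^\ast(w)$, the set of $\rel{\frm B}$-reachable points from $w$, and reachability is closed under taking one further step. Write $U := \rel{\frm B}^\ast(w)$ for this domain. To establish persistence, I would take an arbitrary $u\in U$ together with an arbitrary $v\in\dom{\frm B}$ satisfying $u\mathrel{\rel{\frm B}}v$, and argue that $v\in U$. By definition of $U$, the hypothesis $u\in U$ unpacks to $w\mathrel{\rel{\frm B}^\ast}u$. Since $\rel{\frm B}^\ast$ is the reflexive, transitive closure of $\rel{\frm B}$, we have $\rel{\frm B}\subseteq \rel{\frm B}^\ast$, so the edge $u\mathrel{\rel{\frm B}}v$ yields $u\mathrel{\rel{\frm B}^\ast}v$. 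Applying transitivity of $\rel{\frm B}^\ast$ to $w\mathrel{\rel{\frm B}^\ast}u$ and $u\mathrel{\rel{\frm B}^\ast}v$ gives $w\mathrel{\rel{\frm B}^\ast}v$, which is exactly $v\in \rel{\frm B}^\ast(w)=U$. This is precisely the persistence condition of the relevant definition, so $U$ is persistent and the generated substructure is as claimed.

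The only facts invoked are the two defining closure properties of $\rel{\frm B}^\ast$, namely that it contains $\rel{\frm B}$ and that it is transitive; neither requires any assumption on $\frm B$, so the statement holds uniformly for every Kripke structure and every $w\in\dom{\frm B}$. Accordingly, I do not anticipate any genuine obstacle: the result is immediate once the abbreviation $R^\ast$ is expanded to its meaning as the reflexive-transitive closure, which matches the paper's own remark that the lemma is obvious from the definitions. The sole point worth stating explicitly is the inclusion $\rel{\frm B}\subseteq\rel{\frm B}^\ast$, which licenses promoting the single $\rel{\frm B}$-edge to an $\rel{\frm B}^\ast$-edge before the transitivity step.
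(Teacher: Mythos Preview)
Your proof is correct and matches the paper's treatment: the paper does not give an explicit argument, merely remarking that the lemma is ``obvious from the definitions,'' and your unwinding of $\rel{\frm B}^\ast$ via its containment of $\rel{\frm B}$ and transitivity is exactly the routine verification this remark invites.
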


\begin{remark}
Although we will not need this in the text, persistent substructures can be generalized to other classes of convergence spaces by considering substructures with open domain (see, e.g., \cite{Goldblatt2017Spatial}). However, it is typically not the case that there is a least open substructure containing a given point $w$.
\end{remark}

\subsection{Model amalgamation}\label{SecAmalg}

If $\{A_i : i\in I\}$ is a family of sets, let us use $\coprod_{i\in I} A_i$ to denote its disjoint union in a standard way. We extend this notation to families $\{\frm A_i : i\in I\}$ of Kripke models by setting
\[\coprod_{i\in I}\frm A_i = (\dom{\frm A},R_\frm A,V_\frm A),\]
where
\begin{enumerate}[label=(\roman*)]

\item $\dom{\frm A} = \coprod_{i\in I}\dom{\frm A_i} $,

\item $R_{\frm A} = \coprod_{i\in I} R_{\frm A_i} $, and

\item for $w\in \dom{\frm A}$, $V_{\frm A}(w) = V_{\frm A_i}(w)$ if $w\in \dom{\frm A_i}$.

\end{enumerate}

It is easy to check that, for any $j\in I$, $\frm A_j$ is a persistent substructure of $\coprod_{i\in I}\frm A_i$, and thus we obtain the following from Lemma \ref{LemmTruthBisim}:

\begin{lemma}
If $\{\frm A_i : i\in I\}$ is a family of models, $w\in \dom{\frm A_j}$ and $\varphi\in \lang^{\mu\pt}_{\ps\pd}$, then $(\frm A_j,w)\models \varphi$ if and only if $(\coprod_{i\in I}\frm A_i,w)\models\varphi$.
\end{lemma}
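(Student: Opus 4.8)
The plan is to obtain this as an immediate consequence of the two preceding lemmas on persistent submodels together with the truth-preservation result, Lemma \ref{LemmTruthBisim}. The only thing that genuinely requires checking is that each summand $\frm A_j$ sits inside the amalgam as a \emph{persistent} submodel; once that is in place, everything else is mechanical. So write $\frm B = \coprod_{i\in I}\frm A_i$, and aim to show $(\frm A_j, w) \bis (\frm B, w)$ for every $w \in \dom{\frm A_j}$.

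First I would confirm that $\frm A_j$ is a submodel of $\frm B$ in the required sense. By clause (i) of the definition of $\coprod$ we have $\dom{\frm A_j} \subseteq \dom{\frm B}$; by clause (ii), since the disjoint union keeps the index components pairwise disjoint, $R_{\frm B} \cap (\dom{\frm A_j} \times \dom{\frm A_j}) = R_{\frm A_j}$; and by clause (iii), $V_{\frm B}(w) = V_{\frm A_j}(w)$ for every $w \in \dom{\frm A_j}$. The key step is then persistence: suppose $w \in \dom{\frm A_j}$ and $w \mathrel{R_{\frm B}} v$. Because $R_{\frm B} = \coprod_{i\in I} R_{\frm A_i}$ is itself a disjoint union of the component relations, any $R_{\frm B}$-edge lies entirely within a single summand; as $w$ belongs to the $j$-th summand and the summands are disjoint, this edge must be an edge of $\frm A_j$, whence $v \in \dom{\frm A_j}$. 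Thus $\dom{\frm A_j}$ is persistent, and $\frm A_j$ is a persistent submodel of $\frm B$.

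Finally, the lemma stating that a persistent submodel is locally bisimilar to its host (via the inclusion map) yields $(\frm A_j, w) \bis (\frm B, w)$. Since $\lang^{\mu\pt}_{\ps\pd}$ contains no universal modality, the local-bisimulation case of Lemma \ref{LemmTruthBisim}, part (1), applies, giving $(\frm A_j, w) \models \varphi$ iff $(\frm B, w) \models \varphi$ for every $\varphi \in \lang^{\mu\pt}_{\ps\pd}$, as desired. I do not anticipate any real obstacle: the disjointness of the union trivializes the persistence check, and the restriction to the $\forall$-free fragment $\lang^{\mu\pt}_{\ps\pd}$ is precisely what makes \emph{local} bisimilarity sufficient — had $\forall$ been present, one would instead need a total, surjective bisimulation, which the inclusion of a single summand does not provide.
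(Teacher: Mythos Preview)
Your proof is correct and follows essentially the same approach as the paper: the paper simply remarks that each $\frm A_j$ is a persistent substructure of the disjoint union and then invokes Lemma~\ref{LemmTruthBisim}, while you spell out the persistence verification and the appeal to local bisimilarity in more detail.
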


The tools we have presented will be instrumental throughout the text to obtain many of our main results.
Next, we turn our attention to discussing classes of convergence spaces that will be important throughout the text.

\section{Special classes of spaces}\label{SecSpecial}

Our main succinctness results use constructions based on Kripke semantics, which we then `lift' to other classes of spaces. Specifically, we will focus on classes of $\cls{K4}$ models that are confluent images of natural spaces, including Euclidean spaces.
As the latter are connected and confluent maps preserve connectedness, we must work with $\cls{K4}$ frames that share this property.

\subsection{Connectedness}

Given any $\cls {K4}$ frame $\frm A$, there always exists some metric space $\frm X$ such that there is a surjective confluent map $f\colon \dom{\frm X}\to \dom{\frm A}$ \cite{Kudinov2014}. However, if the space $\frm X$ is fixed beforehand, there is not always a guarantee that such a map exists. In particular, this is typically not the case for $\mathbb R^n$ for any $n$, due to the fact that these spaces are {\em connected;} that is, they cannot be partitioned into two disjoint open sets. More formally, if $\mathbb R^n=i(\rgnA)\cup i(\overline {\rgnA})$, then either $\rgnA=\varnothing$ or $\overline {\rgnA}=\varnothing$. This property is characterized by the connectedness axiom
\[\forall (\nc p \vee \nc \overline p)\to (\forall p \vee \forall \overline p),\]
studied by Shehtman in \citep{ShehtmanEverywhereHere}.

Over the class of relational structures the connectedness axiom is not valid in general, but it {\em is} valid over a special class of frames. 

\begin{definition}
Let $\frm A=(\dom{\frm A},R_\frm A)$ be a $\cls{K4}$ frame and $B\subseteq \dom{\frm A}$. We say that $B$ is {\em connected} if for all $w,v\in B$, there are $b_1 \mathrel  \hdots b_{n } \in B$ such that $b_0 =w$, $b_n = v$, and for all $i<n$, either $b_i \mathrel R_\frm A b_{i+1}$ or $b_{i+1} \mathrel R_\frm A b_{i}$.

We say that $\frm A$ is {\em connected} if $\dom{\frm A}$ is connected, and that it is {\em locally connected} if for all $a\in \dom{\frm A}$, $R(a)$ is connected. We say that $\frm A$ is {\em totally connected} if it is both connected and locally connected. The class of totally connected $\cls{KD4}$ frames will be denoted $\cls{TC}$.
\end{definition}

A celebrated result of McKinsey and Tarski \cite{McKinsey1944} states that any formula of $\lang_\ps$ satisfiable over an $\cls{S4}$ frame is satisfiable over the real line, or any other {\em crowded}\footnote{A metric space is {\em crowded} if $d_\frm X(\dom{\frm X}) = \dom{\frm X}$; i.e., if $\frm X$ contains no isolated points.} metric space $\frm X$ satisfying some natural properties.
This result has since received several improvements and variations throughout the years (see e.g.~\cite{Rasiowa,BezhGerke,MintsZhang,KremerStrong}).
We present a powerful variant proven in
\cite{Goldblatt2017Spatial}, which states the following.

\begin{theorem}\label{TheoExistsConfMap}
Let $\frm X=(\dom{\frm X},d_\frm X)$ be a crowded metric space equipped with the limit operator, and $\frm A=(\dom{\frm A},R_\frm A)$ be finite $\cls{TC}$ frame.
Then, there exists a surjective, confluent map $f\colon \dom{\frm X} \to \dom{\frm A}$.
\end{theorem}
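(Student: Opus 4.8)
The plan is to recast confluence as a single pointwise \emph{colouring} condition and then to build $f$ by induction on the cluster structure of $\frm A$. The first step is to note that, since $d_\frm X$ and $\oper_\frm A = R_\frm A^{-1}[\cdot]$ both distribute over unions, each confluence requirement may be tested on the fibres $f^{-1}(b)$. Unpacking the definitions, forward-confluence is equivalent to the implication $x\in d_\frm X(f^{-1}(b))\Rightarrow f(x)\mathrel{R_\frm A}b$, and backward-confluence to its converse; together they say that a surjection $f$ is confluent if and only if
\[
f(x)\mathrel{R_\frm A} b \iff x\in d_\frm X\!\big(f^{-1}(b)\big)
\qquad\text{for all } x\in\dom{\frm X},\ b\in\dom{\frm A}.
\]
Thus the whole task reduces to the following: colour $\dom{\frm X}$ by the points of $\frm A$, using every colour at least once, so that the set of colours accumulating at any point $x$ is exactly the successor set $R_\frm A(f(x))$.

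I would then induct on the depth of $\frm A$, i.e.\ the length of the longest strict chain of clusters, using that a finite transitive frame decomposes into clusters. In the base case of depth $1$, connectedness forces $\frm A$ to consist of a single cluster and seriality forces it to be nondegenerate, so $R_\frm A(a)=\dom{\frm A}$ for every $a$. Here it suffices to partition $\dom{\frm X}$ into dense colour classes, one per point of $\frm A$: density together with crowdedness makes every colour accumulate at every point, so the colouring condition holds automatically. This rests on the standard fact that every crowded metric space is $\omega$-resolvable, i.e.\ splits into countably many dense subsets.

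For depth greater than $1$ I would peel off the top clusters. Given a top cluster $C$, its set of \emph{strict} successors is $R_\frm A$-upward closed, and each connected component thereof is again upward closed, hence a $\cls{TC}$ frame of strictly smaller depth to which the induction hypothesis applies. Around each point coloured by $C$ the construction interleaves two things: a dense set coloured by the colours of $C$ (so that all cluster-mates accumulate), together with a null-sequence of pairwise disjoint open balls accumulating at that point, each ball coloured by a recursively obtained copy of one of these smaller-depth components (so that every strict-successor colour accumulates as well). Since open subsets of a crowded metric space are themselves crowded, the recursion is well posed, and surjectivity follows because every cluster is eventually realized.

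The main obstacle is keeping \emph{both} directions of the colouring condition alive simultaneously, and coherent across the frame. Backward-confluence (equivalently, openness) requires every successor colour to accumulate at $x$, which the dense null-sequences of patches secure; the harder, forward direction requires that \emph{no} non-successor colour accumulate at $x$, which forces the limit points of each patch to be coloured by $C$ — which does see the patch's colours — so that all boundary accumulation is licit. When $\frm A$ has several top clusters and $\frm X$ is connected (as for $\mathbb R^n$) this becomes genuinely delicate: a connected space cannot be cut into disjoint open per-cluster regions, so the colourings attached to different top clusters must be blended along shared boundaries without creating illicit accumulation, and it is exactly here — governed by local connectedness, and requiring that the recursion proceed by depth and by connected components rather than by generated subframes (which need not be $\cls{TC}$) — that essentially all of the work lies. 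A secondary point is to establish the $\omega$-resolvability and null-sequence ingredients for arbitrary, possibly non-separable, crowded metric spaces.
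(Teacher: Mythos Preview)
The paper does not prove this theorem: it is quoted verbatim from Goldblatt and Hodkinson~\cite{Goldblatt2017Spatial} (see the sentence immediately preceding the statement, ``We present a powerful variant proven in \cite{Goldblatt2017Spatial}''), and is used as a black box to derive Corollary~\ref{CorRn}. There is therefore no in-paper proof to compare your proposal against.

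As to the proposal itself: your pointwise reformulation of confluence is correct, and induction on cluster depth together with $\omega$-resolvability of crowded metric spaces is indeed the standard skeleton for results of this type. You are also candid that the real content lies in the boundary-blending step when $\frm A$ has several maximal clusters and $\dom{\frm X}$ is connected; as written, that step is only gestured at, so what you have is an accurate outline of the argument rather than a proof. If you want to complete it, the Goldblatt--Hodkinson paper (or, for the closure-operator analogue, the earlier McKinsey--Tarski style arguments in \cite{BezhGerke,KremerStrong}) carries out precisely this construction, and the key device is a carefully controlled ``dissection'' of $\dom{\frm X}$ into nested open pieces whose boundaries are assigned to the appropriate clusters so that no illicit accumulation can occur.
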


Putting together Lemma \ref{LemmTruthBisim} and Theorem \ref{TheoExistsConfMap}, we obtain the following.

\begin{corollary}\label{CorRn}
Let $\frm X=(\dom{\frm X},d_\frm X)$ be a crowded metric space equipped with the limit operator, and $\frm A=(\dom{\frm A},R_\frm A,V_\frm A)$ be a finite $\cls{TC}$ model.
Then, there exists a map $f\colon \dom{\frm X} \to \dom{\frm A}$ and a model $\frm M = (\frm X,V_\frm M)$ such that, for all $\varphi\in \lang^{\mu\pt}_{\ps\pd\forall}$,
\begin{equation}\label{EqTruthMetric}
\val\varphi_\frm M = f^{-1}[\val\varphi_\frm A].
\end{equation}
\end{corollary}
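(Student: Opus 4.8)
The plan is to lift the frame-level map supplied by Theorem \ref{TheoExistsConfMap} to the level of models by transporting the valuation $V_\frm A$ along the map and then reading off the truth-set identity \eqref{EqTruthMetric} from the bisimulation invariance established in Lemma \ref{LemmTruthBisim}. The analytic content is entirely absorbed into Theorem \ref{TheoExistsConfMap}, so the work here is essentially bookkeeping.

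First I would apply Theorem \ref{TheoExistsConfMap} to the underlying frame $(\dom{\frm A},R_\frm A)$ of the given model, which is a finite $\cls{TC}$ frame; this yields a surjective, confluent map $f\colon \dom{\frm X}\to \dom{\frm A}$. Next I would define the valuation $V_\frm M$ on $\frm X$ by pulling back $V_\frm A$ along $f$, setting $V_\frm M(x)=V_\frm A(f(x))$ for each $x\in\dom{\frm X}$, and take $\frm M=(\frm X,V_\frm M)$. Since $\frm X$ is a crowded metric space with its limit operator $d_\frm X$, it is a convergence space, so $\frm M$ is a convergence model. By construction, for every atom $p\in\PV$ and every $x$, the pointed models $(\frm M,x)$ and $(\frm A,f(x))$ agree on $p$.

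The key step is to observe that the graph $\chi=\{(x,f(x)) : x\in\dom{\frm X}\}$ of $f$ is a total, surjective bisimulation relative to $\PV$ between $\frm M$ and $\frm A$. Totality is immediate as $f$ is a function, surjectivity of $\chi$ follows from surjectivity of $f$, and agreement on all atoms of $\PV$ holds by the choice of $V_\frm M$. For confluence, note that for a function the image $\chi[\cdot]$ is just $f[\cdot]$ and the preimage $\chi^{-1}[\cdot]$ is $f^{-1}[\cdot]$, so the relational forward- and backward-confluence inequalities of Definition \ref{DerSimNbh}, computed with respect to $d_\frm X$ on the source and $R_\frm A^{-1}[\cdot]$ on the target, coincide exactly with the confluence of $f$ granted by Theorem \ref{TheoExistsConfMap}. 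Hence $\frm M$ and $\frm A$ are globally bisimilar relative to $\PV$.

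With this in hand, Lemma \ref{LemmTruthBisim}(2) gives, for every $\varphi\in\lang^{\mu\pt}_{\ps\pd\forall}$ and every $x\in\dom{\frm X}$, that $(\frm M,x)\models\varphi$ if and only if $(\frm A,f(x))\models\varphi$; equivalently, $x\in\val\varphi_\frm M$ iff $f(x)\in\val\varphi_\frm A$ iff $x\in f^{-1}[\val\varphi_\frm A]$, which is precisely \eqref{EqTruthMetric}. The one point that genuinely must be respected is that the language includes the universal modality $\forall$, so the local version of bisimulation invariance (part (1) of the lemma) would not suffice; this is exactly why the surjectivity and totality of $f$ matter, as they are what upgrade the confluent map to a \emph{global} bisimulation and thereby license the use of part (2). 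Beyond this, no real obstacle remains.
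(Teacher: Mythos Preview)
Your proposal is correct and follows essentially the same route as the paper's proof: take the surjective confluent map from Theorem \ref{TheoExistsConfMap}, pull back the valuation along it, and invoke Lemma \ref{LemmTruthBisim}. You have simply made explicit the details the paper leaves implicit, in particular that the graph of $f$ is a \emph{global} bisimulation (using totality and surjectivity), which is exactly what is needed for part (2) of Lemma \ref{LemmTruthBisim} to handle the universal modality.
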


\begin{proof}
The map $f$ is the surjective, confluent map provided by Theorem \ref{TheoExistsConfMap}, and the valuation $V_\frm X$ is defined by $p\in V_\frm X(x)$ if and only if $p \in V_\frm A(f(x))$ for $p\in \PV$ and $x\in \dom{\frm X}$. That \eqref{EqTruthMetric} holds follows from Lemma \ref{LemmTruthBisim}.
\end{proof}

\subsection{Scattered spaces}\label{SecScattered}

In {\em provability logic} \citep{logicofprovability}, a seemingly unrelated application of modal logic, $\nd\varphi$ is interpreted as  `$\varphi$ is a theorem of (say) Peano arithmetic'. Surprisingly, the valid formulas under this interpretation are exactly the valid formulas over the class of scattered limit spaces, as shown by Solovay \cite{Solovay}.
This non-trivial link between proof theory and spatial reasoning allows for an additional and unexpected application of the logics we are considering.
For this, let us define scattered spaces in the context of convergence spaces.

\begin{definition}
A convergence space $\frm A$ is {\em scattered} if, for every $X\subseteq \dom{\frm A}$, if $X\subseteq \oper_\frm A(X)$, then $X=\varnothing$.
%
%A convergence space $\frm A$ is a {\em scattered closure space} if there is a scattered limit space $\frm B$ such that $\frm A = \frm B^+$.
\end{definition}

In other words, if $X\not=\varnothing$, then there is $a \in X\setminus \oper_\frm A(X)$; such a point is an {\em isolated point} of $X$. Examples of scattered spaces are not difficult to construct.

\begin{lemma}
Let $\frm A = (\dom{\frm A}, \rel{\frm A})$ be any $\cls{K4}$ frame. Then, $\frm A$ is a scattered space if and only if $\rel{\frm A}$ is {\em converse-well-founded;} that is, there do not exist infinite sequences
\[a_0 \mathrel \rel{\frm A} a_1 \mathrel \rel{\frm A} a_2 \mathrel \rel{\frm A} \hdots\]
%\begin{enumerate}
%
%\item $\frm A$ is a scattered limit space if and only if $\rel{\frm A}$ is {\em converse-well-founded;} that is, there do not exist infinite sequences
%\[a_0 \mathrel \rel{\frm A} a_1 \mathrel \rel{\frm A} a_2 \mathrel \rel{\frm A} \hdots\]
%
%\item $\frm A$ is a scattered closure space if and only if $\rel{\frm A}$ is {\em Noetherian;} that is, if
%\[a_0 \mathrel \rel{\frm A} a_1 \mathrel \rel{\frm A} a_2 \mathrel \rel{\frm A} \hdots\]
%is an infinite sequence, then there exists $n$ such that $a_{n+m}=a_n$ for all $m$.
%\end{enumerate}
%
\end{lemma}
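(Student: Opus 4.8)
The plan is to unfold both definitions and observe that they are two sides of the same combinatorial fact about the relation $\rel{\frm A}$. Since $\frm A \in \cls{K4}$, its associated operator is $\oper_\frm A(X) = \rel{\frm A}^{-1}[X]$, so the scattering condition $X \subseteq \oper_\frm A(X)$ says precisely that every $w \in X$ has an $\rel{\frm A}$-successor inside $X$; call such an $X$ \emph{$\rel{\frm A}$-dense in itself}. Thus $\frm A$ is scattered if and only if the only $\rel{\frm A}$-dense-in-itself set is $\varnothing$, and I would prove the stated equivalence by establishing each contrapositive.

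First I would handle the direction ``not converse-well-founded $\Rightarrow$ not scattered''. Given an infinite sequence $a_0 \mathrel{\rel{\frm A}} a_1 \mathrel{\rel{\frm A}} a_2 \mathrel{\rel{\frm A}} \cdots$, set $X = \{a_i : i \in \mathbb N\}$. This $X$ is nonempty, and for every $a_j \in X$ we have $a_j \mathrel{\rel{\frm A}} a_{j+1}$ with $a_{j+1} \in X$, so $a_j \in \oper_\frm A(X)$; hence $X \subseteq \oper_\frm A(X)$ with $X \neq \varnothing$, witnessing that $\frm A$ is not scattered. Note that the $a_i$ need not be distinct: taking the set of all terms appearing in the sequence is harmless.

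For the converse, ``not scattered $\Rightarrow$ not converse-well-founded'', suppose some nonempty $X$ satisfies $X \subseteq \oper_\frm A(X)$. Pick any $a_0 \in X$; since $a_0 \in \oper_\frm A(X) = \rel{\frm A}^{-1}[X]$, there is $a_1 \in X$ with $a_0 \mathrel{\rel{\frm A}} a_1$, and iterating this choice produces an infinite $\rel{\frm A}$-chain lying entirely inside $X$, so $\rel{\frm A}$ is not converse-well-founded.

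The proof is essentially routine; the only points worth flagging are the appeal to (dependent) choice when extracting the infinite chain in the second direction, and the observation that transitivity of $\rel{\frm A}$ plays no role in the argument itself — the $\cls{K4}$ hypothesis is needed only to guarantee that $\frm A$ is a genuine convergence space, so that the notion of ``scattered'' applies at all. I expect no real obstacle here.
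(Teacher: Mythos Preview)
Your proof is correct and entirely standard; the paper itself states this lemma without proof, treating it as a well-known fact, so there is no argument in the paper to compare against. Your remarks about dependent choice and about transitivity being needed only so that ``scattered'' is well-defined are accurate and appropriate.
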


In particular, if $\frm A$ is a finite $\cls {K4}$ frame, then $\frm A $ is scattered as a convergence space if and only if $\rel{\frm A}$ is irreflexive. 
The class of frames with a transitive, converse well-founded relation is named $\cls{GL}$ after G\"odel and L\"ob, whose contributions led to the development of provability logic. 

\begin{proposition}\label{PropTangTrivial}
Let $\varphi_1,\hdots,\varphi_n \in \lang^{\mu\pt}_{\ps\pd\forall}$. Then, ${\pt \{ \varphi_1,\hdots,\varphi_n \}} \equiv \bot$ over the class of scattered spaces.
%\begin{itemize}
%
%\item ${\pt \{ \varphi_1,\hdots,\varphi_n \}} \equiv \bot$ over the class of scattered limit spaces;
%
%\item ${\pt \{ \varphi_1,\hdots,\varphi_n \}} \equiv \pd\bigwedge_{i\leq n}\varphi_n$ over the class of scattered closure spaces.
%
%\end{itemize}
\end{proposition}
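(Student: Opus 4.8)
The plan is to read off the result directly from the semantic characterization of the tangled limit operator given in the lemma immediately following Theorem~\ref{TheoDawarOtto}, combined with the definition of a scattered space. Fix a convergence model $(\frm A,\V)$ whose underlying space is scattered, and fix formulas $\varphi_1,\hdots,\varphi_n$. I would argue by contradiction: suppose $\val{\pt\{\varphi_1,\hdots,\varphi_n\}}_\V\neq\varnothing$, and pick some $x\in\val{\pt\{\varphi_1,\hdots,\varphi_n\}}_\V$. By the characterization lemma, there is a set $S\subseteq\dom{\frm A}$ with $x\in S$ and $S\subseteq\oper_\frm A\big(S\cap\val{\varphi_i}_\V\big)$ for every $i\leq n$. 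Since $x\in S$, the set $S$ is nonempty.

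The key step is to discharge the conjuncts $\val{\varphi_i}_\V$ using monotonicity of the normal operator $\oper_\frm A$. Monotonicity is an immediate consequence of axiom ({\sc n2}): if $U\subseteq W$ then $W=U\cup W$, so $\oper_\frm A(W)=\oper_\frm A(U)\cup\oper_\frm A(W)\supseteq\oper_\frm A(U)$. Applying this with $U=S\cap\val{\varphi_i}_\V\subseteq S=W$, we obtain $\oper_\frm A(S\cap\val{\varphi_i}_\V)\subseteq\oper_\frm A(S)$ for each $i$. Combined with the witnessing inclusion for any single $i\leq n$ (here is where I use that the index set is nonempty, i.e.\ $n\geq 1$, which is built into the syntax of $\pt$), this yields $S\subseteq\oper_\frm A(S)$.

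But now $S$ is a nonempty set satisfying $S\subseteq\oper_\frm A(S)$, which directly contradicts the definition of a scattered space, under which $X\subseteq\oper_\frm A(X)$ forces $X=\varnothing$. Hence no such $x$ can exist, so $\val{\pt\{\varphi_1,\hdots,\varphi_n\}}_\V=\varnothing$ in every model over a scattered space, which is exactly the assertion that $\pt\{\varphi_1,\hdots,\varphi_n\}\equiv\bot$ over that class.

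I do not expect any genuine obstacle here: the proof is short and the only nontrivial ingredient is the reduction $\oper_\frm A(S\cap\val{\varphi_i}_\V)\subseteq\oper_\frm A(S)$, which is where scatteredness meets the tangled operator. The one point to state explicitly is the monotonicity of normal operators, as it is used implicitly but is not recorded as a separate lemma earlier in the text; everything else is a direct unwinding of the relevant definitions.
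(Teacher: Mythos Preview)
Your proof is correct and follows essentially the same route as the paper's: both use the semantic characterization of $\pt$ to obtain a witnessing set $S$ with $S\subseteq\oper_\frm A(S\cap\val{\varphi_i}_\V)$, discharge the $\val{\varphi_i}_\V$ via monotonicity to get $S\subseteq\oper_\frm A(S)$, and invoke scatteredness. The only cosmetic difference is that the paper shows directly that every such $S$ is empty (so the extension, being a union of such sets, is empty), whereas you frame it as a contradiction from a nonempty witness; your version is also more explicit about deriving monotonicity from ({\sc n2}), which the paper leaves implicit.
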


\begin{proof}
Assume that $(\frm A,\V)$ is a scattered limit model. By the semantics of $\pt$, consider a set $S$ such that, for $i\leq n$, $S \subseteq \oper_\frm A (S \cap \val{\varphi_i}_\V)$. But, this implies that $S \subseteq \oper_\frm A (S)$, which, since $\frm A$ is scattered, means that $S=\varnothing$. Since $\val{\pt \{ \varphi_1,\hdots,\varphi_n \}}_\V  $ is the union of all such $S$, we conclude that $\val{\pt \{ \varphi_1,\hdots,\varphi_n \}}_\V = \varnothing $.
\end{proof}

From this we immediately obtain the following.

\begin{corollary}\label{CorTanPd}
There exists a function $\trtanpd\colon \lang^{\pt}_{\pd \forall} \to \lang_{\pd \forall}$ such that $\varphi \equiv \trtanpd(\varphi)$ is valid over the class of scattered spaces and $\left | \trtanpd(\varphi) \right | \leq |\varphi|$ for all $\varphi \in \lang^{\pt}_{\ps\forall}$.
\end{corollary}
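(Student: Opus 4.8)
The plan is to read the translation directly off Proposition~\ref{PropTangTrivial}: since every tangled limit is equivalent to $\bot$ over scattered spaces, the obvious move is to erase the tangled subformulas outright. I would define $\trtanpd$ by recursion so that it acts as the identity on literals, commutes with the Boolean connectives and with $\pd,\nd,\forall,\exists$, and sets
\[
\trtanpd(\pt\{\varphi_1,\ldots,\varphi_n\}) = \bot, \qquad \trtanpd(\nt\{\varphi_1,\ldots,\varphi_n\}) = \top .
\]
Crucially, the arguments $\varphi_1,\ldots,\varphi_n$ are discarded rather than recursed into, so no tangled operator can survive and $\trtanpd(\varphi)\in\lang_{\pd\forall}$ for every $\varphi$ in the domain.

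For the equivalence $\varphi\equiv\trtanpd(\varphi)$ over scattered spaces I would proceed by structural induction on $\varphi$. The literal case is immediate, and the Boolean and modal cases follow because equality of truth sets over a fixed class of models is a congruence for each of these operators: if $\val\psi_\V=\val{\trtanpd(\psi)}_\V$ on every scattered model, then applying $\oper_\frm A$, $\hat\oper_\frm A$, or the $\forall/\exists$ clauses to both sides preserves the identity, while $\cap$ and $\cup$ handle $\wedge$ and $\vee$. The only substantive cases are the tangled operators, and here no induction hypothesis is even needed. For $\pt$, Proposition~\ref{PropTangTrivial} gives $\pt\{\varphi_1,\ldots,\varphi_n\}\equiv\bot=\trtanpd(\pt\{\varphi_1,\ldots,\varphi_n\})$ over scattered spaces. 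For the dual $\nt$, I would show $\nt\{\varphi_1,\ldots,\varphi_n\}\equiv\top$: under the $\nu$-semantics of $\nu p.\bigvee_{i\leq n}\nd(p\vee\trtan{\varphi_i})$, the whole space $\dom{\frm A}$ is a fixed point of the associated monotone map, since at $X=\dom{\frm A}$ each disjunct equals $\hat\oper_\frm A(\dom{\frm A})=\overline{\oper_\frm A(\varnothing)}=\dom{\frm A}$ by normality; being the top element, $\dom{\frm A}$ is automatically the greatest fixed point, so the formula denotes $\dom{\frm A}$. (This already holds over every convergence space, not merely scattered ones.)

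For the size bound $|\trtanpd(\varphi)|\leq|\varphi|$ I would induct on $\varphi$ as well, using the convention $|\top|=|\bot|=1$ forced by counting syntax-tree nodes. Literals give equality; the Boolean and modal clauses preserve the inequality, since in each the translated operator contributes the same additive constant and the subformula sizes do not increase by the induction hypothesis. The decisive step is at a tangled node: $|\trtanpd(\pt\{\varphi_1,\ldots,\varphi_n\})|=|\bot|=1\leq|\varphi_1|+\cdots+|\varphi_n|+1=|\pt\{\varphi_1,\ldots,\varphi_n\}|$, and symmetrically for $\nt$ with $\top$. Hence the size strictly decreases at tangled nodes and is exactly preserved everywhere else.

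There is no genuine obstacle here: the entire content of the statement is Proposition~\ref{PropTangTrivial}, and everything else is a routine congruence-and-counting induction. The only point demanding a moment's attention is the dual operator, where one must confirm that the de Morgan dual of \emph{everywhere false} is \emph{everywhere true} under the greatest-fixed-point reading of $\nt$ --- precisely the one-line fixed-point computation indicated above.
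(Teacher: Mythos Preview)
Your proposal is correct and matches the paper's intended argument exactly: the paper later states (in Section~\ref{SecSuccExt}) that $\trtanpd$ is ``defined simply by replacing every occurrence of $\pt\Phi$ by $\bot$ and every occurrence of $\nt\Phi$ by $\top$,'' which is precisely your translation, and treats the corollary as immediate from Proposition~\ref{PropTangTrivial}. Your additional care in verifying the $\nt$ case directly via the greatest-fixed-point semantics, and in spelling out the size induction, goes beyond what the paper records but is entirely in line with it.
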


Theorem \ref{TheoExistsConfMap} has an analogue for a family of `nice' scattered spaces. Below and throughout the text, we use the standard set-theoretic convention that an ordinal is equivalent to the set of ordinals below it, i.e. $\zeta\in \xi$ if and only if $\zeta<\xi$.

\begin{definition}
Given an ordinal $\Lambda$, define $d \colon 2^\Lambda \to 2^\Lambda$ by letting $\xi\in d(X)$ if and only if $X \cap \xi$ is unbounded in $\xi$.
\end{definition}

Recall that addition, multiplication and exponentiation are naturally defined on the ordinal numbers (see, e.g., \cite{HrbacekJech}) and that $\omega$ defines the least infinite ordinal. The following result can be traced back to Abashidze \cite{abashidze1985} and Blass \cite{blass1990}, and is proven in a more general form by Aguilera and the first author in \cite{AguileraFernandez}.

\begin{theorem}
If $\frm A$ is any finite $\cls{GL}$ frame, then there exists an ordinal $\Lambda < \omega^\omega$ and a surjective map $f\colon \Lambda\to \dom{\frm A}$ that is confluent with respect to the limit operator on $\Lambda$.
\end{theorem}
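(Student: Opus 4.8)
The plan is to prove this by induction on the structure of the finite $\cls{GL}$ frame $\frm A$, building up suitable ordinals $\Lambda$ and confluent maps by a gluing argument. The key structural fact about a finite $\cls{GL}$ frame is that $R_\frm A$ is transitive and irreflexive (equivalently converse-well-founded, by the Lemma preceding the statement), so it is a strict partial order on the finite set $\dom{\frm A}$. I would first stratify $\dom{\frm A}$ by \emph{rank}: assign to each point $a$ its depth, the length of the longest $R_\frm A$-chain starting at $a$. The points of rank $0$ are the $R_\frm A$-maximal points, and in general the rank measures how far a point sits below the top of the order. The intended correspondence is that a point of rank $k$ should be the image of ordinals whose Cantor normal form involves $\omega^k$ as the leading term; this is exactly why ordinals below $\omega^\omega$ suffice, and why $k\mapsto \omega^k$ appears as the natural scale.

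The core of the argument is to realize each point $a$ together with its $R_\frm A$-successors as a confluent image of an ordinal interval. Concretely, for each $a$ I would produce an ordinal $\mu_a<\omega^\omega$ and a surjective confluent map $f_a$ from $\mu_a$ onto the generated subframe $R^\ast_\frm A(a)$, sending the top ordinal $\mu_a$ (or a cofinal sequence of ordinals realizing rank $k$) to $a$ itself. Given the maps $f_b$ for all immediate successors $b$ of $a$, one assembles $f_a$ by placing copies of the successor solutions as an $\omega$-indexed sequence converging up to a point mapped onto $a$: the accumulation point picks up exactly the successors of $a$ under the limit operator $d$ on $\Lambda$, which is what forward confluence ($\chi[d_\Lambda X]\subseteq d_\frm A(\chi[X])$) and backward confluence demand. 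Since $a$ sees, under $R_\frm A$, all the points visible to its successors plus the successors themselves (by transitivity), cofinally repeating the successor blocks guarantees that every $R_\frm A$-successor of $a$ is approximated in the ordinal topology, securing backward confluence, while the fact that $\Lambda$'s limit operator only produces points below a given ordinal matches the converse-well-foundedness of $R_\frm A$ and secures forward confluence.

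The main obstacle, and the step requiring the most care, is the interleaving needed to guarantee \emph{backward} confluence at the accumulation point: it is not enough that $a$'s successors appear cofinally; one must ensure that \emph{every} successor $b$ appears cofinally often, and moreover that the local structure near each ordinal correctly reflects which points are $R_\frm A$-above its image. This is handled by repeating the full family of successor blocks in a round-robin fashion so that each $b\in R_\frm A(a)$ is hit on a subsequence cofinal in $\mu_a$, and by taking the ordinal $\mu_a$ to be of the form $\omega^{\mathrm{rank}(a)}\cdot m + \cdots$ so that the whole construction closes off below $\omega^\omega$. Once the map $f_a$ is built for an $R_\frm A$-maximal starting point spanning all of $\dom{\frm A}$—or, if $\frm A$ has several maximal points, by forming the disjoint ordinal sum of the pieces and noting that a disjoint union of confluent maps is confluent—one obtains the desired $\Lambda<\omega^\omega$ and surjective confluent $f\colon \Lambda\to\dom{\frm A}$. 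I would then simply cite \cite{AguileraFernandez} for the general form and the verification that the ordinal bound $\omega^\omega$ is respected, since that reference establishes the construction in full generality.
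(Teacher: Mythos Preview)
The paper does not actually prove this theorem: it is stated without proof and attributed to Abashidze \cite{abashidze1985} and Blass \cite{blass1990}, with the general form credited to \cite{AguileraFernandez}. Your proposal therefore goes beyond what the paper does, giving a reasonable sketch of the classical rank-based construction (stratify by depth, glue $\omega$-indexed copies of successor solutions in a round-robin so every successor appears cofinally, bound everything by $\omega^{\mathrm{rank}(a)+1}$), and then defers to the same reference \cite{AguileraFernandez} for the details---which is exactly the paper's own stance on this result.
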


 As before, this readily gives us the following corollary:

\begin{corollary}\label{CorOrd}
Given a finite $\cls{GL}$-model $\frm A=(\dom{\frm A},R_\frm A,V_\frm A)$, there exists an ordinal $\Lambda < \omega^\omega$, a surjective map $f\colon \Lambda\to \dom{\frm A}$, and a model $\frm M = (\Lambda,V_\frm M)$ such that, for all $\varphi\in \lang^{\mu\pt}_{\ps\pd\forall}$,
\[\val\varphi_\frm M = f^{-1}[\val\varphi_\frm A].\]
\end{corollary}

Now that we have settled the classes of structures we are interested in, we discuss the techniques that we will use to establish our main succinctness results.

\section{Model  equivalence games}\label{SecGames}

The limit-point, or set-derivative, operator $\pd$ is strictly more expressive than the closure 
operator $\ps$ \citep{Bezhanishvili2005}.
 Nevertheless, if we consider a formula such as $\varphi = \underbrace{\ps \ps\hdots \ps}_n \top$, we observe that its translation $\trcl{\varphi}$ into $\lang_{\pd}$ is exponential.

In Section \ref{SecSuccCl}, we will show that this exponential blow-up is indeed inevitable.
To be precise, we wish to show that there is no translation $t\colon \lang_{\ps} \to \lang_{\pd}$
for which exists a sub-exponential function $f(x)$
 such that $t(\varphi)\equiv \varphi$ over the class of convergence spaces and $|t(\varphi)|\leq f(|\varphi|)$. 
In view of Theorem \ref{CorTanPd}, to show that $\varphi\not\equiv\psi$ over the class of convergence spaces (or even metric spaces),
 it suffices to exhibit a model $\frm A \in \cls{K4}$ and $a\in\dom{\frm A}$ such that $(\frm A,a) \models \varphi$ but $ (\frm A,a) \not\models \psi$,
or vice-versa. 
We will prove that such $\frm A$ exists whenever $\psi$ is small by using {\em model equivalence games,}
which are based on sets of pointed models.

We will use $\pointed a,\pointed b$,\,\textellipsis{} to denote pointed models. As was the case for non-pointed structures, for a class of pointed models $\cls{A}$ and a formula $\varphi$, we write $\cls{A}\models \varphi$ when 
$ \pointed a \models \varphi$ for all $ \pointed a  \in \cls{A}$, i.e., $\varphi$ is true in any pointed model in $\cls{A}$, and say that the formulas $\varphi$ and $\psi$ are equivalent on 
a class of pointed models $\cls{A}$ when $ \pointed a  \models \varphi $ if and only if $ \pointed a  \models \psi$ for all pointed models $ \pointed a \in \cls{A}$. We can also define an accessibility relation between pointed models.

\begin{definition}
For a pointed model
$ \pointed a = ({\frm A}, a)$, we denote by $\Box \pointed a$ the set $\{({\frm A}, b) : a \mathrel \rel{\frm A} b\}$, i.e., the set of all pointed models that are successors
of the pointed model $\pointed a$ along the relation $\rel{\frm A}$.
\end{definition}

The game described below is essentially the formula-size game from Adler and Immerman \citep{adlerimmerman}
but reformulated slightly to fit our present purposes. The general idea is that we have two competing
players, Hercules and the Hydra. Given a formula $\varphi$ and a
class of pointed  models $\cls M$, Hercules  is trying to show that there is a ``small'' formula $\psi$ in the language 
 $\lang_{\pd}$ that is equivalent
to $\varphi$ on $\cls{M}$,  whereas the Hydra is trying to show that any such 
$\psi$ is ``big''. Of course, what ``small'' and ``big'' mean depends on the context at hand. The players move by adding and labelling nodes on a game-tree, $T$. Although our use of trees is fairly standard, they a prominent role throughout the text, so let us give some basic definitions before setting up the game.

\begin{definition}
For our purposes, a {\em tree} is a pair $(T,\prec)$, where $T$ is any set and $\prec$ a strict partial order such that, if $\eta\in T$, then $\{\zeta\in T : \zeta\prec \eta\}$ is finite and linearly ordered, and $T$ has a minimum element called its {\em root.} We will sometimes notationally identify $(T,\prec)$ with $T$, and write $\peq$ for the reflexive closure of $\prec$.

Maximal elements of $T$ are {\em leaves.} If $\zeta,\eta\in T$, we say that $\eta$ is a {\em daughter} of $\zeta$ if $\zeta \prec \eta$ and there is no $\xi$ such that $\zeta\prec \xi \prec \eta$.
A {\em path (of length $m$)} on $T$ is a sequence $\vec \eta=(\eta_i)_{i\leq m}$ such that $\eta_{i+1}$ is a daughter of $\eta_i$ whenever $i<m$.
\end{definition}

Next, Definition~\ref{defSatisfactionGame} 
gives the precise moves that Hercules and the Hydra may play in the game.

\begin{definition}\label{defSatisfactionGame}  Let $\cls{M}$ be a class of 
pointed  models and $\varphi$ be a formula.  The  {\em($\varphi$, $\cls{M}$) model equivalence game} ($(\varphi, \cls{M})$-{\meg})
 is played by two players, Hercules and the Hydra, according to the following instructions.\\

\noindent{{\sc setting up the playing field.}}  The Hydra initiates the game by choosing two classes of pointed models $\cls{A},\cls{B}\subseteq\cls{M}$ such that 
$\cls{A}\models\varphi$ and $\cls{B}\models \neg \varphi$.\\

After that, the players continue the  $(\varphi, \cls{M})$-{{\meg}}  on the pair 
$( \cls{A},\cls{B})$ by constructing a finite game-tree $T$, in such a way that each node $\eta\in T$ is  labelled
 with a pair  $( \lft (\eta), \rgt(\eta))$ of classes of pointed models and one symbol 
that is either a literal or one from  the set $ \{ \vee,\wedge, \Box,\Diamond, \exists, \forall\}$. 
We will usually write $\lft (\eta) \circ \rgt(\eta) $ instead of $( \lft (\eta), \rgt(\eta) )$.
The pointed models in $\lft(\eta)$ are  called 
{\em the models  on the left}. Similarly, the pointed models in $\rgt(\eta)$ are called  
{\em the models on the right}.

Any leaf $\eta$ 
can be declared either a {\em head} or a {\em stub}. Once $\eta$ has been declared 
a stub, no further moves can be played on it. The construction of the game-tree begins with a root labeled by
$\cls{A} \circ\cls{B}$ that is declared a head. 

Afterwards, the game continues as long as there is at least one head. In each turn, Hercules goes first by choosing a head $\eta$, labeled by $\cls{L} \circ\cls{R} = \lft (\eta) \circ \rgt(\eta)$. He then plays one of the following moves.\\

%%%%%%%%%%%%%%%%%
\noindent {\sc literal-move.} Hercules  chooses a literal $\iota$ such that  $\cls{L}\models\iota$  and  $\cls{R}\not\models  \iota$. 
The node $\eta$ is declared a stub and labelled with the symbol $\iota$.\\

%%%%%%%%%%%%%%%%%%%%%%%%
\noindent {\sc $\vee$-move.} Hercules labels $\eta$ with the symbol $\vee$ and chooses two sets  
$\cls{L}_1,\cls{L}_2 \subseteq\cls{L}$ such that
  $\cls{L}=\cls{L}_1 \cup \cls{L}_2$. Two new heads, labeled by $\cls{L}_1\circ\cls{R}$ and $ \cls{L}_2\circ\cls{R}$,
are added to the tree as daughters of $\eta$.\\

%%%%%%%%%%%%%%%%%%%%%%%%
\noindent {\sc $\wedge$-move.} Hercules labels $\eta$ with the symbol $\wedge$ and chooses two sets 
$\cls{R}_1, \cls{R}_2 \subseteq\cls{R}$ such that 
$\cls{R}=\cls{R}_1 \cup \cls{R}_2$. Two new heads, labeled by $\cls{L}\circ\cls{R}_1$ and $\cls{L}\circ\cls{R}_2$,
are added to the tree as daughters of $\eta$.\\
  
%%%%%%%%%%%%%%%%%%%%%%%%%%%%%%%
\noindent {\sc $\Diamond$-move.} Hercules labels $\eta$ with the symbol $\Diamond$ and, for each pointed 
model $\pointed l \in \cls{L}$,  he chooses a pointed model from $\Box \pointed l$
(if for some  $\pointed l \in \cls{L}$ we have $\Box \pointed l=\emptyset$,
 Hercules cannot play this move). All these new pointed models  are collected in 
the set $\cls{L}_1$.  For each pointed model $\pointed r \in \cls{R}$, the Hydra  replies
 by picking a subset of $\Box \pointed r$.\footnote{In particular, if for some $\pointed r$ we have that $\Box \pointed r=\emptyset$, the Hydra does not 
add anything to $\cls{R}_1$ for the pointed model $\pointed r$.}
All the pointed models chosen by the Hydra are 
collected in the class $\cls{R}_1$. 
A new head labeled by $\cls{L}_1\circ \cls{R}_1$  is added as a daughter to $\eta$.\\

\noindent {\sc $\Box$-move.} Hercules labels $\eta$ with the symbol $\Box$ and, for each pointed model $\pointed r \in \cls{R}$,
 he chooses a pointed model from $\Box \pointed r$ (as before, if for some  $\pointed r \in \cls{R}$ we have 
that $\Box \pointed r=\emptyset$, then  Hercules cannot play this move). All these new pointed models  are 
collected in the set $\cls{R}_1$. The Hydra replies by constructing a class of models $\cls{L}_1$  as follows. 
For each $ \pointed l \in \cls{L}$, she picks a subset of $\Box \pointed l$ and collects all these 
pointed models in the set $\cls{L}_1$.
A head labeled by $\cls{L}_1\circ \cls{R}_1$  is added as a daughter 
to $\eta$.\\

The $(\varphi, \cls{M})$-{\meg} game concludes when there are no heads.
Hercules has a winning strategy  in $n$ moves in the $(\varphi, \cls{M})$-{\meg}  iff no matter how the Hydra plays,
the resulting game tree has $n$ nodes and there are no heads; note that we do not count the move
performed by the Hydra when setting up the playing field.
\end{definition}

The relation between the  $(\varphi, \cls{M})$-{\meg} and formula-size is given by the following result.
The essential features of the proof of the  
 next theorem can be found in any one of  ~\citep{ijcai,succinctnessaijournal,hellaaiml}. 
 %For the sake of completeness, we
%give it in the Appendix. 
\begin{theorem}\label{thrm: satisfactionGames}  Hercules has a winning strategy in  $n$ moves in the $(\varphi, \cls{M})$-{\meg} iff 
there is a  $\lang_{\pd}$-formula $\psi$ with   $|\psi|\leq n$ that is equivalent to $\varphi$ on $\cls{M}$.
\end{theorem}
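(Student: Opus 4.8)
The plan is to establish a tight correspondence between game trees for Hercules's winning strategies and syntax trees of $\lang_\pd$-formulas, proving each direction of the biconditional by induction while maintaining a single semantic invariant. The invariant attached to a node $\eta$ labelled $\cls L \circ \cls R$ is that the formula $\psi_\eta$ associated with the subtree rooted at $\eta$ satisfies $\cls L \models \psi_\eta$ and $\cls R \models \neg\psi_\eta$; additionally, $|\psi_\eta|$ equals the number of nodes in the subtree rooted at $\eta$. The dictionary between a Hercules move and a top-level connective is: a literal-move corresponds to $\psi_\eta$ being that literal; a $\vee$- (resp.\ $\wedge$-) move to $\psi_\eta = \psi_{\eta_1}\vee\psi_{\eta_2}$ (resp.\ $\psi_{\eta_1}\wedge\psi_{\eta_2}$); and a $\Diamond$- (resp.\ $\Box$-) move to $\psi_\eta = \Diamond\psi_{\eta_1}$ (resp.\ $\Box\psi_{\eta_1}$), where $\eta_1,\eta_2$ are the daughters of $\eta$. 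The node-count bookkeeping matches Definition \ref{defComplexity} exactly: a stub contributes one node as $|p|=1$, a binary move contributes its own node plus its two subtrees as $|\varphi\wedge\psi|=|\varphi|+|\psi|+1$, and a modal move contributes its node plus one subtree as $|\pd\varphi|=|\varphi|+1$.

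For the direction from formulas to strategies ($\Leftarrow$), suppose $\psi$ is equivalent to $\varphi$ on $\cls M$ with $|\psi|\le n$. Whatever admissible pair $\cls A\models\varphi$, $\cls B\models\neg\varphi$ the Hydra opens with, we have $\cls A\models\psi$ and $\cls B\models\neg\psi$ by equivalence, so the invariant holds at the root with $\psi_{\mathrm{root}}=\psi$. Hercules then plays the syntax tree of $\psi$ top-down, and the only points requiring verification are that each move preserves the invariant, which is exactly where the semantics of the connectives enter. For a $\vee$-move, $\cls L\models\psi_1\vee\psi_2$ lets Hercules split $\cls L=\cls L_1\cup\cls L_2$ with $\cls L_i\models\psi_i$, while $\cls R\models\neg\psi_1\wedge\neg\psi_2$ keeps $\cls R$ valid for both daughters; the $\wedge$-case is dual. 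For a $\Diamond$-move, $\pointed l\models\Diamond\psi'$ furnishes, for each $\pointed l\in\cls L$, a successor satisfying $\psi'$ for Hercules to pick, whereas $\pointed r\models\neg\Diamond\psi'=\Box\neg\psi'$ guarantees that every successor the Hydra may add to $\cls R_1$ satisfies $\neg\psi'$; the $\Box$-case is symmetric. Each branch terminates at a literal of $\psi$, which is a legal literal-move since the invariant gives $\cls L\models\iota$ and $\cls R\models\neg\iota$, hence $\cls R\not\models\iota$. The resulting tree has $|\psi|\le n$ nodes and no heads, so Hercules wins in $n$ moves.

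For the converse ($\Rightarrow$), fix a winning strategy for Hercules in $n$ moves and apply it against the largest admissible opening, namely $\cls A=\{\pointed a\in\cls M:\pointed a\models\varphi\}$ and $\cls B=\{\pointed b\in\cls M:\pointed b\models\neg\varphi\}$; since a winning strategy must succeed against every Hydra choice, it succeeds here. Read off $\psi_\eta$ from the finished tree by recursion from the leaves upward, using the node labels to choose connectives as above, and prove the invariant by induction on the height of $\eta$. The base case is a stub, where the literal-move's legality conditions are precisely the invariant; the inductive steps reverse the semantic arguments of the previous paragraph. At the root this yields $\cls A\models\psi$ and $\cls B\models\neg\psi$, i.e.\ every pointed model of $\cls M$ satisfying $\varphi$ satisfies $\psi$ and every one satisfying $\neg\varphi$ satisfies $\neg\psi$; since each model of $\cls M$ satisfies exactly one of $\varphi,\neg\varphi$, we conclude $\psi\equiv\varphi$ on $\cls M$, with $|\psi|$ equal to the number of nodes and hence $\le n$. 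The main obstacle I anticipate is the careful treatment of the modal moves in both directions, where the asymmetry between Hercules choosing single successors and the Hydra choosing arbitrary subsets must be matched precisely against the existential/universal semantics of $\Diamond$ and $\Box$ under negation; a secondary but essential subtlety is the quantifier order in the biconditional, since ``Hercules wins'' means he wins against all Hydra openings, so the forward direction must produce a uniform strategy for every admissible $\cls A,\cls B$, while the backward direction need only exploit the single maximal opening to recover global equivalence on $\cls M$.
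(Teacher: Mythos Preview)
Your proposal is correct and follows essentially the same approach as the paper's own proof (given in the appendix): in the $\Leftarrow$ direction Hercules plays along the syntax tree of $\psi$, and in the $\Rightarrow$ direction one reads $\psi$ off the closed tree obtained when the Hydra opens with the maximal $\cls A,\cls B$. Your exposition is in fact more explicit than the paper's about the semantic invariant; the only point left implicit in both is that, for the $\Rightarrow$ direction, the Hydra must also respond greedily at each modal move (choosing all of $\Box\pointed r$, resp.\ $\Box\pointed l$), since only then does $\cls R_1\models\neg\psi'$ entail $\cls R\models\Box\neg\psi'$ when reversing the $\Diamond$-argument.
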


Intuitively, a winning strategy for Hercules in the $(\varphi, \cls{M})$-{\meg} 
 is given by the syntax tree $T_{\psi}$ of  any $\lang_{\pd}$-formula   $\psi$
 that is equivalent to $\varphi$ on $\cls{M}$ (note that $\varphi$ 
is not necessarily a modal formula: for example, it can be a first- or second-order formula, or 
a formula from a different modal language). 
Since Hercules is trying to prove that there {\em exists} a small formula, i.e.,
 the number of nodes in its syntax tree is small,  while the Hydra is trying to show that any such formula is  ``big'',
 if both  Hercules  and the Hydra play optimally 
and Hercules has a winning strategy, then the resulting game tree $T$ is the  syntax tree $T_{\psi}$
of the smallest  modal formula $\psi$ that is equivalent to $\varphi$ on $\cls{C}$.
In particular, if $\cls L\circ\cls R$ is the label of the root, we have that $\psi$ must be true in every element of $\cls L$ and false on every element of $\cls R$, which would be impossible if there were two bisimilar pointed models $\pointed l\in \cls L$ and $\pointed r\in \cls R$. More generally, the subformula $\theta$ of $\psi$ corresponding to any node $\eta$ is true on all pointed models of $\lft(\eta)$ and false on all pointed models of $\rgt(\eta)$. It follows that Hercules loses if any node has bisimilar models on the left and right, provided the Hydra plays well.

As for what it means to `play well', note that the Hydra has no incentive to pick less pointed models in her turns, so it suffices to assume that she plays {\em greedily:}

\begin{definition}
We say that Hydra {\em plays greedily} if:
\begin{enumerate}[label=(\alph*)]

\item whenever Hercules makes a $\pd$-move on a head $\eta$, Hydra replies by choosing all of $\nd \pointed b$ for each $\pointed b\in \rgt (\eta)$, and

\item whenever Hercules makes a $\nd$-move on a head $\eta$, Hydra replies by choosing all of $\nd \pointed a$ for each $\pointed a\in \lft (\eta)$.

\end{enumerate}
\end{definition}

If Hydra plays greedily, Hercules must avoid having bisimilar models on each side:

\begin{lemma}\label{LemBisimLose}
If Hydra plays greedily, no closed game tree contains a node $\eta$ such that there are $\pointed l\in\lft (\eta)$ and $\pointed r \in \rgt (\eta)$ that are locally bisimilar.
\end{lemma}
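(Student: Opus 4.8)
The plan is to argue by contradiction: suppose some closed game tree produced against a greedy Hydra contains a node with a locally bisimilar pair across its two sides, and then descend from that node to a leaf while preserving such a pair, reaching an impossible situation at a stub. Concretely, I would establish two facts. First, a \emph{persistence} claim: if a non-stub node $\eta$ carries $\pointed l \in \lft(\eta)$ and $\pointed r \in \rgt(\eta)$ with $\pointed l \bis \pointed r$, then at least one daughter of $\eta$ again carries a locally bisimilar pair across its two sides. Second, a \emph{base} claim: no stub can carry such a pair. Granting these, starting from the offending node and repeatedly applying persistence walks us down a branch of the finite game tree; since every leaf of a closed tree is a stub (the only move producing a permanent leaf is the literal-move), we eventually contradict the base claim.

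For persistence I would fix a witnessing bisimulation $\chi$ with $l \mathrel\chi r$ and treat each move Hercules may make. The Boolean moves are immediate: a $\vee$-move splits $\lft(\eta) = \cls L_1 \cup \cls L_2$, so $\pointed l$ lands in some $\cls L_i$ and the pair $(\pointed l, \pointed r)$ survives into that daughter, while the $\wedge$-move is symmetric on the right. The modal moves are the crux. In a $\pd$-move Hercules selects, for our $\pointed l$, some successor $\pointed l^\ast \in \Box\pointed l$; applying forward confluence of $\chi$ (the zig condition, in its relational form) to $l \mathrel{R_\frm A} l^\ast$ and $l \mathrel\chi r$ yields a successor $r^\ast$ of $r$ with $l^\ast \mathrel\chi r^\ast$, hence $\pointed l^\ast \bis \pointed r^\ast$. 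Because the Hydra plays greedily, she places \emph{all} of $\nd \pointed r$ into the daughter's right side, so $\pointed r^\ast$ is present and the bisimilar pair persists. The $\nd$-move is dual: Hercules picks a successor $\pointed r^\ast$ of $\pointed r$, backward confluence of $\chi$ supplies a successor $\pointed l^\ast$ of $\pointed l$ with $\pointed l^\ast \bis \pointed r^\ast$, and greedy play puts all of $\nd \pointed l$ on the left, so $\pointed l^\ast$ is available.

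The base claim is where bisimulation invariance enters: a stub is labelled by a literal $\iota$ that separates its two sides, holding throughout $\lft(\eta)$ and failing throughout $\rgt(\eta)$, whereas by Lemma~\ref{LemmTruthBisim} locally bisimilar pointed models agree on every literal; thus no bisimilar pair can straddle a stub. I expect the main obstacle to be exactly the two modal cases, since Hercules chooses the successors adversarially and we have no control over which one he picks — the argument only closes because greedy play forces the opposite side to contain \emph{every} successor, so the confluence conditions are guaranteed to produce a matching bisimilar partner. Some care is also needed to carry a witnessing relation (now relating the chosen successors) down the tree, which is precisely what the relational zig/zag characterization of confluence provides.
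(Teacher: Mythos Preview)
Your proposal is correct and follows essentially the same approach as the paper: both arguments show that a bisimilar pair across $\lft(\eta)\circ\rgt(\eta)$ survives every possible move by the same case analysis (Boolean moves keep the pair in one daughter; $\pd$- and $\nd$-moves use forward/backward confluence together with greedy play to produce a matching successor; a literal move is blocked because bisimilar points agree on atoms). The only cosmetic difference is framing: the paper phrases it as ``there is always a head carrying a bisimilar pair, hence the tree never closes,'' whereas you descend along a branch of a putatively closed tree to a stub and derive a contradiction there; these are equivalent presentations of the same invariant.
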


\proof
We prove by induction on the number of rounds in the game that, once a node $\eta_0$ such that there are bisimilar $ \pointed l \in\lft (\eta_0)$ and $\pointed r \in \rgt (\eta_0)$ is introduced, there will always be a head $\eta$ with bisimilar pointed models on each side. The base case, where $\eta_0$ is first introduced, is trivial, as new nodes are always declared to be heads.

For the inductive step, assume that there are a head $\eta$, $ \pointed l = (\frm L,l) \in\lft (\eta)$, $\pointed r = (\frm R,r) \in \rgt (\eta)$, and a bisimulation $\chi \subseteq \dom{\frm R} \times \dom{\frm L}$ with $l\mathrel \chi r$.
We may also assume that Hercules plays on $\eta$, since otherwise $\eta$ remains on the tree as a head.

Hercules cannot play a literal move on $\eta$ since $l$ and $r$ agree on all atoms. If Hercules plays an $\vee$-move, he chooses $\cls L_1,\cls L_2$ so that $\cls L_1 \cup \cls L_2 = \lft(\eta)$ and creates two new nodes $\eta_1$ and $\eta_2$ labeled by $\cls L_1 \circ \rgt(\eta)$ and $\cls L_2 \circ \rgt(\eta)$, respectively. If $\pointed l \in \cls L_1$, then we observe that $(\pointed l,\pointed r)$ is a pair of bisimilar pointed models that still appears in $\cls L_1 \circ \rgt(\eta)$. If not, then $\pointed l \in \cls L_2$, and the pair appears on $\cls L_2 \circ \rgt(\eta)$. The case for an $\wedge$-move is symmetric.

If Hercules plays a $\pd$-move, then for the pointed model $(\frm L,l') \in \nd \pointed l$ that Hercules chooses, by forward confluence and the assumption that Hydra plays greedily, Hydra will choose at least one pointed model $(\frm R,r')\in \nd \pointed r$ such that $l'\mathrel \chi r'$. Similarly, if Hercules plays a $\nd$-move, then for the pointed model $(\frm R,r') \in \nd \pointed r$ that Hercules chooses, using backwards confluence, Hydra replies by choosing at least one $(\frm L,l')\in \nd \pointed l$ such that $l'\mathrel \chi r'$. It follows that, no matter how Hercules plays, the following state in the game will also contain two bisimilar pointed models, and hence the game-tree will never be closed.
\endproof

\begin{figure}
\begin{center}
\begin{tikzpicture}
[
help/.style={circle,draw=white!100,fill=white!100,thick, inner sep=0pt,minimum size=1mm},
treenode/.style={circle,draw=white!100,fill=white!100, thick, inner sep=0pt, minimum size=5mm},
white/.style={circle,draw=black!100,fill=white!100,thick, inner sep=0pt,minimum size=2mm},
black/.style={circle,draw=black!100,fill=black!100,thick, inner sep=0pt,minimum size=2mm},
xscale=0.75,
yscale=0.4
]

%\draw[help lines] (0,0) grid (17,30);

\node at ( 8,28) [treenode](treenode1) []{$\vee$};

\node at ( 6,28.5) [white](m2white) [label=90: $p$]{};
\node at ( 7,28.5) [white](m2blackup) [label=90: $p$]{};
\node at ( 6.5,27.5) [white](m2blackdown) [label=270: $\mathcal{A}_2$, label=180: $\triangleright$]{};

\node at ( 9.5,27.5) [white](nblack) [label=270: $\mathcal{B}$, label=180: $\triangleright$]{};
\node at ( 9,28.5) [white](nleftwhite) []{};
\node at ( 10,28.5) [white](nwhiteright) [label=90: $p$]{};

\node at ( 5,27.5) [white](m1down) [label=270: $\mathcal{A}_1$, label=180: $\triangleright$]{};
\node at ( 4.5,28.5) [white](m1white) []{};
\node at ( 5.5,28.5) [white](m1black2) [label=90: $p$]{};
\node at ( 4.5,29.5) [white](m1up) [label=90: $p$]{};

\begin{scope}[>=stealth, auto]

\draw [->] (m2blackdown) to  (m2blackup);
\draw [->] (m1down) to  (m1black2);
\draw [->] (nblack) to  (nleftwhite);

\draw [->] (m2blackdown) to  (m2white);
\draw [->] (nblack) to  (nwhiteright);
\draw [->] (m1white) to  (m1up);
\draw [->] (m1down) to   (m1white);

\end{scope}

\node at ( 4.5,24) [treenode](treenode2) []{$\Box$};

\node at ( 10,23) [white](2m1down) [label=270: $\mathcal{A}_1$,label=180: $\triangleright$]{};
\node at ( 9.5,24) [white](2m1white) []{};
\node at ( 10.5,24) [white](2m1black) [label=90: $p$]{};
\node at ( 9.5,25) [white](2m1up) [label=90: $p$]{};

\node at ( 6,23) [white](2nblack) [label=270: $\mathcal{B}$,label=180: $\triangleright$]{};
\node at ( 5.5,24) [white](2nleftwhite) []{};
\node at ( 6.5,24) [white](2nwhiteright) [label=90: $p$]{};

\begin{scope}[>=stealth, auto]

\draw [->] (2m1down) to  (2m1white);
\draw [->] (2nblack) to  (2nleftwhite);
\draw [->] (2nblack) to  (2nwhiteright);
\draw [->] (2m1white) to  (2m1up);
\draw [->] (2m1down) to (2m1black);

\end{scope}

\node at ( 11.5,24) [treenode](treenode3) []{$\Diamond$};

\node at ( 13,23) [white](3nblack) [label=270: $\mathcal{B}$, label=180: $\triangleright$]{};
\node at ( 12.5,24) [white](3nleftwhite) [ ]{};
\node at ( 13.5,24) [white](3nwhiteright) [label=90: $p$]{};

\node at ( 3.5,24) [white](3m2white) [label=90: $p$]{};
\node at ( 2.5,24) [white](3m2blackup) [label=90: $p$]{};
\node at ( 3,23) [white](3m2blackdown) [label=270: $\mathcal{A}_2$, label=180: $\triangleright$]{};

\begin{scope}[>=stealth, auto]

\draw [->] (3m2blackdown) to  (3m2blackup);
\draw [->] (3nblack) to  (3nleftwhite);

\draw [->] (3m2blackdown) to  (3m2white);
\draw [->] (3nblack) to  (3nwhiteright);

\end{scope}

\node at (4.5,19.5)[treenode](treenode4)[]{$p$};

\node at ( 10,18.5) [white](4m1down) [label=270: $\mathcal{A}_1$]{};
\node at ( 9.5,19.5) [white](4m1white) [label=180: $\triangleright$]{};
\node at ( 10.5,19.5) [white](4m1black)[label=90: $p$]{};
\node at ( 9.5,20.5) [white](4m1up) [label=90: $p$]{};

\node at ( 6,18.5) [white](4nblack) [label=270: $\mathcal{B}$]{};
\node at ( 5.5,19.5) [white](4nleftwhite) [label=180: $\triangleright$]{};
\node at ( 6.5,19.5) [white](4nwhiteright) [label=90: $p$]{};

\begin{scope}[>=stealth, auto]

\draw [->] (4m1down) to  (4m1white);
\draw [->] (4nblack) to  (4nleftwhite);
\draw [->] (4nblack) to  (4nwhiteright);
\draw [->] (4m1white) to  (4m1up);
\draw [->] (4m1down) to  (4m1black);

\end{scope}

\node at (11.5,19.5)[treenode](treenode5)[]{$\Diamond$};
\node at ( 13,18.5) [white](5nblack1) [label=270: $\mathcal{B}$]{};
\node at ( 12.5,19.5) [white](5nleftwhite1) [label=180: $\triangleright$ ]{};
\node at ( 13.5,19.5) [white](5nwhiteright1) [label=90: $p$]{};

\node at ( 15,18.5) [white](newnblack1) [label=270: $\mathcal{B}$]{};
\node at ( 14.5,19.5) [white](newnleftwhite1) []{};
\node at ( 15.5,19.5) [white](newnwhiteright1) [label=180: $\triangleright$, label=90: $p$]{};

\node at (2.5,19.5) [white](5m3white) [label=90: $p$]{};
\node at ( 3,18.5) [white](5m3blackdown) [label=270: $\mathcal{A}_2$]{};
\node at ( 3.5,19.5) [white](5m3blackup) [label=180: $\triangleright$, label=90: $p$]{};

\node at ( 2,19.5) [white](5m2white) [label=90: $p$]{};
\node at ( 1,19.5) [white](5m2blackup) [label=180: $\triangleright$, label=90: $p$]{};
\node at ( 1.5,18.5) [white](5m2blackdown) [label=270: $\mathcal{A}_2$]{};

\begin{scope}[>=stealth, auto]

\draw [->] (5m3blackdown) to  (5m3white);
\draw [->] (5m2blackdown) to  (5m2blackup);
\draw [->] (5nblack1) to  (5nleftwhite1);
\draw [->] (5m3blackdown) to  (5m3blackup);
\draw [->] (5m2blackdown) to  (5m2white);
\draw [->] (5nblack1) to  (5nwhiteright1);

\draw [->] (newnblack1) to  (newnleftwhite1);
\draw [->] (newnblack1) to  (newnwhiteright1);
\end{scope}

\node at (11.5,15)[treenode](treenode6)[]{$p$};
\node at ( 10,14) [white](6m1down) [label=270: $\mathcal{A}_1$]{};
\node at ( 9.5,15) [white](6m1white) []{};
\node at ( 10.5,15) [white](6m1black) [label=90: $p$]{};
\node at ( 9.5,16) [white](6m1up) [label=180: $\triangleright$, label=90: $p$]{};

\node at (12.5,15)[help](helpempty)[]{$\varnothing$};

\begin{scope}[>=stealth, auto]
\draw [->] (6m1down) to (6m1white);
\draw [->] (6m1white) to  (6m1up);
\draw [->] (6m1down) to  (6m1black);

\end{scope}

\begin{scope}[>=stealth,  thick]

\draw [->] (treenode1) to  (treenode2);
\draw [->] (treenode1) to  (treenode3);
\draw [->] (treenode2) to  (treenode4);
\draw [->] (treenode5) to  (treenode6);
\draw [->] (treenode3) to  (treenode5);
\end{scope}

\end{tikzpicture}

\end{center}
\caption{A closed game tree}
\label{fig:closedtree}
\end{figure}
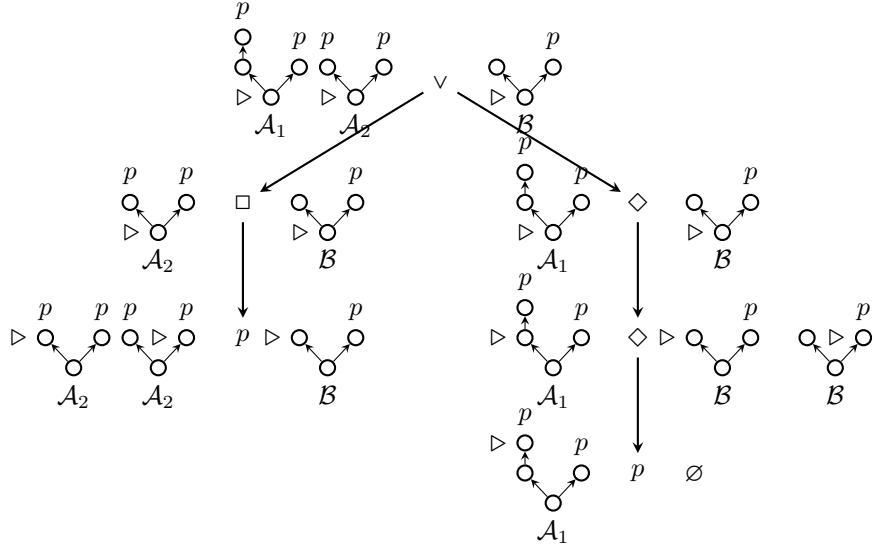

\begin{example}\label{exmp:gametree}
A closed game tree for a model equivalence game is shown in Figure \ref{fig:closedtree}. 
 Pointed models occurring along the nodes of the 
tree are  pairs consisting of the relevant  model $\mathcal{A}_1$, $\mathcal{A}_2$ or $\mathcal{B}$
 and the nodes marked by $\triangleright$. The relations between the points in 
the respective Kripke frame are denoted by the arrows, i.e., if $\frm{F}\in\{\frm{A}_1, \frm{A}_2, \frm{B}\}$,
then, for $w, v \in \dom{\frm{F}}$, we have $w \mathrel \rel{\frm F}v$ if and only if there is an arrow coming out of $w$ and pointing to $v$. 
We have only one proposition $p$ which is shown next to the points in which it is true.
 Note that, if we disregard the Kripke models, the game tree is actually the 
syntax tree of the formula $\nd p \vee \pd\pd p$. It is easily seen that, for any node $\eta$ in the tree, the sub-formula of $\nd p \vee \pd\pd p$
starting at $\eta$ is true in all the pointed models on the left of $\eta$ and false in all the pointed models on the right. 
It is worth pointing out that Hercules could have also won if he had played according to the formula $\nd p \vee \pd(\overline{p}\wedge\pd p)$.
\end{example}

\section{Exponential succinctness of closure over derivative}\label{SecSuccCl}

Now we may use the model equivalence games we have presented to show that the closure operation is exponentially more succinct than the set-derivative operator. Our proof will be based on the following infinite sequence of formulas.

\begin{definition}\label{def:formulas}
For every $n\geq 1$, let the formulas $\varphi_n$ be defined recursively by
\begin{enumerate}[label=(\roman*)]

\item$\varphi_1= \ps p_1$, and

\item $\varphi_{n+1}=\ps(p_{n+1}\wedge \varphi_n)$.

\end{enumerate}
Then, define $\psi_n=\trcl{\varphi_n}.$
\end{definition}

Recall that $t^\pd_\ps$ was defined in Section \ref{SubsecClos}, and that $\phi_n$ is equivalent to $\psi_n$
for every $n$.
Before we proceed, let us give some easy bounds on the size of the formulas we have defined, which can be proven by an easy induction.

\begin{lemma}
For all $n\in\mathbb N$, $|\varphi_n| \leq 3n$ and $|\psi_n| \geq 2^n$.
\end{lemma}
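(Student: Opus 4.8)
The plan is to prove both inequalities by induction on $n$, simply unwinding the size function of Definition \ref{defComplexity} together with the definition of the translation $\trcl\cdot$ from Section \ref{SubsecClos}.

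For the bound $|\varphi_n|\leq 3n$, I would begin with the base case: since $\varphi_1 = \ps p_1$, the clauses for $\ps$ and for literals give $|\varphi_1| = |p_1| + 1 = 2 \leq 3$. For the inductive step, the recursive definition $\varphi_{n+1} = \ps(p_{n+1}\wedge \varphi_n)$ together with the clauses for $\ps$ and $\wedge$ yields
\[|\varphi_{n+1}| = |p_{n+1}\wedge \varphi_n| + 1 = |\varphi_n| + 3,\]
so that $|\varphi_n| = 3n-1 \leq 3n$. In fact this computes the exact value, which is more than enough.

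For the bound $|\psi_n|\geq 2^n$, the crucial point is that $\trcl\cdot$ \emph{duplicates} the body of every closure modality: by definition $\trcl{\ps\theta} = \trcl\theta \vee \pd \trcl\theta$, so the translated formula contains two disjoint occurrences of $\trcl\theta$. Applying this with $\theta = p_{n+1}\wedge\varphi_n$, and using that $\trcl\cdot$ commutes with $\wedge$, I would establish
\[\psi_{n+1} = \trcl{\varphi_{n+1}} = (p_{n+1}\wedge\psi_n)\vee \pd(p_{n+1}\wedge\psi_n).\]
Reading off the size clauses then gives the recurrence $|\psi_{n+1}| = 2|\psi_n| + 6$, and in particular $|\psi_{n+1}|\geq 2|\psi_n|$. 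Combined with the base case $|\psi_1| = |p_1 \vee \pd p_1| = 4 \geq 2$, a straightforward induction delivers $|\psi_n| \geq 2^n$.

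There is no genuine obstacle here; the lemma is essentially a bookkeeping exercise. The only point requiring any care is to read off correctly from the definition of $\trcl\cdot$ that the clause for $\ps$ produces two independent copies of the translated argument, since each copy contributes additively to the size and it is precisely this duplication that drives the doubling $|\psi_{n+1}|\geq 2|\psi_n|$. This is the mechanism responsible for the exponential blow-up that the remainder of Section \ref{SecSuccCl} is devoted to showing is unavoidable. If one wishes, the exact value $|\psi_n| = 5\cdot 2^n - 6$ can be recorded by solving the recurrence above, but the stated weaker bound suffices.
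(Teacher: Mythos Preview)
Your proposal is correct and matches the approach the paper intends: the paper simply states that the lemma ``can be proven by an easy induction'' and omits the details, so your explicit computation of the recurrences $|\varphi_{n+1}|=|\varphi_n|+3$ and $|\psi_{n+1}|=2|\psi_n|+6$ is exactly the routine verification being alluded to.
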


Thus there is an exponential blow-up when passing from $\varphi_n$ to $\psi_n$.
We are going to show that on any class of  models that contains the class of finite $\cls{GL}$ or $\cls{TC}$ models,
we cannot find an  essentially shorter formula than $\psi_n$ in the modal language $\lang_{\pd}$
that is equivalent to $\varphi_n$.
This result will be a consequence of the following.

\begin{theorem}\label{thm:succinctness} 
Let $\cls C$ be either:
\begin{enumerate}[label=(\alph*)]

\item\label{ItTheoSuccA} the class of all finite $\cls{GL}$ frames, or

\item\label{ItTheoSuccB} the class of finite $\cls{TC}$ frames.

\end{enumerate}
Then, for every $n\geq 1$, Hercules has no winning strategy of less than $2^n$ moves in the $(\varphi_n,\cls C)$-\meg.
\end{theorem}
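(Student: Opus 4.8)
The plan is to prove the theorem by constructing, for each $n$, an explicit family of pointed models witnessing a large lower bound, and then invoking Lemma~\ref{LemBisimLose} to argue that Hercules cannot close the game tree with fewer than $2^n$ nodes. By Theorem~\ref{thrm: satisfactionGames}, a winning strategy for Hercules in $m$ moves corresponds exactly to a $\lang_\pd$-formula of size $\leq m$ equivalent to $\varphi_n$ on $\cls C$; since we already know $|\psi_n|\geq 2^n$ and $\psi_n$ is the canonical such formula, the target bound $2^n$ is the natural one. The first step is therefore to design a suitable ``bad'' starting configuration $(\cls A,\cls B)$ that the Hydra can set up, where $\cls A\models\varphi_n$ and $\cls B\models\neg\varphi_n$, built from finite $\cls{GL}$ or $\cls{TC}$ frames.

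The core combinatorial idea I would pursue is to exploit the nested structure of $\varphi_n=\ps(p_n\wedge\ps(p_{n-1}\wedge\cdots))$. The closure formula $\varphi_n$ asserts the existence of a chain of points witnessing $p_n,p_{n-1},\dots,p_1$ along the reflexive-transitive reachability relation, at the correct ``depths.'' I would build linearly-ordered (flat, irreflexive) $\cls{GL}$ frames encoding such chains, indexed by binary strings of length $n$: each bit records whether a given $p_i$ is witnessed immediately or one step later, so that there are roughly $2^n$ distinguishable models on the satisfying side. The Hydra's set $\cls A$ would consist of models satisfying $\varphi_n$, while $\cls B$ contains carefully perturbed models that fail $\varphi_n$ by breaking exactly one link in the chain. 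The point is that any single $\pd$-move or Boolean split by Hercules can only separate a small portion of these configurations, so to keep bisimilar models off the same side at every node — as Lemma~\ref{LemBisimLose} demands — Hercules is forced to branch, and the tree must contain at least $2^n$ nodes.

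The key steps, in order, are: (1) fix the concrete finite frames (flat irreflexive chains for the $\cls{GL}$ case, and their totally-connected counterparts for the $\cls{TC}$ case, using seriality/reflexivity tweaks that preserve the relevant truth values); (2) verify $\cls A\models\varphi_n$ and $\cls B\models\neg\varphi_n$ by direct unwinding of the semantics of $\ps$; (3) establish the crucial \emph{non-bisimilarity bookkeeping} — a potential-style invariant tracking how many ``active'' pairs of locally bisimilar models must persist on each side, showing that each move of Hercules reduces this potential by at most a bounded amount while the Hydra plays greedily; and (4) conclude that closing the tree requires at least $2^n$ nodes. For the $\cls{TC}$ case I would additionally check that the total-connectedness constraint can be met without collapsing the distinctions among the $2^n$ models, possibly by padding each frame with auxiliary reflexive clusters.

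The main obstacle I expect is step~(3): setting up the right invariant. It is not enough that the starting sets are large; I must show that the specific moves available in the {\meg} cannot efficiently separate the exponentially many near-bisimilar models. The delicate point is the interaction of $\pd$-moves with greedy Hydra play — when Hercules descends one modal level, the Hydra gets to pull along \emph{all} successors of her right-hand models, so I need the chain-encoding to be arranged so that at each depth the surviving right-hand successors remain locally bisimilar to some left-hand successor unless Hercules has already paid for a branching. Making this quantitative, rather than merely qualitative, and uniform across both frame classes, is where the real work lies; the design of the $2^n$ models in step~(1) must be reverse-engineered precisely to make this invariant go through.
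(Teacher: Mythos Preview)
Your high-level framework is right --- set up $\cls A\circ\cls B$ with $2^n$ paired models, have the Hydra play greedily, and use Lemma~\ref{LemBisimLose} to force a large tree --- and this is exactly the scaffolding the paper uses. But the concrete construction you propose, ``linearly-ordered (flat, irreflexive) chains'' indexed by binary strings, will not support the invariant you need in step~(3), and this is a genuine gap.

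The difficulty is this. To force $2^n$ leaves, it is not enough that the $2^n$ left-hand models are pairwise distinguishable; you must arrange that whenever \emph{two} left-hand/right-hand pairs coexist at a node, Hercules cannot make progress with a single $\pd$-move and is therefore forced to split with an $\vee$-move. Concretely, the paper's mechanism (Lemma~\ref{LemmBoxmove}) requires that (i) if Hercules, on a matched pair $(\pointed a,\pointed b)$, picks any successor of $\pointed a$ other than the unique ``correct'' one along a designated critical branch, then $\pointed b$ already contains a bisimilar successor, and (ii) if two matched pairs are present and agree up to the current depth, then the correct successor on one pair is bisimilar to some successor inside the \emph{other} pair's right-hand model. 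Property~(ii) is what blocks $\pd$-moves and forces branching; it demands that each right-hand model carry, as auxiliary subtrees, copies of the critical continuations of \emph{all other} left-hand models. Flat chains have a unique successor at each point, so there is no room for these auxiliary copies; the Hydra cannot manufacture the bisimilar witnesses needed to punish Hercules, and a single $\pd$-move may legitimately advance all pairs simultaneously.

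The paper's models are accordingly \emph{trees}, built recursively: each $\frm B^{n+1}_{2^n+j}$ has a root seeing not only the designated successor $\frm B^{n+1}_j$ but also the disjoint union $\coprod_{k=2}^{2^n} S[\frm A^{n+1}_k]$, precisely to supply the bisimilar witnesses for property~(ii). The lower-bound invariant is then not a potential function but a direct combinatorial count: for each index $i$ one tracks the set $\Lambda(i)$ of leaves along which the $i$-th twin pair survives at every ancestor, and shows these sets are non-empty and pairwise disjoint (Lemma~\ref{lemmSurjection}). Disjointness comes from the observation that two twin pairs of the same critical height must be distinguished at some earlier depth~$r$ (Lemma~\ref{LemmDistinguishAA}), and at the node where the $(r{+}1)$-th $\pd$-move would occur, property~(ii) above makes that move illegal --- so the two pairs must have been separated by an $\vee$-move beforehand. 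Your potential-style argument could perhaps be made to work, but only after you rebuild the models with this auxiliary branching, and at that point the $\Lambda(i)$ bookkeeping is more direct. For the $\cls{TC}$ case the paper simply adjoins a single reflexive ``point at infinity'' seen by every world; this makes the frame totally connected while leaving the critical-branch combinatorics untouched.
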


This theorem will be proven later in this section.
Once we do, we will immediately obtain a series of succinctness results:

\begin{proposition}\label{PropSuccinctnessBound}
Let $\cls C$ be a class of convergence spaces containing either:
\begin{enumerate}

\item\label{ItSuccinctnessGL} the class of all finite $\cls{GL}$ frames, or

\item\label{ItSuccinctnessTC} all finite $\cls{TC}$ frames,

\item the set of ordinals $\Lambda<\omega^\omega$, or

\item any crowded metric space $\frm X$.

\end{enumerate}
Then, for all $n\geq 1$, whenever $\psi \in \lang_{\pd}$ is equivalent to $\varphi_n$ over $\cls C$, it follows that $|\psi| \geq 2^{\frac{|\varphi|}3}$.
\end{proposition}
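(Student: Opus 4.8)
The plan is to derive Proposition~\ref{PropSuccinctnessBound} from Theorem~\ref{thm:succinctness} together with the game-characterization of formula size (Theorem~\ref{thrm: satisfactionGames}) and the transfer results of Section~\ref{SecSpecial}. First I would observe that the four listed options for $\cls C$ split into two genuinely different cases: the two \emph{finite-frame} classes (\ref{ItSuccinctnessGL} and \ref{ItSuccinctnessTC}) are handled directly by Theorem~\ref{thm:succinctness}, whereas the \emph{concrete spaces} (ordinals $\Lambda<\omega^\omega$ and crowded metric spaces) must be reduced to the finite-frame cases via Corollary~\ref{CorOrd} and Corollary~\ref{CorRn} respectively.

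For the finite-frame cases, suppose $\psi\in\lang_\pd$ is equivalent to $\varphi_n$ over $\cls C$. Since $\cls C$ contains all finite $\cls{GL}$ frames (resp.\ all finite $\cls{TC}$ frames), $\psi$ is in particular equivalent to $\varphi_n$ on the subclass of pointed models drawn from that class; hence by Theorem~\ref{thrm: satisfactionGames} Hercules would have a winning strategy in $|\psi|$ moves in the corresponding \meg. Theorem~\ref{thm:succinctness} says any such strategy needs at least $2^n$ moves, so $|\psi|\geq 2^n$. Combining this with the size bound $|\varphi_n|\leq 3n$ from the preceding lemma gives $n\geq|\varphi_n|/3$, whence $|\psi|\geq 2^n\geq 2^{|\varphi_n|/3}$, which is the claimed inequality (reading $|\varphi|$ as $|\varphi_n|$).

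For the concrete-space cases the key point is that equivalence over the richer class $\cls C$ forces equivalence over the finite frames, so no \emph{new} short formula can appear. Concretely, if $\psi$ is equivalent to $\varphi_n$ over a class containing all ordinals below $\omega^\omega$, then for every finite $\cls{GL}$ model $\frm A$ I would apply Corollary~\ref{CorOrd} to obtain an ordinal $\Lambda<\omega^\omega$, a surjection $f\colon\Lambda\to\dom{\frm A}$, and a model $\frm M=(\Lambda,V_\frm M)$ with $\val\chi_\frm M=f^{-1}[\val\chi_\frm A]$ for all $\chi$. Since $f$ is surjective, $\val{\psi}_\frm A$ and $\val{\varphi_n}_\frm A$ coincide iff their $f$-preimages coincide, so equivalence of $\psi$ and $\varphi_n$ on $\frm M$ (which holds because $\Lambda\in\cls C$) transfers back to equivalence on $\frm A$. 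Thus $\psi\equiv\varphi_n$ over all finite $\cls{GL}$ frames, reducing to case~\ref{ItSuccinctnessGL}. The crowded-metric-space argument is identical, using Corollary~\ref{CorRn} in place of Corollary~\ref{CorOrd} and finite $\cls{TC}$ frames in place of $\cls{GL}$ frames.

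The main obstacle I anticipate is making the transfer direction airtight: I must be careful that equivalence of the two formulas on the single concrete space really does descend to equivalence on \emph{every} relevant finite frame, which requires that the surjectivity of $f$ lets me cancel $f^{-1}$, i.e.\ that $f^{-1}[U]=f^{-1}[U']$ implies $U=U'$. This is immediate from surjectivity but is the linchpin of the reduction. A secondary bookkeeping point is that Theorem~\ref{thm:succinctness} is stated for frames while the game and Theorem~\ref{thrm: satisfactionGames} speak of pointed \emph{models}; I would note that a winning strategy over the class of all pointed models built on frames in $\cls C$ is exactly what Theorem~\ref{thm:succinctness} rules out, since the Hydra is free to choose any valuations when setting up the playing field. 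Everything else is the routine arithmetic $|\varphi_n|\leq 3n$ already recorded.
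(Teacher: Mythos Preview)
Your proposal is correct and follows essentially the same approach as the paper: the first two cases come straight from Theorem~\ref{thm:succinctness} plus Theorem~\ref{thrm: satisfactionGames}, and the concrete-space cases are reduced to the finite-frame cases via Corollaries~\ref{CorOrd} and~\ref{CorRn}. The only cosmetic difference is that the paper phrases the transfer contrapositively (a small $\psi$ yields a finite $\cls{GL}$ or $\cls{TC}$ counterexample, which is then lifted to a point in the ordinal or metric space via $f^{-1}$), whereas you argue directly that equivalence on the concrete space descends to every finite frame using surjectivity of $f$; these are logically identical, and your explicit use of the cancellation $f^{-1}[U]=f^{-1}[U']\Rightarrow U=U'$ is exactly the content of the paper's choice of $\xi\in f^{-1}(w)$.
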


\proof
In the first two cases, the claim follows immediately from Theorems \ref{thm:succinctness} and \ref{thrm: satisfactionGames}.

Now, suppose that $|\psi|\leq 2^{\frac{|\varphi_n|}3}$. Then, by the first claim, there is a finite, pointed $\cls{GL}$ model $(\frm K,w)$ such that $(\frm K,w) \not \models \psi \leftrightarrow \varphi_n$. By Corollary \ref{CorOrd}, there are a model $\frm M$ based on some $\Lambda<\omega^\omega$ and a surjective map $f\colon \Lambda\to\dom{\frm K}$ such that $\val\theta_\frm M = f^{-1}[\val \theta_\frm K]$ for all $\Theta$. In particular, for $\xi \in f^{-1}(w)$ we have that $(\frm M,\xi) \not \models \psi \leftrightarrow \varphi_n$, and thus $\psi$ is not equivalent to $\varphi_n$ on $\omega^\omega$.

If $\cls C$ contains a crowded metric space $\frm X$, we reason analogously, but instead choose $\frm K$ to be a $\cls{TC}$-model and use Corollary \ref{CorRn} to produce the required function $f$.
\endproof

Proposition \ref{PropSuccinctnessBound} will be progressively improved throughout the text until culminating in Theorem \ref{TheoMain}.
In order to  prove  Theorem~\ref{thm:succinctness} when $\cls C$ is the class of finite $\cls{GL}$-models, we are going to  
define, for every $n\geq 1$, a pair of sets of pointed $\cls{GL}$-models $\cls{A}^n$,
$\cls{B}^n$   such that $\cls{A}^n\models \varphi_n$, whereas
$\cls{B}^n\models\neg \varphi_n$.
The Hydra is going to pick $\cls{A}^n$ and  $\cls{B}^n$ when setting up the playing field for 
the $(\varphi_n, \cls{GL})$-{\meg}.
After that, we show that Hercules has no winning strategy of
less than $2^n$ moves.

\subsection{The sets of models $\cls{A}^n$ and $\cls{B}^n$}

%Now we define two pairs of  sequences of sets of models, $\cls{A}^n$,  $\cls{B}^n$ and $\cls{UA}^n$, $\cls{UB}^n$  that
%we are going to use for the $(\varphi_n, \cls{GL})_{\pd}$-{\meg} and $(\varphi_n, \cls{KD4})$-{\meg}, respectively.
Each model in $\cls{A}^n\cup \cls{B}^n$ is based on a finite, transitive, irreflexive tree 
(i.e., a finite, tree-like $\cls{GL}$ model), as illustrated in Figures~\ref{fig:succinctnessModels} and~\ref{fig:succinctnessModelsN}.
The `critical' part of each model lies in its right-most branch as shown in the figures.
To formalize this, let us begin with a few basic definitions.

\begin{definition}
A model $\frm A$ is {\em rooted} if there is a unique $w_0 \in \dom{\frm A}$ with $\dom{\frm A}=\{w_0\} \cup R_\frm A(w_0)$, and {\em tree-like} if $(\dom{\frm A},R_\frm A)$ is a tree. A {\em model with successors} is a model $\frm A$ equipped with a partial function $S_\frm A\colon \dom{\frm A} \pto \dom{\frm A}$ such that $S_\frm A(a)$ is always a daughter of $a$.
%The path $\vec w$ is a {\em branch} if it is maximal; that is, if $w_0$ is the root and $w_{m}$ is a leaf.

If $\frm A$ is a rooted model with successors, the {\em critical branch of $\frm A$} is the maximal path $\vec w=(w_i)_{i\leq m}$ such that $w_0$ is the root of $\frm A$ and $w_{i+1} =S_\frm A(w_i)$ for all $i<m$; we say that $m$ is the {\em critical height} of $\frm A$.

We denote the generated subframe of $S_\frm A(w_0)$ by $S [\frm A]$. If $w\in \dom{\frm A}$ and $S_\frm A(w)$ is defined, we define $S[(\frm A,w)]=(\frm A,S_\frm A(w))$; note that in this case, we do not restrict the domain.
\end{definition}

The partial function $S_\frm A$ will not be used in the semantics, but it will help us to describe Hercules' strategy.
Let us begin by  defining recursively the two sets of pointed models $\cls{A}^n$ and $\cls{B}^n$ with critical branches containing $2^n$ pointed models each.
The formal definition of $\cls{A}^n$ and $\cls{B}^n$ is as follows.

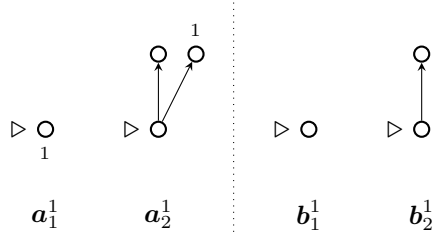
\begin{figure}
\begin{center}
\begin{tikzpicture}
[
help/.style={circle,draw=white!100,fill=white!100,thick, inner sep=0pt,minimum size=1mm},
white/.style={circle,draw=black!100,fill=white!100,thick, inner sep=0pt,minimum size=2mm},
]

%\draw[help lines] (0,0) grid (12,12);
%%%%%%%%%%%%%%%%%%%%%%%%%%%%%%%%%%%
 \node at (.5, 2) [white](1A1) [label=270: ${}_1$, label=180: $\triangleright$]{};
\node at (.5, 1.25)[help](1A1)[label=270:$\bm{a}_1^1$]{};
%%%%%%%%%%%%%%%%%%%%%%%%%%%%%%%%%%
 \node at (2, 2) [white](1A2) [ label=180: $\triangleright$]{};
\node at (2, 1.25)[help](1A2name)[label=270:$\bm{a}_2^1$]{};
 \node at (2.5, 3) [white](2A3) [label=90: ${}_1$]{};
\node at (2, 3) [white](2A2) []{};
%%%%%%%%%%%%%%%%%%%%%%%%%%%%%

\node at ( 3,3.75) [help](helpUpDotted) []{};
\node at ( 3,.5) [help](helpDownDotted) []{};

%%%%%%%%%%%%%%%%%%%%%%%%%%%%%%%%%%%
 \node at (4, 2) [white](1B1) [ label=180: $\triangleright$]{};
\node at (4, 1.25)[help](1B1name)[label=270:$\bm{b}_1^1$]{};
%%%%%%%%%%%%%%%%%%%%%%%%%%%%%%%%%%
 \node at (5.5, 2) [white](1B2) [ label=180: $\triangleright$]{};
\node at (5.5, 1.25)[help](1B2name)[label=270:$\bm{b}_2^1$]{};
 \node at (5.5, 3) [white](2B2) []{};
%%%%%%%%%%%%%%%%%%%%%%%%%%%%%

\begin{scope}[>=stealth, auto]
\draw [->] (1A2) to  (2A2);
\draw [->] (1A2) to  (2A3);
\draw [->] (1B2) to  (2B2);

\draw [ dotted] (helpUpDotted) to  (helpDownDotted);

\end{scope}

\end{tikzpicture}

\end{center}
\caption{The pointed models in the sets $\cls{A}^1$ and  $\cls{B}^1$. The numbers appearing next to each point $w$ are the indices of the propositional variables true in $w$. Intuitively, ${\frm B}^1_2$ is obtained by ``erasing'' the right branch of ${\frm A}^1_2$.}
\label{fig:succinctnessModels}
\end{figure}

\begin{definition}\label{def:models}
For $n\geq 0$, the $\cls{GL}$-models in the sets $\cls{A}^{n+1}  = \{\frm A^{n+1}_i :i \leq 2^n\}$ and $\cls{B}^{n+1}  = \{\frm B^{n+1}_i :i \leq 2^{n+1}\}$ are defined recursively according to the following cases.
When defined, we will denote the roots of $\frm A^m_j$ and $\frm B^m_j$ by $a^m_j$ and $b^m_j$, respectively, and the pointed models $(\frm A^m_j,a^m_j)$, $(\frm B^m_j,b^m_j)$ by $\pointed a^m_j$ and $\pointed b^m_j$.
\\

\noindent {\sc Case $i\leq 2^{n}$.}
If $n=0$, then $\frm A^1_1$ is a single point $a^1_1$ with $V_{\frm A^1_1}(a^1_1) = \{p_1\}$, and $\frm B^1_1$ is a single point $b^1_1$ with $V_{\frm B^1_1}(b^1_1) = \varnothing$.

If $n>0$, assume inductively that $\cls A^n,\cls B^n$ have been defined. We define $\frm A^{n+1}_i$ to be a copy of $\frm A^{n}_i$, except that the new propositional symbol $p_{n+1}$ is true in the 
root. Similarly, $\frm B^{n+1}_i$ is a copy of $\frm B^{n}_i$, except that $p_{n+1}$ is true in the root.\\

\noindent {\sc Case $i > 2^{n}$.}
Write $i = {2^n+j}$, so that $1\leq j\leq 2^n$. First, set
\[\frm X = \Big ( \coprod_{k=2}^{2^{n}} S[\frm A^{n+1}_k] \Big ) \amalg {\frm B}^{n+1}_j ,\] 
and then construct ${\frm B}^{n+1}_{2^n+j}$ by adding a (fresh) irreflexive root $b^{n+1}_{2^n+j}$ to $\frm X$ which sees all elements of $\dom{\frm X}$ and satisfies no atoms. 
We set
\[S_{{\frm B}^{n+1}_{2^n+j}} =S_{{\frm B}^{n+1}_{j}} \cup \{ (b^{n+1}_{2^n+j} , b^{n+1}_j )\}. \]
The models ${\frm A}^{n+1}_{2^n+j}$ are constructed similarly, except that we take
\[\frm Y = \Big ( \coprod_{k=2}^{2^{n}}  S[ \frm A^{n+1}_k] \Big ) \amalg {\frm B}^{n+1}_j  \amalg {\frm A}^{n+1}_j ,\]
and add an irreflexive root which sees all elements of $\frm Y$ and satisfies no atoms. 
Finally, we set
\[S_{{\frm A}^{n+1}_{2^n+j}} =S_{{\frm A}^{n+1}_{j}} \cup \{ (a^{n+1}_{2^n+j} , a^{n+1}_j )\}. \]
\end{definition}

\begin{example}
Figure \ref{fig:succinctnessModels} shows $\cls A^1$ and $\cls B^1$.
We are using the conventions established in Example~\ref{exmp:gametree} that
each pointed model consists of the relevant model and the point designated with 
$\triangleright$. The indices of the propositional letters that are true on a point are shown next to it.
In any frame $\frm F$ displayed and any $w\in \dom{\frm F}$, $S_\frm F(w)$ is the right-most daughter of $w$ (when it exists).
Note that $\frm A^1_2$ and $\frm B^1_2$ are defined according to the inductive clause, where in the latter $\frm X$ is just $\frm B^1_1$ (as the rest of the disjoint union has empty range) and $\frm Y$ is $\frm B^1_1\amalg \frm A^1_1$.

Next, $\cls A^2$ and $\cls B^2$ are shown in Figure~\ref{fig:succinctnessModels2}, and are obtained as follows. $\frm A^2_1,\frm A^2_2$ are copies of $\frm A^1_1,\frm A^1_2$, but with $p_2$ made true at the root, and $\frm B^2_1,\frm B^2_2$ are defined similarly from $\frm B^1_1,\frm B^1_2$. In this case, we just have that $\coprod_{k=2}^{2^1}S[\frm A^2_k]= S[\frm A^2_1]$, so that for example $\frm B^2_3$ is obtained by taking $\frm X=S[\frm A^2_1 ] \amalg \frm B^2_3$ and $\frm A^2_3$ is obtained by taking $\frm Y=S[\frm A^2_1]\amalg \frm B^2_3 \amalg \frm A^2_3$. Note that we do not take $k=1$ in the disjoint union, as $S[\frm A^1_1]$ is not defined.

Finally, let us show how to obtain the models in $\cls{A}^3$ and $\cls{B}^{3}$ 
with the help of the  models in $\cls{A}^2$ and $\cls{B}^2$ (see Figure \ref{fig:succinctnessModels3}).
Note that the
relations denoted by the arrows are actually transitive but the remaining arrows are not shown, in order to avoid cluttering.  

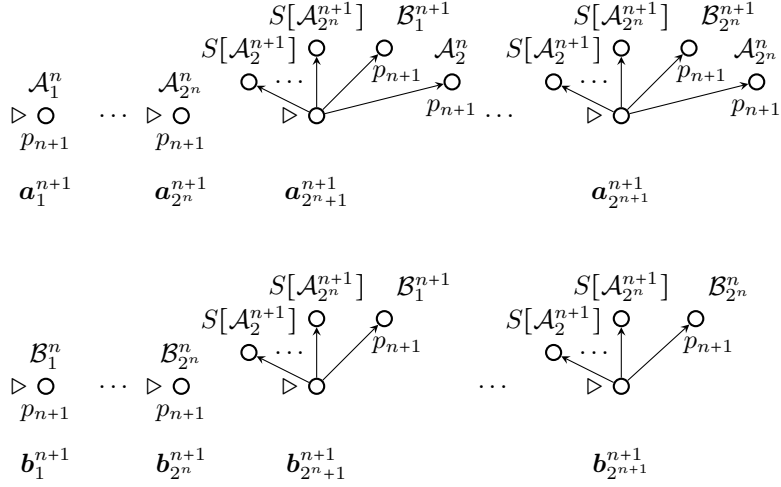
\begin{figure} 
\begin{center}

\begin{tikzpicture}
[
help/.style={circle,draw=white!100,fill=white!100,thick, inner sep=0pt,minimum size=1mm},
white/.style={circle,draw=black!100,fill=white!100,thick, inner sep=0pt,minimum size=2mm},
scale=.9
]

\def\ya{-1}

%\draw[help lines] (0,0) grid (12,12);
%%%%%%%%%%%%%%%%%%%%%%%%%%%%%%%%%%%%
 \node at (1, 6+\ya) [white](1An) [label=270: $p_{n+1}$,label=90: $\mathcal A^n_1$,label=180:$\triangleright$]{};
\node at ( 1,5.3+\ya) [help](dots1) [label=270:${\bm a}^{n+1}_1$]{};

%%%%%%%%%%%%%%%%%%%%%%%%%%%%%%%%%%%%%%%%%%%%%%%%%
\node at ( 2,6+\ya) [help](dots1) []{$\ldots$};
%%%%%%%%%%%%%%%%%%%%%%%%%%%%%%%%%%%%%%%%%%%%%%
 \node at (3, 6+\ya) [white](1An2n) [label=270: $p_{n+1}$,label=90: ${\mathcal A}^n_{2^n}$, label=180:$\triangleright$]{};
\node at ( 3,5.3+\ya) [help](dots1) [label=270:${\bm a}^{n+1}_{2^n}$]{};

%%%%%%%%%%%%%%%%%%%%%%%%%%%%%%%%%%%%%%%%%%%%%%%%%%%%%

 \node at (5, 6+\ya) [white](1An2n1) [label=180:$\triangleright$]{};
\node at ( 5,5.3+\ya) [help](dots1) [label=270:${\bm a}^{n+1}_{2^n+1}$]{};

 \node at (7, 6.5+\ya) [white](1Anup) [label=90: ${\mathcal A}^n_2$,label=270: $p_{n+1}$]{};

\node at (6, 7+\ya) [white](1Bnup) [label=70: $ {\mathcal B}^{n+1}_1$,label=270: $\mbox{  }\mbox{  }\mbox{ }p_{n+1}$]{};

\node at (5, 7+\ya) [white](rest) [label=90:$S{[}{\mathcal A}_{2^n}^{n+1}{]}$]{};

\node at (4.6, 6.5+\ya) [help](dots) []{$\ldots$};

\node at (4, 6.5+\ya) [white](rest1) [label=90:$S{[}{\mathcal A}_2^{n+1}{]}$]{};

\begin{scope}[>=stealth, auto]
\draw [->] (1An2n1) to  (1Anup);
\draw [->] (1An2n1) to  (1Bnup);
\draw [->] (1An2n1) to  (rest);
\draw [->] (1An2n1) to  (rest1);
\end{scope}

%%%%%%%%%%%%%%%%%%%%%%%%%%%%%%%%%%%%%%%%%%%%%%

\node at (7.7, 6+\ya) [help](dots) []{$\ldots$};

%%%%%%%%%%%%%%%%%%%%%%%%%%%%%%%%%%%%%%%%%%%%%%%%%%
 \node at (9.5, 6+\ya) [white](1An2n2n) [label=180:$\triangleright$]{};
\node at ( 1,0.3) [help](dots1) [label=270:${\bm b}^{n+1}_1$]{};

\node at ( 9.5,5.3+\ya) [help](dots1) [label=270:${\bm a}^{n+1}_{2^{n+1}}$]{};

 \node at (11.5, 6.5+\ya) [white](2nAnup) [label=90: ${\mathcal A}^n_{2^n}$,label=270: $p_{n+1}$]{};

\node at (10.5, 7+\ya) [white](2nBnup) [label=70: ${\mathcal B}^{n+1}_{2^n}$,label=270: $\mbox{ }\mbox{ }\mbox{ }p_{n+1}$]{};

\node at (9.5, 7+\ya) [white](rest11) [label=90:$S{[}\mathcal A^{n+1}_{2^{n}}{]}$]{};

\node at (9.1, 6.5+\ya) [help](dots) []{$\ldots$};

\node at (8.5, 6.5+\ya) [white](rest111) [label=90:$S{[}\mathcal A_2^{n+1}{]}$]{};

\begin{scope}[>=stealth, auto]
\draw [->] (1An2n2n) to  (2nAnup);
\draw [->] (1An2n2n) to  (2nBnup);
\draw [->] (1An2n2n) to  (rest11);
\draw [->] (1An2n2n) to  (rest111);
\end{scope}

%%%%%%%%%%%%%%%%%%%%%%%%%%%%%%%%%%%%%%%%%%%%%%%%%%%%%%%%%%%%%%%%%%%%%

\node at (1, 1) [white](1Bn) [label=270: $p_{n+1}$,label=90: ${\mathcal B}^n_1$,label=180:$\triangleright$]{};

%%%%%%%%%%%%%%%%%%%%%%%%%%%%%%%%%%%%%%%%%%%%%%%%%
\node at ( 2,1) [help](dots1) []{$\ldots$};
\node at ( 3,0.3) [help](dots1) [label=270:${\bm b}^{n+1}_{2^n}$]{};
%%%%%%%%%%%%%%%%%%%%%%%%%%%%%%%%%%%%%%%%%%%%%%
 \node at (3, 1) [white](1Bn2n) [label=270: $p_{n+1}$,label=90: ${\mathcal B}^n_{2^n}$,label=180:$\triangleright$]{};

%%%%%%%%%%%%%%%%%%%%%%%%%%%%%%%%%%%%%%%%%%%%%%%%%%%%%

 \node at (5, 1) [white](1Bn2n1) [label=180:$\triangleright$]{};
\node at ( 5,0.3) [help](dots1) [label=270:${\bm b}^{n+1}_{2^n+1}$]{};

\node at (6, 2) [white](1BBnup) [label=70: ${\mathcal B}^{n+1}_1$,label=270: $\mbox{ }\mbox{ }\mbox{ }p_{n+1}$]{};

\node at (5, 2) [white](Brest) [label=90:$S{[}{\mathcal A}^{n+1}_{2^{n}}{]}$]{};

\node at (4.6, 1.5) [help](Bdots) []{$\ldots$};

\node at (4, 1.5) [white](Brest1) [label=90:$S{[}{\mathcal A}_2^{n+1}{]}$]{};

\begin{scope}[>=stealth, auto]

\draw [->] (1Bn2n1) to  (1BBnup);
\draw [->] (1Bn2n1) to  (Brest);
\draw [->] (1Bn2n1) to  (Brest1);
\end{scope}

%%%%%%%%%%%%%%%%%%%%%%%%%%%%%%%%%%%%%%%%%%%%%%

\node at (7.6, 1) [help](Bdots) []{$\ldots$};

%%%%%%%%%%%%%%%%%%%%%%%%%%%%%%%%%%%%%%%%%%%%%%%%%%
 \node at (9.5, 1) [white](1Bn2n2n) [label=180:$\triangleright$]{};
\node at ( 9.5,0.3) [help](dots1) [label=270:${\bm b}^{n+1}_{2^{n+1}}$]{};

\node at (10.6, 2) [white](2nBBnup) [label=70: ${\mathcal B}^n_{2^n}$,label=270: $\mbox{ }\mbox{ }\mbox{ }p_{n+1}$]{};

\node at (9.5, 2) [white](Brest11) [label=90:$S{[}{\mathcal A}_{2^n}^{n+1}{]}$]{};

\node at (9.1, 1.5) [help](Bdots) []{$\ldots$};

\node at (8.5, 1.5) [white](Brest111) [label=90:$S{[}{\mathcal A}_2^{n+1}{]}$]{};

\begin{scope}[>=stealth, auto]

\draw [->] (1Bn2n2n) to  (2nBBnup);
\draw [->] (1Bn2n2n) to  (Brest11);
\draw [->] (1Bn2n2n) to  (Brest111);
\end{scope}

\end{tikzpicture}

\end{center}
\caption{The pointed models  in the sets $\cls{A}^{n+1}$ and  $\cls{B}^{n+1}$.}
\label{fig:succinctnessModelsN}
\end{figure}

%\begin{figure} 
%\begin{center}
%\includegraphics{Figures/succinctnessModels31.pdf}
%\end{center}
%\caption{The first four pointed models  in the sets $\cls{A}^{3}$ and  $\cls{B}^{3}$.}
%\label{fig:succinctnessModels31}
%\end{figure}

As before, the first four pointed models in $\cls{A}^3$  and $\cls{B}^3$ are obtained from the
four models in $\cls{A}^2$ and $\cls{B}^2$, respectively, by simply making the new proposition $p_3$ true in their roots.
To construct the next four models in $\cls{A}^3$ and $\cls{B}^3$, observe that $\coprod_{k=2}^{2^2}S[\frm A^3_k]=S[\frm A^3_2] \amalg S[\frm A^3_3] \amalg S[\frm A^3_4]$. Then, $\frm B^{3}_{4+j}$ is defined by adding a root to $\frm X=S[\frm A^3_2] \amalg S[\frm A^3_3] \amalg S[\frm A^3_4]\amalg \frm B^3_j$, and $\frm A^{3}_{4+j}$ is obtained by adding $\frm A^3_j$ to $\frm B^3_j$.

Finally, we remark that $S[\frm A^3_1] \cong \frm A^1_1$, $S[\frm A^3_2] \cong \frm A^2_1$ and $S[\frm A^3_3]\cong\frm A^2_2$.
%\begin{figure} 
%\begin{center}
%\includegraphics{Figures/succinctnessModels32.pdf}
%\end{center}
%\caption{The last four pointed models  in the sets $\cls{A}^{3}$ and  $\cls{B}^{3}$.}
%\label{fig:succinctnessModels32}
%\end{figure}
%\newpage
\end{example}

In fact, the latter observation is only a special case of a general pattern.

\begin{lemma}\label{LemmSuccessor}
Let $i=2^k+j$, where $j<2^k$ and $n\geq k$ be arbitrary. Then, $S[\frm A^{n+1}_i]$ is isomorphic to $\frm A^{k+1}_j$, and $S[\frm B^{n+1}_i]$ is isomorphic to $\frm B^{k+1}_j$.
\end{lemma}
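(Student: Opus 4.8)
The plan is to prove both isomorphisms at once by induction on $n$, with $k$ and $j$ held fixed (where $1\le j<2^k$, the lower bound being forced by $i=2^k+j>2^k$ and by $\frm A^{k+1}_j$ requiring a positive index). The whole argument hinges on tracking the designated successor of the root along the critical branch, since by definition $S[\frm A^{n+1}_i]$ is the subframe generated by $S_{\frm A^{n+1}_i}(a^{n+1}_i)$.

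First I would isolate the effect of each of the two clauses of Definition \ref{def:models}. For the copy clause ($i\le 2^n$), $\frm A^{n+1}_i$ is by definition an isomorphic copy of $\frm A^n_i$ as a model-with-successors in which only the root's valuation changes (adding $p_{n+1}$); thus the successor function, the critical branch strictly below the root, and every non-root valuation are inherited verbatim, and since $p_{n+1}$ is false off the root the successor subframe is unaffected. This gives $S[\frm A^{n+1}_i]\cong S[\frm A^n_i]$, and symmetrically $S[\frm B^{n+1}_i]\cong S[\frm B^n_i]$. For the successor clause ($i=2^n+j$), the definition sets $S_{\frm A^{n+1}_{2^n+j}}(a^{n+1}_{2^n+j})=a^{n+1}_j$, the root of the summand $\frm A^{n+1}_j$ inside the disjoint union $\frm Y$; I would argue that the subframe generated by $a^{n+1}_j$ is exactly this summand, so $S[\frm A^{n+1}_{2^n+j}]\cong \frm A^{n+1}_j$, and the same computation on $\frm X$ yields $S[\frm B^{n+1}_{2^n+j}]\cong \frm B^{n+1}_j$.

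The induction then closes quickly. Writing $i=2^k+j$ so that $2^k<i<2^{k+1}$: in the base case $n=k$ we have $i>2^n$, so the successor clause applies directly and $S[\frm A^{k+1}_i]\cong \frm A^{k+1}_j$, which is the claim since $n+1=k+1$; the case of $\frm B$ is identical. In the inductive step $n>k$ we have $i<2^{k+1}\le 2^n$, so the copy clause applies, and combining $S[\frm A^{n+1}_i]\cong S[\frm A^n_i]$ with the induction hypothesis $S[\frm A^n_i]\cong \frm A^{k+1}_j$ (legitimate because $n-1\ge k$) finishes the step, again with $\frm B$ treated in the same way.

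I expect the one genuinely delicate point to be the successor clause: I must verify that passing to the subframe generated by the designated successor $a^{n+1}_j$ discards both the newly added irreflexive root and every other summand of $\frm Y$, leaving precisely the single component $\frm A^{n+1}_j$. This is exactly where I would use that $\frm Y$ is a disjoint union (so nothing outside the summand is reachable from $a^{n+1}_j$) together with the fact that the fresh root sits strictly above $a^{n+1}_j$ and is therefore not generated by it. Everything else is routine index bookkeeping on $i=2^k+j$ and the observation that relabeling a root cannot change the subframe generated by its successor.
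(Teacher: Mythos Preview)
Your proof is correct and follows essentially the same induction on $n$ as the paper: the base case $n=k$ reads off $S[\frm A^{k+1}_i]\cong \frm A^{k+1}_j$ from the successor clause of Definition~\ref{def:models}, and the inductive step $n>k$ uses the copy clause to reduce $S[\frm A^{n+1}_i]$ to $S[\frm A^n_i]$ and then invokes the induction hypothesis. You are, if anything, more explicit than the paper about why the generated subframe at $a^{n+1}_j$ is exactly the summand $\frm A^{n+1}_j$ and about the implicit constraint $1\le j$.
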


\proof
By induction on $n$. In the base case, $n=k$, and $\frm A^{k+1}_i$ is defined by the clause for $i>2^k$, from which it is obvious that $S[\frm A^{k+1}_i]\cong \frm A^{k+1}_j$. Similarly, $S[\frm B^{k+1}_i]\cong \frm B^{k+1}_j$.

If $n>k$, then $\frm A^{n+1}_i$ is defined by the clause for $i\leq 2^k$, meaning that it is a copy of $\frm A^{n}_i$ with an atom added to the root. It follows that
\[S[\frm A^{n+1}_i] \cong S[\frm A^{n}_i] \stackrel{\text{\sc ih}}\cong S[\frm A^{k+1}_i],\]
as needed. The claim for $\frm B ^{n+1}_i$ is analogous.
\endproof

In order for the Hydra to be allowed to set up the playing field with $\cls A^n\circ \cls B^n$, we need our formulas $\varphi_n$ to be true on the left and false on the right, which is indeed the case, as we will see next.

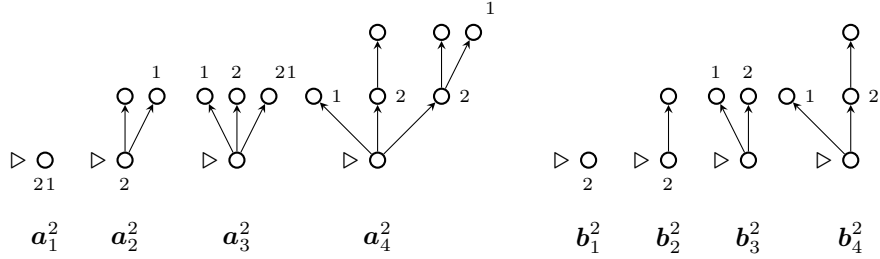
\begin{figure} 
\begin{center}
\begin{tikzpicture}
[
help/.style={circle,draw=white!100,fill=white!100,thick, inner sep=0pt,minimum size=1mm},
white/.style={circle,draw=black!100,fill=white!100,thick, inner sep=0pt,minimum size=2mm},
scale=0.85
]

\def\xa{-.3}
\def\ya{0}
\def\xb{8.5}
\def\yb{5}
\def\xc{-1.4}
\def\xd{-0.5}

%\draw[help lines] (0,0) grid (20,12);
%%%%%%%%%%%%%%%%%%%%%%%%%%%%%%%%%%%
 \node at (.5, 7+\ya) [white](1A1) [label=270: ${}_2{}_1$, label=180: $\triangleright$]{};
\node at (.5, 6.25+\ya)[help](1A1)[label=270:$\bm a_1^2$]{};
%%%%%%%%%%%%%%%%%%%%%%%%%%%%%%%%%%
 \node at (1.75, 7+\ya) [white](1A2) [ label=270: ${}_2$,label=180: $\triangleright$]{};
\node at (1.75, 6.25+\ya)[help](1A2name)[label=270:$\bm a_2^2$]{};
 \node at (2.25, 8+\ya) [white](2A3) [label=90: ${}_1$]{};
\node at (1.75, 8+\ya) [white](2A2) []{};
%%%%%%%%%%%%%%%%%%%%%%%%%%%%%
 \node at (3.5, 7+\ya) [white](3A1) [label=180: $\triangleright$]{};
\node at (3.5, 6.25+\ya)[help](3Aname)[label=270:$\bm a_3^2$]{};
 \node at (4, 8+\ya) [white](3A2) [label=90: $\mbox{ }\mbox{ }\mbox{ }\mbox{ }{}_2{}_1$]{};
\node at (3.5, 8+\ya) [white](3A3) [label=90: ${}_2$]{};
\node at (3, 8+\ya) [white](3A4) [label=90: ${}_1$]{};
%%%%%%%%%%%%%%%%%%%%%%%%%%%%%

 \node at (6+\xa, 7+\ya) [white](4A1) [label=180: $\triangleright$]{};
\node at (6+\xa, 6.25+\ya)[help](4Aname)[label=270:$\bm a_4^2$]{};
 \node at (7+\xa, 8+\ya) [white](4A2) [label=0: ${}_2$]{};
\node at (7.5+\xa, 9+\ya) [white](4A3) [label=90: $\mbox{ }\mbox{ }\mbox{ }\mbox{ }{}_1$]{};
\node at (7+\xa, 9+\ya) [white](4A4) []{};
\node at (6+\xa, 8+\ya) [white](4A5) [label=0: ${}_2$]{};
\node at (6+\xa, 9+\ya) [white](4A7) []{};
\node at (5+\xa, 8+\ya) [white](4A6) [label=0: ${}_1$]{};
%%%%%%%%%%%%%%%%%%%%%%%%%%%%%

%%%%%%%%%%%%%%%%%%%%%%%%%%%%%
 \node at (.5+\xb, 2+\yb) [white](1B1) [label=270: ${}_2$, label=180: $\triangleright$]{};
\node at (.5+\xb, 1.25+\yb)[help](1B1)[label=270:$\bm b_1^2$]{};
%%%%%%%%%%%%%%%%%%%%%%%%%%%%%%%%%%
 \node at (1.75+\xb, 2+\yb) [white](1B2) [ label=270: ${}_2$,label=180: $\triangleright$]{};
\node at (1.75+\xb, 1.25+\yb)[help](1B2name)[label=270:$\bm b_2^2$]{};
 
\node at (1.75+\xb, 3+\yb) [white](2B2) []{};
%%%%%%%%%%%%%%%%%%%%%%%%%%%%%
 \node at (3.5+\xb+\xd, 2+\yb) [white](3B1) [label=180: $\triangleright$]{};
\node at (3.5+\xb+\xd, 1.25+\yb)[help](3Bname)[label=270:$\bm b_3^2$]{};

\node at (3.5+\xb+\xd, 3+\yb) [white](3B3) [label=90: ${}_2$]{};
\node at (3+\xb+\xd, 3+\yb) [white](3B4) [label=90: ${}_1$]{};
%%%%%%%%%%%%%%%%%%%%%%%%%%%%%

 \node at (6+\xb+\xc, 2+\yb) [white](4B1) [label=180: $\triangleright$]{};
\node at (6+\xb+\xc, 1.25+\yb)[help](4Bname)[label=270:$\bm b_4^2$]{};

\node at (6+\xb+\xc, 3+\yb) [white](4B5) [label=0: ${}_2$]{};
\node at (6+\xb+\xc, 4+\yb) [white](4B7) []{};
\node at (5+\xb+\xc, 3+\yb) [white](4B6) [label=0: ${}_1$]{};
%%%%%%%%%%%%%%%%%%%%%%%%%%%%%

\begin{scope}[>=stealth, auto]
\draw [->] (1A2) to  (2A2);
\draw [->] (1A2) to  (2A3);
%%%%
\draw [->] (3A1) to  (3A2);
\draw [->] (3A1) to  (3A3);
\draw [->] (3A1) to  (3A4);
%%%%%%%
\draw [->] (4A1) to  (4A2);
\draw [->] (4A1) to  (4A6);
\draw [->] (4A1) to  (4A5);
\draw [->] (4A2) to  (4A3);
\draw [->] (4A2) to  (4A4);
\draw [->] (4A5) to  (4A7);
%%%%%%
\draw [->] (1B2) to  (2B2);

\draw [->] (3B1) to  (3B3);
\draw [->] (3B1) to  (3B4);
%%%%%%%

\draw [->] (4B1) to  (4B6);
\draw [->] (4B1) to  (4B5);

\draw [->] (4B5) to  (4B7);

\end{scope}

\end{tikzpicture}

\end{center}
\caption{The pointed models  in the sets $\cls{A}^{2}$ and  $\cls{B}^{2}$.}
\label{fig:succinctnessModels2}
\end{figure}

\begin{lemma}\label{LemmPhiTF}
For all $n\geq 1$, $\cls{A}^n\models\varphi_n$, whereas $\cls{B}^n \models \neg \varphi_n$.
\end{lemma}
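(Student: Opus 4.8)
The plan is to prove both halves simultaneously by induction on $n$, carrying the joint invariant that $a^n_i \models \varphi_n$ for every index $i$ while $b^n_i \models \neg\varphi_n$ for every $i$ (I write $w \models \chi$ for $(\frm F,w)\models\chi$ when the ambient model $\frm F$ is clear). Everything rests on unfolding the shorthand $\ps\alpha = \alpha \vee \pd\alpha$: on a transitive frame a point $w$ satisfies $\varphi_{n+1} = \ps(p_{n+1}\wedge\varphi_n)$ precisely when $w$ itself, or some $R$-successor of $w$, satisfies $p_{n+1}\wedge\varphi_n$. The base case $n=1$ is a direct inspection of Figure \ref{fig:succinctnessModels}: $a^1_1 \models p_1$ and $a^1_2$ sees $a^1_1$, so both satisfy $\ps p_1$; whereas $b^1_1$ and $b^1_2$ carry no atoms and reach no point where $p_1$ holds, so both refute $\ps p_1$.

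For the inductive step I would first record a bookkeeping fact about the fresh variable. Since $p_{n+1}$ is introduced only at level $n+1$, in every ``copy'' model $\frm A^{n+1}_i$ and $\frm B^{n+1}_i$ with $i\le 2^n$ the atom $p_{n+1}$ is true exactly at the root; consequently $p_{n+1}$ is false throughout each $S[\frm A^{n+1}_k]$, since these exclude the root $a^{n+1}_k$, the only point of $\frm A^{n+1}_k$ carrying $p_{n+1}$. I would also use repeatedly that $\varphi_n$ mentions only $p_1,\dots,p_n$: a copy model and its level-$n$ original agree everywhere on $\{p_1,\dots,p_n\}$, hence are globally bisimilar relative to this set, so by Lemma \ref{LemmTruthBisim} they satisfy the same instances of $\varphi_n$; moreover the truth of $\varphi_n$, and of $p_{n+1}\wedge\varphi_n$, at the root of a component is preserved when that component sits inside a larger model as a generated submodel or as a disjoint summand, by the truth-preservation results of Section \ref{SecTruthPres}.

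With these tools the three straightforward cases are quick. For $i\le 2^n$: the root $a^{n+1}_i$ carries $p_{n+1}$ and, by the bisimulation transfer and the inductive hypothesis, satisfies $\varphi_n$, hence satisfies $p_{n+1}\wedge\varphi_n$ and so $\varphi_{n+1}$; dually, $b^{n+1}_i$ refutes $\varphi_n$ by the inductive hypothesis while every successor refutes $p_{n+1}$, so no reachable point satisfies $p_{n+1}\wedge\varphi_n$ and $b^{n+1}_i \models \neg\varphi_{n+1}$. For the new $\cls A$-root, writing $i = 2^n+j$ with $j\le 2^n$, the root $a^{n+1}_{2^n+j}$ sees the summand $\frm A^{n+1}_j$, whose root $a^{n+1}_j$ satisfies $p_{n+1}\wedge\varphi_n$ exactly as in the copy case above, so $a^{n+1}_{2^n+j} \models \varphi_{n+1}$.

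The genuinely delicate case, and the step I expect to be the main obstacle, is the refutation at the new $\cls B$-root $b^{n+1}_{2^n+j}$, since here I must verify a \emph{universal} statement: that none of its many successors, namely the summands $S[\frm A^{n+1}_k]$ for $2\le k\le 2^n$ together with $\frm B^{n+1}_j$, satisfies $p_{n+1}\wedge\varphi_n$. The argument splits according to where $p_{n+1}$ can hold: it is false throughout every $S[\frm A^{n+1}_k]$ by the bookkeeping fact, and within $\frm B^{n+1}_j$ it holds only at $b^{n+1}_j$, where however $\varphi_n$ fails by the inductive hypothesis (transported into $\frm B^{n+1}_{2^n+j}$ via generated-submodel preservation). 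Hence $p_{n+1}\wedge\varphi_n$ holds nowhere below the root, and since the root itself carries no atoms, $b^{n+1}_{2^n+j} \models \neg\varphi_{n+1}$, closing the induction.
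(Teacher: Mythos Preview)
Your proof is correct and follows essentially the same inductive structure as the paper's: a case split on whether $i\le 2^n$ or $i>2^n$, using that $p_{n+1}$ occurs only at the roots of the copy models together with bisimulation/generated-submodel preservation to transport the inductive hypothesis. The only noteworthy difference is in the refutation for $\pointed b^{n+1}_i$ with $i\le 2^n$: you argue directly from the location of $p_{n+1}$, whereas the paper instead uses transitivity and the fact that $\varphi_n$ is $\ps$-prefixed to conclude $\varphi_n$ fails everywhere in $\frm B$; both arguments are valid.
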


\proof
By induction on $n$. If $n=1$, then $\varphi_1=\ps p_1$, which is true on all pointed models of $\cls A^1$ and false on all models of $\cls B^1$, as can be seen by inspection on Figure \ref{fig:succinctnessModels}.

If $n>1$, write $n=m+1$. Recall that $\varphi_n = \ps(p_{m+1} \wedge \ps \varphi_{m})$. Fix $i\leq 2^{m+1}$. Let $\pointed a = (\frm A,a) =\pointed a^n_i \in \cls A^n$ and $\pointed b = (\frm B,b)=\pointed b^n_i\in \cls B^n$. We consider two cases.\\

\noindent {\sc Case} $i\leq 2^{m}$. In this case, $\pointed a = \pointed a^{m+1}_i \models p_{m+1}$ by construction. By the induction hypothesis, $\pointed a^m_i\models\varphi_m$. Since $\pointed a $ agrees everywhere with $\pointed a^{m}_i$ on all atoms different from $p_{m+1}$, it follows that $\pointed a \models \varphi_m$ as well, and hence $\pointed a \models p_{m+1}\wedge \ps \varphi_m$, which readily implies that $\pointed a \models \ps (p_{m+1}\wedge \ps \varphi_m)$.

As for $\pointed b = \pointed b^{m+1}_i$, by the induction hypothesis we have that $\pointed b^m_i\not \models \varphi_m$, which implies that $\pointed b \not\models \varphi_m$, since the two agree everywhere on the atoms appearing in $\varphi_m$.
Now, consider arbitrary $v\in \dom{\frm B}$ (so that $b\mathrel R_\frm B v$). If we had that $(\frm B,v) \models \varphi_m$, by the transitivity of $R_\frm B$, we would have that $\pointed b \models \varphi_m$, which is false.
Hence, $\varphi_m$ is false on every point of $\dom{\frm B}$, from which it follows that $\ps \varphi_m$ is false on every point of $\dom{\frm B}$ as well. It follows that $\pointed b\not \models \ps(p_{m+1}\wedge\ps\varphi_m)$.\\

\noindent {\sc Case} $i>2^m$. Write $i=2^m +j$. Since we already have that $\pointed a^m_j \models\varphi_{m+1}$ by the previous case, and $\pointed a^m_j$ is locally bisimilar to $(\frm A ,S_\frm A[a])$, we see that $\pointed a\models\ps\varphi_{m+1}$, which implies that $\pointed a\models\varphi_{m+1}$.

Finally, for $\pointed b$ we see by construction that the only point of $\dom{\frm B}$ that satisfies $p_{m+1}$ is $S_\frm B (b)$. However, by the previous case, $(\frm B, S_\frm B (b))\not\models \varphi_{m+1}$, from which it follows that $(\frm B, S_\frm B (b))\not\models \ps \varphi_{m}$, and hence $\pointed b\not \models \varphi_{m+1}$.
\endproof

\begin{figure} 
\begin{center}

\begin{tikzpicture}
[
help/.style={circle,draw=white!100,fill=white!100,thick, inner sep=0pt,minimum size=1mm},
white/.style={circle,draw=black!100,fill=white!100,thick, inner sep=0pt,minimum size=2mm},
scale=.8
]

\def\xa{-3}
\def\ya{-4.5}
\def\yaaa{-6}
\def\xaa{0}
\def\yaa{0}
\def\xb{-2.2}
\def\yb{\ya}
\def\ybbb{\yaaa}
\def\xbb{0.7}
\def\ybb{0}
\def\xc{-.7}

%%%%%%%%%%%%%%%%%%%%%%%%%%%%%%%%%%%
 \node at (1+\xa, 26+\ya) [white](1A1) [label=270: ${}_3{}_2{}_1$, label=180: $\triangleright$]{};
\node at (0+\xa, 26+\ya)[help](1A1)[label=180:${\bm a}_1^3$]{};
%%%%%%%%%%%%%%%%%%%%%%%%%%%%%%%%%%
 \node at (1+\xa, 24+\yaaa) [white](1A2) [ label=270: ${}_3{}_2$,label=180: $\triangleright$]{};
\node at (0+\xa, 24+\yaaa)[help](1A2name)[label=180:${\bm a}_2^3$]{};
 \node at (1.5+\xa, 25+\yaaa) [white](2A3) [label=0: ${}_1$]{};
\node at (1+\xa, 25+\yaaa) [white](2A2) []{};
%%%%%%%%%%%%%%%%%%%%%%%%%%%%%
 \node at (1+\xaa, 21.5+\yaa) [white](3A1) [label=180: $\triangleright$,label=270: ${}_3$]{};
\node at (0+\xaa, 21.5+\yaa)[help](3Aname)[label=180:${\bm a}_3^3$]{};
 \node at (1.5+\xaa, 22.5+\yaa) [white](3A2) [label=90: $\mbox{ }\mbox{ }\mbox{ }\mbox{ }{}_2{}_1$]{};
\node at (1+\xaa, 22.5+\yaa) [white](3A3) [label=90: ${}_2$]{};
\node at (0.5+\xaa, 22.5+\yaa) [white](3A4) [label=90: ${}_1$]{};
%%%%%%%%%%%%%%%%%%%%%%%%%%%%%

 \node at (1+\xaa, 18+\yaa) [white](4A1) [label=270:${}_3$,label=180: $\triangleright$]{};
\node at (0+\xaa, 18+\yaa)[help](4Aname)[label=180:${\bm a}_4^3$]{};
 \node at (2+\xaa, 19+\yaa) [white](4A2) [label=0: ${}_2$]{};
\node at (2.25+\xaa, 20+\yaa) [white](4A3) [label=90: $\mbox{ }\mbox{ }\mbox{ }\mbox{ }{}_1$]{};
\node at (1.75+\xaa, 20+\yaa) [white](4A4) []{};
\node at (1+\xaa, 19+\yaa) [white](4A5) [label=0: ${}_2$]{};
\node at (1+\xaa, 20+\yaa) [white](4A7) []{};
\node at (0+\xaa, 19+\yaa) [white](4A6) [label=0: ${}_1$]{};
%%%%%%%%%%%%%%%%%%%%%%%%%%%%%

\putaway{
%%%%%%%%%%%%%%%%
\node at (1, 14.5) [white](5A1) [label=180: $\triangleright$]{};
\node at (0, 14.5)[help](5Aname)[label=180:${\bm a}_5^3$]{};
 \node at (1, 15.5) [white](5A2) [label=0: ${}_2$]{};
\node at (1.25, 16.5) [white](5A3) [label=90: $\mbox{ }\mbox{ }\mbox{ }\mbox{ }{}_1$]{};
\node at (.75, 16.5) [white](5A4) []{};
\node at (0, 15.5) [white](5A5) [label=90: $\mbox{ }\mbox{ }\mbox{ }{}_2{}_1$]{};
\node at (-1, 15.5) [white](5A6) [label=90: ${}_1$]{};
 \node at (2, 15.5) [white](5A7) [label=90: $\mbox{ }\mbox{ }\mbox{ }{}_3{}_2$]{};
 \node at (3, 15.5) [white](5A8) [label=90: $\mbox{ }\mbox{ }\mbox{ }\mbox{ }\mbox{ }{}_3{}_2{}_1$]{};
%%%%%%%%%%%%%%%%%%%%%%%%%%%%%%%
\node at (1, 11) [white](6A1) [label=180: $\triangleright$]{};
\node at (0, 11)[help](6Aname)[label=180:${\bm a}_6^3$]{};
 \node at (0, 12) [white](6A2) [label=0: ${}_2$]{};
\node at (.25, 13) [white](6A3) [label=90: $\mbox{ }\mbox{ }{}_1$]{};
\node at (-0.25, 13) [white](6A4) []{};
\node at (-1, 12) [white](6A5) [label=90: $\mbox{ }\mbox{ }{}_2{}_1$]{};
\node at (-2, 12) [white](6A6) [label=90: ${}_1$]{};
 \node at (1, 12) [white](6A7) [label=0: ${}_3{}_2$]{};
 \node at (1, 13) [white](6A8) []{};
\node at (2.3, 12) [white](6A9) [label=0: ${}_3{}_2$]{};
\node at (2.3, 13) [white](6A10) []{};
\node at (2.8, 13) [white](6A11) [label=90: ${}_1$]{};
%%%%%%%%%%%%%%%%%%%%%%%%%%%%%%%
\node at (1, 7.5) [white](7A1) [label=180: $\triangleright$]{};
\node at (0, 7.5)[help](7Aname)[label=180:${\bm a}_7^3$]{};
 \node at (0, 8.5) [white](7A2) [label=0: ${}_2$]{};
\node at (.25, 9.5) [white](7A3) [label=90: $\mbox{ }{}_1$]{};
\node at (-0.25, 9.5) [white](7A4) []{};
\node at (-1, 8.5) [white](7A5) [label=90: $\mbox{ }\mbox{ }{}_2{}_1$]{};
\node at (-2, 8.5) [white](7A6) [label=90: $\mbox{ }{}_1$]{};
 \node at (1, 8.5) [white](7A7) [label=0: ${}_3$]{};
 \node at (1.25, 9.5) [white](7A8) [label=90: ${}_2$]{};
\node at (0.75, 9.5) [white](7A9) [label=90: $\mbox{ }{}_1$]{};
\node at (2.3, 8.5) [white](7A10) [label=0: ${}_3$]{};
\node at (2.3, 9.5) [white](7A11) [label=90: ${}_2$]{};
\node at (2.8, 9.5) [white](7A12) [label=90: $\mbox{ }\mbox{ }\mbox{ }\mbox{ }{}_2{}_1$]{};
\node at (1.8, 9.5) [white](7A13) [label=90: ${}_1$]{};
%%%%%%%%%%%%%%%%%%%%%%%%%%%%%%%%%%%%%%
\node at (1, 3) [white](8A1) [label=180: $\triangleright$]{};
\node at (0, 3)[help](8Aname)[label=180:${\bm a}_8^3$]{};
 \node at (0, 4) [white](8A2) [label=0: ${}_2$]{};
\node at (.25, 5) [white](8A3) [label=90: $\mbox{ }{}_1$]{};
\node at (-0.25, 5) [white](8A4) []{};
\node at (-1, 4) [white](8A5) [label=90: $\mbox{ }\mbox{ }{}_2{}_1$]{};
\node at (-2, 4) [white](8A6) [label=90: $\mbox{ }{}_1$]{};
 \node at (1, 4) [white](8A7) [label=0: ${}_3$]{};
 \node at (1.25, 5) [white](8A8) [label=0: ${}_2$]{};
\node at (0.75, 5) [white](8A9) [label=90: $\mbox{ }{}_1$]{};
\node at (2.8, 4) [white](8A10) [label=0: ${}_3$]{};
\node at (2.8, 5) [white](8A11) [label=0: ${}_2$]{};
\node at (2.8, 6) [white](8A17) []{};

\node at (3.75, 5) [white](8A12) [label=0: ${}_2$]{};
\node at (4, 6) [white](8A15) [label=0: ${}_1$]{};
\node at (3.5, 6) [white](8A16) []{};
\node at (2.3, 5) [white](8A13) [label=90: ${}_1$]{};
\node at (1.25, 6) [white](8A14) []{};
%%%%%%%%%%%%%%%%%%%%%%%%%%%%%
}

%%%%%%%%%%%%%%%%%%%
 \node at (8+\xb, 26+\yb) [white](1B1) [label=270: ${}_3{}_2$, label=180: $\triangleright$]{};
\node at (9+\xb+\xc, 26+\yb)[help](1B1)[label=0:${\bm b}_1^3$]{};
%%%%%%%%%%%%%%%%%%%%%%%%%%%%%%%%%%
 \node at (8+\xb, 24+\ybbb) [white](1B2) [ label=270: ${}_3{}_2$,label=180: $\triangleright$]{};
\node at (9+\xb+\xc, 24+\ybbb )[help](1B2name)[label=0:${\bm b}_2^3$]{};
 
\node at (8+\xb, 25+\ybbb) [white](2B2) []{};
%%%%%%%%%%%%%%%%%%%%%%%%%%%%%
 \node at (8+\xbb, 21.5+\ybb) [white](3B1) [label=270:${}_3$,label=180: $\triangleright$]{};
\node at (9+\xbb+\xc, 21.5+\ybb )[help](3Bname)[label=0:${\bm b}_3^3$]{};

\node at (8+\xbb, 22.5+\ybb) [white](3B3) [label=90: ${}_2$]{};
\node at (7.5+\xbb, 22.5+\ybb) [white](3B4) [label=90: ${}_1$]{};
%%%%%%%%%%%%%%%%%%%%%%%%%%%%%

 \node at (8+\xbb, 18+\ybb) [white](4B1) [label=270:${}_3$,label=180: $\triangleright$]{};
\node at (9+\xbb+\xc, 18+\ybb )[help](4Bname)[label=0:${\bm b}_4^3$]{};

\node at (8+\xbb, 19+\ybb) [white](4B5) [label=0: ${}_2$]{};
\node at (8+\xbb, 20+\ybb) [white](4B7) []{};
\node at (7+\xbb, 19+\ybb) [white](4B6) [label=0: ${}_1$]{};
%%%%%%%%%%%%%%%%%%%%%%%%%%%%%

\putaway{
\node at (8, 14.5) [white](5B1) [label=180: $\triangleright$]{};
\node at (9, 14.5)[help](5Bname)[label=0:${\bm b}_5^3$]{};
 \node at (8, 15.5) [white](5B2) [label=0: ${}_2$]{};
\node at (8.25, 16.5) [white](5B3) [label=90: $\mbox{ }\mbox{ }\mbox{ }\mbox{ }{}_1$]{};
\node at (7.75, 16.5) [white](5B4) []{};
\node at (7, 15.5) [white](5B5) [label=90: $\mbox{ }\mbox{ }\mbox{ }{}_2{}_1$]{};
\node at (6, 15.5) [white](5B6) [label=90: ${}_1$]{};
 \node at (9, 15.5) [white](5B7) [label=90: $\mbox{ }\mbox{ }\mbox{ }\mbox{ }{}_3{}_2$]{};
 %%%%%%%%%%%%%%%%%%%%%%%%%%%%%
\node at (8, 11) [white](6B1) [label=180: $\triangleright$]{};
\node at (9, 11)[help](6Bname)[label=0:${\bm b}_6^3$]{};
 \node at (8, 12) [white](6B2) [label=0: ${}_2$]{};
\node at (8.25, 13) [white](6B3) [label=90: $\mbox{ }\mbox{ }{}_1$]{};
\node at (7.75, 13) [white](6B4) []{};
\node at (7, 12) [white](6B5) [label=90: $\mbox{ }\mbox{ }{}_2{}_1$]{};
\node at (6.3, 12) [white](6B6) [label=90: ${}_1$]{};
 \node at (9, 12) [white](6B7) [label=0: ${}_3{}_2$]{};
 \node at (9, 13) [white](6B8) []{};
%%%%%%%%%%
\node at (8, 7.5) [white](7B1) [label=180: $\triangleright$]{};
\node at (9, 7.5)[help](7Bname)[label=0:${\bm b}_7^3$]{};
 \node at (7, 8.5) [white](7B2) [label=0: ${}_2$]{};
\node at (7.25, 9.5) [white](7B3) [label=90: $\mbox{ }{}_1$]{};
\node at (6.75, 9.5) [white](7B4) []{};
\node at (6, 8.5) [white](7B5) [label=90: $\mbox{ }\mbox{ }{}_2{}_1$]{};
\node at (5, 8.5) [white](7B6) [label=90: $\mbox{ }{}_1$]{};
 \node at (8, 8.5) [white](7B7) [label=0: ${}_3$]{};
 \node at (8.25, 9.5) [white](7B8) [label=90: ${}_2$]{};
\node at (7.75, 9.5) [white](7B9) [label=90: $\mbox{ }{}_1$]{};
%%%%%%%%%%%%%%%%%%%%%%%%%%%%%

\node at (8, 3) [white](8B1) [label=180: $\triangleright$]{};
\node at (9, 3)[help](8Bname)[label=0:${\bm b}_8^3$]{};
 \node at (7, 4) [white](8B2) [label=0: ${}_2$]{};
\node at (7.25, 5) [white](8B3) [label=90: $\mbox{ }{}_1$]{};
\node at (6.75, 5) [white](8B4) []{};
\node at (6, 4) [white](8B5) [label=90: $\mbox{ }\mbox{ }{}_2{}_1$]{};
\node at (5, 4) [white](8B6) [label=90: $\mbox{ }{}_1$]{};
 \node at (8, 4) [white](8B7) [label=0: ${}_3$]{};
 \node at (8.25, 5) [white](8B8) [label=0: ${}_2$]{};
\node at (7.75, 5) [white](8B9) [label=90: $\mbox{ }{}_1$]{};
 \node at (8.25, 6) [white](8B10) []{};
%%%%%%%%%%%%%%%%%%%%%%%%%%%%%
}

\begin{scope}[>=stealth, auto]
\draw [->] (1A2) to  (2A2);
\draw [->] (1A2) to  (2A3);
%%%%
\draw [->] (3A1) to  (3A2);
\draw [->] (3A1) to  (3A3);
\draw [->] (3A1) to  (3A4);
%%%%%%%
\draw [->] (4A1) to  (4A2);
\draw [->] (4A1) to  (4A6);
\draw [->] (4A1) to  (4A5);
\draw [->] (4A2) to  (4A3);
\draw [->] (4A2) to  (4A4);
\draw [->] (4A5) to  (4A7);
%%%%%%
\putaway{
\draw [->] (5A1) to  (5A2);
\draw [->] (5A1) to  (5A6);
\draw [->] (5A1) to  (5A5);
\draw [->] (5A2) to  (5A3);
\draw [->] (5A2) to  (5A4);
\draw [->] (5A1) to  (5A7);
\draw [->] (5A1) to  (5A8);
}

%%%%%%

\draw [->] (1B2) to  (2B2);

\draw [->] (3B1) to  (3B3);
\draw [->] (3B1) to  (3B4);
%%%%%%%

\draw [->] (4B1) to  (4B6);
\draw [->] (4B1) to  (4B5);

\draw [->] (4B5) to  (4B7);
%%%%%%%%%%%%%%%%%%%%

\putaway{
%%%%%%%%%%%%%%%%%%%%%
\draw [->] (5B1) to  (5B2);
\draw [->] (5B1) to  (5B6);
\draw [->] (5B1) to  (5B5);
\draw [->] (5B2) to  (5B3);
\draw [->] (5B2) to  (5B4);
\draw [->] (5B1) to  (5B7);

%%%%%%%%%%%%%%%%%%%%%
\draw [->] (6B1) to  (6B2);
\draw [->] (6B1) to  (6B6);
\draw [->] (6B1) to  (6B5);
\draw [->] (6B2) to  (6B3);
\draw [->] (6B2) to  (6B4);
\draw [->] (6B1) to  (6B7);
\draw [->] (6B7) to  (6B8);
%%%%%%%%%%%%%%%%%%%%%
\draw [->] (6A1) to  (6A2);
\draw [->] (6A1) to  (6A6);
\draw [->] (6A1) to  (6A5);
\draw [->] (6A2) to  (6A3);
\draw [->] (6A2) to  (6A4);
\draw [->] (6A1) to  (6A7);
\draw [->] (6A7) to  (6A8);
\draw [->] (6A1) to  (6A9);
\draw [->] (6A9) to  (6A10);
\draw [->] (6A9) to  (6A11);
%%%%%%%%%%%%%%%%%%%%%
\draw [->] (7A1) to  (7A2);
\draw [->] (7A1) to  (7A6);
\draw [->] (7A1) to  (7A5);
\draw [->] (7A2) to  (7A3);
\draw [->] (7A2) to  (7A4);
\draw [->] (7A1) to  (7A7);
\draw [->] (7A7) to  (7A8);
\draw [->] (7A7) to  (7A9);
\draw [->] (7A1) to  (7A10);
\draw [->] (7A10) to  (7A11);
\draw [->] (7A10) to  (7A12);
\draw [->] (7A10) to  (7A13);
%%%%%%%%%%%%%%%%%%%%%
\draw [->] (8A1) to  (8A2);
\draw [->] (8A1) to  (8A6);
\draw [->] (8A1) to  (8A5);
\draw [->] (8A2) to  (8A3);
\draw [->] (8A2) to  (8A4);
\draw [->] (8A1) to  (8A7);
\draw [->] (8A7) to  (8A8);
\draw [->] (8A7) to  (8A9);
\draw [->] (8A1) to  (8A10);
\draw [->] (8A10) to  (8A11);
\draw [->] (8A10) to  (8A12);
\draw [->] (8A10) to  (8A13);
\draw [->] (8A11) to  (8A17);
\draw [->] (8A8) to  (8A14);
\draw [->] (8A12) to  (8A15);
\draw [->] (8A12) to  (8A16);
%%%%%%%%%%%%%%%%%%%%%
\draw [->] (7B1) to  (7B2);
\draw [->] (7B1) to  (7B6);
\draw [->] (7B1) to  (7B5);
\draw [->] (7B2) to  (7B3);
\draw [->] (7B2) to  (7B4);
\draw [->] (7B1) to  (7B7);
\draw [->] (7B7) to  (7B8);
\draw [->] (7B7) to  (7B9);

%%%%%%%%%%%%%%%%%%%
\draw [->] (8B1) to  (8B2);
\draw [->] (8B1) to  (8B6);
\draw [->] (8B1) to  (8B5);
\draw [->] (8B2) to  (8B3);
\draw [->] (8B2) to  (8B4);
\draw [->] (8B1) to  (8B7);
\draw [->] (8B7) to  (8B8);
\draw [->] (8B7) to  (8B9);
\draw [->] (8B8) to  (8B10);
%%%%%%%%%%%%%%%%%%%%%%%%%%%%%%%%
}
\end{scope}

%\draw[help lines] (0,0) grid (15,30);
%%%%%%%%%%%%%%%%%%%%%%%%%%%%%%%%%%%
%%%%%%%%%%%%%%%%%%%%%%%%%%%%%
\node at (1, 14.5) [white](5A1) [label=180: $\triangleright$]{};
\node at (0, 14.5)[help](5Aname)[label=180:$\bm a_5^3$]{};
 \node at (1, 15.5) [white](5A2) [label=0: ${}_2$]{};
\node at (1.25, 16.5) [white](5A3) [label=90: $\mbox{ }\mbox{ }\mbox{ }\mbox{ }{}_1$]{};
\node at (.75, 16.5) [white](5A4) []{};
\node at (0, 15.5) [white](5A5) [label=90: $\mbox{ }\mbox{ }\mbox{ }{}_2{}_1$]{};
\node at (-1, 15.5) [white](5A6) [label=90: ${}_1$]{};
 \node at (2, 15.5) [white](5A7) [label=90: $\mbox{ }\mbox{ }\mbox{ }{}_3{}_2$]{};
 \node at (3, 15.5) [white](5A8) [label=90: $\mbox{ }\mbox{ }\mbox{ }\mbox{ }\mbox{ }{}_3{}_2{}_1$]{};
%%%%%%%%%%%%%%%%%%%%%%%%%%%%%%%
\node at (1, 11) [white](6A1) [label=180: $\triangleright$]{};
\node at (0, 11)[help](6Aname)[label=180:$\bm a_6^3$]{};
 \node at (0, 12) [white](6A2) [label=0: ${}_2$]{};
\node at (.25, 13) [white](6A3) [label=90: $\mbox{ }\mbox{ }{}_1$]{};
\node at (-0.25, 13) [white](6A4) []{};
\node at (-1, 12) [white](6A5) [label=90: $\mbox{ }\mbox{ }{}_2{}_1$]{};
\node at (-2, 12) [white](6A6) [label=90: ${}_1$]{};
 \node at (1, 12) [white](6A7) [label=0: ${}_3{}_2$]{};
 \node at (1, 13) [white](6A8) []{};
\node at (2.3, 12) [white](6A9) [label=0: ${}_3{}_2$]{};
\node at (2.3, 13) [white](6A10) []{};
\node at (2.8, 13) [white](6A11) [label=90: ${}_1$]{};
%%%%%%%%%%%%%%%%%%%%%%%%%%%%%%%
\node at (1, 7.5) [white](7A1) [label=180: $\triangleright$]{};
\node at (0, 7.5)[help](7Aname)[label=180:$\bm a_7^3$]{};
 \node at (0, 8.5) [white](7A2) [label=0: ${}_2$]{};
\node at (.25, 9.5) [white](7A3) [label=90: $\mbox{ }{}_1$]{};
\node at (-0.25, 9.5) [white](7A4) []{};
\node at (-1, 8.5) [white](7A5) [label=90: $\mbox{ }\mbox{ }{}_2{}_1$]{};
\node at (-2, 8.5) [white](7A6) [label=90: $\mbox{ }{}_1$]{};
 \node at (1, 8.5) [white](7A7) [label=0: ${}_3$]{};
 \node at (1.25, 9.5) [white](7A8) [label=90: ${}_2$]{};
\node at (0.75, 9.5) [white](7A9) [label=90: $\mbox{ }{}_1$]{};
\node at (2.3, 8.5) [white](7A10) [label=0: ${}_3$]{};
\node at (2.3, 9.5) [white](7A11) [label=90: ${}_2$]{};
\node at (2.8, 9.5) [white](7A12) [label=90: $\mbox{ }\mbox{ }\mbox{ }\mbox{ }{}_2{}_1$]{};
\node at (1.8, 9.5) [white](7A13) [label=90: ${}_1$]{};
%%%%%%%%%%%%%%%%%%%%%%%%%%%%%%%%%%%%%%
\node at (1, 3) [white](8A1) [label=180: $\triangleright$]{};
\node at (0, 3)[help](8Aname)[label=180:$\bm a_8^3$]{};
 \node at (0, 4) [white](8A2) [label=0: ${}_2$]{};
\node at (.25, 5) [white](8A3) [label=90: $\mbox{ }{}_1$]{};
\node at (-0.25, 5) [white](8A4) []{};
\node at (-1, 4) [white](8A5) [label=90: $\mbox{ }\mbox{ }{}_2{}_1$]{};
\node at (-2, 4) [white](8A6) [label=90: $\mbox{ }{}_1$]{};
 \node at (1, 4) [white](8A7) [label=0: ${}_3$]{};
 \node at (1.25, 5) [white](8A8) [label=0: ${}_2$]{};
\node at (0.75, 5) [white](8A9) [label=90: $\mbox{ }{}_1$]{};
\node at (2.8, 4) [white](8A10) [label=0: ${}_3$]{};
\node at (2.8, 5) [white](8A11) [label=0: ${}_2$]{};
\node at (2.8, 6) [white](8A17) []{};

\node at (3.75, 5) [white](8A12) [label=0: ${}_2$]{};
\node at (4, 6) [white](8A15) [label=0: ${}_1$]{};
\node at (3.5, 6) [white](8A16) []{};
\node at (2.3, 5) [white](8A13) [label=90: ${}_1$]{};
\node at (1.25, 6) [white](8A14) []{};
%%%%%%%%%%%%%%%%%%%%%%%%%%%%%
%%%%%%%%%%%%%%%%%%%%%%%%%%%%%
\node at (8, 14.5) [white](5B1) [label=180: $\triangleright$]{};
\node at (9, 14.5)[help](5Bname)[label=0:$\bm b_5^3$]{};
 \node at (8, 15.5) [white](5B2) [label=0: ${}_2$]{};
\node at (8.25, 16.5) [white](5B3) [label=90: $\mbox{ }\mbox{ }\mbox{ }\mbox{ }{}_1$]{};
\node at (7.75, 16.5) [white](5B4) []{};
\node at (7, 15.5) [white](5B5) [label=90: $\mbox{ }\mbox{ }\mbox{ }{}_2{}_1$]{};
\node at (6, 15.5) [white](5B6) [label=90: ${}_1$]{};
 \node at (9, 15.5) [white](5B7) [label=90: $\mbox{ }\mbox{ }\mbox{ }\mbox{ }{}_3{}_2$]{};
 %%%%%%%%%%%%%%%%%%%%%%%%%%%%%
\node at (8, 11) [white](6B1) [label=180: $\triangleright$]{};
\node at (9, 11)[help](6Bname)[label=0:$\bm b_6^3$]{};
 \node at (8, 12) [white](6B2) [label=0: ${}_2$]{};
\node at (8.25, 13) [white](6B3) [label=90: $\mbox{ }\mbox{ }{}_1$]{};
\node at (7.75, 13) [white](6B4) []{};
\node at (7, 12) [white](6B5) [label=90: $\mbox{ }\mbox{ }{}_2{}_1$]{};
\node at (6.3, 12) [white](6B6) [label=90: ${}_1$]{};
 \node at (9, 12) [white](6B7) [label=0: ${}_3{}_2$]{};
 \node at (9, 13) [white](6B8) []{};
%%%%%%%%%%
\node at (8, 7.5) [white](7B1) [label=180: $\triangleright$]{};
\node at (9, 7.5)[help](7Bname)[label=0:$\bm b_7^3$]{};
 \node at (7, 8.5) [white](7B2) [label=0: ${}_2$]{};
\node at (7.25, 9.5) [white](7B3) [label=90: $\mbox{ }{}_1$]{};
\node at (6.75, 9.5) [white](7B4) []{};
\node at (6, 8.5) [white](7B5) [label=90: $\mbox{ }\mbox{ }{}_2{}_1$]{};
\node at (5, 8.5) [white](7B6) [label=90: $\mbox{ }{}_1$]{};
 \node at (8, 8.5) [white](7B7) [label=0: ${}_3$]{};
 \node at (8.25, 9.5) [white](7B8) [label=90: ${}_2$]{};
\node at (7.75, 9.5) [white](7B9) [label=90: $\mbox{ }{}_1$]{};
%%%%%%%%%%%%%%%%%%%%%%%%%%%%%

\node at (8, 3) [white](8B1) [label=180: $\triangleright$]{};
\node at (9, 3)[help](8Bname)[label=0:$\bm b_8^3$]{};
 \node at (7, 4) [white](8B2) [label=0: ${}_2$]{};
\node at (7.25, 5) [white](8B3) [label=90: $\mbox{ }{}_1$]{};
\node at (6.75, 5) [white](8B4) []{};
\node at (6, 4) [white](8B5) [label=90: $\mbox{ }\mbox{ }{}_2{}_1$]{};
\node at (5, 4) [white](8B6) [label=90: $\mbox{ }{}_1$]{};
 \node at (8, 4) [white](8B7) [label=0: ${}_3$]{};
 \node at (8.25, 5) [white](8B8) [label=0: ${}_2$]{};
\node at (7.75, 5) [white](8B9) [label=90: $\mbox{ }{}_1$]{};
 \node at (8.25, 6) [white](8B10) []{};
%%%%%%%%%%%%%%%%%%%%%%%%%%%%%

\begin{scope}[>=stealth, auto]
\draw [->] (1A2) to  (2A2);
\draw [->] (1A2) to  (2A3);
%%%%
\draw [->] (3A1) to  (3A2);
\draw [->] (3A1) to  (3A3);
\draw [->] (3A1) to  (3A4);
%%%%%%%
\draw [->] (4A1) to  (4A2);
\draw [->] (4A1) to  (4A6);
\draw [->] (4A1) to  (4A5);
\draw [->] (4A2) to  (4A3);
\draw [->] (4A2) to  (4A4);
\draw [->] (4A5) to  (4A7);
%%%%%%
\draw [->] (5A1) to  (5A2);
\draw [->] (5A1) to  (5A6);
\draw [->] (5A1) to  (5A5);
\draw [->] (5A2) to  (5A3);
\draw [->] (5A2) to  (5A4);
\draw [->] (5A1) to  (5A7);
\draw [->] (5A1) to  (5A8);
%%%%%%
\draw [->] (1B2) to  (2B2);

\draw [->] (3B1) to  (3B3);
\draw [->] (3B1) to  (3B4);
%%%%%%%

\draw [->] (4B1) to  (4B6);
\draw [->] (4B1) to  (4B5);

\draw [->] (4B5) to  (4B7);
%%%%%%%%%%%%%%%%%%%%
\draw [->] (5B1) to  (5B2);
\draw [->] (5B1) to  (5B6);
\draw [->] (5B1) to  (5B5);
\draw [->] (5B2) to  (5B3);
\draw [->] (5B2) to  (5B4);
\draw [->] (5B1) to  (5B7);

%%%%%%%%%%%%%%%%%%%%%
\draw [->] (6B1) to  (6B2);
\draw [->] (6B1) to  (6B6);
\draw [->] (6B1) to  (6B5);
\draw [->] (6B2) to  (6B3);
\draw [->] (6B2) to  (6B4);
\draw [->] (6B1) to  (6B7);
\draw [->] (6B7) to  (6B8);
%%%%%%%%%%%%%%%%%%%%%
\draw [->] (6A1) to  (6A2);
\draw [->] (6A1) to  (6A6);
\draw [->] (6A1) to  (6A5);
\draw [->] (6A2) to  (6A3);
\draw [->] (6A2) to  (6A4);
\draw [->] (6A1) to  (6A7);
\draw [->] (6A7) to  (6A8);
\draw [->] (6A1) to  (6A9);
\draw [->] (6A9) to  (6A10);
\draw [->] (6A9) to  (6A11);
%%%%%%%%%%%%%%%%%%%%%
\draw [->] (7A1) to  (7A2);
\draw [->] (7A1) to  (7A6);
\draw [->] (7A1) to  (7A5);
\draw [->] (7A2) to  (7A3);
\draw [->] (7A2) to  (7A4);
\draw [->] (7A1) to  (7A7);
\draw [->] (7A7) to  (7A8);
\draw [->] (7A7) to  (7A9);
\draw [->] (7A1) to  (7A10);
\draw [->] (7A10) to  (7A11);
\draw [->] (7A10) to  (7A12);
\draw [->] (7A10) to  (7A13);
%%%%%%%%%%%%%%%%%%%%%
\draw [->] (8A1) to  (8A2);
\draw [->] (8A1) to  (8A6);
\draw [->] (8A1) to  (8A5);
\draw [->] (8A2) to  (8A3);
\draw [->] (8A2) to  (8A4);
\draw [->] (8A1) to  (8A7);
\draw [->] (8A7) to  (8A8);
\draw [->] (8A7) to  (8A9);
\draw [->] (8A1) to  (8A10);
\draw [->] (8A10) to  (8A11);
\draw [->] (8A10) to  (8A12);
\draw [->] (8A10) to  (8A13);
\draw [->] (8A11) to  (8A17);
\draw [->] (8A8) to  (8A14);
\draw [->] (8A12) to  (8A15);
\draw [->] (8A12) to  (8A16);
%%%%%%%%%%%%%%%%%%%%%
\draw [->] (7B1) to  (7B2);
\draw [->] (7B1) to  (7B6);
\draw [->] (7B1) to  (7B5);
\draw [->] (7B2) to  (7B3);
\draw [->] (7B2) to  (7B4);
\draw [->] (7B1) to  (7B7);
\draw [->] (7B7) to  (7B8);
\draw [->] (7B7) to  (7B9);

%%%%%%%%%%%%%%%%%%%
\draw [->] (8B1) to  (8B2);
\draw [->] (8B1) to  (8B6);
\draw [->] (8B1) to  (8B5);
\draw [->] (8B2) to  (8B3);
\draw [->] (8B2) to  (8B4);
\draw [->] (8B1) to  (8B7);
\draw [->] (8B7) to  (8B8);
\draw [->] (8B7) to  (8B9);
\draw [->] (8B8) to  (8B10);
%%%%%%%%%%%%%%%%%%%%%%%%%%%%%%%%
\end{scope}

\end{tikzpicture}

\end{center}
\caption{The pointed models  in the sets $\cls{A}^{3}$ and  $\cls{B}^{3}$.}
\label{fig:succinctnessModels3}
\end{figure}
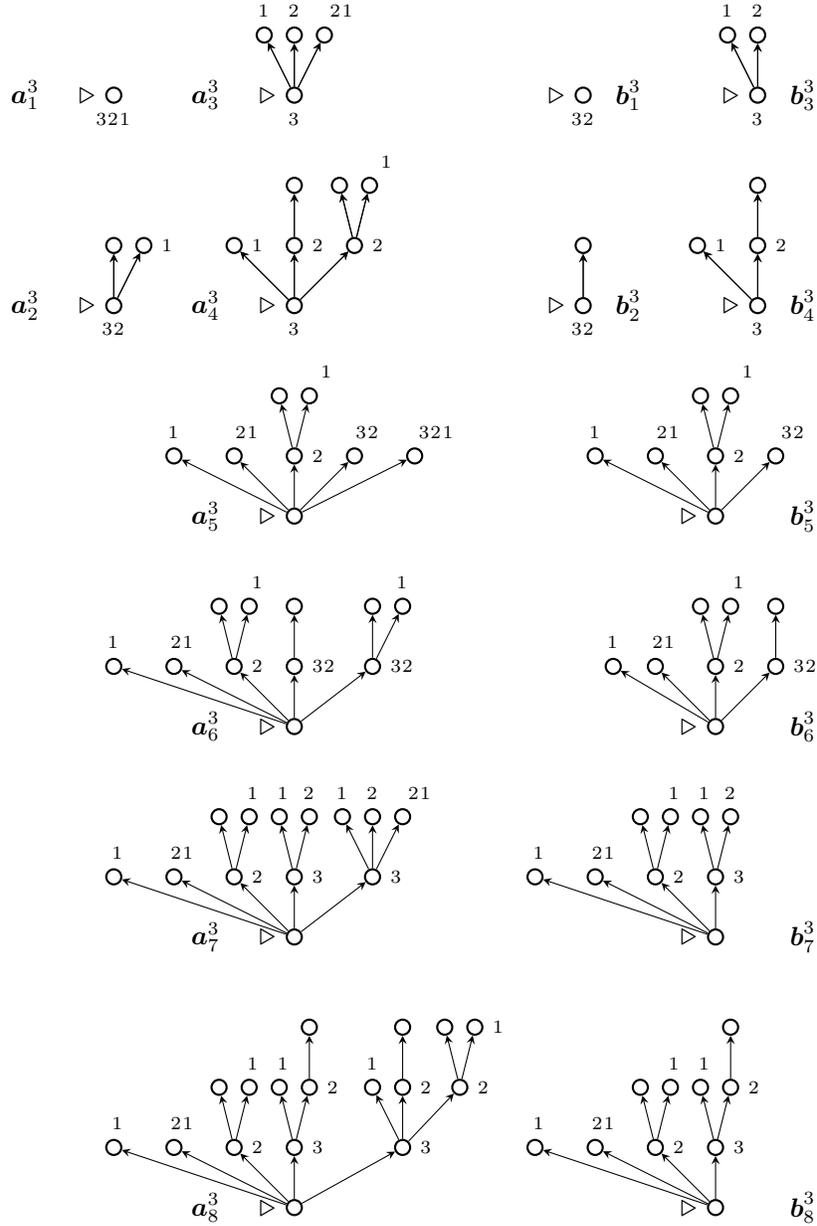

%%%%%%%%%%%%%%%%%%%%%%%%%%%%%%%%%%%%%%%%%%%%%%%%%%%%%%%%%%%%%%%%
\subsection{The lower bound on the number of moves in  $(\varphi_n, \cls{GL})$-{\meg}}\label{sectScattered}

The pointed models in $\cls A^n\circ \cls B^n$ are constructed so that the critical branch of $\frm A^n_i$ is always very similar to the critical branch of $\frm B^n_i$, differing only at their top point (see, for example, Figure \ref{fig:succinctnessModels3}). Let us make this precise. 

\begin{definition}\label{defSpecialPairs}
Suppose that $\frm M$, $\frm N$ are two finite models with successors and with roots $w$ and $v$, respectively.
We say that $r \in \mathbb N$ {\em distinguishes} $\frm M$ and $\frm N$
if $(\frm M,S_\frm M^r(w))$, $(\frm N,S^r_\frm N(v))$ differ on the truth of a propositional variable, 
but whenever $i<r$, then $(\frm M,S_\frm M^i(w))$, $(\frm N,S^i_\frm N(v))$ agree on the truth of all propositional variables. We call $r$ the {\em distinguishing value} of $\frm M$ and $\frm N$.
\end{definition}

Note that the distinguishing value of two models $\frm M,\frm N$ need not be defined, but when it is, it is unique. Moreover, the distinguishing values of the models we have constructed usually do exist.

\begin{lemma}\label{LemmDistinguishAA}
Fix $n\geq 1$ and $1\leq i < j \leq 2^n$. Suppose that $\frm A^n_i$ and $\frm A^n_j$ have the same critical height $m$. Then, $\frm A^n_i$ and $\frm A^n_j$ are distinguished by some $r<m$, satisfying the following properties:
\begin{enumerate}[label=(\alph*)]

\item\label{ItDZero} If $i\leq 2^{n - 1}$ and $2^{n-1}<j$, then $\frm A^{n}_i$ and $\frm A^{n}_j$ have distinguishing value $0$.

\item\label{ItDOne} If $\frm A^n_i$ and $\frm A^n_j$ have distinguishing value $r$, then $\frm A^{n+1}_i$, $\frm A^{n+1}_j$ have distinguishing value $r$ and $\frm A^{n+1}_{2^n+i}$, $\frm A^{n+1}_{2^n+j}$ have distinguishing value $r+1$.

\end{enumerate}
\end{lemma}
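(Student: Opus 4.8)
The plan is to argue by induction on $n$, after first recording how each critical branch is assembled from smaller ones. For a rooted model with successors $\frm M$ with root $w$ and critical height $m$, let me read off along its critical branch the list $\big(V_{\frm M}(w),V_{\frm M}(S_{\frm M}(w)),\ldots,V_{\frm M}(S^m_{\frm M}(w))\big)$ of valuations, and write $h(\frm M)$ for $m$. Definition \ref{def:models} provides two clauses, each with a transparent effect on this list. If $i\leq 2^n$, then $\frm A^{n+1}_i$ is a copy of $\frm A^n_i$ with $p_{n+1}$ adjoined at the root, so its list is that of $\frm A^n_i$ with $p_{n+1}$ added to the first entry only, and $h(\frm A^{n+1}_i)=h(\frm A^n_i)$. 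If $i=2^n+j$, then the critical branch of $\frm A^{n+1}_i$ is the fresh, atom-free root $a^{n+1}_{2^n+j}$ followed by the entire critical branch of $\frm A^{n+1}_j$, so its list is $\varnothing$ prepended to that of $\frm A^{n+1}_j$, and $h(\frm A^{n+1}_{2^n+j})=h(\frm A^{n+1}_j)+1$. Both facts are immediate from the copy clause and from the definition of $S_{\frm A^{n+1}_{2^n+j}}$.

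Item \ref{ItDOne} is then purely a statement about these lists and needs no induction. For the unshifted pair, adjoining $p_{n+1}$ to both roots neither creates nor removes a disagreement at position $0$ (both roots gain it) and leaves every later position untouched, so $\frm A^{n+1}_i,\frm A^{n+1}_j$ inherit the distinguishing value $r$ of $\frm A^n_i,\frm A^n_j$. For the shifted pair, prepending the same atom-free entry to the lists of $\frm A^{n+1}_i$ and $\frm A^{n+1}_j$ forces agreement at the new position $0$ and shifts every subsequent comparison by one, so the least disagreement moves from $r$ to $r+1$; the required points all exist because the critical heights grow by exactly one.

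With \ref{ItDOne} available I would establish the remaining assertions---existence of a distinguishing value $r<m$, and the value $0$ in the situation of \ref{ItDZero}---by induction on $n$. The base case $n=1$ is vacuous, as the only pair $(1,2)$ has critical heights $0\neq 1$. For the step, fix $i<j\leq 2^{n+1}$ of common critical height $m$ and distinguish three exhaustive cases by the position of $i,j$ relative to $2^n$. If $j\leq 2^n$, the copy clause preserves heights, so $\frm A^n_i,\frm A^n_j$ again share height $m$; the induction hypothesis gives $r<m$, and \ref{ItDOne} transports it unchanged. If $i>2^n$, write $i=2^n+i'$ and $j=2^n+j'$; then $h(\frm A^n_{i'})=h(\frm A^n_{j'})=m-1$, the induction hypothesis yields $r'<m-1$, and \ref{ItDOne} gives $r'+1<m$ for the shifted pair. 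In both of these cases the hypothesis of \ref{ItDZero} fails, so nothing further is needed there.

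The only case carrying real content is $i\leq 2^n<j$, which is exactly the hypothesis of \ref{ItDZero}: here the root of $\frm A^{n+1}_i$ satisfies $p_{n+1}$ by the copy clause, while $\frm A^{n+1}_j=\frm A^{n+1}_{2^n+j'}$ has an atom-free root by the prepending clause, so the two already differ on $p_{n+1}$ at position $0$ and their distinguishing value is $0$; and $0<m$ since $j>2^n$ forces $m\geq 1$. I expect the only real difficulty to be bookkeeping: keeping the two lists aligned under the clauses, verifying that the common-critical-height hypothesis descends correctly---in particular that lowering both heights by one in the second case preserves both the equality of heights and the strict bound, turning $r'<m-1$ into $r'+1<m$---and confirming that \ref{ItDZero}'s hypothesis lands precisely in the split case and not in the two cases resolved by the induction hypothesis.
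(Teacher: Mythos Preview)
Your proposal is correct and follows essentially the same approach as the paper: establish \ref{ItDZero} and \ref{ItDOne} directly from the two constructive clauses of Definition~\ref{def:models}, then derive the existence of $r<m$ by induction on $n$ with the three-way case split on the positions of $i,j$ relative to $2^n$. The only cosmetic difference is that the paper proves \ref{ItDZero} separately before the induction, whereas you fold it into the split case of the inductive step; your explicit ``list of valuations along the critical branch'' viewpoint makes the bookkeeping slightly more transparent than the paper's presentation.
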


\proof
Assume that $\frm A^{n+1}_i$ and $\frm A^{n+1}_j$ are so that their critical heights have the same value, $m$. To prove \ref{ItDZero}, it suffices to observe that in this case, the root of $\frm A^{n+1}_i$ satisfies $p_{n+1}$, but not the root of $\frm A^{n+1}_j$. For \ref{ItDOne}, if $\frm A^{n}_i$ and $\frm A^{n}_j$ are distinguished by $r$, it is easy to see that $\frm A^{n+1}_i$ and $\frm A^{n+1}_j$ are still distinguished by $r$, since the models agree on all variables $p_k$ with $k\leq n$, and $p_{n+1}$ is true exactly on both roots of the new models. Meanwhile, $S[\frm A^{n+1}_{2^n+i}] \cong \frm A^{n+1}_{i}$ and $S[\frm A^{n+1}_{2^n+j}] \cong \frm A^{n+1}_{j}$, hence $\frm A^{n+1}_i$ and $\frm A^{n+1}_j$ are distinguished by $r+1$ since we have added a new root to each model, both satisfying no atoms.

From this, the existence of a distinguishing value $r<m$ follows by a straightforward induction on $n$. The base case, $n=1$, follows vacuously since no two models in $\cls A^1$ have the same critical height. For the inductive case, we assume the claim for $n$ and prove it for $n+1$. If $i,j\leq 2^n$, then by the induction hypothesis we have that $\frm A^{n}_i$ and $\frm A^{n}_j$ are distinguished by some $r<m$, and by \ref{ItDOne}, $\frm A^{n+1}_i$ and $\frm A^{n+1}_j$ are still distinguished by $r$. If $i\leq 2^n$ and $2^n<j$, by \ref{ItDZero}, $\frm A^{n+1}_i$ and $\frm A^{n+1}_j$ are distinguished by $r=0$. Finally, if $2^n<i,j$, by the induction hypothesis, $\frm A^{n}_{i-2^n}$ and $\frm A^{n}_{j-2^n}$ are distinguished by some $r<m-1$, so that by \ref{ItDOne}, $\frm A^{n+1}_i$ and $\frm A^{n+1}_j$ are distinguished by $r+1<m$.

\endproof

\begin{lemma}\label{LemmDistinguishAB}
Fix $n\geq 1$ and $i \in [1,2^n]$. Then, $\frm A^n_i$ and $\frm B^n_i$ have the same critical height $m$, and are distinguished by $m$.
\end{lemma}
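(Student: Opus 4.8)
The plan is to establish both assertions simultaneously by induction on $n$, rephrasing them in terms of the sequence of valuations along the critical branch. For a finite rooted model with successors $\frm M$ with root $w$ and critical height $m$, write $\ell(\frm M)$ for the sequence $(V_\frm M(w), V_\frm M(S_\frm M(w)), \ldots, V_\frm M(S^m_\frm M(w)))$ of valuations read off the critical branch. In these terms the lemma asserts exactly that $\ell(\frm A^n_i)$ and $\ell(\frm B^n_i)$ have the same length $m+1$, agree on their first $m$ entries (those of index $0,\ldots,m-1$), and differ on the last entry (of index $m$): the common length records that the critical heights coincide, and the position of the first disagreement is the distinguishing value. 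I will also use the elementary fact, provable by a trivial induction, that only the atoms $p_1,\ldots,p_n$ occur anywhere in $\frm A^n_i$ and $\frm B^n_i$.

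For the base case $n=1$ I would inspect the two pairs directly. When $i=1$, both $\frm A^1_1$ and $\frm B^1_1$ are single points, so $m=0$, and they differ at the root because $p_1$ holds in the former but not the latter; hence the distinguishing value is $0=m$. When $i=2$, unwinding the $i>2^0$ clause of Definition \ref{def:models} shows that each critical branch has length two, an atom-free root followed by $a^1_1$, respectively $b^1_1$; the index-$0$ entries are both empty and so agree, while the index-$1$ entries differ on $p_1$, giving $m=1$ and distinguishing value $1$.

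For the inductive step I would assume the claim at level $n$ and process the two clauses of Definition \ref{def:models} in order, since the clause for $i>2^n$ depends on that for $i\leq 2^n$ at the same level $n+1$. When $i\leq 2^n$, the model $\frm A^{n+1}_i$ is a copy of $\frm A^n_i$ with $p_{n+1}$ adjoined at the root (and similarly for $\frm B$), leaving the successor function and the entire critical branch intact; thus $\ell(\frm A^{n+1}_i)$ arises from $\ell(\frm A^n_i)$ only by inserting $p_{n+1}$ into the index-$0$ entry, and likewise on the $\frm B$ side. Because $p_{n+1}$ is fresh and is added symmetrically to both roots, this neither creates a disagreement at index $0$ (when $m>0$ the index-$0$ entries were equal and stay equal) nor removes one (when $m=0$ they already differed on some $p_k$ with $k\leq n$), so the common critical height $m$ and distinguishing value $m$ are inherited. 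When $i=2^n+j$ with $1\leq j\leq 2^n$, the equation $S_{\frm A^{n+1}_{2^n+j}}=S_{\frm A^{n+1}_j}\cup\{(a^{n+1}_{2^n+j},a^{n+1}_j)\}$ makes the critical branch of $\frm A^{n+1}_{2^n+j}$ the atom-free new root followed verbatim by the critical branch of $\frm A^{n+1}_j$ (equivalently $S[\frm A^{n+1}_{2^n+j}]\cong\frm A^{n+1}_j$, cf.\ Lemma \ref{LemmSuccessor}), and symmetrically for $\frm B$. Hence $\ell(\frm A^{n+1}_{2^n+j})$ and $\ell(\frm B^{n+1}_{2^n+j})$ are the empty valuation prepended to $\ell(\frm A^{n+1}_j)$ and $\ell(\frm B^{n+1}_j)$ respectively; by the case just handled these underlying sequences agree below the top and differ at the top with common length $m'+1$, and prepending an identical empty entry raises every index by one, giving shared critical height $m'+1$ and distinguishing value $m'+1$.

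The argument is little more than bookkeeping, and the single delicate point is the index-$0$ analysis in the case $i\leq 2^n$, where I must ensure that adjoining the fresh variable $p_{n+1}$ to both roots keeps the first disagreement exactly at index $m$ rather than shifting it to $0$ or erasing it. This is precisely why it matters that $p_{n+1}$ does not occur in the level-$n$ models and is introduced on both sides at once; granting that, the propagation of the critical-branch structure through the two clauses is entirely routine.
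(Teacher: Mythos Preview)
Your proof is correct and follows essentially the same approach as the paper's: induction on $n$, with the base case handled by direct inspection of the four models in $\cls A^1\cup\cls B^1$, and the inductive step split into the clauses $i\le 2^n$ (where adjoining the fresh atom $p_{n+1}$ symmetrically to both roots preserves the distinguishing value) and $i>2^n$ (where prepending an atom-free root shifts the distinguishing value up by one, appealing to the already-established case $i\le 2^n$ at level $n+1$). Your rephrasing via the valuation sequence $\ell(\frm M)$ is a convenient bookkeeping device but does not change the argument's substance.
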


\proof
Proceed by induction on $n$. The base case follows from inspecting $\cls A^1,\cls B^1$, depicted in Figure \ref{fig:succinctnessModels}. For the inductive step, we assume the claim for $n$. If $i\leq 2^n$, since $\pointed A^{n+1}_i$ and $\pointed B^{n+1}_i$ are based on the same frames as $\pointed A^{n}_i$ and $\pointed B^{n}_i$, respectively, it follows from the induction hypothesis that they share the same critical height, $m$. Then, for $k<m$, $S^k[\pointed a^{n}_i]$ and $S^k[\pointed b^{n}_i]$ agree on all atoms by the induction hypothesis, and hence $S^k[\pointed a^{n+1}_i]$ and $S^k[\pointed b^{n+1}_i]$ agree on all atoms too, including $p_{n+1}$, which is true precisely on the roots of both models. Moreover, $S^m[\pointed a^{n}_i]$ and $S^m[\pointed b^{n}_i]$ disagree on some atom (in fact, on $p_1$), hence so too do $S^m[\pointed a^{n+1}_i]$ and $S^m[\pointed b^{n+1}_i]$.

If $2^n<i$, then $S[\pointed a^{n+1}_i] \cong  \pointed a^{n+1}_{i-2^n} $ and $S [\pointed b^{n+1}_i] \cong \pointed b^{n}_{i-2^n}$, which by the previous case share the same critical height, $m$. It follows that $\pointed a^{n+1}_i$ and $\pointed b^{n+1}_i$ also share the same critical height, $m +1$. Moreover, $\pointed a^{n+1}_i$ and $\pointed b^{n+1}_i$ agree on all atoms (they are all false), and since $S^{r+1}[\pointed a^{n+1}_i] \cong S^{r}[\pointed a^{n}_i] $ and $S^{r+1}[\pointed b^{n+1}_i] \cong S^{r}[\pointed b^{n}_i] $ for $r\leq m$, it follows once again by the previous case that they share the same atoms for $r<m$ and disagree on some atom for $r=m$.
\endproof

By {\em twins of height $k$} we mean a pair of the form $(S^k[\pointed a^n_i],S^k[\pointed b^n_i])$, where $i\leq 2^n$ and both expressions are defined.
If $\cls{L}$ is a set of pointed models from $\cls A^n$ and $\cls R$ from $\cls B^n$,
we say that there are twins of height $k$ in 
$\cls{L}\circ\cls{R}$ if there are twins $(S^k[\pointed a^n_i],S^k[\pointed b^n_i])$ such that 
$S^k[\pointed a^n_i] \in \cls{L}$ and $S^k[\pointed b^n_i]\in \cls{R}$.

For example, the pointed models 
$(\pointed a^{n+1}_{2^n+1} , \pointed b^{n+1}_{2^n+1})$ 
shown in Figure~\ref{fig:succinctnessModelsN} are twins of height zero, and $(S[\pointed a^{n+1}_{2^n+1}] , S[\pointed b^{n+1}_{2^n+1}])$ are twins of height one.
Note that the two pairs share the same models, and vary only on the evaluation point: $\pointed a^{n+1}_{2^n+1}$ and $\pointed b^{n+1}_{2^n+1}$ are evaluated at their respective roots, $S[\pointed a^{n+1}_{2^n+1}]$ and $S[\pointed b^{n+1}_{2^n+1}]$ at the rightmost daughters of these roots.

The following lemma tells us that, while models $\frm A^n_i$ and $\frm B^n_j$ can never be bisimilar (since one satisfies $\varphi_n$ but the other does not), they can come quite close to being so.

\begin{lemma}\label{LemmBoxmove}
Fix $n\geq 1$ and $r\geq 0$.
\begin{enumerate}[label=(\roman*)]

\item\label{ItSpecialPairs} If $(\pointed a,\pointed b)$ are twins of height $r$ in $\cls A^n\circ \cls B^n$ and $\pointed a' \in \nd \pointed a$ is such that $\pointed a'\not = S [ \pointed a ]$, it follows that there is $\pointed b'\in \nd\pointed b$ such that $\pointed a' \bis \pointed b'$

\item\label{ItBoxOne}  If $(\pointed a,\pointed b)$ are twins of height $r$ in $\cls A^n\circ \cls B^n$ and $\pointed b' \in \nd \pointed b$, it follows that there is $\pointed a' \in \nd \pointed a$ such that $\pointed a' \bis \pointed b'$.

\item\label{ItBoxTwo} If $1 \leq i<j\leq 2^n$, ${\frm A}^n_i$ and ${\frm B}^n_j$ have the same critical height $m>r$, and they
are distinguished by $r$, then there is $\pointed b'\in \nd S^{r}[{\pointed b}^n_j]$ such that $S^{r+1}[{\pointed a}^n_i] \bis \pointed b'$.

\end{enumerate}

\end{lemma}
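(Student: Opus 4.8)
The plan is to establish all three items by a single induction on $n$, proving \ref{ItSpecialPairs} and \ref{ItBoxOne} first and then \ref{ItBoxTwo}, with the base case $n=1$ read off from Figure \ref{fig:succinctnessModels}. The one tool used everywhere is that an isomorphism of generated subframes is a bisimulation: by the generated-submodel lemma, whenever the subframe generated by $w$ in $\frm F$ is isomorphic to the one generated by $v$ in $\frm G$ via a map taking $w$ to $v$, we have $(\frm F,w)\bis(\frm G,v)$. Using this I first reduce twins of arbitrary height to twins of height $0$: iterating Lemma \ref{LemmSuccessor} shows that the subframe generated by $S^r[\pointed a^n_i]$ is isomorphic to some $\frm A^{n'}_{i'}$ and that generated by $S^r[\pointed b^n_i]$ to $\frm B^{n'}_{i'}$ with the \emph{same} indices, under an isomorphism matching successors and critical successors. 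Hence items \ref{ItSpecialPairs} and \ref{ItBoxOne} for height-$r$ twins follow from the height-$0$ case for a model of index at most $i$.

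For height-$0$ twins $(\pointed a^n_i,\pointed b^n_i)$ I split on the clauses of Definition \ref{def:models}. If $i\le 2^{n-1}$, then $\frm A^n_i,\frm B^n_i$ are copies of $\frm A^{n-1}_i,\frm B^{n-1}_i$ carrying the extra atom $p_n$ only at the root, so proper successors are unchanged and both items follow from the induction hypothesis at level $n-1$. If $i=2^{n-1}+j$, the root of $\frm B^n_i$ sees exactly $\coprod_{k=2}^{2^{n-1}}S[\frm A^n_k]\amalg\frm B^n_j$, whereas the root of $\frm A^n_i$ sees these same summands together with an additional copy of $\frm A^n_j$ whose root is the critical successor. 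Item \ref{ItBoxOne} is then immediate, since every successor of the $\frm B$-root sits in a summand embedded identically in $\frm A^n_i$, so the identical point on the $\frm A$ side is bisimilar.

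Item \ref{ItSpecialPairs} is the heart of the argument. A non-critical successor $\pointed a'$ of the $\frm A$-root is of one of three kinds. If it lies in a shared summand $S[\frm A^n_k]$ or in the shared copy of $\frm B^n_j$, the identical point on the $\frm B$ side is bisimilar. If it lies strictly below $a^n_j$ and is not the critical successor of $a^n_j$, then since $\frm A^n_j$ is a copy of $\frm A^{n-1}_j$, item \ref{ItSpecialPairs} of the induction hypothesis applied to $(\pointed a^{n-1}_j,\pointed b^{n-1}_j)$ provides a match, which transports into the copy of $\frm B^n_j$ seen by the $\frm B$-root. The remaining case $\pointed a'=S[\pointed a^n_j]$ is handled by the key observation that, by Lemma \ref{LemmSuccessor}, the generated subframe of $S[\pointed a^n_j]$ is isomorphic to $S[\frm A^n_j]$ — which, as long as $j\ge 2$, is literally one of the summands seen by \emph{both} roots; the root of that summand on the $\frm B$ side is then the required $\pointed b'$. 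When $j=1$ the copy of $\frm A^n_1$ is a single point and this case is vacuous.

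Finally, for item \ref{ItBoxTwo} I would first transfer the problem to the $\frm A$ side: as $r<m$, by Lemma \ref{LemmDistinguishAB} the critical branches of $\frm A^n_j$ and $\frm B^n_j$ agree below height $m$, so it suffices to exhibit a non-critical successor $\pointed c$ of $S^r[\pointed a^n_j]$ with $\pointed c\bis S^{r+1}[\pointed a^n_i]$ and then apply item \ref{ItSpecialPairs} to the height-$r$ twins $(S^r[\pointed a^n_j],S^r[\pointed b^n_j])$ to move $\pointed c$ to a successor of $S^r[\pointed b^n_j]$. The $\frm A$-versus-$\frm A$ claim is proven by induction on $n$ steered by Lemma \ref{LemmDistinguishAA}: when $i\le 2^{n-1}<j$ the distinguishing value is $0$ (item \ref{ItDZero}), and $S[\pointed a^n_i]$ is matched by the summand $S[\frm A^n_i]$ seen by the root $a^n_j$, which exists because $m>r$ forces $i\ge 2$; the cases with $i,j$ on the same side of $2^{n-1}$ are reduced, via the increment rules of item \ref{ItDOne}, to the induction hypothesis at level $n-1$ with distinguishing value $r$ or $r-1$. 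The main obstacle I anticipate is precisely this last coordination in item \ref{ItBoxTwo} — aligning the distinguishing value $r$ with the depth at which the matching $\frm A$-summand first becomes visible and justifying the transfer across the critical branches of $\frm A^n_j$ and $\frm B^n_j$ — together with keeping the nested induction well-founded, which the strict decrease of either the level $n$ or the model index makes possible.
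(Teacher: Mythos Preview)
Your argument for items \ref{ItSpecialPairs} and \ref{ItBoxOne} is correct and is essentially the paper's proof reorganised: where you first reduce height-$r$ twins to height-$0$ twins via the generated-submodel isomorphism from Lemma \ref{LemmSuccessor} and then split on the two clauses of Definition \ref{def:models}, the paper weaves both steps into a single induction, handling the ``$a$ not the root'' sub-case (your height-reduction) inside the case $i>2^{n}$. Your three kinds of non-critical successors in item \ref{ItSpecialPairs} match the paper's three sub-cases exactly, including the observation that the troublesome case $\pointed a'=S[\pointed a^n_j]$ is covered by the summand $S[\frm A^n_j]$ (with the $j=1$ case vacuous).

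For item \ref{ItBoxTwo} you take a genuinely different route. The paper proves the claim directly by the three-way case split on the positions of $i,j$ relative to $2^{n}$, finding the bisimilar point inside $\frm B^{n+1}_j$ each time. You instead first prove an auxiliary $\frm A$-versus-$\frm A$ statement (a non-critical successor of $S^r[\pointed a^n_j]$ bisimilar to $S^{r+1}[\pointed a^n_i]$) by the same case split steered by Lemma \ref{LemmDistinguishAA}, and then invoke item \ref{ItSpecialPairs} on the height-$r$ twins $(S^r[\pointed a^n_j],S^r[\pointed b^n_j])$ to transport that successor to the $\frm B$ side. This is valid: your use of Lemma \ref{LemmDistinguishAB} correctly converts the hypothesis ``$\frm A^n_i$ and $\frm B^n_j$ distinguished by $r$'' into ``$\frm A^n_i$ and $\frm A^n_j$ distinguished by $r$'' (since $r<m$), and the well-foundedness you flag is indeed secured by the decrease in $n$ via the two sub-cases. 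Your route is arguably more modular, since it recycles item \ref{ItSpecialPairs} rather than repeating its case structure; the paper's direct approach avoids the intermediate $\frm A$-versus-$\frm A$ lemma at the cost of a small amount of duplication.
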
  

\begin{proof}
The three claims are proven by induction on $n$, and their proofs have a very similar structure.\\

\noindent{\sc Claim} \ref{ItSpecialPairs}. The base case (for $n=1$) follows by observing Figure \ref{fig:succinctnessModels}. Indeed, the only twins are $(\pointed a^1_1,\pointed b^1_1)$, $(\pointed a^1_2,\pointed b^1_2)$ and $(S[\pointed a^1_2],S[\pointed b^1_2])$. Of these, only for $(\pointed a^1_2,\pointed b^1_2)$ can we choose $\pointed a'$ as in the antecedent, so in the other cases the claim is vacuously true. But in this case, we have that $\pointed a'$ must be $\frm A^1_2$ evaluated at the top-left point, which is clearly locally bisimilar to $\frm B^1_2$ evaluated at its top point.

Otherwise, assume the claim for $n$, and let us establish it for $n+1$. Write $\pointed a=(\frm A,a)$ and $\pointed b=(\frm B,b)$, and assume that $i\in[1,2^{n+1}]$ is so that $(\pointed a,\pointed b) = (S^r[\pointed a^{n+1}_i], S^r[ \pointed b^{n+1}_i])$. Let $\pointed a'=(\frm A,a')\in \nd\pointed a$ be such that $\pointed a'\not= S[\pointed a]$, and consider two cases, according to $i$.\\

\noindent {\bf Case $i\leq 2^n$.} Observe that $a'$ cannot be the root of $\frm A$, since $R_\frm A$ is irreflexive. It follows that $(\frm A,a') \bis (\frm A^n_i,a')$ since, by definition, $\frm A^n_i$ and $\frm A=\frm A^{n+1}_i$ disagree only at the root. By the induction hypothesis, there is $b'\in\dom{\frm B^n_i}$ such that $b\mathrel R_\frm B b'$ and $(\frm A^n_i,a') \bis (\frm B^n_i,b')$, but once again $b'$ cannot be the root so we must have that $(\frm B^n_i,b') \bis (\frm B,b') \in \nd \pointed b$, as claimed.\\

\noindent {\bf Case $i> 2^n$.} We consider two sub-cases. First, assume that $a$ is {\em not} the root of $\frm A$, so that $a\in \dom{S[\frm A]}$. In this case, by Lemma \ref{LemmSuccessor}, $(\frm A,a) \bis (\frm A^n_{i-2^n},a)$ and $(\frm B,b) \bis (\frm B^n_{i-2^n},b)$. We can then apply the case for $i\leq 2^n$ to find $b'\in \dom{S[\frm B]}$ such that $ b \mathrel R_\frm B b'$ and $(S[\frm B],b') \bis (S[\frm A],a')$. This gives us that $(\frm B,b') \bis (\frm A,a')$ as well.

Otherwise, assume that $a$ is the root of $\frm A$, so that $b$ is also the root of $\frm B$. If $a' \in \left |\coprod_{j=2}^{2^n} S[\frm A^{n+1}_j]\amalg \frm B^{n+1}_j\right |$, we can take $b'=a'$, and clearly $(\frm A,a')$ is locally bisimilar to $(\frm B,b')$. Otherwise, $a'\in \dom{S[\frm A]}$, and by the assumption $a'\not =  S_\frm A(a)$.

If $a'=S_\frm A(S_\frm A(a))$, we let $b'$ be the copy of $S_\frm A(S_\frm A(a))$ in $\coprod_{j=2}^{2^n} S[\frm A^{n+1}_j]$.
If not, since $S_\frm A(a)$ is the root of $S[\frm A]$, we have that $S_\frm A(a)\mathrel R_\frm A a'$. Since $S[\pointed a] \bis \pointed a^{n+1}_{i-2^n}$, we can apply the case for $i\leq 2^n$ to find $b'\in \dom{S[\frm B]}$ such that $S_\frm B(b)\mathrel R_\frm B b'$ and $(\frm B,b') \bis (\frm A,a')$. By transitivity we also have that $b\mathrel R_\frm B b'$, as needed.\\

\noindent {\sc Claim} \ref{ItBoxOne}. The base case can readily be verified for $\cls A^1\circ \cls B^1$ on Figure \ref{fig:succinctnessModels}.
\putaway{
For the inductive step, assume the claim for $n$; let us prove it for $n+1$. Suppose that $S^r[\pointed a^{n+1}_j]$ and $S^r[\pointed b^{n+1}_j]$ are both defined and $\pointed b = (\frm B^{n+1}_j,b)\in \nd S^r[\pointed b^{n+1}_j]$. If $j\leq 2^n$, then the induction hypothesis tells us that there is $(\frm A^n_j,a) \in \nd S^{r}[\pointed a^n_j]$ that is locally bisimilar to $(\frm B^{n}_j,b)$; but $(\frm A^n_j,a)\bis (\frm A^{n+1}_j,a)$ and $(\frm B^{n}_j,b) \bis (\frm B^{n+1}_j,b)$, as the models differ only at the roots (which $a,b$ are not), yielding the claim.

If $j > 2^n$, consider two cases. If $r=0$, then
\[b \in \left|\coprod^{2^n}_{k=2} S[\frm A^{n+1}_k] \amalg \frm B^{n+1}_{j-2^n} \right | \subseteq \left|\coprod^{2^n}_{k=2} S[\frm A^{n+1}_k] \amalg \frm B^{n+1}_{j-2^n} \amalg \frm A^{n+1}_{j-2^n}\right |,\]
i.e.~$b$ is a point of $\frm A^{n+1}_j$ as well and we can just take $\pointed a= (\frm A^{n+1}_{j},b)$. If instead $r>0$, then $b\in \dom{S[\frm B^{n+1}_j]}$, but $S[\pointed b^{n+1}_j] = \pointed b^{n+1}_{j-2^n}$, and by the previous case we can take $(\frm A^{n+1}_{j-2^n},a) \in \nd S^{r-1}[\pointed b^n_{l-2^n}]$ that is locally bisimilar to $(\frm B^{n+1}_{j-2^n},b)$. But then, $(\frm A^{n+1}_{j-2^n},a ) \bis (\frm A^{n+1}_{j},a ) \in  \nd S^{r}[\pointed a^{n+1}_j]$ and $(\frm B^{n+1}_{j-2^n},b) \bis \pointed b $, so that $(\frm A^{n+1}_{j},a ) \bis \pointed b$, as needed.
}
The inductive step follows the same structure as that for claim \ref{ItSpecialPairs} by swapping the roles of $\pointed a$ and $\pointed b$, except that in the case where $i> 2^n$ and $b$ is the root of $\frm B$, the proof is somewhat simplified as we always have $b' \in \left |\coprod_{j=2}^{2^n} S[\frm A^{n+1}_j]\amalg \frm B^{n+1}_j\right |$, and thus we can always take $a'=b'$.\\

\noindent {\sc Claim} \ref{ItBoxTwo}. It is obvious that the proposition is trivially true for $n=1$  because the respective critical heights of ${\frm A}^1_1$ and ${\frm B}^1_2$ are different.
The inductive step also follows a similar structure as before, but in order to apply the induction hypothesis we must also pay some attention to the distinguishing values.\\

\noindent{\bf Case $i,j\leq 2^n$.} By Lemma \ref{LemmDistinguishAA}\ref{ItDOne}, if $\frm A^{n+1}_i$ and $\frm B^{n+1}_j$ are distinguished by $r$, then so are $\frm A^{n}_i$ and $\frm B^{n}_j$, which by the induction hypothesis tells us that there is $(\frm B^n_j,b') \in \nd S^{r}[\pointed b^n_j]$ that is locally bisimilar to $S^{r+1}[\pointed a^n_i]$. Reasoning as in in the case for $i\leq 2^n$ in claim \ref{ItSpecialPairs}, this yields $S^{r+1}[\pointed a^{n+1}_i ]\bis (\frm B^{n+1}_j,b') \in \nd S^r[\pointed b^{n+1}_j]$.\\

\noindent{\bf Case $i,j > 2^n$.} We once again use Lemma \ref{LemmDistinguishAA}\ref{ItDOne} (twice) to see that if $\frm A^{n+1}_i$ and $\frm B^{n+1}_j$ are distinguished by $r$, then $\frm A^{n+1}_{i-2^n}$ and $\frm B^{n+1}_{j-2^n}$ are distinguished by $r-1$. By the case for $i,j\leq 2^n$, this tells us that there is $(\frm B^{n+1}_{j-2^n},b') \in \nd S^{r-1}[\pointed b^{n+1}_{j-2^n}]$ that is locally bisimilar to $S^{r}[\pointed a^{n+1}_{i-2^n}]$. Setting $\pointed b' = (\frm B^{n+1}_{j},b') $, we reason as in the proof of the case $i>2^n$ in claim \ref{ItSpecialPairs} to obtain $S^{r+1}[\pointed a^{n+1}_i] \bis \pointed b'$ and $\pointed b' \in \nd S^{r}[\pointed b^{n+1}_j]$.\\

\noindent{\bf Case $i \leq  2^n < j$.} By Lemma \ref{LemmDistinguishAA}\ref{ItDZero}, $\frm A^{n+1}_i$ and $\frm B^{n+1}_j$ are distinguished by $r = 0$. But, $\coprod_{k=2}^{2^n} S[\frm A^{n+1}_k]$ already contains a copy of $S[\pointed a^{n+1}_i]$, and we use this copy as $\pointed b'$.
\putaway{
If $j,l\leq 2^n$, then by Lemma \ref{LemmDistinguishAA}\ref{ItDOne}, if $\frm A^{n+1}_j$ and $\frm B^{n+1}_l$ are distinguished by $r$, then so are $\frm A^{n}_j$ and $\frm B^{n}_l$, which by the induction hypothesis tells us that there is $(\frm B^n_l,b) \in \nd S^{r}[\pointed b^n_l]$ that is locally bisimilar to $S^{r+1}[\pointed a^n_j]$, which as in the proof of Claim \ref{ItBoxOne} yields $S^{r+1}[\pointed a^{n+1}_j ]\bis (\frm B^{n+1}_l,b) \in \nd S^r[\pointed b^{n+1}_l]$.

If $j,l > 2^n$, we once again use Lemma \ref{LemmDistinguishAA}\ref{ItDOne} (twice) to see that if $\frm A^{n+1}_j$ and $\frm B^{n+1}_l$ are distinguished by $r$, then $\frm A^{n+1}_{j-2^n}$ and $\frm B^{n+1}_{l-2^n}$ are distinguished by $r-1$, which by the previous case tells us that there is $(\frm B^{n+1}_{l-2^n},b) \in \nd S^{r-1}[\pointed b^{n+1}_{l-2^n}]$ that is locally bisimilar to $S^{r}[\pointed a^{n+1}_{j-2^n}]$. Setting $\pointed b = (\frm B^{n+1}_{l-2^n},b) $, we reason as in the proof of Claim \ref{ItBoxOne} to obtain $S^{r+1}[\pointed a^{n+1}_j] \bis \pointed b$ and $\pointed b \bis (\frm B^{n+1}_l,b) \in \nd S^{r}[\pointed b^{n+1}_l]$, as needed.

Finally, if $j\leq  2^n<l$, we use Lemma \ref{LemmDistinguishAA}\ref{ItDZero} to see that $\frm A^{n+1}_j$ and $\frm B^{n+1}_l$ are distinguished by $r = 0$. But, $\coprod_{k=2}^{2^n} S[\frm A^{n+1}_k]$ already contains a copy of $S[\pointed a^{n+1}_j]$, and we use this copy as $\pointed b$.}
\end{proof}

\begin{example}
Lemma \ref{LemmBoxmove}\ref{ItBoxTwo} is best understood by looking at the models in 
Figures~\ref{fig:succinctnessModels2} and~\ref{fig:succinctnessModels3}.
A simple inspection of
Figure~\ref{fig:succinctnessModels2} is enough to see that 
$\pointed a^2_2$ and $\pointed b^2_3$
differ on the truth of $p_2$ and that $S[{\pointed a}^2_2]$ is locally bisimilar to ${\frm B^2_3}$ at its top-left point.
Meanwhile, in Figure \ref{fig:succinctnessModels3},  $\pointed a^3_2$ and 
$\pointed b^3_3 $ differ on the value of $p_2$, i.e., the smallest number for which the critical branches of ${\frm A}^3_2$ and 
${\frm B}^3_3$ differ on the truth of a propositional variable is zero. Obviously, $S[\pointed a^3_2]$
satisfies only $p_1$ and the same applies to the {\em left} successor point of $\pointed b^3_3$. 
In a similar way, we see that the smallest number for which the critical branches of  ${\frm A}^3_6$ and  ${\frm B}^3_7$
differ on the truth of a propositional variable is one because the rightmost daughters $s$ and $t$ of the roots of  
${\frm A}^3_6$ and ${\frm B}^3_7$, respectively, differ on the truth of $p_2$. Again, we have that the rightmost daughter of $s$ 
satisfies only $p_1$ and the same applies to the left successor of the only node in ${\frm B}^3_7$ that satisfies  $p_3$.
\end{example}

\putaway{
%%%%%%%%%%%%%%%%%%%%
%%%%%%%%%%%%%%%%%%%%%%%%
%%%%%%%%%%%%%%%%%%%%%%%
%%%%%%%%%%%%%%%%%%%%%
\begin{strategy}\label{StratLB}
For any twins  $({\frm A}^n_j, w)$, $({\frm B}^n_j, v)$ of height $k$ in a given position $\eta$ with
 $\cls{L}\circ\cls{R}$, when Hercules plays a $\pd$-move the Hydra replies with all pointed models from $\nd({\frm B}^n_j, v)$. 
Similarly, if Hercules plays a $\nd$-move, the Hydra replies with all the pointed models in  $\nd({\frm A}^n_j, v)$.\david{Please revise this to a full strategy that covers any move Upper Bound makes, including the bad moves.}\petar{I hope it's better now.}
 \end{strategy}
%%%%%%%%%%%%%%%%%%%%%%%
%%%%%%%%%%%%%%%%%%%%%%%%
%%%%%%%%%%%%%%%%%%%%%%%%%%
%%%%%%%%%%%%%%%%%%%%%%%%%%%%%%
}

Lemma \ref{LemmBoxmove} shows us that the moves that Hercules can make in order to win are, in fact, rather restricted. Below, for fixed $n\geq 1$, say that Hydra {\em plays amazingly} if she labels the root by $\cls A^n \circ \cls B^n$ and plays greedily.

\begin{lemma}\label{lemm:moves}
Assume that the Hydra plays amazingly.
For any node $\eta$ (not necessarily a leaf) in a closed game tree $T$
 for the $(\varphi_n, \cls{GL})$-{\meg}:

\begin{enumerate}[label=(\alph*)]

\item\label{ItTwinsDiamond} if there are twins $( \pointed a ,  \pointed b) $ in $\lft(\eta)\circ \rgt(\eta)$, then Hercules did not play a $\nd$-move in  $\eta$;

\item\label{ItTwinsBox} if there are twins $( \pointed a ,  \pointed b) $ in $\lft(\eta)\circ \rgt(\eta)$ and Hercules played a $\pd$-move, then he chose $S[\pointed a] \in \nd \pointed a$, and

\item\label{ItDoubleTwins} if there there are two pairs of twins $( \pointed a , \pointed b )$ and $( \pointed a' , \pointed b' )$ both of height $r$
 in $\lft(\eta)\circ \rgt(\eta)$ and $r$ distinguishes $\pointed a$ and $\pointed a'$, 
then Hercules did not play a $\pd$-move at $\eta$.

\end{enumerate}

\end{lemma}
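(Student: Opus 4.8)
The plan is to treat all three parts uniformly. Each asserts that a particular move is forbidden, so in each case I would assume for contradiction that Hercules makes the prohibited move on a node $\eta$ of the given closed tree $T$, and then exhibit a pair of locally bisimilar pointed models lying on the left and right of the single daughter node that the move creates. Since Hydra plays greedily, Lemma \ref{LemBisimLose} forbids any node of a closed tree from carrying such a pair, and this contradiction rules the move out. The mechanism supplying the bisimilar partner is always the matching clause of Lemma \ref{LemmBoxmove}, combined with the fact that greedy play dumps \emph{every} $\Box$-successor of each model on the opposite side into the daughter.

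For part \ref{ItTwinsDiamond}, suppose Hercules plays a $\nd$-move with twins $(\pointed a,\pointed b)$ present in $\lft(\eta)\circ\rgt(\eta)$. He must select some $\pointed b^\ast\in\nd\pointed b$ into the right of the daughter, and Lemma \ref{LemmBoxmove}\ref{ItBoxOne} furnishes $\pointed a^\ast\in\nd\pointed a$ with $\pointed a^\ast\bis\pointed b^\ast$; greedy play places all of $\nd\pointed a$, hence $\pointed a^\ast$, on the left, so $(\pointed a^\ast,\pointed b^\ast)$ is a bisimilar pair in the daughter, a contradiction. For part \ref{ItTwinsBox}, suppose Hercules plays a $\pd$-move and selects some $\pointed a^\ast\in\nd\pointed a$ with $\pointed a^\ast\neq S[\pointed a]$ onto the left; Lemma \ref{LemmBoxmove}\ref{ItSpecialPairs} produces $\pointed b^\ast\in\nd\pointed b$ with $\pointed a^\ast\bis\pointed b^\ast$, and greedy play puts all of $\nd\pointed b$ on the right, so the daughter again carries a bisimilar pair. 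Hence the only legal choice above $\pointed a$ is $S[\pointed a]$.

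The third part, \ref{ItDoubleTwins}, is where I expect the real work. Writing the two pairs as $(\pointed a,\pointed b)=(S^r[\pointed a^n_i],S^r[\pointed b^n_i])$ and $(\pointed a',\pointed b')=(S^r[\pointed a^n_j],S^r[\pointed b^n_j])$, I relabel so that $i<j$. If Hercules plays a $\pd$-move, then part \ref{ItTwinsBox} (just proven) forces $S[\pointed a]=S^{r+1}[\pointed a^n_i]$ onto the left of the daughter, while greedy play puts every element of $\nd\pointed b'=\nd S^r[\pointed b^n_j]$ on the right. I would then apply Lemma \ref{LemmBoxmove}\ref{ItBoxTwo} to the pair $\frm A^n_i,\frm B^n_j$ to obtain $\pointed b''\in\nd S^r[\pointed b^n_j]$ with $S^{r+1}[\pointed a^n_i]\bis\pointed b''$, giving the bisimilar pair $(S[\pointed a],\pointed b'')$ across the daughter and the desired contradiction.

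The crux, and the step I expect to be the main obstacle, is verifying the hypotheses of Lemma \ref{LemmBoxmove}\ref{ItBoxTwo}: that $\frm A^n_i$ and $\frm B^n_j$ share a common critical height $m>r$ and are distinguished by $r$, whereas the hypothesis only gives that $\frm A^n_i$ and $\frm A^n_j$ are distinguished by $r$. Here I would transfer the distinguishing value using Lemma \ref{LemmDistinguishAB}, which gives that $\frm A^n_j$ and $\frm B^n_j$ have the same critical height $m$ and agree everywhere along the critical branch at heights below $m$, while Lemma \ref{LemmDistinguishAA} guarantees $r<m$. Consequently $\frm A^n_j$ and $\frm B^n_j$ coincide on the critical branch at all heights $\le r$, so replacing $\frm A^n_j$ by $\frm B^n_j$ leaves the distinguishing behaviour against $\frm A^n_i$ untouched; hence $\frm A^n_i$ and $\frm B^n_j$ are distinguished by exactly $r$ and share critical height $m>r$, as required.
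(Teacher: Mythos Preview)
Your proposal is correct and follows exactly the paper's approach: each part reaches a contradiction with Lemma~\ref{LemBisimLose} by invoking the matching clause of Lemma~\ref{LemmBoxmove} to produce a bisimilar pair in the daughter node. You are in fact more careful than the paper in part~\ref{ItDoubleTwins}---the paper cites Lemma~\ref{LemmBoxmove}\ref{ItBoxTwo} directly without pausing to check that $\frm A^n_i$ and $\frm B^n_j$ (as opposed to $\frm A^n_j$) are distinguished by $r$, which your transfer via Lemma~\ref{LemmDistinguishAB} supplies.
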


\begin{proof}
Assume that Hercules played either a $\pd$-move or a $\nd$-move, and let $\eta'$ be the new head that was created.\\

\noindent {\sc Claim} \ref{ItTwinsDiamond}. If $\lft(\eta)\circ\rgt(\eta)$ contains twins $(\pointed a,\pointed b)$ and Hercules plays a $\pd$-move in $\eta$, he must choose $\pointed a'\in \nd \pointed a$ to place in $\lft (\eta')$. If $\pointed a'\not=S[\pointed a]$, then by Lemma \ref{LemmBoxmove}\ref{ItSpecialPairs}, there is $\pointed b'\in \nd \pointed b$ such that $\pointed a'\bis \pointed b'$. Since the Hydra plays greedily, we have that $\pointed b' \in \rgt(\eta')$, which by Lemma \ref{LemBisimLose} implies that Hercules cannot win.\\

\noindent {\sc Claim} \ref{ItTwinsBox}. This is simliar to the previous item. If Hercules plays a $\nd$-move in $\eta$, then he must choose $\pointed b' \in \nd \pointed b$ to place in $\lft (\eta')$. But then, by Lemma \ref{LemmBoxmove}\ref{ItBoxOne}, the Hydra will place a bisimilar $\pointed a'\in \rgt(\eta')$, and Hercules cannot win.\\

\noindent {\sc Claim} \ref{ItDoubleTwins}. Assume that there there are two pairs of twins $(S^r[\pointed a^n_i],S^r[\pointed b^n_i])$ and $(S^r[\pointed a^n_j],S^r[\pointed b^n_j])$ with $i<j$ in $\lft(\eta)\circ \rgt(\eta)$, such that $r$ distinguishes $\frm A^n_i$ and $\frm A^n_j$.  If Hercules plays a $\nd$-move, by claim \ref{ItTwinsDiamond}, he must place $S^{r+1}[\pointed a^n_i] \in \lft (\eta')$. By Lemma \ref{LemmBoxmove}\ref{ItBoxTwo}, there will be $\pointed v \in \nd S^r[\pointed b^n_j]$ such that $S^{r+1}[\pointed a^n_i] \bis \pointed v$. As before, this causes there to be bisimilar pointed models in $\lft(\eta')$ and $\rgt(\eta')$, which implies that Hercules cannot win. 
\end{proof}

Since the respective rightmost branches in the pointed models  $\pointed a^n_j$ 
and $\pointed b^n_j$ differ on a literal 
only in their leaves, we see that for every pair of twins $(\pointed a,\pointed b)$, the Hydra's strategy forces Hercules  to make 
$m$ many $\pd$-moves, where $m$ is the critical height  
of $\pointed a$ and $\pointed b$. Let us make this precise.

\begin{definition}\label{DefLambda}
Fix $n\geq 1$, $i\in [1,2^n]$ and a closed game tree $(T,\peq)$. 
Then, define $\Lambda(i)$ to be the set of leaves $\eta$ of $T$ such that for every $\eta'\peq \eta$, there is some $r \geq 0$ such that $(S^r[\pointed a^n_i],S^r[\pointed b^n_i])$ appear in $\lft(\eta') \circ \rgt(\eta')$.
\end{definition}

The sets $\Lambda(i)$ are non-empty and disjoint when Hydra plays amazingly, from which our exponential lower bound will follow immediately. To prove this, we will need the following lemma.

\begin{lemma}\label{LemmDiamNum}
Fix $n\geq 1$. Let $T$ be a closed game-tree for the $(\varphi_n,\cls{GL})$-{\meg} where the Hydra plays amazingly. Let $i\in[1,2^n]$, and $\eta \in \Gamma(i)$. Then,

\newcounter{counter}

\begin{enumerate}[label={\rm (\alph*)}]

\item\label{ItExR} for all $\zeta\peq \eta$ and all $r\geq 0$, if Hercules has played $r$ $\pd$-moves before $\zeta$ then $(S^r[\pointed a^n_i],S^r[\pointed b^n_i])$ appear in $\lft(\zeta) \circ \rgt (\zeta)$, and

\item\label{ItExM} if Hercules played $m$ $\pd$-moves before $\eta$, then $m$ is the critical height of $\frm A^n_i$ and $\frm B^n_i$.

\setcounter{counter}{\value{enumi}}

\end{enumerate}

\end{lemma}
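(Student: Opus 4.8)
The plan is to prove \ref{ItExR} by induction on the distance from the root to $\zeta$ along the path to $\eta$, and then to read off \ref{ItExM} from \ref{ItExR} together with the distinguishing values supplied by Lemma~\ref{LemmDistinguishAB}. First I would record two preliminary observations. Since $\eta\in\Gamma(i)$, the defining property of $\Gamma(i)$ (Definition~\ref{DefLambda}) guarantees that \emph{every} $\zeta\peq\eta$ carries some twin $(S^s[\pointed a^n_i],S^s[\pointed b^n_i])$ of pair $i$; hence, by Lemma~\ref{lemm:moves}\ref{ItTwinsDiamond}, Hercules never plays a $\nd$-move at a node of this path, so along the path only $\vee$-, $\wedge$- and $\pd$-moves occur, terminating in a literal-move at the stub $\eta$ (the tree being closed). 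The second observation is that the left member of pair $i$ follows a single trajectory: it begins as $\pointed a^n_i=S^0[\pointed a^n_i]$ at the root, $\frm A^n_i$ contributes exactly one point to each left label, it is untouched by $\vee$- and $\wedge$-moves, and, by Lemma~\ref{lemm:moves}\ref{ItTwinsBox}, it is advanced by exactly one $S$-step at each $\pd$-move where pair $i$'s twin is present.

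For the induction establishing \ref{ItExR}, the base case is the root, where $0$ $\pd$-moves have been played and the height-$0$ twin $(\pointed a^n_i,\pointed b^n_i)$ belongs to $\cls A^n\circ\cls B^n$ because the Hydra plays amazingly. For the inductive step I would pass from the parent $\zeta^-$ (to which the hypothesis applies, as $\zeta^-\prec\eta$) to $\zeta$, ruling out literal- and $\nd$-moves at $\zeta^-$ by the preliminary observations. On a $\vee$- or $\wedge$-move the $\pd$-count is unchanged and the left member of pair $i$ keeps its height $r^-$; since $\Gamma(i)$ forces some pair-$i$ twin into $\zeta$ and this left member is the unique point of $\frm A^n_i$ that can be present, that twin must sit at height $r^-$, so both $S^{r^-}[\pointed a^n_i]\in\lft(\zeta)$ and $S^{r^-}[\pointed b^n_i]\in\rgt(\zeta)$. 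On a $\pd$-move the count increases to $r^-+1$; Lemma~\ref{lemm:moves}\ref{ItTwinsBox} places $S^{r^-+1}[\pointed a^n_i]$ on the left, and because the Hydra plays greedily she collects all successors of $S^{r^-}[\pointed b^n_i]$, in particular $S^{r^-+1}[\pointed b^n_i]$, on the right; hence the height-$(r^-{+}1)$ twin appears, matching the new $\pd$-count.

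To obtain \ref{ItExM}, note that $\eta$ is a stub, so Hercules closed it with a literal-move, choosing some literal $\iota$ with $\lft(\eta)\models\iota$ and $\rgt(\eta)\not\models\iota$. By \ref{ItExR} applied at $\zeta=\eta$ with $r=m$, the twin $(S^m[\pointed a^n_i],S^m[\pointed b^n_i])$ lies in $\lft(\eta)\circ\rgt(\eta)$, so its two members disagree on $\iota$. By Lemma~\ref{LemmDistinguishAB} the models $\frm A^n_i$ and $\frm B^n_i$ share a critical height $h$ and are distinguished by $h$, meaning their critical branches agree on all atoms below height $h$ and disagree exactly at height $h$. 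Existence of the height-$m$ twin forces $m\leq h$, while disagreement at height $m$ forces $m=h$; therefore $m=h$ is the critical height of $\frm A^n_i$ and $\frm B^n_i$, as claimed.

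The main obstacle is the height-pinning inside the inductive step of \ref{ItExR}: one must argue that the pair-$i$ twin present in $\zeta$ can \emph{only} be at the height dictated by the $\pd$-count. This rests on two points that deserve care—that the left member of $\frm A^n_i$ traces a single, monotonically advancing $S$-trajectory (so no twin of a different height can appear and $\Gamma(i)$ cannot be met by a stray copy), and that the greedy Hydra propagates exactly the matching right member $S^{r}[\pointed b^n_i]$ forward at each $\pd$-step. Coordinating these with the set-splitting performed by $\vee$- and $\wedge$-moves, so that the surviving twin is forced to have the correct height, is the delicate part; once it is in place, \ref{ItExM} follows immediately from Lemma~\ref{LemmDistinguishAB}.
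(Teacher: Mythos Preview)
Your argument is correct and follows essentially the same route as the paper. The one organizational difference is that the paper strengthens the induction for \ref{ItExR} by an explicit uniqueness clause---``for all $t\neq r$, $S^t[\pointed a^n_i]\notin\lft(\zeta)$''---and carries it along the induction, whereas you fold the same content into your ``single trajectory'' observation; both serve to pin the height of the surviving twin after $\vee/\wedge$-splits, and the remaining steps (ruling out $\nd$-moves via Lemma~\ref{lemm:moves}\ref{ItTwinsDiamond}, advancing via Lemma~\ref{lemm:moves}\ref{ItTwinsBox}, and deducing \ref{ItExM} from Lemma~\ref{LemmDistinguishAB}) are identical.
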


\begin{proof}
Assume that the Hydra played amazingly, and that a closed game tree $T$ with root $\eta_0$ is given.\\

\noindent{\sc Claim} \ref{ItExR}. We proceed by induction on $\zeta$ along $\prec$. For the induction to go through, we need to prove a slightly stronger claim: if Hercules has played $r$ $\pd$-moves before $\zeta$, then $(S^r[\pointed a^n_i],S^r[\pointed b^n_i])$ appear in $\lft(\zeta) \circ \rgt (\zeta)$, and
\begin{enumerate}[label=(\alph*)]
\setcounter{enumi}{\value{counter}}

\item\label{ItUnique} for all $t\not = r$, $S^{t}[\pointed a^n_i] \not\in \lft(\zeta)$.
\end{enumerate}
For the base case this is clear, as only $S^0[\pointed a^n_i],S^0[\pointed b^n_i]$ appear in $\cls A^n \circ \cls B^n  =\lft(\eta_0) \circ \rgt(\eta_0)$, and Hercules has played zero $\pd$-moves before $\eta_0$.

For the inductive step, assume the claim for $\zeta$, and suppose that $\zeta'\peq \eta$ is a daughter of $\zeta$; we will prove claims \ref{ItExR} and \ref{ItUnique} for $\zeta'$. Let $r$ be the number of $\pd$-moves that Hercules has played before $\zeta$.  Since $\eta\in \Lambda(i)$, we have that $(S^k[\pointed a^n_i],S^k[\pointed b^n_i])$ occur in $\lft(\zeta')\circ\rgt(\zeta')$ for some $k$.

Obviously Hercules did not play a literal move on $\zeta$, or it would be a leaf. If Hercules played a $\vee$- or $\wedge$-move, since these moves do not introduce new pointed models, it follows that $(S^k[\pointed a^n_i],S^k[\pointed b^n_i])$ also occur in $\lft(\zeta)\circ\rgt(\zeta)$, and by uniqueness that $k=r$, from which claim \ref{ItExR} follows for $\zeta'$. As for claim \ref{ItUnique}, if $S^t[\pointed a^n_i] \in \lft(\zeta')$, then once again we have that $S^t[\pointed a^n_i] \in \lft(\zeta)$ and thus $t = r$.

If Hercules played a $\pd$-move, then $(S^r[\pointed a^n_i],S^r[\pointed b^n_i])$ occur in $\lft(\zeta)\circ\rgt(\zeta)$ by the induction hypothesis. By Lemma \ref{lemm:moves}\ref{ItSpecialPairs}, Hercules chose $S^{r+1}[\pointed a^n_i] \in \nd S^r[\pointed a^n_i]$. By Lemma \ref{LemmDistinguishAB}, $\frm A^n_i$ has the same critical height as $\frm B^n_i$, and thus $S^{r+1}[\pointed b^n_i] $ is defined. Since Hydra plays greedily, she chose $S^{r+1}[\pointed b^n_i] \in \nd S^r[\pointed b^n_i].$ But, there are now $r + 1$ $\pd$-moves before $\zeta'$, so claim \ref{ItExR} follows. Moreover, if $S^t[\pointed a^n_i] \in \lft(\zeta')$, then there must be $\pointed a' \in \lft(\zeta)$ such that $S^t[\pointed a^n_i] \in \nd\pointed a'$. But, since $\frm A^n_i$ is a tree, this can only occur when $\pointed a' = S^{t'}[\pointed a^n_i]$ for some $t' < t$, and it follows that $t'=r$ by the induction hypothesis, so that once again by Lemma \ref{lemm:moves}\ref{ItSpecialPairs}, $t = r+1$. Claim \ref{ItUnique} follows.

Finally, we note that Hercules cannot play a $\nd$-move on $\zeta$ by Lemma \ref{lemm:moves}\ref{ItTwinsBox}.\\

\noindent {\sc Claim} \ref{ItExM}. 
Let $r$ be the number of $\pd$-moves that Hercules played before $\eta$. By Lemma \ref{LemmDistinguishAB}, if $m$ is the critical height of $\frm A^n_i$, then it is also the critical height of $\frm B^n_i$ and $m$ distinguishes $\frm A^n_i$ and $\frm B^n_i$. Since $T$ is closed, $\eta$ must be a stub, which means that Hercules must have played a literal move on $\eta$. But this is only possible if $S^r[\pointed a^n_i]$ and $S^r[\pointed b^n_i]$ disagree on a literal, which is only possible if $r = m$.
\end{proof}

\begin{lemma}\label{lemmSurjection}
Fix $n\geq 1$. Let $(T,\peq)$ be a closed game-tree for the $(\varphi_n,\cls{GL})$-{\meg} where the Hydra plays amazingly.  Then,

\begin{enumerate}[label=(\alph*)]

\item\label{ItSurjNonEmpty} For all $i\in[1,2^n]$, $\Lambda(i)$ is non-empty,

\item\label{ItInjective} if $1\leq i< j \leq 2^n$, then $\Lambda(i) \cap \Lambda(j) = \varnothing$.

\end{enumerate}

\end{lemma}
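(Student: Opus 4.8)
The plan is to extract both statements from a single closed game tree $T$ in which the Hydra plays amazingly, using the move restrictions of Lemma~\ref{lemm:moves} together with the bookkeeping of Lemma~\ref{LemmDiamNum}.

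For part~\ref{ItSurjNonEmpty} I would build a branch of $T$ witnessing a leaf in $\Lambda(i)$ by descending from the root while maintaining the invariant that a pair of twins of index $i$ (of some height $r$) occurs in $\lft(\cdot)\circ\rgt(\cdot)$ at the current node. The root carries $\cls A^n\circ\cls B^n$, which contains the height-$0$ twins $(\pointed a^n_i,\pointed b^n_i)$, so the invariant holds there. At a non-leaf node $\zeta$ carrying such twins I inspect Hercules' move: a $\vee$- or $\wedge$-move splits only one of the two sides, so one of its daughters keeps both twins at the same height; a $\nd$-move is ruled out by Lemma~\ref{lemm:moves}\ref{ItTwinsDiamond}; and for a $\pd$-move Lemma~\ref{lemm:moves}\ref{ItTwinsBox} forces Hercules to place $S[\pointed a]=S^{r+1}[\pointed a^n_i]$ on the left, while greedy play forces the Hydra to place the critical successor $S^{r+1}[\pointed b^n_i]$ on the right (this point exists because $\frm A^n_i$ and $\frm B^n_i$ share a critical height by Lemma~\ref{LemmDistinguishAB}), so the unique daughter carries twins of height $r+1$. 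Hence from any non-leaf node with twins I can step to a daughter that still has twins, and since $T$ is finite the descent reaches a leaf $\eta$ along which twins of index $i$ occur at every $\eta'\peq\eta$, i.e.\ $\eta\in\Lambda(i)$.

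For part~\ref{ItInjective} I would argue by contradiction, assuming a leaf $\eta\in\Lambda(i)\cap\Lambda(j)$ with $i<j$. Applying Lemma~\ref{LemmDiamNum}\ref{ItExM} to $i$ and to $j$ shows that the number of $\pd$-moves played before $\eta$ equals both the critical height of $\frm A^n_i$ and that of $\frm A^n_j$; these heights therefore coincide, say with common value $m$, and Lemma~\ref{LemmDistinguishAA} supplies a distinguishing value $r<m$ for $\frm A^n_i$ and $\frm A^n_j$. Since the $\pd$-count rises from $0$ to $m$ along the path to $\eta$, there is a node $\zeta\peq\eta$ at which Hercules plays the $(r{+}1)$-st $\pd$-move, so that exactly $r$ $\pd$-moves precede $\zeta$. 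Lemma~\ref{LemmDiamNum}\ref{ItExR}, applied to $i$ and to $j$ at $\zeta$, then places both twin pairs $(S^r[\pointed a^n_i],S^r[\pointed b^n_i])$ and $(S^r[\pointed a^n_j],S^r[\pointed b^n_j])$ in $\lft(\zeta)\circ\rgt(\zeta)$; as $r$ distinguishes $\frm A^n_i$ and $\frm A^n_j$, Lemma~\ref{lemm:moves}\ref{ItDoubleTwins} forbids Hercules from playing a $\pd$-move at $\zeta$, contradicting the choice of $\zeta$.

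The step needing the most care is the $\pd$-case of the descent in part~\ref{ItSurjNonEmpty}: I must ensure the descent never plays a $\pd$-move on twins already at the maximal height $m$, where $S^{m+1}[\pointed a^n_i]$ is undefined. This is handled by observing that such a move is impossible in a closed tree, since Lemma~\ref{lemm:moves}\ref{ItTwinsBox} would force the undefined choice $S[\pointed a]$; and that a terminating literal move cannot occur at a smaller height either, because the left and right twins of height $r<m$ agree on every literal by Lemma~\ref{LemmDistinguishAB} and so cannot be separated. Thus below height $m$ only $\vee$-, $\wedge$-, and $\pd$-moves occur and the invariant is preserved, while the branch can close (through a literal move) only once height $m$ is reached, so the descent indeed terminates in a leaf of $\Lambda(i)$. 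Part~\ref{ItInjective}, by contrast, becomes immediate once the common critical height and the distinguishing value $r$ are in hand.
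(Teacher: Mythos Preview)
Your proposal is correct and follows essentially the same approach as the paper's proof: both parts proceed exactly as you describe, with part~\ref{ItSurjNonEmpty} handled by a descent (the paper frames it as an induction on rounds maintaining a suitable leaf, which amounts to the same thing) and part~\ref{ItInjective} by the identical contradiction via Lemma~\ref{LemmDiamNum} and Lemma~\ref{lemm:moves}\ref{ItDoubleTwins}. Your extra care about the $\pd$-move at maximal critical height is a point the paper glosses over, so your treatment is if anything slightly more complete.
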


\begin{proof} Let $\pointed a=\pointed a^n_i$ and $\pointed b=\pointed b^n_i$.\\

\noindent\ref{ItSurjNonEmpty}  We show by induction on the number of rounds in the game that there is always a leaf $\eta$ such that
\begin{itemize}

\item[(\textasteriskcentered)] for all $\zeta\peq \eta$ there is $r\geq 0$ such that $(S^r[\pointed a],S^r[\pointed b])$ appear in $\lft (\zeta) \circ \rgt (\zeta)$.

\end{itemize} 
 For the base case, we take $\eta$ to be the root, in which case it is clear that $(S^0[\pointed a],S^0[\pointed b])$ appears in $\lft (\eta) \circ \rgt (\eta) = \cls A^n\circ \cls B^n$. For the inductive step, assume that $\eta$ is a leaf such that (\textasteriskcentered) holds. We may assume that Hercules plays on $\eta$, for otherwise $\eta$ remains on the game-tree as a leaf.

If Hercules plays a literal move, then $\eta$ simply becomes a stub, but remains on the game-tree. If Hercules plays a $\vee$- or $\wedge$-move, then two heads $\eta_1$ and $\eta_2$ are added, and as in the proof of Lemma \ref{LemBisimLose}, either $(S^r[\pointed a],S^r[\pointed b])$ occurs in $\lft (\eta_1) \circ \rgt(\eta_1)$ and we take $\eta_1$ as the new head, or it occurs in $\lft(\eta_2) \circ \rgt(\eta_2) $ and we take $\eta_2$ instead.

If Hercules plays a $\pd$-move, then a new node $\eta'$ is added, and by Lemma \ref{lemm:moves}\ref{ItSpecialPairs}, Hercules places $S^{r+1}[\pointed a]$ in $\lft (\eta')$. Since the Hydra plays greedily and $S^{r+1}[\pointed b]$ exists by Lemma \ref{LemmDistinguishAB}, we have that $S^{r+1}[\pointed b] \in \rgt (\eta')$. Therefore, the twins $(S^{r+1}[\pointed a],S^{r+1}[\pointed b])$ appear in $\lft(\eta')\circ \rgt(\eta')$. Finally, Hercules cannot play a $\nd$-move by Lemma \ref{lemm:moves}\ref{ItTwinsBox}.\\

\noindent\ref{ItInjective} Now, let $1\leq i < j\leq 2^n$. Towards a contradiction, assume that $\eta\in \Lambda(i) \cap \Lambda(j)$. Let $m$ be the number of $\pd$-moves that Hercules played before $\eta$. By Lemma \ref{LemmDiamNum}\ref{ItExM}, $\frm A^n_i$ and $\frm A^n_j$ both have critical height $m$. By Lemma \ref{LemmDistinguishAA}, there is $r<m$ that distinguishes $\frm A^n_i$ and $\frm A^n_j$. Let $\zeta'\peq \eta$ be the first node such that Hercules has played $r+1$ $\pd$-moves before $\zeta'$, and $\zeta$ be its predecessor. Then, by Lemma \ref{LemmDiamNum}\ref{ItExR}, $(S^r[\pointed a^n_i],S^r[\pointed b^n_i])$ and $(S^r[\pointed a^n_j],S^r[\pointed b^n_j])$ both appear on $\lft(\zeta) \circ \rgt (\zeta)$, which by Lemma \ref{lemm:moves}\ref{ItDoubleTwins} implies that Hercules cannot play a $\pd$-move at $\zeta$. This means that he cannot have played $r+1$ $\pd$-moves before $\zeta'$, a contradiction.
\end{proof}

We are finally ready to prove our lower bound on the number of moves in the $(\varphi_n,\cls{GL})$-{\meg}. In fact, we have proven a slightly stronger claim.

\begin{proposition}\label{propSuccGL} 
For every $n\geq 1$, Hercules has no winning strategy of less than $2^n$ moves in the $(\varphi_n,\cls A^n\cup \cls B^n)$-\meg.
\end{proposition}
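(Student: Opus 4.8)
The plan is to derive this proposition directly from the two structural facts about the sets $\Lambda(i)$ established in Lemma~\ref{lemmSurjection}, converting the lower bound into a simple counting argument on the leaves of any closed game tree. All of the genuinely difficult work---restricting which moves Hercules may make in the presence of twins (Lemma~\ref{lemm:moves}) and tracking how twin pairs propagate along a branch (Lemma~\ref{LemmDiamNum})---has already been carried out, so what remains is purely combinatorial.

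First I would confirm that the \emph{amazing} Hydra strategy is a legitimate line of play in the $(\varphi_n,\cls A^n\cup\cls B^n)$-\meg: she is entitled to open by labelling the root with $\cls A^n\circ \cls B^n$ precisely because Lemma~\ref{LemmPhiTF} yields $\cls A^n\models\varphi_n$ and $\cls B^n\models\neg\varphi_n$, while both $\cls A^n,\cls B^n\subseteq \cls A^n\cup\cls B^n$; she then plays greedily. Fixing any closed game tree $T$ produced when Hydra plays amazingly, I would invoke Lemma~\ref{lemmSurjection}: by Lemma~\ref{lemmSurjection}\ref{ItSurjNonEmpty} each of the $2^n$ sets $\Lambda(1),\dots,\Lambda(2^n)$ is non-empty, and by Lemma~\ref{lemmSurjection}\ref{ItInjective} they are pairwise disjoint. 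Since every $\Lambda(i)$ consists of leaves of $T$, selecting one leaf from each gives an injection of $[1,2^n]$ into the leaves of $T$; hence $T$ has at least $2^n$ leaves, and in particular at least $2^n$ nodes.

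Finally I would translate node count into move count: a winning strategy for Hercules in $k$ moves must, against every Hydra play---and in particular against the amazing Hydra---close the tree with exactly $k$ nodes, so from the bound $k\ge 2^n$ we conclude that no winning strategy uses fewer than $2^n$ moves. I do not anticipate a serious obstacle at this stage, as the substance is isolated in the preceding lemmas; the only points needing care are that the amazing Hydra is a single concrete legal line of play (which is what makes the claim slightly stronger than, and via Theorem~\ref{thrm: satisfactionGames} a strengthening of, Theorem~\ref{thm:succinctness} for this class), and the bookkeeping that the $\Lambda(i)$ genuinely live among the leaves, so that their disjointness forces $2^n$ distinct nodes rather than merely $2^n$ recurrences of twin pairs along overlapping branches.
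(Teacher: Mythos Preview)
Your proposal is correct and follows essentially the same approach as the paper: invoke Lemma~\ref{lemmSurjection} to obtain $2^n$ pairwise disjoint, non-empty sets of leaves $\Lambda(i)$, conclude there are at least $2^n$ leaves, and hence at least $2^n$ moves. Your added justification that the amazing Hydra play is a legal opening (via Lemma~\ref{LemmPhiTF}) is a welcome detail the paper leaves implicit.
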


\proof
Assume that Hydra plays amazingly, and let $T$ be a closed game tree. Then, by Lemma \ref{lemmSurjection}, the sets of leaves $\{\Lambda(i) : i \in [1,2^n]\}$ are non-empty and disjoint. It follows that there are at least $2^n$ leaves, and since closing each leaf requires one literal move, Hercules must have played at least $2^n$ moves.
\endproof

Since $\cls A^n\cup \cls B^n\subseteq \cls{GL}$, Theorem \ref{thm:succinctness}\ref{ItTheoSuccA} readily follows. In view of Theorem \ref{thrm: satisfactionGames}, we also obtain the following stronger form of Proposition \ref{PropSuccinctnessBound}\ref{ItSuccinctnessGL}:

\begin{proposition}\label{propBoundBasicLangGL}
For all $n\geq 1$, whenever $\psi \in \lang_{\pd}$ is such that $\varphi_n \equiv \psi$ on $\cls A^n \cup  \cls B^n  $, it follows that $|\psi| \geq 2^n$.
\end{proposition}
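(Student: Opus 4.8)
The plan is to obtain this as an immediate corollary of the game-theoretic machinery already in place, combining Proposition~\ref{propSuccGL} with the size--strategy correspondence of Theorem~\ref{thrm: satisfactionGames}. The key observation is that the class of pointed models figuring in the statement, namely $\cls M = \cls A^n \cup \cls B^n$, is \emph{exactly} the class over which the game in Proposition~\ref{propSuccGL} is played, so no additional lifting to other spaces (as in Proposition~\ref{PropSuccinctnessBound}) is required here; the two results can be composed directly.

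Concretely, I would argue directly. Suppose $\psi \in \lang_{\pd}$ is equivalent to $\varphi_n$ on $\cls M = \cls A^n \cup \cls B^n$, and write $k = |\psi|$. Then $\psi$ witnesses the right-hand side of the biconditional in Theorem~\ref{thrm: satisfactionGames} with the parameter instantiated as $k$: it is a $\lang_{\pd}$-formula with $|\psi| \leq k$ that is equivalent to $\varphi_n$ on $\cls M$. Hence the left-hand side holds, i.e.\ Hercules has a winning strategy in $k$ moves in the $(\varphi_n,\cls M)$-\meg. But Proposition~\ref{propSuccGL} asserts precisely that Hercules has \emph{no} winning strategy of fewer than $2^n$ moves in the $(\varphi_n,\cls A^n \cup \cls B^n)$-\meg. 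Therefore $k \geq 2^n$, that is, $|\psi| \geq 2^n$, as required.

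There is no substantive obstacle remaining at this point: all the combinatorial work has been absorbed into Proposition~\ref{propSuccGL} (via Lemmas~\ref{lemm:moves}, \ref{LemmDiamNum} and~\ref{lemmSurjection}, which show that Hydra's amazing strategy forces at least $2^n$ non-empty and pairwise disjoint leaf-sets $\Lambda(i)$, and hence at least $2^n$ literal moves), while the translation between game length and formula size is supplied verbatim by Theorem~\ref{thrm: satisfactionGames}. The only points demanding care are bookkeeping ones: that one applies the \emph{right-to-left} direction of Theorem~\ref{thrm: satisfactionGames}, passing from an equivalent formula to a winning strategy of the same length, and that the class parameter $\cls M$ in the game is literally $\cls A^n \cup \cls B^n$, so that no intermediate confluent-map argument is needed and the bound $2^n$ carries over unchanged.
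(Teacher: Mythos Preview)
Your proposal is correct and matches the paper's own approach exactly: the paper does not even spell out a proof, simply stating that in view of Theorem~\ref{thrm: satisfactionGames} (applied to Proposition~\ref{propSuccGL}) the bound follows. Your write-up makes explicit precisely the inference the paper leaves implicit.
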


\subsection{ $\lang_\ps$ is exponentially more succinct than $\lang_\pd$ on $\cls{TC}$}\label{subsectPdonRn}

We proceed to show that Hercules has no winning strategy of less than $2^n$ moves in $(\varphi_n, \cls{TC})$-{\meg}.
We begin by defining two sets of pointed models $\hat{\cls{A}}^n$ and $\hat{\cls{B}}^n$
that are a slight modification of the models in $\cls{A}^n$ and $\cls{B}^n$, respectively.

\begin{definition}
Let $\frm K=(\dom{\frm K},R_\frm K,V_\frm K)$ be any Kripke model. We define a new model $\hat {\frm K}$ such that
\begin{enumerate}[label=(\roman*)]

\item  $\dom{\hat{\frm K}}=\dom{\frm K}\cup \{\infty\}$, where $\infty\not\in \dom{\frm K}$,

\item $R_{\hat{\frm K}}=R_\frm K\cup (\dom{\hat{\frm K}}\times \{\infty\} )$, and

\item $V_{\hat{\frm K}}(w)=V_{{\frm K}}(w)$ if $w\in \dom{\frm K}$, $V_{\hat{\frm K}}(\infty)=\varnothing$.

\end{enumerate}
If $\frm K$ is equipped with a successor partial function $S_\frm K$, we also define $S_{\hat{\frm K}}=S_\frm K $.
For a class of models $\cls X$, we denote $\{\hat {\frm K} : \frm K\in \cls X\}$ by $\hat{\cls X}$.
\end{definition}

In other words, we add a `point at infinity' that is seen by both worlds.
Note that the successor function remains unchanged, i.e.~$\infty$ is never a successor. 
In particular, $\infty$ can never belong to a critical branch.

This operation allows us to easily turn a model into a totally connected model:

\begin{lemma}\label{LemmInfty}
Let $\frm M,\frm N$ be $\cls{K4}$ models. Then:
\begin{enumerate}[label={\rm (\alph*)}]

\item $\hat{\frm M}$ is a $\cls{TC}$ model;

\item if $w\in \dom{\frm M}$ and $v\in \dom{\frm N}$ are such that $(\frm M,w)\bis(\frm N,v)$, then $(\hat{\frm M},w)\bis(\hat{\frm N},v)$, and

\item $(\hat{\frm M},\infty) \bis (\hat{\frm N},\infty)$.

\end{enumerate}
\end{lemma}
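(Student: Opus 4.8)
The plan is to handle the three items in turn, with essentially all of the work being careful bookkeeping around the added point $\infty$. The one structural fact I will use over and over is that $\infty$ sees only itself, i.e. $R_{\hat{\frm M}}(\infty)=\{\infty\}$: the relation $R_\frm M$ contains no pair mentioning $\infty$, and among the freshly added pairs $\dom{\hat{\frm M}}\times\{\infty\}$ the only one with first coordinate $\infty$ is $(\infty,\infty)$. Everything else is an instance of the general principle that adding edges into $\infty$ cannot spoil transitivity, seriality, or connectedness, and that it can always be matched in a confluence check.

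For (a) I would verify the four defining properties of $\cls{TC}$ directly. \emph{Transitivity}: given $x\mathrel R_{\hat{\frm M}} y\mathrel R_{\hat{\frm M}} z$, if $z=\infty$ the edge $x\mathrel R_{\hat{\frm M}}\infty$ is present by construction, while if $z\neq\infty$ both edges must come from $R_\frm M$ (every added edge targets $\infty$), so $x\mathrel R_\frm M z$ by transitivity of $R_\frm M$. \emph{Seriality}: every world, including $\infty$ itself, has $\infty$ as a successor. \emph{Connectedness}: any two worlds $x,y$ are joined by the path $x,\infty,y$, using $x\mathrel R_{\hat{\frm M}}\infty$ and $y\mathrel R_{\hat{\frm M}}\infty$ (the second edge traversed backwards). \emph{Local connectedness}: for any $a$, the set $R_{\hat{\frm M}}(a)$ contains $\infty$, and each of its members sees $\infty$, so the very same path $x,\infty,y$ lies inside $R_{\hat{\frm M}}(a)$ and witnesses its connectedness.

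For (b) I would take a bisimulation $\chi\subseteq\dom{\frm M}\times\dom{\frm N}$ with $w\mathrel\chi v$ and set $\hat\chi=\chi\cup\{(\infty,\infty)\}$, which still relates $w$ to $v$. Atom-agreement is inherited on $\chi$, and it holds for the new pair because $V_{\hat{\frm M}}(\infty)=\varnothing=V_{\hat{\frm N}}(\infty)$. For forward confluence I would invoke the relational characterisation for Kripke frames and argue by cases on a pair $x\mathrel{\hat\chi}y$ with $x\mathrel R_{\hat{\frm M}}x'$: if $x'=\infty$ (which is forced whenever $x=\infty$, since $R_{\hat{\frm M}}(\infty)=\{\infty\}$) I match with $y'=\infty$, using that $y$ sees $\infty$; otherwise $x,x'\in\dom{\frm M}$ and the edge lies in $R_\frm M$, so forward confluence of the original $\chi$ supplies a $y'\in\dom{\frm N}$ with $x'\mathrel\chi y'$ and $y\mathrel R_\frm N y'$, and $R_\frm N\subseteq R_{\hat{\frm N}}$ finishes the case. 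Backward confluence is symmetric, swapping the roles of $\frm M$ and $\frm N$. Finally, (c) is immediate by the same verification applied to the singleton relation $\{(\infty,\infty)\}$, which is a bisimulation between $(\hat{\frm M},\infty)$ and $(\hat{\frm N},\infty)$.

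The only genuine subtlety — and what I regard as the main obstacle — is ensuring that the added edges never create an unmatchable move in the confluence check. This is exactly where $R_{\hat{\frm M}}(\infty)=\{\infty\}$ does the work: a move \emph{out of} $\infty$ can only go to $\infty$ and is matched trivially, while a move \emph{into} $\infty$ from any world is matched by the corresponding move into $\infty$ on the other side. Once this is isolated, every remaining case collapses either to that trivial matching or to the hypothesis that $\chi$ is a bisimulation on the original models, so none of the verifications require any real computation.
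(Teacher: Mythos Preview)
Your proof is correct and follows essentially the same approach as the paper: for (a) you use the path $x,\infty,y$ to witness (local) connectedness, and for (b) and (c) you extend a given bisimulation by the single pair $(\infty,\infty)$, exactly as the paper does with $\hat\chi=\chi\cup\{(\infty,\infty)\}$ and $\hat\varnothing$. Your version is more thorough in that you explicitly verify transitivity and seriality and spell out the confluence cases, which the paper leaves implicit.
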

\proof
If $w,w'\in \dom{\hat{\frm M}}$, then $w,\infty,w'$ is a path connecting $w$ to $w'$ (as $x\mathrel R_{\hat{\frm M}} \infty$ holds for all $x\in\dom{\hat{\frm M}}$). It follows that $\hat{\frm M}$ is connected, and indeed the same path witnesses that $\hat{\frm M}$ is locally connected if we take $w,w'\in R_\frm M(u)$. The second claim follows from observing that if $\chi \subseteq \dom{\frm M} \times \dom{\frm N}$ is a bisimulation then so is $\hat\chi = \chi \cup \{(\infty,\infty)\} \subseteq \dom{\hat{\frm M}} \times \dom{\hat{\frm N}}$, and the third by choosing an arbitrary such $\hat \chi$ (say, $\hat \varnothing$).
\endproof

\begin{figure}
\begin{center}

\begin{tikzpicture}
[
help/.style={circle,draw=white!100,fill=white!100,thick, inner sep=0pt,minimum size=1mm},
white/.style={circle,draw=black!100,fill=white!100,thick, inner sep=0pt,minimum size=2mm},
]

%\draw[help lines] (0,0) grid (20,20);
%%%%%%%%%%%%%%%%%%%%%%%%%%%%%%%%%%%
 \node at (1, 16) [white](1A1) [label=270: ${}_1$, label=180: $\triangleright$]{};
\node at (1, 15.25)[help](1A11)[label=270:$\bm a_1^1 $]{};
%%%%%%%%%%%%%%%%%%%%%%%%%%%%%%%%%%
 \node at (2, 16) [white](1A2) [label=180:$\triangleright$]{};
\node at (2.25, 15.25)[help](1A2name)[label=270:$\bm a_2^2 $]{};
 \node at (2.75, 17) [white](2A3) [label=0: ${}_1$]{};
\node at (2, 17) [white](2A2) []{};
%%%%%%%%%%%%%%%%%%%%%%%%%%%%%

%%%%%%%%%%%%%%%%%%%%%%%%%%%%
\node at (4.5, 16) [white](1B1) [label=180:$\triangleright$ ]{};
\node at (4.5, 15.25)[help](1B11)[label=270:$\bm b_1^2 $]{};
%%%%%%%%%%%%%%%%%%%%%%%%%%%%%%%%%%
 \node at (5.5, 16) [white](1B2) [label=180:$\triangleright$]{};
\node at (5.75, 15.25)[help](1B2name)[label=270:$\bm b_2^2 $]{};
 \node at (5.5, 17) [white](2B2) []{};
%%%%%%%%%%%%%%%%%%%

\node at (1, 17) [white](reflexiveA1) []{};
\node at (2.75, 18) [white](reflexiveA2) []{};
\node at (4.5, 17) [white](reflexiveB1) []{};
\node at (5.5, 18) [white](reflexiveB2) []{};

\node at (3.5, 19)[help](helpdottedup)[]{};
\node at (3.5, 15)[help](helpdotteddown)[]{};

%%%%%%%%%%%%%%%%%%%%%%%%%%%%%%%%
\begin{scope}[>=stealth, auto]

%%%%%%%%%%%%%%%%%%%%%%%%
\draw [->] (reflexiveA1) to [in=50, out=130, loop]  (reflexiveA1);
\draw [->] (reflexiveA2) to [in=50, out=130, loop]  (reflexiveA2);
\draw [->] (reflexiveB1) to [in=50, out=130, loop]  (reflexiveB1);
\draw [->] (reflexiveB2) to [in=50, out=130, loop]  (reflexiveB2);

\draw [->, bend left] (1B2) to   (reflexiveB2);

\draw [->] (1A1) to  (reflexiveA1);
\draw [->] (1A2) to  (2A2);
\draw [->] (1A2) to  (2A3);
\draw [->] (2A2) to  (reflexiveA2);
\draw [->] (2A3) to  (reflexiveA2);
\draw [->] (1A2) to  (reflexiveA2);

%%%%%%%%%%
\draw [->] (1B1) to  (reflexiveB1);
%\draw [->] (1B2) to  (reflexiveB2);
\draw [->] (1B2) to  (2B2);
\draw [->] (2B2) to  (reflexiveB2);

\draw [-, dotted] (helpdottedup) to  (helpdotteddown);

\end{scope}

\end{tikzpicture}

\end{center}
\caption{The pointed models in $\hat{\cls{A}}^1$ and $\hat{\cls{B}}^1$
are shown on the left and on the right of the dotted line, respectively.}
\label{fig:succinctnessModelsRn1}
\end{figure}
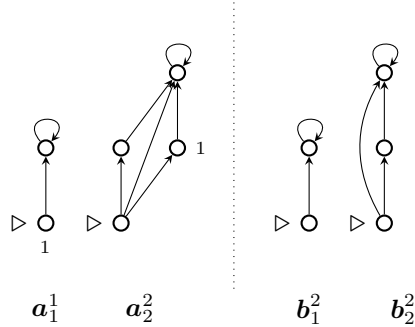

We have the following analogue of Lemma \ref{LemmPhiTF}. The proof is identical and we omit it.

\begin{lemma}\label{LemmInfMove}
For any $n\geq 1$, $\hat{\cls{A}}^n\models \varphi_n$ and $\hat{\cls{B}}^n \models \neg \varphi_n$.
\end{lemma}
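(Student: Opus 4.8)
The plan is to avoid repeating the nested induction of Lemma \ref{LemmPhiTF} and instead to isolate a single \emph{transfer principle}: for every $\cls{K4}$ model $\frm K$, every original world $w\in\dom{\frm K}$, and every $k\geq 1$,
\[(\frm K,w)\models\varphi_k \iff (\hat{\frm K},w)\models\varphi_k,\]
and moreover $(\hat{\frm K},\infty)\not\models\varphi_k$. Granting this, the lemma is immediate: by Lemma \ref{LemmPhiTF} we have $\pointed a^n_i\models\varphi_n$ and $\pointed b^n_i\not\models\varphi_n$ for each $i$, and since the distinguished point of a hatted model remains its original root (recall $S_{\hat{\frm K}}=S_{\frm K}$, so $\infty$ is never a successor and in particular never a root), the transfer principle yields $(\hat{\frm A}^n_i,a^n_i)\models\varphi_n$ and $(\hat{\frm B}^n_i,b^n_i)\not\models\varphi_n$, i.e.\ $\hat{\cls A}^n\models\varphi_n$ and $\hat{\cls B}^n\models\neg\varphi_n$.

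The first point I would stress is \emph{why} a formula-specific argument is needed here rather than the generic machinery of Section \ref{SecTruthPres}. The set $\dom{\frm K}$ is not persistent in $\hat{\frm K}$, since every world now sees the external point $\infty$; consequently $(\frm K,w)$ and $(\hat{\frm K},w)$ are in general \emph{not} locally bisimilar, so truth of arbitrary formulas need not be preserved. What rescues us is the very special shape of $\varphi_k$, namely the guarded nesting $\ps(p_k\wedge\ps(p_{k-1}\wedge\cdots\wedge\ps p_1))$ of closures of conjunctions, each protected by an atom, together with the fact that $\V_{\hat{\frm K}}(\infty)=\varnothing$.

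I would then prove the transfer principle by induction on $k$, working from the innermost subformula outward. For the base case $\varphi_1=\ps p_1=p_1\vee\pd p_1$: atoms are unchanged on $\dom{\frm K}$, and the sole new successor $\infty$ fails $p_1$, so $\pd p_1$ keeps its value at each original $w$; at $\infty$, whose only successor is itself, both disjuncts fail. For the step, set $\theta_{k+1}=p_{k+1}\wedge\varphi_k$, so that $\varphi_{k+1}=\theta_{k+1}\vee\pd\theta_{k+1}$. At an original world $v$, the formula $\theta_{k+1}$ is unchanged (its atom $p_{k+1}$ is unchanged and $\varphi_k$ is unchanged by the inductive hypothesis), while at $\infty$ it fails because $p_{k+1}$ does; hence the extra successor $\infty$ can never be a witness for $\pd\theta_{k+1}$, so $\varphi_{k+1}$ retains its value at every original $w$ and fails at $\infty$.

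The hard part—and the place where a careless proof would slip—is exactly this last observation: although $\infty$ becomes a successor of \emph{every} world, it never satisfies any of the guarded conjunctions $p_k\wedge\varphi_{k-1}$, and therefore is never a witness for any $\ps$ occurring in $\varphi_n$. Once that is pinned down, the induction is routine, and no appeal to bisimilarity at original worlds is required; the only use of the tools of Section \ref{SecTruthPres} is the trivial remark that $\{\infty\}$ is persistent in $\hat{\frm K}$, so the generated submodel of $\infty$ is a single atom-free reflexive point, which satisfies no $\varphi_k$.
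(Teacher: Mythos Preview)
Your argument is correct, and it takes a different route from the paper's. The paper simply declares the proof ``identical'' to that of Lemma~\ref{LemmPhiTF} and omits it, meaning one is expected to rerun the induction on $n$ through the recursive construction of $\cls{A}^n,\cls{B}^n$, checking at each step that the hatted versions behave the same way. You instead factor out a clean transfer principle valid for \emph{any} $\cls{K4}$ model $\frm K$: the value of each $\varphi_k$ at original worlds is unchanged by hatting, and $\infty$ refutes every $\varphi_k$. This is both more modular and more general---it decouples the lemma from the details of Definition~\ref{def:models}, pinpoints exactly which feature of the $\varphi_n$ makes the argument work (the atomic guard inside every $\ps$), and then invokes Lemma~\ref{LemmPhiTF} once rather than mirroring its entire induction. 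One small quibble: your parenthetical justification that the distinguished point of a hatted model is the original root ``because $S_{\hat{\frm K}}=S_{\frm K}$'' is not quite the right reason---the root is not defined via $S$. The correct observation is simply that the original root $w_0$ still satisfies $\dom{\hat{\frm K}}=\{w_0\}\cup R_{\hat{\frm K}}(w_0)$, whereas $\infty$ sees only itself; this is also what the paper's figures indicate. That aside, the proof stands.
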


Intuitively,  the proof that  Hercules has no winning strategy of less than $2^n$ moves in the game that 
starts with $\hat{\cls{A}}^n\circ\hat{\cls{B}}^n$  revolves around  the observation that 
 the models in $\hat{\cls{A}}^n$ and $\hat{\cls{B}}^n$ are 
constructed in such a way that Hercules, when playing 
a $\pd$- or $\nd$-move at some position labeled by $\cls{L}\circ\cls{R}$,  cannot pick $\infty$ in any model of  
$\cls L$ or $\cls R$, as this will lead to bisimilar pointed models on each side. Thus, given the construction of the models in 
 $\hat{\cls{A}}^n$ and $\hat{\cls{B}}^n$, Hercules and the Hydra are essentially playing a  
$(\varphi_n, \cls{GL}) $-{\meg} on  $\cls{A}^n\circ\cls{B}^n$ and the lower bound
on the number of moves in any winning $(\varphi_n, \cls{GL}) $-{\meg} for Hercules on    $\cls{A}^n\circ\cls{B}^n$
established in the previous sub-section applies to the present case too.

Let us formalise the above intuitive considerations. As before, we will say that the Hydra plays amazingly in the $(\varphi_n,\cls{TC})$-{\meg} if she labels the root by $\hat{\cls A}^n \circ \hat{\cls B}^n$ and plays greedily.

 \begin{lemma}\label{lemm:infty}
Fix $n\geq 1$, and assume that the Hydra plays amazingly in the $(\varphi_n,\cls{TC})$-{\meg}.

Then, for any position $\eta$ in a closed game tree $T$ such that $\lft(\eta)$ and $\rgt(\eta)$ are both non-empty,
\begin{enumerate}[label=(\roman*)]

\item if Hercules  played a $\pd$-move in $\eta$, he did not pick any pointed model of the form $(\frm A,\infty)$, and

\item if Hercules  played a $\nd$-move in $\eta$, he did not pick any pointed model of the form $(\frm B,\infty)$.

\end{enumerate}
\end{lemma}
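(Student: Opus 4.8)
The plan is to prove both items by the same reductio, exploiting the fact that all points at infinity are mutually bisimilar together with the assumption that the game tree $T$ is closed. Recall from Lemma~\ref{LemBisimLose} that when the Hydra plays greedily, no closed game tree can contain a node carrying locally bisimilar pointed models on its two sides. Hence it suffices to show that, were Hercules to select a point at infinity on a $\pd$- or $\nd$-move, the Hydra's greedy reply would manufacture a bisimilar point at infinity on the \emph{opposite} side, producing exactly such a forbidden node and contradicting the closedness of $T$.

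First I would record the two structural facts that drive the argument. By the definition of $R_{\hat{\frm K}}$, the point $\infty$ is a successor of \emph{every} world of $\hat{\frm K}$ (including itself); hence $(\frm K,\infty)\in\nd\pointed w$ for any $\pointed w=(\frm K,w)$. Second, by Lemma~\ref{LemmInfty} any two points at infinity are locally bisimilar, i.e.\ $(\hat{\frm M},\infty)\bis(\hat{\frm N},\infty)$.

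For item (i), suppose Hercules plays a $\pd$-move at $\eta$ and, contrary to the claim, selects some $(\frm A,\infty)$ into the new left class $\cls L_1=\lft(\eta')$, where $\eta'$ is the daughter created. Since $\rgt(\eta)$ is non-empty by hypothesis, fix any $\pointed r=(\frm B,r)\in\rgt(\eta)$. Because $r$ sees $\infty$, we have $(\frm B,\infty)\in\nd\pointed r$, and since the Hydra plays greedily she collects all of $\nd\pointed r$ into $\cls R_1=\rgt(\eta')$; in particular $(\frm B,\infty)\in\rgt(\eta')$. By the second structural fact, $(\frm A,\infty)\bis(\frm B,\infty)$, so $\eta'$ carries bisimilar pointed models on each side, contradicting Lemma~\ref{LemBisimLose}. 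Item (ii) is entirely symmetric: on a $\nd$-move Hercules selects successors of the right models, so if he chose some $(\frm B,\infty)$ into $\rgt(\eta')$, then using that $\lft(\eta)$ is non-empty and that the Hydra greedily collects all of $\nd\pointed a$ for each $\pointed a\in\lft(\eta)$, we obtain a bisimilar $(\frm A,\infty)\in\lft(\eta')$, again contradicting Lemma~\ref{LemBisimLose}.

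The argument is short, and the only point requiring care — the main (if modest) obstacle — is the bookkeeping of the non-emptiness hypotheses: item (i) needs $\rgt(\eta)\neq\varnothing$ so that the Hydra has a right model whose successor set contains $\infty$, and dually item (ii) needs $\lft(\eta)\neq\varnothing$. These are precisely the hypotheses supplied, so the two parts go through without further assumptions. I would also remark that $\infty$ is never produced by the successor function $S$, so selecting it can never advance a critical branch; this is what will later let us transfer the $\cls{GL}$ lower bound to the $\cls{TC}$ setting.
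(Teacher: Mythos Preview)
Your proof is correct and follows essentially the same approach as the paper: a reductio using that $\infty$ is a successor of every point, that the Hydra's greedy play therefore puts a bisimilar $(\cdot,\infty)$ on the opposite side, and that Lemma~\ref{LemBisimLose} then contradicts closedness of $T$. Your version is slightly more explicit about why the non-emptiness hypotheses on $\lft(\eta)$ and $\rgt(\eta)$ are needed, which is a nice touch.
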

\begin{proof}
The claims are symmetric, so we prove the first. If Hercules picked a frame of the form $(\frm A,\infty)$, since the Hydra plays greedily, she will also pick at least one frame of the form $(\frm B,\infty)$. By Lemma \ref{LemmInfty}, $(\frm A,\infty) \bis (\frm B,\infty)$, which in view of Lemma \ref{LemBisimLose} contradicts the assumption that $T$ is closed.
\end{proof}

We will not give a full proof of Theorem \ref{thm:succinctness}.\ref{ItTheoSuccB}, as it proceeds by replacing $\cls A^n\circ \cls B^n$ by $\hat{\cls A}^n \circ \hat{\cls B}^n$ throughout Section \ref{sectScattered}. Instead, we give a rough outline below.

\begin{proposition}\label{propSuccTC} 
For every $n\geq 1$, Hercules has no winning strategy of less than $2^n$ moves in the $(\varphi_n,\hat{\cls A}^n\cup \hat{\cls B}^n)$-\meg.
\end{proposition}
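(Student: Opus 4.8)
The plan is to mirror the proof of Proposition \ref{propSuccGL} verbatim, systematically replacing $\cls A^n \circ \cls B^n$ by $\hat{\cls A}^n \circ \hat{\cls B}^n$ throughout Section \ref{sectScattered}, and to isolate the single genuinely new phenomenon---the point at infinity---via Lemma \ref{lemm:infty}. The observation that makes this feasible is that the hat operation leaves the combinatorial skeleton of the models untouched: since $\infty$ is never a successor, it never lies on a critical branch, and the successor partial function satisfies $S_{\hat{\frm K}} = S_\frm K$. Consequently the critical heights of $\hat{\frm A}^n_i$ and $\hat{\frm B}^n_i$ agree with those of $\frm A^n_i$ and $\frm B^n_i$, and, because the hat operation does not alter the valuation on any original point, the notions of \emph{twins} and \emph{distinguishing value}, together with the statements of Lemmas \ref{LemmSuccessor}, \ref{LemmDistinguishAA} and \ref{LemmDistinguishAB}, transfer word for word to the hatted models. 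As in the scattered case, we say the Hydra \emph{plays amazingly} if she labels the root by $\hat{\cls A}^n \circ \hat{\cls B}^n$ and then plays greedily.

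The key new ingredient is the behaviour of $\infty$ under modal moves. For a non-$\infty$ point $a$ one has $\nd (\hat{\frm A}, a) = \{(\hat{\frm A}, b) : a \mathrel R_\frm A b\} \cup \{(\hat{\frm A},\infty)\}$, so the only successor that the hat adds is the point at infinity. Lemma \ref{lemm:infty} shows that in a closed tree Hercules can never productively select this extra successor: any $\pd$-move placing $(\hat{\frm A},\infty)$ on the left (or $\nd$-move placing $(\hat{\frm B},\infty)$ on the right) forces the greedily playing Hydra to answer with $(\hat{\frm B},\infty)$ (resp.\ $(\hat{\frm A},\infty)$), and these are bisimilar by Lemma \ref{LemmInfty}(c), so by Lemma \ref{LemBisimLose} the tree cannot close. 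Hence every modal move in a closed tree acts only on non-$\infty$ points, that is, exactly on the successors present in the un-hatted models.

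With this in hand we re-establish the hatted analogue of Lemma \ref{LemmBoxmove}. Given twins $(\hat{\pointed a},\hat{\pointed b})$ and a chosen successor $\hat{\pointed a}' \neq S[\hat{\pointed a}]$ that, by the previous paragraph, is not $\infty$, the point $\hat{\pointed a}'$ is an original point of $\frm A$; applying Lemma \ref{LemmBoxmove} inside the un-hatted models produces a bisimilar partner in $\frm B$, and Lemma \ref{LemmInfty}(b) lifts this bisimulation to $\hat{\frm B}$, the lifted partner again being a non-$\infty$ successor and hence a legal reply for the Hydra. The analogues of claims \ref{ItBoxOne} and \ref{ItBoxTwo} follow the same pattern. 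From here the remaining lemmas of Section \ref{sectScattered}---the moves lemma \ref{lemm:moves}, the $\pd$-counting lemma \ref{LemmDiamNum}, and the surjection lemma \ref{lemmSurjection}---carry over line by line, with Lemma \ref{lemm:infty} supplying, wherever needed, the fact that the modal moves never touch $\infty$. The conclusion is, as before, that the sets $\Lambda(i)$ for $i\in[1,2^n]$ are non-empty and pairwise disjoint, so a closed tree has at least $2^n$ leaves and therefore Hercules plays at least $2^n$ moves.

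The only real obstacle is to confirm that the universal successor $\infty$ grants Hercules no new winning line---that adding a point which every world can see does not, for instance, let him collapse the left and right families faster. This is precisely the content of Lemma \ref{lemm:infty}: touching $\infty$ is always self-defeating against greedy play, because Lemma \ref{LemmInfty}(c) makes the two infinities indistinguishable. Once this is secured, the argument is purely bookkeeping, the one point requiring care being that each bisimulation imported from the un-hatted game must be extended by the pair $(\infty,\infty)$, via Lemma \ref{LemmInfty}(b), so that it remains a bisimulation of the totally connected models actually in play.
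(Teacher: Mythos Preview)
Your proposal is correct and follows essentially the same route as the paper's own proof sketch: transfer the combinatorial lemmas (\ref{LemmDistinguishAA}, \ref{LemmDistinguishAB}, \ref{LemmBoxmove}) to the hatted models via $S_{\hat{\frm K}}=S_{\frm K}$ and Lemma~\ref{LemmInfty}, invoke Lemma~\ref{lemm:infty} to rule out Hercules ever selecting $\infty$, and then replay Lemmas~\ref{lemm:moves}, \ref{LemmDiamNum}, and \ref{lemmSurjection} verbatim to conclude that the $\Lambda(i)$ are non-empty and disjoint. The only stylistic difference is that you invoke Lemma~\ref{lemm:infty} \emph{before} re-proving Lemma~\ref{LemmBoxmove} (restricting attention to non-$\infty$ successors), whereas the paper first lifts Lemma~\ref{LemmBoxmove} in full generality---the $\infty$ case being handled by Lemma~\ref{LemmInfty}(c)---and only afterwards uses Lemma~\ref{lemm:infty} in the analogue of Lemma~\ref{lemm:moves}; either ordering works.
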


\proof[Proof sketch]
The analogues of Lemmas \ref{LemmDistinguishAA} and Lemmas \ref{LemmDistinguishAB} for $\hat{\cls A}^n \circ \hat{\cls B}^n$ follow directly from the original statements by observing that $S_{\hat{\frm K}} = S_{{\frm K}}$, so that the critical branch is identical. Similarly, an analogue of Lemma \ref{LemmBoxmove} follows easily from the original if we use Lemma \ref{LemmInfty} to see that, whenever $\pointed a'\bis \pointed b'$, we also have that $\hat{\pointed a}' \bis \hat{\pointed b}'$. The analogue of Lemma \ref{lemm:moves} can then be proven as before, using Lemma \ref{lemm:infty} to rule out situations where Hercules chooses $\infty$.

The rest of the results leading up to Proposition \ref{PropSuccinctnessBound} rely on these basic lemmas and thus readily apply to the $(\varphi_n,\cls{TC})$-{\meg}.
In particular, Definition \ref{DefLambda} makes no assumption about the frames appearing in $T$, hence the sets $\Lambda(i)$ for $i \in [1,2^n]$ are readily available for the $(\varphi_n,\cls{TC})$-{\meg}, and as before are disjoint and non-empty. We conclude that Hercules has no winning strategy in less than $2^n$ moves.
\endproof

Theorem \ref{thm:succinctness}\ref{ItTheoSuccB} readily follows, as does the following stronger version of Proposition \ref{PropSuccinctnessBound}\ref{ItSuccinctnessTC}:

\begin{proposition}\label{propBoundBasicLangTC}
For all $n\geq 1$, whenever $\psi \in \lang_{\pd}$ is such that $\varphi_n \equiv \psi$ on $\hat{\cls A}^n \cup \hat{ \cls B}^n  $, it follows that $|\psi| \geq 2^n$.
\end{proposition}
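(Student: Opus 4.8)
The plan is to deduce this formula-size lower bound directly from the game-theoretic lower bound of Proposition~\ref{propSuccTC}, using Theorem~\ref{thrm: satisfactionGames} as the bridge between formula size and the number of moves in a winning strategy. Concretely, I would take the class of pointed models to be $\cls M = \hat{\cls A}^n \cup \hat{\cls B}^n$ and argue by contraposition: assuming a short equivalent $\lang_\pd$-formula exists, I produce a short winning strategy for Hercules, contradicting Proposition~\ref{propSuccTC}.

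First I would fix $\psi \in \lang_\pd$ equivalent to $\varphi_n$ on $\cls M$ and suppose, for contradiction, that $|\psi| < 2^n$. By the direction of Theorem~\ref{thrm: satisfactionGames} that passes from an equivalent formula to a strategy (applied with $\varphi = \varphi_n$ and $\cls M$ as above), the existence of such a $\psi$ yields a winning strategy for Hercules in the $(\varphi_n, \cls M)$-\meg\ using at most $|\psi|$ moves, hence fewer than $2^n$ moves. The game is well posed here because Lemma~\ref{LemmInfMove} guarantees $\hat{\cls A}^n \models \varphi_n$ and $\hat{\cls B}^n \models \neg\varphi_n$, so the Hydra can legitimately set up the field with $\hat{\cls A}^n \circ \hat{\cls B}^n$. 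But Proposition~\ref{propSuccTC} asserts precisely that Hercules has \emph{no} winning strategy of fewer than $2^n$ moves in this game, a contradiction. Therefore $|\psi| \geq 2^n$, as claimed. This mirrors, verbatim, the passage from Proposition~\ref{propSuccGL} to Proposition~\ref{propBoundBasicLangGL} in the $\cls{GL}$ setting.

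Since Proposition~\ref{propSuccTC} is taken as given, the present argument is essentially a one-line application of Theorem~\ref{thrm: satisfactionGames}, and there is no genuine obstacle at this stage. The real work sits in Proposition~\ref{propSuccTC} itself, whose proof transports the machinery developed for the $\cls{GL}$-game --- the distinguishing-value lemmas, the key bisimulation Lemma~\ref{LemmBoxmove}, and the counting via the sets $\Lambda(i)$ --- to the totally connected setting by replacing each model with its one-point extension $\hat{(\cdot)}$. The only new ingredient there is Lemma~\ref{lemm:infty}, which rules out Hercules ever selecting the adjoined point $\infty$ during a modal move: by Lemma~\ref{LemmInfty} the two copies of $\infty$ are bisimilar, so choosing $\infty$ would place locally bisimilar models on both sides and, by Lemma~\ref{LemBisimLose}, prevent Hercules from closing the tree. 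This guarantees that the $\hat{\cls A}^n$--$\hat{\cls B}^n$ game behaves exactly like the $\cls{GL}$-game on $\cls A^n \circ \cls B^n$, so the $2^n$ bound carries over unchanged.
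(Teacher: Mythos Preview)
Your argument is correct and matches the paper's own treatment: the paper does not even give an explicit proof of this proposition, presenting it as an immediate consequence of Proposition~\ref{propSuccTC} via Theorem~\ref{thrm: satisfactionGames}, exactly as you do. Your contraposition and invocation of Lemma~\ref{LemmInfMove} to justify that the Hydra may legitimately set up the field are appropriate, and your explanatory final paragraph about the role of Lemma~\ref{lemm:infty} is accurate supplementary context rather than a needed step here.
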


\section{Succinctness in the extended language}\label{SecSuccExt}

Proposition \ref{PropSuccinctnessBound} holds even if we replace $\lang_\pd$ by the extended language $\lang^{\pt}_{\pd\forall}$. In this section, we will first extend these results to $\lang _{\pd \forall}$ and then to $\lang^{\pt}_{\pd\forall}$.

\subsection{Succinctness with the universal modality}\label{subsectForall}

As it turns out, the universal modality is not of much help in expressing $\ps$ succinctly. This is perhaps not surprising, as $\ps$ is essentially a local operator and $\forall$ is global. The disjoint union operation from Section \ref{SecAmalg} will be useful in making this precise.

\begin{definition}\label{defModelsForallScatt}
%The $\cls{K4}$-pointed models in the sets $\overline{\cls{A}^n}$ and $\overline{\cls{B}^n}$ are constructed out of
%the models in $\cls{A}^n$ and $\cls{B}^n$ as follows. 
Let the model $\frm{C}^n$ be obtained by taking the 
disjoint union of all the models used in    $\cls{A}^n$ and $\cls{B}^n$ i.e., 
\[
\frm{C}^n=\Big ( \coprod_{i=1}^{2^n} \frm A^n_i \Big ) \amalg  \Big ( \coprod_{i=1}^{2^n} \frm B^n_i \Big ).
\]
\end{definition}

The models $\frm{C}^n$ will allow us to make the universal modality effectively useless in distinguishing between the pointed models we have constructed.

\begin{figure} 
\begin{center}

\begin{tikzpicture}
[
help/.style={circle,draw=white!100,fill=white!100,thick, inner sep=0pt,minimum size=1mm},
white/.style={circle,draw=black!100,fill=white!100,thick, inner sep=0pt,minimum size=2mm},
]

%\draw[help lines] (0,0) grid (20,20);
%%%%%%%%%%%%%%%%%%%%%%%%%%%%%%%%%%%
 \node at (1, 16) [white](1A1) [label=270: ${}_1$]{};

%%%%%%%%%%%%%%%%%%%%%%%%%%%%%%%%%%
 \node at (2.25, 16) [white](1A2) []{};

 \node at (2.75, 17) [white](2A3) [label=0: ${}_1$]{};
\node at (2.25, 17) [white](2A2) []{};
%%%%%%%%%%%%%%%%%%%%%%%%%%%%%

%%%%%%%%%%%%%%%%%%%%%%%%%%%%
\node at (4, 16) [white](1B1) [ ]{};

%%%%%%%%%%%%%%%%%%%%%%%%%%%%%%%%%%
 \node at (5, 16) [white](1B2) []{};
%\node at (5, 15.25)[help](1B2name)[label=270:$(\mathcal{B}_2^2\mbox{, }\triangleright)$]{};
 \node at (5, 17) [white](2B2) []{};
%%%%%%%%%%%%%%%%%%%

%%%%%%%%%%%%%%%%%%%%%%%%%%%%%%%%
\begin{scope}[>=stealth, auto]

\draw [->] (1A2) to  (2A2);
\draw [->] (1A2) to  (2A3);

\draw [->] (1B2) to  (2B2);

\node at (3, 15.25)[help](1A11)[label=270:$\mathcal C^1$]{};

%\draw[help lines] (0,0) grid (20,20);
%%%%%%%%%%%%%%%%%%%%%%%%%%%%%%%%%%%

%%%%%%%%%%%%%%%%%%%%%%%%%%%%%
\node at (8, 16) [white](1A1r) [label=270: ${}_1$]{};

%%%%%%%%%%%%%%%%%%%%%%%%%%%%%%%%%%
 \node at (9.25, 16) [white](1A2r) [ ]{};

 \node at (9.75, 17) [white](2A3r) [label=0: ${}_1$]{};
\node at (9.25, 17) [white](2A2r) []{};
%%%%%%%%%%%%%%%%%%%%%%%%%%%%%%%%%%%
\node at (10, 19) [white](reflexive1r) []{};
%%%%%%%%%%%%%%%%%%%%%%%%%%%%
\node at (11, 16) [white](1B1r) [ ]{};
\node at (10, 15.25)[help](1B11r)[label=270:$\hat{\mathcal{C}}^1$]{};
%%%%%%%%%%%%%%%%%%%%%%%%%%%%%%%%%%
 \node at (12, 16) [white](1B2r) []{};

 \node at (12, 17) [white](2B2r) []{};
%%%%%%%%%%%%%%%%%%%%%%%%%%%%%%%%%%%%%%%%%%
%%%%%%%%%%%%%%%%%%%%%%%%%%%%%%%%%%%%%%%%%%%%
%%%%%%%%%%%%%%%%%%%%%%%%%%%%%%%%%%%%%%%%%%%%%%%
%%%%22222222222222%%%%%%%%%%%%%%%%%%%%%%%%%%%

%%%%%%%%%%%%%%%%%%%%%%%%%%%%%

%%%%%%%%%%%%%%%%%%%%%%%%%%%%

%%%%%%%%%%%%%%%%%%%%%%%%%%%%%%%%%%

%%%%%%%%%%%%%%%%%%%%%%%%%%%%%%%%%%

%%%%%%%%%%%%%%%%%%%%%%%%%%%%%%%%%%%%%%%%%%%

%%%%%%%%%%%%%%%%%%%%%%%%%%%%%%%%

\draw [->] (reflexive1r) to [in=50, out=130, loop]  (reflexive1r);
%%%%%%%%%%%%%%%%%%%%%%%%

%%%%%%
\draw [->] (1A1r) to  (reflexive1r);
\draw [->] (1A2r) to  (2A2r);
\draw [->] (1A2r) to  (2A3r);
\draw [->] (2A2r) to  (reflexive1r);
\draw [->] (2A3r) to  (reflexive1r);
\draw [->] (1A2r) to  (reflexive1r);

\draw [->] (1A2r) to  (2A2r);
\draw [->] (1A2r) to  (2A3r);
%%%%%%%%%%

\draw [->] (1B1r) to  (reflexive1r);
\draw [->] (1B2r) to  (reflexive1r);
\draw [->] (1B2r) to  (2B2r);

\draw [->] (1B2r) to  (2B2r);

\draw [->] (2B2r) to  (reflexive1r);

%%%%%222222%%%%%

%%%%%%%%%%%%%%%%%%%%%%%%

%%%%%%

%%%%%%%%%%

\end{scope}

\end{tikzpicture}

\end{center}
\caption{The models $\frm C^1$ and $\hat{\frm C}^1$.}
\label{fig:succinctnessModels1forallScatt}
\end{figure}

\begin{proposition}\label{PropSuccinctnessBoundForall}
For all $n\geq 1$, whenever $\psi \in \lang_{\pd \forall}$ is such that $\varphi_n \equiv \psi$ either on $\mathcal C^n$ or on $\hat{\mathcal C}^n$, it follows that $|\psi| \geq 2^n$.
\end{proposition}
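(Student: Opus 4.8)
The plan is to exploit the fact that $\mathcal C^n$ and $\hat{\mathcal C}^n$ are \emph{single} models, on which the universal modality degenerates: for any formula $\theta$, the truth set $\val{\forall\theta}_{\mathcal C^n}$ equals either $\dom{\mathcal C^n}$ or $\varnothing$, and likewise $\val{\exists\theta}_{\mathcal C^n}$ is all of $\dom{\mathcal C^n}$ or $\varnothing$. Hence every such subformula is equivalent on $\mathcal C^n$ to a Boolean constant. Concretely, given $\psi\in\lang_{\pd\forall}$ equivalent to $\varphi_n$ on $\mathcal C^n$, I would simultaneously replace each \emph{maximal} subformula of $\psi$ of the form $\forall\theta$ or $\exists\theta$ by $\top$ or $\bot$ according to its (constant) value on $\mathcal C^n$. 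Because these occurrences are maximal, the replacement eliminates every universal and existential modality in one pass, yielding a formula $\psi'\in\lang_{\pd}$; by compositionality of the semantics (there are no fixpoint binders in $\lang_{\pd\forall}$) it has the same truth set on $\mathcal C^n$, and since $|\top|=|\bot|=1\le|\forall\theta|$ we have $|\psi'|\le|\psi|$.

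The second step is to transfer equivalence from the single model $\mathcal C^n$ to the family $\cls A^n\cup\cls B^n$. Each $\frm A^n_i$ (resp.\ $\frm B^n_i$) is a persistent generated submodel of the disjoint union $\mathcal C^n$, so by the results of Section~\ref{SecAmalg} we obtain $(\mathcal C^n,a^n_i)\bis\pointed a^n_i$ and $(\mathcal C^n,b^n_i)\bis\pointed b^n_i$. Now both $\psi'$ and $\varphi_n$ lie in $\lang^{\mu\pt}_{\ps\pd}$, as they contain no universal modality, so Lemma~\ref{LemmTruthBisim}(1) applies: truth at $a^n_i$ in $\mathcal C^n$ matches truth at $\pointed a^n_i$, and similarly for each $b^n_i$. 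Chaining these equivalences with $\val{\psi'}_{\mathcal C^n}=\val{\varphi_n}_{\mathcal C^n}$ shows that $\psi'$ and $\varphi_n$ agree on every pointed model of $\cls A^n\cup\cls B^n$. Proposition~\ref{propBoundBasicLangGL} then gives $|\psi'|\ge 2^n$, whence $|\psi|\ge|\psi'|\ge 2^n$. For the case of $\hat{\mathcal C}^n$ the argument is identical, except that the required bisimulations $(\hat{\mathcal C}^n,a^n_i)\bis\hat{\pointed a}^n_i$ are obtained from the previous ones through Lemma~\ref{LemmInfty}(b), and the final bound comes from Proposition~\ref{propBoundBasicLangTC} applied to $\hat{\cls A}^n\cup\hat{\cls B}^n$.

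The conceptual core, that a global modality carries no information over a fixed model, is easy; the delicate points are bookkeeping. One must check that replacing maximal $\forall$/$\exists$-subformulas genuinely removes all such occurrences and does not increase size, which is immediate once $|\top|=|\bot|=1$ is fixed. The point I expect to be the main obstacle is the transfer step: it is essential that after the replacement the surviving formula $\psi'$ is $\forall$-free, since local bisimilarity only preserves $\lang^{\mu\pt}_{\ps\pd}$-truth (Lemma~\ref{LemmTruthBisim}(1)); were any universal modality to remain, the passage from $\mathcal C^n$ to the locally bisimilar pieces $\pointed a^n_i$, $\pointed b^n_i$ would break, since $\forall$ is only preserved by \emph{global} bisimilarity. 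This is precisely why the replacement must be carried out \emph{before} invoking the bisimulations. An alternative, closer to the spirit of the preceding sections, would be to play the model-equivalence game directly on $\mathcal C^n$, extend the greedy strategy so that Hydra answers any $\forall$- or $\exists$-move by selecting \emph{all} points of $\mathcal C^n$, and observe that such a move always places one common pointed model on both sides of the new node, so that Lemma~\ref{LemBisimLose} forbids it; the game then collapses to the $(\varphi_n,\cls{GL})$-\meg already analysed.
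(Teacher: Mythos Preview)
Your proof is correct and follows essentially the same approach as the paper: define a translation that replaces each (maximal) subformula $\forall\theta$ or $\exists\theta$ by $\top$ or $\bot$ according to its truth in the fixed model, observe this does not increase size and preserves truth on that model, then use local bisimilarity of the disjoint-union components to transfer the equivalence to $\cls A^n\cup\cls B^n$ (resp.\ $\hat{\cls A}^n\cup\hat{\cls B}^n$) and invoke Proposition~\ref{propBoundBasicLangGL} (resp.\ \ref{propBoundBasicLangTC}). Your explicit use of Lemma~\ref{LemmInfty}(b) to obtain $(\hat{\mathcal C}^n,a^n_i)\bis\hat{\pointed a}^n_i$ is a detail the paper leaves implicit, and your closing game-theoretic alternative is a correct but unnecessary extra observation.
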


\begin{proof}
Given a model ${\frm M}$, consider a translation $\trC{\frm M} \colon \lang_{\pd\forall} \to \lang_{\pd}$ that
commutes with all Booleans and $\pd,\nd$, and so that if $\theta$ is of one of the forms  $\forall \varphi$ or $\exists\varphi$, then $\trC{{\frm M}}(\theta) = \top$ if ${\frm M}\models\theta$, $\trC{{\frm M}}(\theta) = \bot$ otherwise.
It is immediately clear that for $w\in \dom{{\frm M}}$ and any $\psi\in \lang_{\pd\forall}$, $({\frm M}, w)\models \psi$ if and only if $({\frm M}, w)\models \trC{\frm M}(\psi)$; moreover, it is obvious that $|\trC{\frm M}(\psi)|\leq |\psi|$. 

Let us consider a point $w$ in one the models $\frm A^n_j$ from Definition~\ref{def:models}.
Since $(\frm A^n_j,w)$ is locally bisimilar to $(\frm{C}^n,w)$, we have by Lemma \ref{LemmTruthBisim} that for any  $\lang_{\pd}$- formula $\psi$, $(\frm{C}^n,w)\models \psi$ if and only if $(\frm{A}^n_j,w)\models \psi$.

Thus, for any $\lang_{\pd\forall}$-formula $\psi$ that is equivalent to  $\varphi_n$  on $\mathcal C^n$, we have that its translation $\trC{\frm C^n}(\psi)$ is also equivalent to $\varphi_n$ on $\cls{A}^n\cup \cls{B}^n$.
Note, however, that according to Proposition~\ref{propBoundBasicLangGL}, the size of $\trC{\frm C^n}(\psi)$ is at least $2^n$. This establishes the proposition for $\mathcal C^n$.

For $\hat{\frm C}^n$ we proceed as above, but work instead with Proposition \ref{propBoundBasicLangTC}.
\end{proof}

\subsection{Succinctness with tangle and fixed points}

Recall that the translation $\trtanpd(\varphi)$ is defined simply by replacing every occurrence of $\pt\Phi$ by $\bot$ and every occurrence of $\nt\Phi$ by $\top$.
Since Proposition \ref{PropSuccinctnessBound} applies to scattered spaces, we can use our work in Section \ref{SecScattered} to immediately obtain succinctness results relative to the tangled derivative over such spaces.

For metric spaces, however, the behavior of the tangled derivative is less trivial. Fortunately, in the $\cls{TC}$ models we have constructed, its behavior is still rather simple.

\begin{lemma}\label{LemmTangleHat}
Let $\frm K$ be a $\cls{GL}$ model and $\Phi \subseteq \lang^{\pt}_{\pd\forall}$ be finite. Then, for any $w\in \dom{\hat{\frm K}}$,
\begin{enumerate}[label=(\alph*)]

\item $(\hat{\frm K},w) \models \pt\Phi$ if and only if $(\hat{\frm K},\infty) \models \bigwedge \Phi$, and

\item $(\hat{\frm K},w) \models \nt\Phi$ if and only if $(\hat{\frm K},\infty) \models \bigvee \Phi$.

\end{enumerate}

\end{lemma}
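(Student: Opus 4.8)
The plan is to reduce both parts to a single combinatorial fact about the tangled limit operator on $\hat{\frm K}$, exploiting two structural features of the construction: by the definition $R_{\hat{\frm K}}=R_\frm K\cup(\dom{\hat{\frm K}}\times\{\infty\})$, \emph{every} point of $\hat{\frm K}$ sees $\infty$, while $\infty$ sees \emph{only} $\infty$ (since $\infty\notin\dom{\frm K}$, it has no $R_\frm K$-successors). Since $\hat{\frm K}$ is a $\cls{TC}$ model, hence a convergence model, the direct characterization of $\pt$ applies: for a finite $\Phi=\{\varphi_1,\dots,\varphi_n\}$, a point $x$ satisfies $\pt\Phi$ iff there is $S\subseteq\dom{\hat{\frm K}}$ with $x\in S$ and $S\subseteq\oper_{\hat{\frm K}}(S\cap\val{\varphi_i})$ for all $i\leq n$. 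Writing $T(U_1,\dots,U_n)$ for the set of $x$ admitting such a witness with the sets $U_i$ in place of the $\val{\varphi_i}$, so that $\val{\pt\Phi}=T(\val{\varphi_1},\dots,\val{\varphi_n})$, the whole argument rests on the claim that $T(U_1,\dots,U_n)=\dom{\hat{\frm K}}$ when $\infty\in\bigcap_{i\leq n}U_i$ and $T(U_1,\dots,U_n)=\varnothing$ otherwise. Specialising to $U_i=\val{\varphi_i}$ gives exactly part (a), since $\infty\in\bigcap_i\val{\varphi_i}$ is the assertion that $(\hat{\frm K},\infty)\models\bigwedge\Phi$.

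For the forward (easy) direction of the claim, assuming $\infty\in\bigcap_iU_i$ I would take $S=\{x,\infty\}$: every point of $S$ sees $\infty\in S\cap U_i$ (using that all points see $\infty$ and that $\infty\mathrel R_{\hat{\frm K}}\infty$), so $S$ witnesses $x\in T(U_1,\dots,U_n)$. The converse carries the real content. Given any witness $S\ni x$, the conditions force $S\subseteq\oper_{\hat{\frm K}}(S)$, so $S$ has no isolated point. If $\infty\notin S$ then $S\subseteq\dom{\frm K}$, and since no $y\in S$ can reach $\infty$ through a point of $S$, each membership $y\in\oper_{\hat{\frm K}}(S)$ must be witnessed by an $R_\frm K$-edge into $S$; hence $S\subseteq\oper_{\frm K}(S)$ computed inside $\frm K$ itself. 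But $\frm K\in\cls{GL}$ is converse well-founded, hence scattered, so $S\subseteq\oper_\frm K(S)$ forces $S=\varnothing$, contradicting $x\in S$. Therefore $\infty\in S$, and then from $\infty\in S\subseteq\oper_{\hat{\frm K}}(S\cap U_i)$ together with the fact that $\infty$'s only successor is $\infty$ we get $\infty\in S\cap U_i$, so $\infty\in U_i$ for every $i$. I expect this scatteredness step --- excluding witnesses that avoid $\infty$ --- to be the main obstacle, though it becomes short once the two structural features are isolated.

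Part (b) I would deduce from part (a) by the de Morgan duality between $\pt$ and $\nt$, avoiding a separate induction. Both interpretations factor through the truth sets of their arguments, and the $\mu$- and $\nu$-translations are dual, which yields at the level of truth sets the identity $\val{\nt\Phi}=\overline{T(\overline{U_1},\dots,\overline{U_n})}$ with $U_i=\val{\varphi_i}$; concretely this is the routine complementation check that ${\rm GFP}(X\mapsto\bigcup_i\hat\oper_{\hat{\frm K}}(X\cup U_i))=\overline{{\rm LFP}(X\mapsto\bigcap_i\oper_{\hat{\frm K}}(X\cap\overline{U_i}))}$, whose right-hand side is $\overline{T(\overline{U_1},\dots,\overline{U_n})}$. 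Substituting the core claim with the complemented sets $\overline{U_i}$ gives $x\in\val{\nt\Phi}$ iff it is \emph{not} the case that $\infty\in\overline{U_i}$ for all $i$, i.e.\ iff $\infty\in U_i=\val{\varphi_i}$ for some $i$, which is precisely $(\hat{\frm K},\infty)\models\bigvee\Phi$. The only point requiring care is that $\mu$/$\nu$ duality computation, which is elementary.
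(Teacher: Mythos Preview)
Your proof of (a) is correct and parallels the paper's: you argue via witness sets $S\subseteq\oper_{\hat{\frm K}}(S\cap U_i)$ (the direct characterisation stated earlier in the paper), while the paper uses the equivalent infinite-path formulation; in both cases the crux is that scatteredness of $\frm K$ forces any non-empty tangled witness to contain $\infty$, after which the observation that $\infty$'s only successor is $\infty$ finishes the job. If anything your version is better grounded in the paper's preliminaries, since the $S$-characterisation is stated explicitly there whereas the sequence characterisation is not.

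For (b), your duality route and the conclusion $\val{\nt\Phi}=\overline{T(\overline{U_1},\dots,\overline{U_n})}$ are correct, but watch the displayed complementation check: you identify ${\rm LFP}\bigl(X\mapsto\bigcap_i\oper_{\hat{\frm K}}(X\cap\overline{U_i})\bigr)$ with $T(\overline{U_1},\dots,\overline{U_n})$, yet by your own definition $T$ (the union of all witness sets) is the \emph{greatest} fixed point of that map, not the least. This slip is inherited from an apparent $\mu$/$\nu$ swap in the paper's stated translations of $\pt$ and $\nt$; with the intended semantics (where $\pt$ is the $\nu$-formula whose GFP is your $T$, and $\nt$ its $\mu$-dual), the duality $\val{\nt\Phi}=\overline{T(\overline{U_1},\dots,\overline{U_n})}$ holds on the nose and your argument goes through. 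The paper sidesteps this for (b) by simply writing ``by similar reasoning''.
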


\begin{proof}
Let $\frm K$ be any $\cls{GL}$ model. By the semantics of $\pt$, if $w\in\dom{\hat{\frm K}}$ and $\Phi \subseteq \lang^{\pt}_{\pd\forall}$ is finite, then $(\hat{\frm K},w)\models\pt\Phi$ if and only if there is an infinite sequence \[w=w_0 \mathrel R_{\hat{\frm K}} w_1 \mathrel R_{\hat{\frm K}} w_2 \mathrel \hdots\]
such that each formula of $\Phi$ holds on $(\hat{\frm K},w_n)$ for infinitely many values of $w_n$. However, since $\frm K$ is a $\cls{GL}$ frame, we must have $w_n = \infty$ for some value of $n$, which implies that $w_m = \infty$ for all $m\geq n$. From this it readily follows that $(\hat{\frm K},\infty)\models \bigwedge \Phi$. Conversely, if $(\hat{\frm K},\infty)\models \bigwedge \Phi$, then the sequence $\infty,\infty,\infty,\hdots$ witnesses that $(\hat{\frm K},w )\models \pt\Phi$. It follows that $(\frm M,w)\models\pt\Phi$ if and only if $(\frm M,\infty)\models\bigwedge\Phi$. By similar reasoning, $(\frm M,w)\models\nt\Phi$ if and only if $(\frm M,\infty )\models \bigvee\Phi$.
\end{proof}

This allows us to define a simple translation from $\lang^{\pt}_{\pd\forall}$ to $\lang_{\pd\forall}$ tailored for our $\cls{TC}$ models.

\begin{definition}
Fix a $\cls{GL}$ model $\frm K$.
We define a translation $\trtanwedge{\frm K} \colon \lang^{\pt}_{\pd\forall}\to \lang_{\pd\forall}$ by letting $\trtanwedge{\frm K}$ commute with Booleans and all modalities except $\pt,\nt$, in which case we set
\begin{center}
\phantom{a}
\hfill
$
\trtanwedge{\frm K}(\pt \Phi) =
\begin{cases}
\top&\text{if $(\hat{\frm K},\infty) \models \bigwedge\Phi$,}\\
\bot&\text{otherwise;}\\
\end{cases}
$
\hfill
$
\trtanwedge{\frm K}(\nt \Phi) =
\begin{cases}
\top&\text{if $(\hat{\frm K},\infty) \models \bigvee\Phi$,}\\
\bot&\text{otherwise.}\\
\end{cases}
$
\hfill\phantom{a}
\end{center}
\end{definition}

\begin{lemma}\label{LemmTanMetTrans}
Let $\frm K$ be a $\cls{GL}$ model and $\varphi \in \lang^{\pt}_{\pd\forall}$ be finite. Then, $ \trtanwedge{\frm K} (\varphi) \equiv \varphi$ over $\hat{\frm K}$.
\end{lemma}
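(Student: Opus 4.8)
The plan is to prove the claim by a straightforward structural induction on $\varphi$, with the only non-routine case being the tangled operators, which is in fact already resolved by Lemma~\ref{LemmTangleHat}. First I would dispose of the base cases: when $\varphi$ is a literal or one of $\top,\bot$, the translation $\trtanwedge{\frm K}$ acts as the identity, so the equivalence over $\hat{\frm K}$ is immediate.

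Next, for the Boolean connectives and the standard modalities $\pd,\nd,\forall,\exists$, the translation $\trtanwedge{\frm K}$ commutes with the operator, so I would apply the induction hypothesis to the immediate subformulas and use the fact that the truth set of each of these operators depends only on the truth sets of its arguments. For instance, in the case $\varphi=\pd\psi$ we have $\trtanwedge{\frm K}(\pd\psi)=\pd\,\trtanwedge{\frm K}(\psi)$, and since $\val{\trtanwedge{\frm K}(\psi)}_{\hat{\frm K}}=\val\psi_{\hat{\frm K}}$ by the inductive hypothesis, applying $\oper_{\hat{\frm K}}$ to both sides yields $\val{\pd\,\trtanwedge{\frm K}(\psi)}_{\hat{\frm K}}=\val{\pd\psi}_{\hat{\frm K}}$. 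The case for $\nd$ is identical via $\hat\oper_{\hat{\frm K}}$, and the cases for $\forall,\exists$ follow in the same way using their truth conditions from Definition~\ref{DefSem}, which again read off only the truth set of the argument.

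The crux is the tangled case $\varphi=\pt\Phi$ (and symmetrically $\nt\Phi$), where $\trtanwedge{\frm K}$ does \emph{not} recurse into $\Phi$ but instead replaces the whole subformula by a constant. Here I would invoke Lemma~\ref{LemmTangleHat}, which asserts that for every $w\in\dom{\hat{\frm K}}$ we have $(\hat{\frm K},w)\models\pt\Phi$ exactly when $(\hat{\frm K},\infty)\models\bigwedge\Phi$. Thus $\val{\pt\Phi}_{\hat{\frm K}}$ is location-independent: it equals $\dom{\hat{\frm K}}$ when $(\hat{\frm K},\infty)\models\bigwedge\Phi$ and $\varnothing$ otherwise. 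A case split on this condition matches these two possibilities precisely with the two branches in the definition of $\trtanwedge{\frm K}(\pt\Phi)$, namely $\top$, whose truth set is $\dom{\hat{\frm K}}$, and $\bot$, whose truth set is $\varnothing$; the case for $\nt\Phi$ uses the second clause of Lemma~\ref{LemmTangleHat} with $\bigvee\Phi$ in the same manner.

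I do not anticipate a genuine obstacle. The only delicate point worth flagging is that the induction hypothesis is not used in the tangled case, since the translation collapses $\pt\Phi$ to a constant without descending into $\Phi$; this is harmless precisely because Lemma~\ref{LemmTangleHat} already evaluates the entire tangled subformula at once over $\hat{\frm K}$, so no recursion into the members of $\Phi$ is needed to determine its (constant) truth set.
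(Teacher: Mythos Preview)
Your proposal is correct and matches the paper's approach exactly: the paper's own proof reads ``Immediate from Lemma~\ref{LemmTangleHat} using a routine induction on $\varphi$,'' and you have simply written out that routine induction in full, including the correct observation that the tangled case requires no appeal to the induction hypothesis because Lemma~\ref{LemmTangleHat} evaluates $\pt\Phi$ globally over $\hat{\frm K}$.
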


\begin{proof}
Immediate from Lemma \ref{LemmTangleHat} using a routine induction on $\varphi$.
\end{proof}

We are now ready to prove the full version of our first main result:

\begin{theorem}\label{TheoMain}
Let $\cls C$ be a class of convergence spaces that contains either:
\begin{enumerate}

\item\label{ItMainGL} the class of all finite $\cls{GL}$ frames,

\item\label{ItMainTC} the class of all finite $\cls{TC}$ frames,

\item\label{ItMainomega} the set of ordinals $\Lambda<\omega^\omega$, or

\item\label{ItMainX} any crowded metric space $\frm X$.

\end{enumerate}
Then, there exist arbitrarily large $\varphi \in \lang_\ps$ such that, whenever $\psi \in \lang^{\pt}_{\pd\forall}$ is equivalent to $\varphi$ over $\cls C$, it follows that $|\psi| \geq 2^{\frac{|\varphi|}3}$.
\end{theorem}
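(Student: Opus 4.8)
The plan is to reduce the extended language $\lang^{\pt}_{\pd\forall}$ to $\lang_{\pd\forall}$ by eliminating the tangled limit operator, and then to invoke the lower bound already established in Proposition \ref{PropSuccinctnessBoundForall}. I would take $\varphi=\varphi_n$ for arbitrary $n$; since $|\varphi_n|\leq 3n$ we have $2^{|\varphi_n|/3}\leq 2^n$, so it suffices to prove that every $\psi\in\lang^{\pt}_{\pd\forall}$ equivalent to $\varphi_n$ over $\cls C$ satisfies $|\psi|\geq 2^n$. The four cases split naturally into the two scattered cases \ref{ItMainGL} and \ref{ItMainomega}, where I would use the trivializing translation $\trtanpd$ of Corollary \ref{CorTanPd}, and the two \emph{connected} cases \ref{ItMainTC} and \ref{ItMainX}, where I would use the model-specific translation $\trtanwedge{\frm C^n}$ of Lemma \ref{LemmTanMetTrans}. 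Both translations map into $\lang_{\pd\forall}$ without increasing formula size.

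For the scattered cases, note that $\frm C^n$ is a finite $\cls{GL}$ frame, being a disjoint union of such frames. When $\cls C$ contains all finite $\cls{GL}$ frames, $\frm C^n\in\cls C$, so $\psi\equiv\varphi_n$ holds on $\frm C^n$; since $\frm C^n$ is scattered, Corollary \ref{CorTanPd} gives $\trtanpd(\psi)\equiv\psi\equiv\varphi_n$ on $\frm C^n$ with $|\trtanpd(\psi)|\leq|\psi|$, and Proposition \ref{PropSuccinctnessBoundForall} forces $|\trtanpd(\psi)|\geq 2^n$, whence $|\psi|\geq 2^n$. When instead $\cls C$ contains only the ordinals $\Lambda<\omega^\omega$, the finite model $\frm C^n$ need not lie in $\cls C$, so I would argue by contradiction: assuming $|\psi|<2^n$, the formula $\trtanpd(\psi)\in\lang_{\pd\forall}$ has size below $2^n$, so by Proposition \ref{PropSuccinctnessBoundForall} it fails to be equivalent to $\varphi_n$ on $\frm C^n$ at some point $w$; Corollary \ref{CorOrd} then transports this failure along a confluent map $f\colon\Lambda\to\dom{\frm C^n}$ to a model $\frm M$ based on an ordinal $\Lambda<\omega^\omega$, where $\trtanpd(\psi)\equiv\psi$ (scatteredness of $\Lambda$) and $\psi\equiv\varphi_n$ (as $\Lambda\in\cls C$) together contradict the transported failure.

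For the connected cases, the role of $\frm C^n$ is played by $\hat{\frm C}^n$, which is a finite $\cls{TC}$ model by Lemma \ref{LemmInfty}, and on which the tangle collapses to a Boolean constant via $\trtanwedge{\frm C^n}$ (Lemma \ref{LemmTanMetTrans}), again without increasing size. When $\cls C$ contains all finite $\cls{TC}$ frames, $\hat{\frm C}^n\in\cls C$ and the argument is exactly as in the direct scattered case, now citing the $\hat{\frm C}^n$ clause of Proposition \ref{PropSuccinctnessBoundForall}. When $\cls C$ contains a crowded metric space $\frm X$, I would again assume $|\psi|<2^n$ and use Proposition \ref{PropSuccinctnessBoundForall} to find a point $w$ of $\hat{\frm C}^n$ separating $\trtanwedge{\frm C^n}(\psi)$ from $\varphi_n$; crucially, I would perform the tangle-rewriting $\trtanwedge{\frm C^n}(\psi)\equiv\psi$ \emph{at} $\hat{\frm C}^n$ (the only model where Lemma \ref{LemmTanMetTrans} applies) to obtain a separation stated purely in terms of $\psi$ and $\varphi_n$, and only then pull it back along the confluent map supplied by Corollary \ref{CorRn} to a model on $\frm X\in\cls C$, contradicting $\psi\equiv\varphi_n$ over $\cls C$.

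The conceptual heavy lifting—the exponential game-theoretic lower bound of Proposition \ref{PropSuccinctnessBoundForall} and the two tangle-elimination lemmas—is already in place, so the main obstacle is bookkeeping in the transfer steps. Specifically, I must ensure that each translation preserves the relevant equivalence \emph{exactly on the model where it is applied} (all scattered spaces for $\trtanpd$, but only the single model $\hat{\frm C}^n$ for $\trtanwedge{\frm C^n}$, since over $\frm X$ the tangle is genuinely nontrivial) while never increasing size, and that Corollaries \ref{CorOrd} and \ref{CorRn} are applied to formulas lying in $\lang^{\mu\pt}_{\ps\pd\forall}$, which indeed contains $\varphi_n$, $\psi$, and their tangle-free translations.
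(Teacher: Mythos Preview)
Your proposal is correct and follows essentially the same route as the paper: take $\varphi=\varphi_n$, eliminate the tangled operator via $\trtanpd$ in the scattered cases and via $\trtanwedge{\frm C^n}$ in the connected cases, then invoke Proposition~\ref{PropSuccinctnessBoundForall} and transfer via Corollaries~\ref{CorOrd} and~\ref{CorRn}. If anything, you are more explicit than the paper about the contrapositive/transfer step in the ordinal and metric-space cases and about where exactly Lemma~\ref{LemmTanMetTrans} is valid, which is a good clarification rather than a deviation.
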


\begin{proof}
First assume that $\cls C$ contains all finite $\cls{GL}$ frames. Fix $n\geq 1$, and assume that $\psi \in \lang^{\pt}_{\pd\forall}$ is equivalent to $\varphi_n$ over $\cls{C}$. Then, by Corollary \ref{CorTanPd}, $\trtanpd(\psi) \in \lang_{\pd\forall}$ is equivalent to $\psi$, and hence to $\varphi_n$, over $\cls C$, and in particular, over $\frm C^n$. By Proposition \ref{PropSuccinctnessBoundForall}, it follows that  $|\trtanpd(\psi)|\geq 2^n$, and hence $|\psi| \geq 2^n$ as well. This establishes item \ref{ItMainGL} and, in view of Corollary \ref{CorOrd}, item \ref{ItMainomega}.

If $\cls C$ contains the class of all finite $\cls{TC}$ frames, we proceed as above, but instead use the translation $\trtanwedge{{\frm C}^n}$, so that $\trtanwedge{{\frm C}^n}(\psi) \in \lang_{\pd\forall}$ is equivalent to $\psi \in \lang^{\pt}_{\pd\forall}$, and hence to $\varphi_n$, over $\hat{\frm C}^n$. Finally, we use Corollary \ref{CorRn} to lift this result to $\cls C$ containing any crowded metric space $\frm X$.
\end{proof}

Given the fact that $\lang^{\pt}_\pd$ is equally expressive as $\lang^\mu_\pd$, it is natural to ask which is more succinct. Note that $\lang^{\pt}_\pd$ cannot be exponentially more succinct than $\lang^\mu_\pd$, as the translation $\trtanb$ is polynomial. On the other hand, we do have that the $\mu$-calculus is more succinct than the tangled language:

\begin{theorem}\label{TheoMuSucc}
Let $\cls C$ contain either:
\begin{enumerate}

\item the class of all finite $\cls{GL}$ frames,

\item the class of all finite $\cls{TC}$ frames,

\item the set of ordinals $\Lambda<\omega^\omega$, or

\item any crowded metric space $\frm X$.

\end{enumerate}
Then, there exist arbitrarily large $\theta \in \lang^\mu_\pd$ such that, whenever $\psi \in \lang^{\pt}_{\pd\forall}$ is equivalent to $\theta$ over $\cls C$, it follows that $|\psi|> 2^{\frac{|\theta|}{12}}$.
\end{theorem}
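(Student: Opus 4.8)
The plan is to derive Theorem \ref{TheoMuSucc} as a short corollary of Theorem \ref{TheoMain}, using the cheap translation of $\ps$ into $\mu$ supplied by Lemma \ref{LemmMuShort}. For each $n \geq 1$ I would set $\theta_n := \trpsmu(\varphi_n)$, where $\varphi_n \in \lang_\ps$ is the family of Definition \ref{def:formulas}. Since $\varphi_n$ is built only from atoms, $\wedge$ and $\ps$, its image $\theta_n$ uses only $\mu$, $\pd$, $\wedge$ and $\vee$; in particular $\theta_n \in \lang^\mu_\pd$, as neither $\nu$, $\nc$, nor $\forall$ is ever introduced. By Lemma \ref{LemmMuShort}, $\theta_n \equiv \varphi_n$ over the class of convergence spaces, hence over $\cls C$, and $|\theta_n| \leq 4|\varphi_n|$. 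As $|\varphi_n| \to \infty$, the formulas $\theta_n$ are arbitrarily large, so it only remains to establish the lower bound for each fixed $n$.

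Next, suppose $\psi \in \lang^{\pt}_{\pd\forall}$ is equivalent to $\theta_n$ over $\cls C$. By transitivity of equivalence over $\cls C$ together with $\theta_n \equiv \varphi_n$, I obtain $\psi \equiv \varphi_n$ over $\cls C$. The formulas $\varphi_n$ are precisely the witnesses used in the proof of Theorem \ref{TheoMain}, and $\cls C$ satisfies one of its four hypotheses, so that theorem applies to $\varphi_n$ and yields $|\psi| \geq 2^{|\varphi_n|/3}$.

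The final step is to convert this into a bound in terms of $|\theta_n|$, which is just bookkeeping of the constants $3$ (from Theorem \ref{TheoMain}) and $4$ (from Lemma \ref{LemmMuShort}), whose product $12$ appears in the exponent. From $|\theta_n| \leq 4|\varphi_n|$ one gets $|\theta_n|/12 \leq |\varphi_n|/3$, hence $2^{|\theta_n|/12} \leq 2^{|\varphi_n|/3} \leq |\psi|$. To sharpen the inequality to the strict $>$ demanded by the statement, I would exploit the genuine slack in the translation bound: a direct induction gives the exact sizes $|\varphi_n| = 3n - 1$ and $|\theta_n| = 6n - 1$, so that $|\theta_n|/12 = (6n-1)/12 < (3n-1)/3 = |\varphi_n|/3$ for every $n \geq 1$; therefore $2^{|\theta_n|/12} < 2^{|\varphi_n|/3} \leq |\psi|$, which is exactly the claim.

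The essential point is that all the real work has already been done in Theorem \ref{TheoMain} and the game-theoretic machinery behind it; the present theorem is purely a packaging result, so I do not expect any genuine obstacle. The only thing requiring care is the mild subtlety of turning the non-strict estimate $|\psi| \geq 2^{|\theta_n|/12}$ into the strict one, and this is harmless precisely because the $4|\varphi|$ bound of Lemma \ref{LemmMuShort} is never tight for the $\varphi_n$. I would also recall, as noted before the theorem, that the complementary direction is settled by the polynomiality of $\trtanb$, so that Theorem \ref{TheoMuSucc} indeed identifies the $\mu$-calculus as strictly, exponentially more succinct than the tangled language rather than merely incomparable to it.
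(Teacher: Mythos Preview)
Your proposal is correct and follows essentially the same approach as the paper: set $\theta_n = \trpsmu(\varphi_n)$, use Lemma~\ref{LemmMuShort} for the size bound and equivalence, and then invoke the lower bound for $\varphi_n$ from Theorem~\ref{TheoMain}. The paper's own proof is a two-sentence sketch of exactly this idea; your version is more explicit, and in particular your computation of the exact sizes $|\varphi_n| = 3n-1$, $|\theta_n| = 6n-1$ to secure the \emph{strict} inequality is a detail the paper leaves implicit.
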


\begin{proof}
By Lemma \ref{LemmMuShort}, for all $n \in \mathbb N$, $\varphi_n \equiv \trpsmu (\varphi_n)$ and $|\trpsmu (\varphi_n)| \leq 4 |\varphi_n|$. Hence, the sequence $(\trpsmu (\varphi_n))_{n\in \mathbb N}$ witnesses that $\lang^\mu_{\pd}$ is exponentially more succinct than $\lang^{\pt}_{\pd\forall}$.
\end{proof}

\section{Concluding remarks}\label{SecConc}

There are several criteria to take into account when choosing the `right' modal logic for spatial reasoning. It has long been known that the limit-point operator leads to a more expressive language than the closure operator does, making the former seem like a better choice of primitive symbol. However, our main results show that one incurs in losses with respect to formula-size, and since the blow-up is exponential, this could lead to situations where e.g.~formally proving a theorem expressed with the closure operator is feasible, but treating its limit-operator equivalent is not.
Similarly, the results of \cite{DawarOtto} make the tangled limit operator seem like an appealing alternative to the spatial $\mu$-calculus, given its simpler syntax and more transparent semantics. Unfortunately, the price to pay is also an exponential blow-up.

We believe that the takeaway is that different modal logics may be suitable for different applications, and hope that the work presented here can be instrumental in clarifying the advantages and disadvantages of each option.
Moreover, there are many related questions that remain open and could give us a more complete picture of the relation between such languages.

For example, one modality that also captures interesting spatial properties is the `difference' modality, where ${\langle {\neq}\rangle}\varphi$ holds at $w$ if there is $v\not = w$ satisfying $\varphi$. This modality has been studied in a topological setting by Kudinov \cite{KudinovDifference}, and succinctness between languages such as $\lang_{\pd \forall}$ and $\lang_{\ps \langle{\neq}\rangle}$ remains largely unexplored. Even closer to the present work is the {\em tangled closure} operator, $\ps^\ast$, defined analogously to $\pt$, but instead using the closure operation. The techniques we have developed here do not settle whether $\lang^{\mu}_\ps$ is exponentially more succinct than $\lang^{\ps^\ast}_\ps$.

There are also possible refinements of our results. Our construction uses infinitely many variables, and it is unclear if $\lang_\ps$ is still exponentially more succinct than $\lang_\pd$ when restricted to a finite number of variables.
Finally, note that Theorem \ref{TheoMain} is sharp in the sense that an exponential translation is available, but this is not so clear for Theorem \ref{TheoMuSucc}, in part because an explicit translation is not given in \cite{DawarOtto}. Sharp upper and lower bounds remain to be found.

%%
%% Bibliography
%%

%% Either use bibtex (recommended), 

\section*{Acknowledgements}

This work was partially supported by ANR-11-LABX-0040-CIMI within the program ANR-11-IDEX-0002-02.

\section*{References}

\end{document}